\newcommand{\COLORON}{0}
\newcommand{\NOTESON}{0}
\newcommand{\Debug}{0}
\newcommand{\comment}[1]{}
\newcommand{\COMMENT}[1]{}
\definecolor{darkgray}{rgb}{0.3,0.3,0.3}
\newcommand{\defi}[1]{{\color{darkgray}\emph{#1}}}
\newtheorem{proposition}{Proposition}[section]
\newtheorem{definition}[proposition]{Definition}
\newtheorem{theorem}[proposition]{Theorem}
\newtheorem{corollary}[proposition]{Corollary}
\newtheorem{lemma}[proposition]{Lemma}
\newtheorem{examp}[proposition]{Example}
\newcommand{\FIG}{0}
\newcommand{\note}[1]{ 

\hspace*{-30pt}
	{\color{blue}  NOTE: \color{Turquoise}{\small  \tt \begin{minipage}[c]{1.1\textwidth}  #1 \end{minipage} \ignorespacesafterend }} 
	
	}
\else \newcommand{\note}[1]{} \fi
\newcommand{\afsubm}[1]{ \ifnum \Debug = 1 {\mymargin{#1}}
\fi} 
\newcommand{\sss}{\ensuremath{\color{red} \bowtie \bowtie \bowtie\ }}
\else \newcommand{\sss}{} \fi
\newcommand{\fig}[1]{Figure ``{#1}''}
\else \newcommand{\fig}[1]{Figure~\ref{#1}} \fi
\renewcommand{\color}[1]{}
\newcommand{\N}{\ensuremath{\mathbb N}}
\newcommand{\R}{\ensuremath{\mathbb R}}
\newcommand{\C}{\ensuremath{\mathbb C}}
\newcommand{\Z}{\ensuremath{\mathbb Z}}
\newcommand{\cc}{\ensuremath{\mathcal C}}
\newcommand{\ce}{\ensuremath{\mathcal E}}
\newcommand{\cp}{\ensuremath{\mathcal P}}
\newcommand{\cs}{\ensuremath{\mathcal S}}
\newcommand{\cgr}{\ensuremath{\mathcal R}}
\newcommand{\oo}{\ensuremath{\omega}}
\newcommand{\OO}{\ensuremath{\Omega}}
\newcommand{\eps}{\ensuremath{\epsilon}}
\newcommand{\sm}{\backslash}
\DeclareRobustCommand{\cev}[1]{%
  \mathpalette\do@cev{#1}%
}
\newcommand{\do@cev}[2]{%
  \fix@cev{#1}{+}%
  \reflectbox{$\m@th#1\vec{\reflectbox{$\fix@cev{#1}{-}\m@th#1#2\fix@cev{#1}{+}$}}$}%
  \fix@cev{#1}{-}%
}
\newcommand{\fix@cev}[2]{%
  \ifx#1\displaystyle
    \mkern#23mu
  \else
    \ifx#1\textstyle
      \mkern#23mu
    \else
      \ifx#1\scriptstyle
        \mkern#22mu
      \else
        \mkern#22mu
      \fi
    \fi
  \fi
}
\newcommand{\nin}{\ensuremath{{n\in\N}}}
\newcommand{\pth}[2]{\ensuremath{#1}\text{--}\ensuremath{#2}~path}
\newcommand{\g}{\ensuremath{G\ }}
\newcommand{\G}{\ensuremath{G}}
\newcommand{\are}{\vec{e}}
\newcommand{\Ex}{\mathbb E}
\renewcommand{\Pr}{\mathbb{P}}
\newcommand{\Lr}[1]{Lemma~\ref{#1}}
\newcommand{\Lrs}[1]{Lemmas~\ref{#1}}
\newcommand{\Tr}[1]{Theorem~\ref{#1}}
\newcommand{\Sr}[1]{Section~\ref{#1}}
\newcommand{\Prr}[1]{Pro\-position~\ref{#1}}
\newcommand{\Cr}[1]{Corollary~\ref{#1}}
\newcommand{\Dr}[1]{De\-fi\-nition~\ref{#1}}
\newcommand{\Cg}{Cayley graph}
\renewcommand{\iff}{if and only if}
\newcommand{\fe}{for every}
\newcommand{\Fe}{For every}
\newcommand{\st}{such that}
\newcommand{\ti}{there is}
\newcommand{\obda}{without loss of generality}
\newcommand{\wrt}{with respect to}
\newcommand{\labequ}[2]{ \begin{equation} \label{#1} #2 \end{equation} } 
\newcommand{\labsplitequ}[2]{ \begin{equation} \begin{split}
 \label{#1} #2 \end{split} \end{equation} } 
\newcommand{\labtequ}[2]{
 \begin{equation} \label{#1} 	\begin{minipage}[c]{0.9\textwidth}  #2 \end{minipage} \ignorespacesafterend \end{equation} }
\newcommand{\mymargin}[1]{
 \ifnum \Debug = 1
  \marginpar{%
    \begin{minipage}{\marginparwidth}\small%
      \begin{flushleft}%
        {\color{blue}#1}%
      \end{flushleft}%
   \end{minipage}%
  }%
 \fi
}%
\newcommand{\mySection}[2]{}
\definecolor{myred}{rgb}{0.90196078,0,0}
\definecolor{mygreen}{rgb}{0,0.90196078,0}
\definecolor{myblue}{rgb}{0,0,0.90196078}
\DeclarePairedDelimiter\abs{\lvert}{\rvert}
\newtheorem{remark}[proposition]{Remark}
\newcommand{\myremark}[1]{\ifnum \Debug = 1 \tiny #1 \fi}
\newcommand{\ABP}{Aizenman-Newman-Barsky property}
\newcommand{\nnm}{nearest-neighbour model}
\newcommand{\lrm}{long-range model}
\newcommand{\Nnm}{Nearest-neighbour model}
\newcommand{\Lrm}{Long-range model}
\newcommand{\pint}{\cp-interface}
\newcommand{\mpint}{multi-\cp-interface}
\newcommand{\scv}{interface} 
\newcommand{\smc}{multi-interface} 
\newcommand{\ssp}{separating strip}
\newcommand{\sms}{separating multi-strip}
\newcommand{\sboxt}{\mathsmaller {\mathsmaller \boxtimes}}
\newcommand{\Ls}{\mathbb{L}^d_\sboxt}
\newcommand{\CS}{\mathcal S}
\newcommand{\Zs}{\ensuremath{\Z^{2}}}
\newcommand{\Zsd}{\ensuremath{\Z^{2*}}}
\newcommand{\vx}{\ensuremath{\bm{x}}}
\newcommand{\sgn}[1]{\ensuremath{(-1)^{#1}}}
\newcommand{\Zn}{\mathcal{Z}}
\newcommand{\ar}[1]{\vec{#1}}
\newcommand{\ard}[2]{\ensuremath{\vec{#1^{#2}}}}
\newcommand{\ra}[1]{\cev{#1}}
\newcommand{\dar}[1]{\overset\leftrightarrow{#1}}
\newcommand{\pb}{proper bipartition}
\newcommand{\qtl}{planar quasi-transitive lattice}
\newcommand{\Wthm}{Weierstrass' \Tr{thmWei}}
\title{Analyticity results in Bernoulli Percolation}
\author[1]{Agelos Georgakopoulos}
\author[2]{Christoforos Panagiotis}
\affil[1,2]{{Mathematics Institute}\\
	{University of Warwick}\\
	{CV4 7AL, UK}\thanks{Supported by the European Research Council (ERC) under the European Union's Horizon 2020 research and innovation programme (grant agreement No 639046).}\\}
\begin{document}
\maketitle

\date{}

\newcommand{\MS}{\mathcal {MS}}
\newcommand{\HRT}{Hardy--Ramanujan formula}
\newcommand{\RL}{ring \Lr{ring}}
\newcommand{\DD}{\ensuremath{\mathbb D}}
\newcommand{\pcs}{\ensuremath{\dot{p}_c}}
\newcommand{\per}{\ensuremath{\partial_{int} C(o)}}
\newcommand{\bou}{\ensuremath{\partial_{ext} C(o)}}
\newcommand{\IEP}{Inclusion-Exclusion Principle}

\begin{abstract}
We prove that for Bernoulli percolation on $\mathbb{Z}^d$, $d\geq 2$, the percolation density is an analytic function of the parameter in the supercritical interval. For this we introduce some techniques that have further implications. In particular, we prove that the susceptibility is analytic in the subcritical interval for all transitive short- or long-range models, and that $p_c^{bond} <1/2$ for certain families of triangulations for which Benjamini \& Schramm conjectured that $p_c^{site} \leq 1/2$. 
\end{abstract}

{\bf MSC classification:} 60K35, 82B43, 05C30.

\section{Introduction}

We prove that for Bernoulli percolation on the cubic lattice $\mathbb{Z}^d$, $d\geq 2$, the probability $\theta_o(p)$ that the origin $o$ is in an infinity cluster is an analytic function of the parameter $p$ in the supercritical interval $p\in (p_c,1]$. This answers a question of Kesten \cite{Ke81}. Our techniques imply analiticity results for other functions and setups.

\subsection{Background and motivation}
In Bernoulli bond percolation, each edge of a connected, locally finite graph $G$ is either deleted (vacant) or retained (occupied)  independently at random with retention probability $p\in [0,1]$ to obtain a random subgraph $\omega$ of $G$. Connected components of $\omega$ are referred to as \defi{clusters}. Percolation theory is primarily concerned with the structure of retained clusters in $\omega$, how this structure changes as the parameter $p$ is varied, and in particular, the {\em phase-transitions} where a small change in $p$ imposes a dramatic change of this structure.
A principal quantity of interest is  the \defi{percolation density} $\theta=\theta_o(p)$, defined as the probability that the cluster $C(o)$ of a vertex $o$ is infinite. The   \defi{percolation threshold} is
$$p_c:= \inf_{} \{p\leq 1 \mid \theta_o(p)>0\},$$
and for quasi-transitive graphs it is strictly between zero and one \cite{DCGRSY}. 

The phase-transition at $p_c$ is the most fundamental and well-known result of percolation theory. It immediately raises the question of whether there are phase-transitions at other values of $p$ ---and how to define them. The obvious approach to hunting for  phase-transitions is to consider functions, such as $\theta_o(p)$, describing the macroscopic behaviour of the model, and study their smoothness in the interval $p\in (0,1)$. 

In this paper we prove that several functions studied in percolation theory are analytic functions of the parameter $p$. 
We consider Bernoulli bond percolation on a variety of graphs, as well as general \lrm s (defined in \Sr{LRM}) preserved by a transitive group action. 

\medskip
Perhaps the first occurrence of questions of smoothness in percolation theory dates back to the work of Sykes \& Essam \cite{SykesEssam}. Trying to compute the value of $p_c$ for bond percolation on the square lattice $\Z^2$, Sykes \& Essam obtained that the free energy (aka.\ mean number of clusters per vertex) $\kappa(p):=\Ex_p(|C_o|^{-1})$ satisfies the functional equation $\kappa(p)=\kappa(1-p)+\phi(p)$ for some polynomial $\phi(p)$. Under the assumption of smoothness of $\kappa$ for every value of the parameter $p$ other than $p_c$, at which it is conjectured that $\kappa$ has a singularity, they obtained that $p_c=1/2$ due to the symmetry of the functional equation around $1/2$. Their work generated considerable interest, and a lot of the early work in percolation was focused on the smoothness of functions like $\kappa$ and the \defi{susceptibility} $\chi$ ---i.e.\ the expected number of vertices in the cluster of a fixed vertex $o$--- that describe the macroscopic behaviour of its clusters. Kunz \& Souillard \cite{KunSou} proved that $\kappa$ is analytic for small enough $p$ for percolation on $\Z^d, d>1$. Grimmett \cite{GriDif} proved that $\kappa$ is $C^\infty$ for $d=2$. A breakthrough was made by Kesten \cite{Ke81}, who proved that $\kappa$ and $\chi$ are analytic for $p\in [0,p_c)$ for all $d$.\footnote{The threshold $p_T$ in Kesten's original formulation was later shown to coincide with $p_c$ by Aizenman \& Barsky  \cite{AizBar}.} (Despite all the efforts, the argument of Sykes \& Essam has never been made rigorous, and all proofs of the fact that $p_c=1/2$ when $d=2$ use different methods, see e.g.\ \cite{KestenCritical,BoRioShor}.)

Except for the special case of $\kappa$ on $\Z^2$ (and other planar lattices), smoothness results are harder to obtain in the supercritical interval $(p_c,1]$, partly because the cluster size distribution $P_n:=\Pr_p(|C(o)| = n)$ has an exponential tail below $p_c$ (\Sr{sec ABP}) but not above $p_c$ \cite{AiDeSoLow}. Still, it is known that $\theta, \kappa$, and the `truncation' $\chi^f$ of $\chi$ are infinitely differentiable for $p\in (p_c,1]$ on $\Z^d$ (see \cite{ChChNeBer} or \cite[\S 8.7]{Grimmett} and references therein). It is a well-known open problem, dating back to \cite{Ke81} at least, and appearing in several textbooks (\cite[Problem 6]{KestenBook},\cite{GrimmettDisordered,Grimmett}), whether $\theta$ is analytic for $p\in (p_c,1]$. Partial progress was made by Braga et al. \ \cite{BrPrSaSco,BrPrSa}, who showed that $\theta$ is analytic for $p$ close enough to $1$. In this paper we fully answer this question in the affirmative (\Tr{analytic}). We also answer the corresponding questions, asked by Michelen et al. \cite{MiPeRoQue}, for Galton-Watson trees (\Tr{thm GW}), and by G\"unter et al. \cite{PenroseBooleanModel} for the Boolean model in $\R^2$ (\Tr{Boolean}).

\medskip
Part of the interest for this question comes form Griffiths' \cite{Griffiths} discovery of models, constructed by applying the Ising model on 2-dimensional percolation clusters, in which the free energy is infinitely differentiable but not analytic. This phenomenon is since called a \defi{Griffiths singularity}, see \cite{EntGri} for an overview and further references. We remark that the importance of Griffiths' discovery is that such a behaviour is possible for functions arising in statistical mechanics. For arbitrary functions this is not a surprise, as `most' $C_\infty$ functions on $[0,1]$ are nowhere analytic \cite{CatDif}.

The study of the analytical properties of the free energy is a common theme in several models of Statistical Mechanics. Perhaps the most famous such example is Onsager's exact calculation of the free energy of the square-lattice Ising model \cite{Onsager}. A corollary of this calculation is the computation of the critical temperature, as well as the analyticity of the free energy for all temperatures other than the critical one. See also \cite{KLM} for an alternative proof of the latter result. The analytical properties of the free energy have also been studied for the $q$-Potts model, which generalizes the Ising model. For this model, the analyticity of the free energy has been proved for $d=2$ and all supercritical temperatures when $q$ is large enough \cite{Complete}. 

\subsection{Summary of results}

We now summarise the main results of this paper. More details are provided in the following section. Let
\begin{align} \label{def pC}
\begin{split}
p_\C:= &\inf_{} \{p\leq 1 \mid \theta_o(p) \text{ is analytic in } (p,1].\}
\end{split}
\end{align}
\begin{enumerate}
\item \label{ichi} For every quasi-transitive graph, and every quasi-transitive (1-parameter) \lrm, the susceptibility $\chi(p)$ is analytic in the subcritical interval $[0,p_c)$.
\item \label{ithpl} For every $d\geq 2$, the percolation density $\theta(p)$ on $\mathbb{Z}^d$ is analytic in the supercritical interval $(p_c,1]$ (in other words, $p_\C=p_c$). So is the $n$-point function $\tau$ and its truncation $\tau^f$. The corresponding results are proved for quasi-transitive lattices in $\R^2$, and continuum percolation in $\R^2$ as well.
\item \label{ithtr} For almost every Galton-Watson tree defined by any progeny distribution, we have $p_\C=p_c$. On the other hand, we display a tree $T$ for which $\theta$ is nowhere analytic on $(p_c,1]$.
\item \label{ithfp} For every finitely presented, 1-ended \Cg, we have $p_\C<1$. 
Moreover, for some \Cg\ of every finitely presented, 1-ended group, we have $p_\C\leq 1-p_c$ for both site and bond percolation. 
\item \label{ithna} For every non-amenable graph with bounded degrees, we have $p_\C<1$. 
It is possible for $\theta$ to be analytic at the uniqueness threshold $p_u$.  
\item  \label{itriang} For certain families of triangulations for which Benjamini \& Schramm \cite{BeSchrPer} and Benjamini \cite{Benjamini} conjectured that $p_c^{site} \leq 1/2$, we prove $p_c^{bond}\leq p_\C <1/2$.
\end{enumerate}

\subsection{Proof ideas}
Kesten's method for the analyticity of $\chi$ (or $\kappa$) below $p_c$ \cite{Ke81} (see also \cite[\S 6.4]{Grimmett}) involves extending $p$ and $\chi$ to the complex plane, and applying the standard complex analytic machinery of Weierstrass to the series $\chi(p):=\sum_{n\in \N} n P_n(p)$. This uses the fact that $P_n(p)$ can be expressed as a polynomial by considering all possible clusters of size $n$, and can hence be extended to $\C$. To show that this series converges to an analytic function $\chi(z)$ on an open neighbourhood of $[p_c,1)$, one needs upper bounds for $|P_n(z)|$ inside appropriate domains in order to apply the Weierstrass M-test (see Appendix~\ref{sec App CA}). These bounds are obtained combining the well-known fact due to Aizenman \& Barsky \cite{AizBar} that $P_n(z)$ decays exponentially in $n$ for real $z$, with elementary complex-analytic calculations. Kesten's calculations involved the numbers of certain `lattice animals', but we observe (\Tr{chi NN}) that this is not necessary and his proof can be simplified. 
An immediate benefit of this simplification is that the proof extends beyond  $\Z^d$, to bond and site percolation on any quasi-transitive graph. The only ingredients needed are the appropriate exponential decay statement and elementary complex analysis. Moreover, with a bit more work the proof can be extended to \lrm s: the functions $P_n(p)$ are no longer polynomials, but we show (\Tr{Pm entire}) how they can be extended into entire functions, i.e.\ complex-analytic functions defined for all $p\in \C$. This summarises the proof of \ref{ichi}, which is given in detail in \Sr{secChi}. 
One application of \ref{ichi} of particular interest to us is to a \lrm\ studied in \cite{IGperc}, which was the original motivation of our work.

The technique we just sketched is used in our results \ref{ithpl}--\ref{ithna} as well, but additional ingredients are needed. For \ref{ithpl}, we write $\theta(p)= 1- \sum_n P_n(p)$ by the definitions, but as $P_n$ decays slower than exponentially for $p>p_c$ \cite{KunSou,Grimmett}, the above machinery cannot be applied to this series. Therefore, instead of working with the size of $C(o)$, we work with the `perimeter' of its boundary. To make this more precise, consider  first the $2$-dimensional case and define the \defi{\scv} of the cluster $C(o)$ to be the pair $(\per,\bou)$, where \per\ denotes the set of edges of $C(o)$ bounding its unbounded face, and \bou\ denotes the set of vacant edges  incident with \per\ lying in the unbounded face of $C(o)$ (\fig{figscv}). We say that such a pair of edge sets $I=(\per,\bou)$ \defi{occurs} in some percolation instance, if it is the \scv\ of some cluster, in which case all edges in \per\ are occupied and all edges in \bou\ are vacant. For any plausible such $I$, the probability $P_I(p):= \Pr_p(\text{$I$ occurs})$ is just $p^{|\per|} (1-p)^{|\bou|}$ by the definitions, which is a polynomial we can extend to $\C$ hoping to apply our complex-analytic machinery. Moreover, these $P_I$ exhibit the kind of exponential decay we need: \bou\ gives rise to a connected subgraph of the dual lattice, and we can combine a well-known coupling between supercritical bond percolation on a lattice and subcritical bond percolation on its dual (see \Tr{pc star}) with the aforementioned exponential decay of $P_n$ applied to the dual.

\begin{figure}[htbp]
   \centering
   \noindent
\begin{overpic}[width=.6\linewidth]{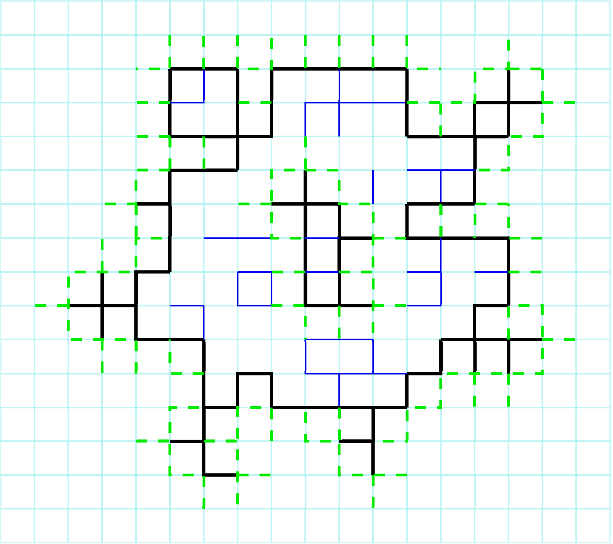}
\put(51,40){$o$}
\end{overpic}
\caption{\small An example of two \scv s of percolation clusters, one nested inside the other, with the inner one being the \scv\ of  $C(o)$. We depict $\per$ with bold lines,  and $\bou$ with dashed lines (but we also depict the other \scv\ similarly). Other occupied edges are depicted in plain lines (blue, if colour is shown).} \label{figscv}
\end{figure}


Still, further challenges arise when trying to express $\theta$ in terms of the functions $P_I$, because knowing that a certain \scv\ $I$ occurs does not imply that it is part of the cluster $C(o)$: there could be other \scv s nested inside $I$, as exemplified in \fig{figscv}. We overcome this difficulty using the \IEP, to express $\theta$ as 
\labtequ{iepintro}{$\theta(p)= 1- \sum_{I \in \MS} (-1)^{c(I)+1} P_I$,}
where $\MS$ is the set of finite disjoint unions of \scv s, and $c(I)$ counts the number of \scv s in $I$. The problem now becomes whether the probability for such an $I \in \MS$ with $n$ edges in total decays exponentially in $n$. All we know so far is that the probability to have an \scv\ containing a fixed vertex $x$ decays exponentially, which seems to be of little use given that there are many ways to partition $n$ into smaller integers $n_1, \ldots n_k$, and construct an  $I \in \MS$ out of $k$ \scv s of lengths $n_i$, each rooted at one of many candidate vertices $x_i$. But there is a way to bring all these possibilities under control, and establish the desired exponential decay, by a certain combination of the following ingredients: \\
a) 
the fact that the number of partitions of an integer $n$ grows subexponentially in $n$ (\Sr{sec parts});\\
b) some combinatorial arguments that restrict the possible vertices $x_i$ at which the \scv s meet the horizontal axis, and\\
c) using the BK inequality (\Tr{BKthm}) to argue that for each choice of a partition of $n$, and vertices $x_1, \ldots x_k$, the probability of occurrence of an $I \in \MS$ complying with this data decays as fast as if we had a single \scv\ of size $n$ (which we already know to decay exponentially). 

This summarises the proof of \ref{ithpl} in 2-dimensions, which is given in detail in \Sr{sec th pl}. Our method applies to \emph{site} percolation  (\Cr{triangular}) as well, with the only difference that we now need to work with the matching graph rather than the dual. 

\medskip
We remark that formula \eqref{iepintro} can be thought of as a refinement of the well-known Peierls argument (see e.g.\ \cite[p.~16]{Grimmett}), where instead of an inequality we now have an equality. The price to pay is that the structures arising ---of the form (\per,\bou) instead of just \bou--- are harder to enumerate, and the benefit is that the events we consider are mutually exclusive, hence the equality. We found this technique very useful in this paper and expect it to be useful elsewhere.

\medskip
The only use of planarity in the proof of \ref{ithpl} we just sketched was the duality argument needed for the exponential tail of the size of an \scv. It is easy to imagine generalising \scv s to higher dimensions, although coming up with a precise definition that uniqely associates an interface with any percolation cluster requires some thought. In \Sr{sec fp} we offer such a definition that applies to all graphs, not just lattices in $\R^d$. We show that once we fix an 1-ended graph \G, a basis of its cycle space (for $G=\Z^d$ the family of all squares is a natural choice) and a percolation instance 
$\omega$, every finite cluster $C$ of $\omega$ uniquely defines an `\scv' $I=(\per,\bou)$ with \per\ a connected subgraph of $C$, and \bou\ containing 
the minimal cut separating $C$ from infinity. This refines the argument of Timar \cite{TimCut} used to simplify the proof of the theorem of Babson \& Benjamini that $p_c<1$ for every $1$-ended finitely presented \Cg. When \g is such a \Cg, we show that our \scv s exhibit an exponential tail by repeating the arguments (a)-(c) from above, and reach \ref{ithfp}. This is one of the hardest results of this paper (\Tr{thm pc fp}), mainly due to the `deterministic' \Tr{uniq pint}. It also applies to site percolation (\Cr{thm pc fp s}). Moreover, we show that if we `triangulate' our \Cg\ by adding more generators, then we can achieve $p_\C\leq 1-p_c$ 
for both site and bond percolation (\Tr{thm pc triang}).

With the aforementioned generalization of the notion of interfaces, our method still yields the analyticity of $\theta$ on $\mathbb{Z}^d$, $d\geq 3$ for the values of $p$ close to $1$, but not in the whole supercritical interval. The main obstacle is that for values of $p$ in the interval $(p_c,1-p_c)$, the distribution of the size of the interface of $C_o$ has only a stretched exponential tail, which follows from the work of Kesten and Zhang \cite{KeZhaPro}. 

In the same paper, Kesten and Zhang introduced some variants of the standard boundary of $C_o$ that are obtained by dividing the lattice $\Z^d$ into large boxes, and proved that these variants satisfy the desired exponential tail on the whole supercritical interval.\footnote{The threshold $p_c(H^d)$ in Kesten's and Zhang's original formulation was proved later to coincide with $p_c(\Z^d)$ by Grimmett and Marstrand \cite{GriMar}.} It is natural to try to apply our method to those variants, however, it turns out that their occurrence does not prevent the origin from being connected to infinity. Instead, we expand these variants into larger objects that we call \textit{separating components}. In Section \ref{theta} (\Lr{Co finite}) we prove that whenever a separating component $S$ occurs, we can find inside $S$ and its boundary $\partial_\sboxt S$ an edge cut  $\partial^b \mathcal{S}_o$ separating the origin from infinity. Conversely, some separating component occurs whenever $C_o$ is finite (\Lr{an S occurs}). Thus we can express $\theta$ in terms of the occurrence of separating components (see \eqref{cases} in \Sr{theta sum S}). 
In contrast to the behaviour of the boundary of $C_o$ which has only a stretched exponential tail on the interval $(p_c,1-p_c]$, this $\partial^b \mathcal{S}_o$ has an exponential tail in the whole supercritical interval. We plug this exponential decay into our general tool (Corollary~\ref{cor general}) to obtain the analyticity of $\theta$ above $p_c$ in \Sr{expand}. In Section \ref{section tau} we use similar arguments to prove the analyticity of the $k$-point function $\tau$ and its truncation $\tau^f$, as well as of $\chi^f$ and $\kappa$.

Typically, $\partial^b \mathcal{S}_o$ has size of smaller magnitude than the boundary of $C_o$, and it is obtained from the latter by `smoothening' some of its parts with `fractal' structure. As a corollary, we re-obtain, in \Sr{Pete}, a result of Pete \cite{Pete} about the exponential decay of the probability that $C_o$ is finite but sends a lot of closed edges to the infinite component.

\medskip
A well-known theorem of Benjamini \& Schramm \cite{BeSchrPer} states that $p_c(G)\leq \frac1{1+h_E(G)}$ where $h_E(G)$ is the Cheeger constant of an arbitrary graph \G, and so $p_c<1$ for every non-amenable graph. We show in \Sr{sec na}, where we recall the relevant definitions, that the same bound applies to $p_\C$ (\Tr{thm nonam}). This has the interesting consequence that $\theta$ does not witness the phase transition at the uniqueness threshold $p_u$: using the results of \cite{PakSmir,ThomRemark}, we deduce that $\theta$ is analytic at $p_u$ for some Cayley graph of every non-amenable group. Another consequence of $p_c(G)\leq \frac1{1+h_E(G)}$ is that $p_\C =p_c$ for the infinite $d$-regular tree (\Cr{cor tree}). Shortly after the first draft \cite{analyticity} of the current paper was released, Hermon \& Hutchcroft \cite{HerHutSup} proved that $\theta$ is analytic in the whole supercritical interval for every non-amenable transitive graph, by establishing that the cluster size distribution $P_n$ has an exponential tail in the whole supercritical interval. 

The analyticity of the annealed analogue of $\theta(p)$ for Galton--Watson trees for $p>p_c$ was recently established by Michelen, Pemantle \& Rosenberg, who asked whether analyticity holds in the quenched regime as well \cite[Question 3]{MiPeRoQue}. In Section \ref{sec trees} (\Tr{thm GW}) we answer this question in the affirmative \eqref{ithtr}. On the other hand, we display a tree $T$ for which $\theta$ is nowhere analytic on $(p_c,1]$ (\Sr{nowhere}).  Following a discussion at an \href{https://percolation.ethz.ch/slack/}{on-line platform}, Tom Hutchcroft constructed a unimodular random graph for which $\theta$ is nowhere analytic on $(p_c,1]$.

\medskip
Most of this paper is concerned with analyticity results, but some of the methods developed can be applied to provide bounds on $p_c$ as well. We display this in \Sr{sec triang}, where we prove that $p_c^{bond} <1/2$ for certain families of triangulations for which Benjamini \& Schramm \cite{BeSchrPer}, Benjamini \cite{Benjamini}, and Angel, Benjamini \& Horesh \cite{AnBeHor} conjectured that $p_c^{site} \leq 1/2$ (\ref{itriang}). Using similar ideas, the second author and J.~Haslegrave \cite{HasPanSit} prove that $p_c^{site} < 1/2$ for all triangulations of a disc with minimum degree at least $7$, answering another question of \cite{BeSchrPer}.

\medskip
After proving some functions to be analytic, additional fun, and hopefully results, can be had by studying their complex extensions. As already mentioned, we proved that the functions $P_n$ admit entire extensions (trivially for \nnm s), and are therefore uniquely determined by their Maclaurin coefficients. As most observables of percolation theory, e.g.\ $\chi$ and $\theta$, are uniquely determined by the sequence $\{P_n\}_{n\in \N}$, it makes sense to study those coefficients. We do so in \Sr{sec alt}, where we show that their signs alternate with $n$, and do not depend on the model (\Tr{alterf}). 

We use this fact in \Sr{sec neg}, where we show how one can make sense of a negative percolation threshold $p_c^-\in \R_{<0}$. As it happened in the history of $p_c$, more than one candidate definitions are possible. We could show that some of them coincide (\Tr{coincide}), but there are still more questions than results on this topic.

\medskip
Most of the essence of our proofs lies in combinatorial arguments. We have made an effort to make this paper accessible to the non-expert, except for this introduction that uses terminology that is defined later. The complex analysis we use is at undergraduate level, involving only some classics we recall in Appendix~\ref{sec App CA} and elementary manipulations. Hardly any background in probability theory is assumed, but some familiarity with the basics of percolation theory as in \cite{Grimmett} will be helpful.

\section{The setup} \label{sec setup}

We recall some standard definitions of percolation theory in order to fix our notation. For more details the reader can consult e.g.\ \cite{Grimmett,LyonsBook}. For a higher level overview of percolation theory we recommend the recent survey \cite{HugoSurvey}.

\subsection{\Nnm s} \label{sec nnm}
Let $G = (V,E)$ be a locally finite countably infinite graph, and let $\Omega:= \{0,1\}^E$ be the set of \defi{percolation instances} on \G. We say that an edge $e$ is \defi{vacant} (respectively, \defi{occupied}) in a  percolation instance $\oo\in \Omega$, if $\oo(e)=0$ (resp.\ $\oo(e)=1$).

By (Bernoulli, bond) \defi{percolation} on $G$ with parameter $p\in [0,1]$ we mean the random subgraph of $G$ obtained by keeping each edge with probability $p$ and deleting it with probability $1-p$, with these decisions being independent of each other.

More formally, we endow $\Omega$  with the $\sigma$-algebra $\mathcal{F}$ generated by the cylinder sets $C_e:= \{\oo \in \Omega, \oo(e)=\eps\}_{e\in E, \eps\in \{0,1\}}$, and the probability measure defined as the product measure $\Pr_p:=\Pi_{e\in E} \mu_e$, where $p\in [0,1]$ is our \defi{percolation parameter} and $\mu_e$ is the Bernoulli measure on $\{0,1\}$ determined by $\mu_e(1)=p$.

The \defi{percolation threshold} $p_c(G)$ is defined by 
$$p_c(G):= \sup\{p \mid \Pr_p(|C(o)|=\infty)=0\},$$
where the \defi{cluster} $C(o)$ of $o\in V$ is the component of $o$ in the subgraph of \g spanned by the occupied edges. It is easy to show that $p_c(G)$ does not depend on the choice of $o$.

To define \defi{site percolation} we repeat the same definitions, except that we now let $\Omega:= \{0,1\}^V$, and let  $C(o)$ be the component of $o$ in the subgraph of \g induced by the occupied vertices. 

In this paper the graph $G$ is \textit{a priori} arbitrary. Some of our results will need assumptions on \g like vertex-transitivity or planarity, but these will be explicitly stated as needed.

\subsection{\Lrm s} \label{sec lrm}

Long range percolation is a generalisation of Bernoulli bond percolation where different edges become occupied with different probabilities, and each vertex can have infinitely many incident edges that can become occupied. In fact, the graph is often taken to be the complete graph on countably many vertices, and so its edges play a rather trivial role. Therefore, it is simpler to define our model with a set rather than a graph as follows.

Let $V$ be a countably infinite set (the \defi{vertices}), and let $E=V^2$ be the set of pairs of its elements (the \defi{edges}). We will typically write $xy$ istead of $\{x,y\}$ to denote an element of $E$. Let $\mu: E \to \R_{\geq 0}$ be a function satisfying $\sum_{y\in V} \mu(xy) =1$ \fe\ $x\in V$ (in some occasions we allow more general $\mu$, satisfying just $\sum_{y\in V} \mu(xy) <\infty$). The data $V,\mu$ define a random graph on $V$ similarly to the previous definition, except that we now make each edge $xy$ vacant with probability $e^{-\mu(xy)t}$, with our parameter $t$ now ranging in $[0,\infty)$. The corresponding probability measure on $\Omega= \{0,1\}^E$ is denoted by $\Pr_t$ (We like thinking of $t$ as time, with the each edge $xy$ becoming occupied if vacant at a tick of a Poisson clock with rate  $\mu(xy)$.)

Analogously to $p_c$, one defines 
$$t_c=t_c(V,\mu):= \sup\{t \mid \Pr_t(|C(o)|=\infty)=0\},$$
which again does not depend on the choice of $o\in V$. 

We say that such a percolation model, defined by $V$ and $\mu$, is \defi{transitive}, if there is a group acting transitively on $V$ that preserves $\mu$. In other words, if \fe\ $x,y\in V$ there is a bijection $\pi: V\to V$ \st\ $\mu(\pi(z)\pi(w)) = \mu(zw)$.

\medskip
Long range percolation is a less standard topic that is not typically found in textbooks, and the term often refers to the special case where the group  acting transitively is $\Z$, for example in order to come up with a model in which $\theta$ is discontinous at $t_c$ \cite{AizNewDis}. In the generality we work with it has been considered in e.g.\ \cite{AizNewTre,IGperc}.

\section{Definitions and preliminaries}

\subsection{Graph theoretic definitions}\label{sec GT}

Let $G = (V,E)$ be a graph.
An \defi{induced} subgraph $H$ of $G$ is a subgraph that contains all edges $xy$ of $G$ with $x,y\in V(H)$. Note that $H$ is uniquely determined by its vertex set. The subgraph of $G$ \defi{spanned} by a vertex set $S\subseteq V(G)$ is the induced subgraph of $G$ with vertex set $S$.

The vertex set of a graph \g will be denoted by $V(G)$, and its edge set by $E(G)$.
A graph $G$ is \defi{(vertex-)transitive}, if \fe\ $x,y\in V(G)$ \ti\ an automorphism $\pi$ of $G$ mapping $x$ to $y$, where an \defi{automorphism} is a bijection $\pi$ of $V(G)$ that preserves edges and non-edges.

A \defi{planar graph} $G$ is a graph that can be embedded in the plane $\R^2$, i.e.\ it can be drawn in such a way that no edges cross each other. Such an embedding is called a  \defi{planar embedding} of the graph. A \defi{plane graph} is a (planar) graph endowed with a fixed planar embedding.

A plane graph divides the plane into regions called \defi{faces}. Using the faces of a plane graph $G$ we define its \defi{dual graph} $G^*$ as follows. The vertices of $G^*$ are the faces of $G$, and we connect two vertices of $G^*$ with an edge whenever the corresponding faces of $G$ share an edge. Thus there is a bijection $e\mapsto e^*$ from $E(G)$ to $E(G^*)$.

Given a finite subgraph $H$ of $G$, we define its \defi{internal boundary} $\partial H$ to be the set of vertices of $H$ that are incident with an infinite component of $G\setminus H$.
We define the \defi{vertex boundary} $\partial^V H$ of $H$ as the set of vertices in $V\setminus V(H)$ that have a neighbour in $H$. The \defi{edge boundary} $\partial^E H$ is the set of edges in $E\setminus E(H)$ that are incident to $H$. 

Consider now a vertex $x$ of $G$. We say that a set $S$ of edges of $G$ is an \defi{edge cut} of $x$ if $x$ belongs to a finite component of $G-S$. We say that $S$ is a \defi{minimal edge cut} of $x$ if it is minimal with respect to inclusion. For a finite connected subgraph $H$ of $G$, its minimal edge cut is the set of edges with one endvertex in $H$ and one in an infinite component of $G\setminus H$.

The \defi{diameter} $diam(H)$ of $H$ is defined as $\max_{x,y\in V(H)}\{d_G(x,y)\}$ where $d_G(x,y)$ denotes the graph-theoretic distance between $x$ and $y$.

\subsection{Exponential tail of the subcritical cluster size distribution: the \ABP} \label{sec ABP}

An important fact that will be used throughout the paper whenever we want to show the convergence of a series is the following  exponential decay of the cluster size distribution $p_n:= \Pr(|C(o)|= n)$ (or equivalently, of $f_n:= \Pr(|C(o)|\geq n)$) in the subcritical regime, which we will call the \defi{\ABP}\footnote{Some bibliographical remarks about \Tr{AB thm}: Kesten \cite{Ke81} proved exponential decay when $\chi<\infty$ for lattices in $\R^d$, and  Aizenman \& Newman \cite{AizNewTre} extended it to all models we are interested in (their precise formula is $\Pr_p(|C(o)|\geq m)\leq (e/m)^{1/2} e^{-m/(2\chi(p))^2}$). Aizenman \& Barsky \cite{AizBar} proved $\chi<\infty$ below $p_c$ on $\Z^d$, and \cite[Theorem 7.46.]{LyonsBook} claims that `their proof works in greater generality', that is, for all transitive graphs. Menshikov \cite{MenCoi} independently obtained the same result in a more restricted class of models. Antunovi{\'c} \& Veseli{\'c} \cite{AntVes} extended this to all quasi-transitive models. 
Duminil-Copin \& Tassion \cite{DCTa} gave a shorter proof that $\chi<\infty$ below $p_c$ (or $\beta_c$) for all independent, transitive bond and site models.}. For every vertex $o$, we define $\chi(p)=\chi_o(p):=\mathbb{E}_p(|C_o|)$, and we let $X(p):=\sup_{o\in V} \chi_o(p)$. 

\begin{theorem}[{\cite[Proposition~5.1]{AizNewTre}, \cite{AizBar,AntVes}}] \label{AB thm}
For every bond, site, or \lrm, (and any vertex $o$), if $X(p)<\infty$, then
$$\Pr_p(|C(o)|\geq n)=O(e^{-n/5X(p)^2}).$$
Moreover, for every quasi-transitive bond, site, or \lrm, if $p<p_c$, then $X(p)<\infty$.
\end{theorem}

We will say that a bond, site, or \lrm\  has the \textit{\ABP} if  for every $p<p_c$ and $o\in V$, there is a constant $c=c(p,o)>0$ such that $\Pr_p(|C(o)|\geq n)\leq e^{-cn}$. 

\subsection{The BK inequality} \label{secBK}

We define a partial order on our space $\Omega= \{0,1\}^{E(G)}$ of percolation instances as follows. For two configurations 
$\omega$ and $\omega'$ we write $\omega\leq \omega'$ if $\omega(e)\leq \omega'(e)$
for every $e\in E$. 

A random variable $X$ is called \defi{increasing} if whenever $\omega\leq \omega'$, then $X(\omega)\leq X(\omega')$. An event $A$ is called \defi{increasing} if its indicator function is increasing. For instance, the event $\{|C(o)|\geq m\}$ is increasing, where $C(o)$ as usual denotes the cluster of $o$. 

\comment{
One can show using the coupling of all percolation processes that $\mathbb{E}_{p}(X)$ and $\Pr_{p}(A)$ are increasing functions of $p$.

\begin{theorem}\cite{Grimmett}
If $X$ is an increasing random variable and $p_1 \leq p_2$, then
$$\mathbb{E}_{p_1}(X) \leq \mathbb{E}_{p_2}(X).$$
Similarly if $A$ is an increasing event, then
$$\Pr_{p_1}(A)\leq \Pr_{p_2}(A).$$
\end{theorem}

It is well-known that increasing functions are differentiable almost everywhere and so
$\Pr_{p}(A)$ is differentiable almost everywhere on $[0,1]$. When $A$ depends on a finite set of edges, $\Pr_{p}(A)$ is a polynomial in $p$ and hence it is differentiable everywhere on $[0,1]$. A natural question is what is the rate at which $\Pr_{p}(A)$ changes when we increase $p$. Russo's formula answers this question. \\

Before stating Russo's theorem we define what a pivotal edge is. Let A be an event and $\omega$ a percolation configuration. An edge $e$ is called \defi{pivotal} for the pair $(A,\omega)$ if $\mathbb{1}(\omega\in A) \neq \mathbb{1}(\omega' \in A)$, where $\omega'$ is the configuration that agrees with $\omega$ on every edge except $e$ where $\omega'(e)=1-\omega(e)$. In other words, $e$ is pivotal for $A$ if the occurrence of $A$ depends crucially on whether the edge $e$ is present or not.

\begin{theorem}\text{(Russo's formula)}\cite{Grimmett}
Let $A$ be an increasing event that depends only on the states of a finite number of edges. Then $$\dfrac{d\Pr_p(A)}{dp} = \mathbb{E}_p(N(A)),$$ where $N(A)$ is the number of pivotal edges for $A$. 
\end{theorem} 

}

For every $\omega\in \Omega$ and a subset $S\subset E$ we write
$$[\omega]_S=\{\omega' \in \Omega: \omega'(e)=\omega(e) \text{ for every } e\in S\}.$$

Let A and B be two events depending on a finite set of edges $F$. Then the disjoint occurrence of $A$ and $B$ is defined as
$$A\circ B =\{\omega \in \Omega: \text{ there is } S\subset F \text{ with } [\omega]_S 
\subset A \text{ and } [\omega]_{F\setminus S} \subset B\}.$$

\begin{theorem}\text{(BK inequality)}\cite{BK,Grimmett} \label{BKthm}
Let $F$ be a finite set and $\omega = \{0, 1\}^F.$ For all increasing events $A$
and $B$ on $\Omega$ we have $$\Pr_p(A\circ B)\leq \Pr_p(A) \Pr_p(B).$$
\end{theorem}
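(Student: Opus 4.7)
The plan is to prove the BK inequality by induction on $|F|$. The base case $|F|=1$ is a direct case check: the only non-trivial increasing events on $\{0,1\}$ are $\{1\}$ and the full space, and in each combination $A\circ B$ is either empty or equals one of $A,B$, so the inequality is immediate.

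For the inductive step, fix an edge $e\in F$ and set $F':=F\setminus\{e\}$. For any event $E\subseteq\{0,1\}^F$, define the two \emph{slices} $E^{\eps}\subseteq\{0,1\}^{F'}$ for $\eps\in\{0,1\}$ by $E^{\eps}:=\{\omega'\in\{0,1\}^{F'}:(\omega',\eps)\in E\}$, where $(\omega',\eps)$ denotes the extension of $\omega'$ to $F$ that assigns value $\eps$ to $e$. If $E$ is increasing then $E^0\subseteq E^1$, and conditioning on $\omega(e)$ gives $\Pr_p(E)=(1-p)\Pr_p(E^0)+p\,\Pr_p(E^1)$.

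The key combinatorial observation is that for increasing $A,B$,
\[(A\circ B)^0\subseteq A^0\circ B^0\quad\text{and}\quad(A\circ B)^1\subseteq (A^1\circ B^0)\cup(A^0\circ B^1),\]
where the disjoint-occurrence operation on the right-hand sides is taken in $\{0,1\}^{F'}$. The first inclusion comes from restricting the witness set $S$ of $A\circ B$ to $F'$. The second inclusion reflects the fact that when $\omega(e)=1$, the edge $e$ can be used to witness at most one of $A,B$, while the other is then witnessed by a subset of $F'$ and is thus insensitive to the value of $\omega(e)$. Since $A^0\subseteq A^1$ and $B^0\subseteq B^1$ we have $A^0\circ B^0\subseteq (A^1\circ B^0)\cap(A^0\circ B^1)$, so by the inclusion-exclusion principle
\[\Pr_p\bigl((A^1\circ B^0)\cup(A^0\circ B^1)\bigr)\leq\Pr_p(A^1\circ B^0)+\Pr_p(A^0\circ B^1)-\Pr_p(A^0\circ B^0).\]

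Applying the inductive hypothesis to each of the three disjoint-occurrence events on $F'$ and substituting into the conditional decomposition above, the claim reduces to the elementary inequality
\[q^2a_0b_0+pq(a_0b_1+a_1b_0)+p^2a_1b_1\geq(1-2p)a_0b_0+p(a_1b_0+a_0b_1),\]
where $q=1-p$, $a_\eps=\Pr_p(A^\eps)$, $b_\eps=\Pr_p(B^\eps)$; a direct algebraic simplification shows the difference equals $p^2(a_1-a_0)(b_1-b_0)\geq 0$, using $a_0\leq a_1$ and $b_0\leq b_1$. The main obstacle is the careful verification of the ``up-slice'' inclusion $(A\circ B)^1\subseteq(A^1\circ B^0)\cup(A^0\circ B^1)$: one must examine the witness $S\subseteq F$ for $A\circ B$, split according to whether $e\in S$ or $e\in F\setminus S$, and in each case check that replacing $\omega(e)$ by $0$ on the side that does not use $e$ still produces a valid witness; this step is where monotonicity of $A$ and $B$ is essentially used.
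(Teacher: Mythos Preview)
The paper does not prove this theorem; it is stated with a citation to \cite{BK,Grimmett} and used as a black box. So there is no proof in the paper to compare against.

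Your inductive approach is the natural first attempt, and both the combinatorial inclusions and the final algebraic identity $p^2(a_1-a_0)(b_1-b_0)\ge 0$ are correct. However, there is a genuine gap at the step where you ``apply the inductive hypothesis to each of the three disjoint-occurrence events''. After combining the conditioning on $\omega(e)$ with your inclusion--exclusion bound, the coefficient in front of $\Pr_p(A^0\circ B^0)$ is $q-p=1-2p$. The inductive hypothesis only furnishes the \emph{upper} bound $\Pr_p(A^0\circ B^0)\le a_0b_0$, so replacing $\Pr_p(A^0\circ B^0)$ by $a_0b_0$ is legitimate only when $1-2p\ge 0$, i.e.\ for $p\le 1/2$. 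For $p>1/2$ the substitution goes the wrong way and the chain of inequalities breaks; your displayed right-hand side $(1-2p)a_0b_0+p(a_1b_0+a_0b_1)$ is then no longer known to dominate $\Pr_p(A\circ B)$.

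This obstruction is exactly why the BK inequality is not entirely routine. The standard remedy (as in the original van den Berg--Kesten argument and in Grimmett's book) is an edge-duplication trick: adjoin a fresh copy $e'$ of $e$, let $\tilde B$ be the event $B$ with the role of $e$ taken over by $e'$, and prove the single-step inequality $\Pr_p(A\circ_F B)\le \Pr_p(A\circ_{F\cup\{e'\}}\tilde B)$. Iterating over all edges of $F$ one arrives at a doubled coordinate set on which $A$ and the modified $B$ depend on disjoint halves, so that disjoint occurrence becomes an intersection of independent events and the bound $\Pr_p(A)\Pr_p(B)$ is immediate. Your inclusions $(A\circ B)^0\subseteq A^0\circ B^0$ and $(A\circ B)^1\subseteq(A^1\circ B^0)\cup(A^0\circ B^1)$ are essentially what is needed to establish that single duplication step, so most of your work can be salvaged; only the way the induction is closed has to change.
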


\subsection{Partitions of integers} \label{sec parts}
A \defi{partition} of a positive integer $n$ is an unordered multiset $\{m_1,m_2,\ldots,m_k\}$ of positive integers such that $m_1+m_2+\ldots+m_k=n$. Let $p(n)$ denote the number of partitions of $n$. An asymptotic expression for $p(n)$ was given by Hardy \& Ramanujan in their famous paper \cite{HarRam}. An elementary proof of this formula up to a multiplicative constant was given by Erd\H os \cite{ErdHR}. As customary we use $A\sim B$ to denote the relation $A/B\rightarrow 1$ as $n\rightarrow \infty$.

\begin{theorem}[Hardy-Ramanujan formula] \label{HRthm}
The number $p(n)$ of partitions of $n$ satisfies
$$p(n)\sim \dfrac{1}{4n\sqrt{3}}\exp\Big(\pi \sqrt{\dfrac{2n}{3}}\Big).$$
\end{theorem}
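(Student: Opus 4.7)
The plan is to use the classical circle method. First I would establish Euler's generating function identity
$$F(q) := \sum_{n \geq 0} p(n) q^n = \prod_{k \geq 1} \frac{1}{1-q^k},$$
which follows from expanding each factor as a geometric series and noting that the coefficient of $q^n$ in the product counts the number of ways to write $n = \sum k \cdot a_k$ with $a_k \geq 0$, i.e.\ partitions of $n$. Since $F$ is analytic on the open unit disk, Cauchy's integral formula yields
$$p(n) = \frac{1}{2\pi i} \oint_{|q|=r} \frac{F(q)}{q^{n+1}} \, dq$$
for any $0 < r < 1$, and the strategy is to evaluate this integral asymptotically by the saddle point method, choosing $r$ close to $1$.

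Next, I would analyze $F(e^{-t})$ as $t \to 0^+$. Writing $\log F(e^{-t}) = \sum_{k,m \geq 1} \frac{1}{m} e^{-kmt}$ and either applying the Euler--Maclaurin formula to the sum $\sum_k \log(1-e^{-kt})$, or invoking the modular transformation of the Dedekind eta function (equivalently, Poisson summation), one obtains the key estimate
$$\log F(e^{-t}) = \frac{\pi^2}{6t} - \tfrac{1}{2}\log\tfrac{t}{2\pi} - \tfrac{t}{24} + O(e^{-c/t}).$$
To recover the leading term, the weaker asymptotic $\log F(e^{-t}) \sim \pi^2/(6t)$ is enough, and this is what Erd\H{o}s \cite{ErdHR} obtains by elementary Tauberian arguments, giving the formula up to a multiplicative constant.

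Now I would apply the saddle point method: the integrand on $|q|=e^{-t}$ is roughly $\exp\bigl(\pi^2/(6t) + nt\bigr)$ in magnitude, minimized at $t_* = \pi/\sqrt{6n}$, which gives the target exponential factor $\exp(\pi\sqrt{2n/3})$. Parametrizing $q = \exp(-t_* + i\theta)$ and expanding $\log F(q) - (n+1)\log q$ to second order in $\theta$ around $\theta=0$ produces a Gaussian integral whose width is of order $t_*^{3/2} \sim n^{-3/4}$; combined with the prefactor from $-\tfrac{1}{2}\log(t_*/2\pi)$ this Gaussian integration delivers exactly the constant $\tfrac{1}{4n\sqrt{3}}$.

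The main obstacle, as usual with the circle method, is controlling the contribution of the integrand on the part of the contour where $\theta$ is not small, i.e.\ where $q$ is away from $1$. One needs the bound $|F(e^{-t_*+i\theta})| \ll F(e^{-t_*}) \cdot \exp(-c/t_*)$ for $|\theta|$ bounded below by some $\delta(n) \to 0$, so that this range contributes a sub-exponential correction. This rests on the elementary inequality $|1-e^{-kt+ik\theta}|^2 \geq (1-e^{-kt})^2 + 4e^{-kt}\sin^2(k\theta/2)$ applied to a positive density of $k$, which for $\theta$ away from all low-denominator rationals gives cancellation. For the leading-order Hardy--Ramanujan asymptotic it suffices to treat the single arc near $q=1$; the full Rademacher series \cite{HarRam} requires the Farey dissection and contributions from every rational point $e^{2\pi i h/k}$ on the unit circle, which is where the technical heart of the proof lies.
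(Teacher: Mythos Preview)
Your outline of the circle method is a faithful sketch of the classical Hardy--Ramanujan proof, and is correct as far as it goes. However, the paper does \emph{not} prove this theorem at all: it simply cites the result from \cite{HarRam} (and the elementary near-version from Erd\H{o}s \cite{ErdHR}) and moves on. What the paper actually needs, and actually proves, is only the much weaker consequence that $p(n)$ grows subexponentially, i.e.\ $\limsup_n p(n)^{1/n}=1$. For that the paper gives a two-line argument: the generating function $\sum p(n)z^n = \prod_k (1-z^k)^{-1}$ has radius of convergence exactly $1$ (it diverges at $z=1$, and for $0<z<1$ the logarithm of the product converges by comparison with $\sum z^k$), and the radius of convergence formula then gives the claim.

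So your approach is vastly more powerful than what the paper requires. If the goal were to supply a self-contained proof of the full asymptotic, your plan is the right one; if the goal is to follow the paper, you should instead note that the theorem is quoted without proof and that only the subexponential bound is established, by the elementary radius-of-convergence argument above.
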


The above asymptotic formula for $p(n)$ implies in particular that $p(n)$ grows subexponentially, and this is all we will need in our several applications of \Tr{HRthm}. This weaker statement can be proved much more easily, and we offer the following elementary proof that makes our paper more self-contained. 

\begin{lemma}
Let $p(n)$ denote the number of partitions of $n$. Then
$$\limsup_{n\to\infty} p(n)^{1/n}=1.$$
\end{lemma}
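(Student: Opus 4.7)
My plan is to prove the upper bound $\limsup_{n\to\infty} p(n)^{1/n} \le 1$ via the standard generating function identity, and combine it with the trivial lower bound $p(n) \ge 1$.

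First I will recall Euler's generating function
\[
P(x) := \sum_{n\ge 0} p(n) x^n = \prod_{k=1}^{\infty} \frac{1}{1-x^k},
\]
which holds as a formal identity because expanding each factor as $\sum_{j\ge 0} x^{kj}$ and multiplying corresponds exactly to choosing, for each part size $k$, how many times it appears in the partition. I will then argue that the product converges for every real $x \in (0,1)$. For this I take logarithms: using $\log(1-x^k)^{-1} = \sum_{j\ge 1} x^{kj}/j$, one obtains the crude bound
\[
\log \frac{1}{1-x^k} \le \sum_{j\ge 1} x^{kj} = \frac{x^k}{1-x^k} \le \frac{x^k}{1-x},
\]
and summing over $k$ gives $\sum_{k\ge 1} \log(1-x^k)^{-1} \le x/(1-x)^2 < \infty$. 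Hence $P(x) < \infty$ for every $x \in (0,1)$.

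Since the series $\sum p(n) x^n$ converges for each such $x$, its terms are bounded, so there exists a constant $M(x)$ with $p(n) x^n \le M(x)$ for all $n$. Taking $n$-th roots yields
\[
p(n)^{1/n} \le \frac{M(x)^{1/n}}{x},
\]
and letting $n \to \infty$ gives $\limsup_{n\to\infty} p(n)^{1/n} \le 1/x$. Since $x \in (0,1)$ was arbitrary, letting $x \to 1^-$ we conclude $\limsup_{n\to\infty} p(n)^{1/n} \le 1$. Combined with $p(n) \ge 1$, which gives $p(n)^{1/n} \ge 1$, the desired equality follows.

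I expect no real obstacle here: the only slightly delicate point is verifying convergence of the infinite product, which reduces cleanly to the elementary estimate above. Everything else is a direct application of the Cauchy--Hadamard philosophy without needing to invoke the theorem by name.
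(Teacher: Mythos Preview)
Your proof is correct and follows essentially the same route as the paper: both use Euler's generating function $\sum p(n)x^n=\prod_k(1-x^k)^{-1}$ and show it converges for every $x\in(0,1)$, deducing $\limsup p(n)^{1/n}\le 1$. The only cosmetic difference is that the paper names the Cauchy--Hadamard formula explicitly while you extract the bound by hand from $p(n)x^n\le M(x)$; your estimate on $\log(1-x^k)^{-1}$ is in fact a bit cleaner than the paper's limit-comparison step.
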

\begin{proof}
Let us denote $f(z)$ the generating function of $p(n)$, i.e. $$f(z)=\sum_{n=0}^\infty p(n)z^n.$$ It is well-known that $f(z)=\prod_{n=1}^\infty \dfrac{1}{1-z^n}$ (this follows easily by considering the bijection between the set of partitions of $n$ and the set of sequences $(i_1,i_2,\dots,i_n)$ where the $i_j$'s are non-negative integers such that $i_1+2i_2+\dots+ni_n=n$).

The radius of convergence $R$ of $f$ is given by the formula $$R=\dfrac{1}{\limsup_{n\to\infty} p(n)^{1/n}}.$$ It suffices to prove that $R=1$. Since $f(1^-)=+\infty$ we have that $R\leq 1$. In order to show that $R\geq 1$ we will prove that $f$ is analytic on the open unit disk.  

Assume that $z\in [0,1)$. Taking the logarithm of the infinite product we obtain the infinite sum $\sum_{n=1}^\infty -\log(1-z^n)$. Using the fact $$\lim_{x\rightarrow 0}\dfrac{-\log(1-x)}{x}=1$$ and the convergence of the sum $\sum_{k=1}^\infty z^k$ we deduce that $\sum_{n=1}^\infty -\log(1-z^n)$ converges. It follows that $\prod_{n=1}^\infty\dfrac{1}{1-n^k}$, and hence $\sum_{n=0}^\infty p(n)z^n$, converges for every $z\in [0,1)$. 

Assume now that $z$ lies in the open unit disk. Since $p(n)\geq 0$ for every $n$, we have $$\Big|\sum_{n=m}^k p(n)z^n\Big|\leq \sum_{n=m}^k p(n)|z|^n$$ for every $m\leq k$, hence the convergence on the open unit disk can be deduced from the convergence on $[0,1)$. 
\end{proof}

\section{The basic technique}\label{sec tools}

A common ingredient of our analyticity results is the following technique, the main idea of which is present in \cite{Ke81} and was mentioned in the introduction. We express our function $f(p)$ as an infinite series $f(p)= \sum_{\nin} a_n f_n(p)$, where $f_n(p)$ is the probability of an event. For example, when $f=\chi$ is the expected size of the cluster $C(o)$ of $o$, then $f_n$ is the probability that $|C(o)|=n$, and $a_n=n$. To prove that $f(p)$ is analytic, our strategy is to extend the domain of definition of each $f_n$ to complex values of $p$ (we will usually write $z$ instead of $p$ when doing so). Our extended $f_n$ will turn out to be complex-analytic, and so $f$ is analytic if the series $\sum_{\nin} a_n f_n(p)$ converges uniformly by standard complex analysis (Weierstrass' \Tr{thmWei}). To show the latter, we employ the Weierstrass M-test (\Tr{MT}), using upper bounds on $|f_n(z)|$ inside appropriate discs (centered in the interval $[0,1]$ where $p$ takes its values). These upper bounds are obtained by \Lr{C equals S NN} below for nearest-neighbour models, and by its counterpart \Lr{C equals S LR} for long-range models. 


\subsection{Nearest-neighbour models} \label{NNM}

The following lemma, and its generalisation \Cr{D and not F NN} below, provides the upper bounds that we are going to plug into the M-test as explained above. 

Let $\Pr_p$ denote the law of Bernoulli percolation with parameter $p$ on an arbitrary graph $G$, as defined in \Sr{sec setup}. Let $D(x,M)$ denote the disc with center $x\in \C$ and radius $M\in \R_{+}$ in $\C$. For a subgraph $S$ of $G$, 
let $\partial S$ be the set of edges of \g that have at least one end-vertex in $S$ but are not contained in $E(S)$.

In this lemma, $x$ is to be thought of as a value of our parameter $p$ near which we want to show the analyticity of some function, and we are free to choose the radius $M$ of the disc we consider as small as we like.

\begin{lemma}\label{C equals S NN}
For every finite subgraph $S$ of $G$ and every $o\in V(G)$, the function $P_S(p) :=\Pr_p(C_o=S)$ admits an entire extension $P_S(z), z\in \C$, \st\ \fe\ $1> M>0$, every $1>x\geq 0$ with $x+M<1$ and every $z\in D(x,M)$, we have
$$|P_S(z)| \leq C^{|\partial S|} P_S(x+M),$$
where $C=C_{M,x}:= \frac{1-x+M}{1-x-M}$.

Moreover, for every $1\geq x>0$, every $x>M>0$ and every $z\in D(x,M)$, we have $|P_S(z)| \leq K^{|E(S)|} P_S(x-M)$, where $K=K_{M,x}:= \frac{x+M}{x-M}$. 
\end{lemma}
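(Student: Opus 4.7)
The plan is to exploit the fact that, for a fixed finite subgraph $S$, the event $\{C(o)=S\}$ is a cylinder event depending only on the edges in $E(S)\cup \partial S$. Concretely, if $o\notin V(S)$ or $S$ is disconnected then the event is impossible and $P(p)\equiv 0$, so the statement is trivial; otherwise, the event $\{C(o)=S\}$ occurs precisely when every edge of $E(S)$ is occupied and every edge of $\partial S$ is vacant. By independence,
\[
P(p) = p^{|E(S)|}(1-p)^{|\partial S|}.
\]
This is a polynomial in $p$, hence it admits an entire extension $P(z)=z^{|E(S)|}(1-z)^{|\partial S|}$. So the only real content of the lemma is the pointwise estimate on $|P(z)|$ inside the two specified discs.

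The main idea for the estimate is to bound $|z|$ and $|1-z|$ separately on the disc using the triangle inequality, and then compare with the value of $P$ at a conveniently chosen real point where the same upper bounds for $|z|$ and $|1-z|$ happen to be \emph{equalities}. For $z\in D(x,M)$ with $0\leq x$ and $x+M<1$, I would write $z=x+w$ with $|w|\leq M$ and observe
\[
|z|\leq x+M, \qquad |1-z|=|(1-x)-w|\leq (1-x)+M,
\]
where the second inequality uses $1-x>0$. Plugging these into the product form of $P(z)$ yields
\[
|P(z)| \leq (x+M)^{|E(S)|}(1-x+M)^{|\partial S|}.
\]
The point of choosing $P(x+M)=(x+M)^{|E(S)|}(1-x-M)^{|\partial S|}$ as the comparison value is that the factor $(x+M)^{|E(S)|}$ matches exactly, and the ratio of the $(1-p)$-factors is
\[
\left(\frac{1-x+M}{1-x-M}\right)^{|\partial S|}=c_{M,x}^{|\partial S|},
\]
giving the stated bound $|P(z)|\leq c_{M,x}^{|\partial S|}P(x+M)$.

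For the ``moreover'' part, the same triangle-inequality argument around the point $1$ gives $|z|\leq 1+M$ and $|1-z|\leq M$ for $z\in D(1,M)$, so
\[
|P(z)|\leq (1+M)^{|E(S)|}M^{|\partial S|},
\]
which I would compare with $P(1-M)=(1-M)^{|E(S)|}M^{|\partial S|}$ to read off the claimed ratio. The only subtlety worth flagging is that the previous bound breaks down as $x+M\to 1$ because $c_{M,x}$ blows up (the factor $1-x-M$ in the denominator becomes zero), which is precisely why a separate disc centred at $1$ is needed; there the roles of the factors switch and it is the $(1-z)$-factor that is comfortably small. No real ``obstacle'' arises in the argument beyond being careful about which of the two factors $|z|^{|E(S)|}$ and $|1-z|^{|\partial S|}$ is controlled by the comparison point and which is merely bounded by a geometric constant depending on $M$ and $x$.
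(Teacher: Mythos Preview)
Your proof is essentially identical to the paper's: both write $P(p)=p^{|E(S)|}(1-p)^{|\partial S|}$, bound $|z|$ and $|1-z|$ separately via the triangle inequality on the disc, and compare with $P$ evaluated at the real point $x+M$ (resp.\ $1-M$). One small remark: in the ``moreover'' part your computation (and in fact the paper's own proof) yields the exponent $|E(S)|$ rather than $|\partial S|$ on $c_M$, so the $|\partial S|$ in the stated bound appears to be a typo.
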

(The second sentence will be used to prove analyticity at $p=1$; the reader who is only interested in analyticity for $p\in [0,1)$ may ignore it and skip the last paragraph of the proof.)
\begin{proof} 
By the definitions, we have 
\labequ{Sproducts NN}{P_S(p)=(1-p)^{|\partial S|}p^{|E(S)|}}
because the event $\{C_o=S\}$ is satisfied exactly when all edges in $\partial S$ are absent and all edges in $E(S)$ present. This function, being a polynomial, admits an entire extension, which we will still denote by $P_S=P_S(z)$ with a slight abuse. 

To prove the upper bound in our first statement ---for $1>x\geq 0$, and $z\in D(x,M)$--- we will bound each of the two products appearing in \eqref{Sproducts NN} separately. Easily, 
$$|z|^{|E(S)|}\leq (x+M)^{|E(S)|}$$
when $z\in D(x,M)$ because $|z|\leq x+|z-x|\leq x+M$. 

Moreover, it is geometrically obvious that the distance $|1-z|$ between $1$ and $z$ is maximised at $z= x-M$, which implies 
$$|1-z|^{|\partial S|}\leq (1-x+M)^{|\partial S|}.$$

Plugging these two inequalities into \eqref{Sproducts NN} we obtain the desired inequality:
\begin{align*}
|P_S(z)|\leq (1-x+M)^{|\partial S|}(x+M)^{|E(S)|} & =\\ 
\Big(\frac{1-x+M}{1-x-M}\Big)^{|\partial S|}(1-x-M)^{|\partial S|}(x+M)^{|E(S)|}
& =\Big(\frac{1-x+M}{1-x-M}\Big)^{|\partial S|} P_S(x+M),
\end{align*} 
where we also applied \eqref{Sproducts NN} with $p=x+M$.

\medskip
For the second statement, let $x\in (0,1]$, $0<M<x$ $z\in D(x,M)$. Then $|z|\leq x+M$, and $|1-z|\leq 1-x+M$, and similarly to the above calculation we have
\begin{align*}
|P_S(z)|\leq (1-x+M)^{|\partial S|}(x+M)^{|E(S)|} &=\\ 
(1-x+M)^{|\partial S|}\Big(\frac{x+M}{x-M}\Big)^{|E(S)|}(x-M)^{|E(S)|}
&=\Big(\frac{x+M}{x-M}\Big)^{|E(S)|} P_S(x-M).
\end{align*}

\end{proof}
\begin{remark} \label{rem d} 
When $G$ has maximum degree $d$, we have the crude bound $|\partial S|\leq d|S|$, with which Lemma \eqref{C equals S NN} yields $|P_S(z)| \leq C_{M,x}^{d|S|} P_S(x+M).$
\end{remark}

Note that in the proof of \Lr{C equals S NN} we can replace $E(S)$ and $\partial S$ with any two disjoint finite sets of edges $D,F \subset E(G)$, to obtain the following:

\begin{corollary}\label{D and not F NN}
For every two disjoint finite sets of edges $D,F \subset E(G)$, the function $P(p) :=\Pr_p(D \subseteq \omega \text{ and } F \cap \omega = \emptyset)$ (i.e.\ the probability that all edges in $D$ are occupied and all edges in $F$ are vacant) admits an entire extension $P(z), z\in \C$, \st\ 
\begin{align}\label{ubound}
|P(z)| \leq \Big(\frac{1-x+M}{1-x-M}\Big)^{|F|}P(x+M)
\end{align} \fe\ $M>0$, $1>x\geq 0$ with $x+M<1$ and $z\in D(x,M)$. Moreover, for every $1\geq x>0$, every $x>M>0$ and every $z\in D(x,M)$, we have 
\begin{align}\label{lbound}
|P(z)| \leq \Big(\frac{x+M}{x-M}\Big)^{|D|} P(x-M).
\end{align} \qed 
\end{corollary}

\subsection{Long-range models} \label{LRM}

We now prove the analogue of \Lr{C equals S NN} for \lrm s. 
Recall that in our long-range setup, we have a vertex set $V$ and any two of its elements can form an edge. The parameters $x,M$ now take their values in $[0,\infty)$, as this is the case for our percolation parameter $t$. Let $\partial S$  be the set of pairs $\{x,y\}\subset V^2$ that are not contained in $E(S)$ but have at least one vertex in $S$.

\begin{lemma}\label{C equals S LR}
For every finite graph $S$ on a subset of $V$, and every $o\in V$, the function $P(t) :=\Pr_t(C(o)=S)$ admits an entire extension $P(z), z\in \C$, \st\ $|P(z)| \leq e^{2M|S|} P(x+M)$ \fe\ $M>0$, $x\geq 0$ and $z\in D(x,M)$. 
\end{lemma}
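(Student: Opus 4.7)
The plan is to first make $P$ explicit, then perform a reorganisation of factors that pushes the per-factor estimates down to $\leq 1$, and finally relate the resulting exponent to $|S|$.

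Assuming $S$ is connected with $o\in V(S)$ (otherwise $P\equiv 0$ and the claim is vacuous), the event $\{C(o)=S\}$ requires every edge in $E(S)$ to be occupied and every edge in $\partial S$ vacant, so by independence
\[
P(t) \;=\; e^{-t\lambda(\partial S)}\prod_{e\in E(S)}(1-e^{-\mu(e)t}), \qquad \lambda(F):=\sum_{e\in F}\mu(e).
\]
Since $S$ is finite the product has finitely many factors, and $\lambda(\partial S)\leq |S|<\infty$ under the normalisation $\sum_w\mu(vw)=1$, so the right-hand side is a finite product of entire functions of $z$ and yields an entire extension $P(z)$.

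The crucial step would then be a simple reorganisation: write $(1-e^{-\mu(e)z})=e^{-\mu(e)z}(e^{\mu(e)z}-1)$ and pull every $e^{-\mu(e)z}$ into the leading exponential to obtain
\[
P(z) \;=\; e^{-z\lambda}\prod_{e\in E(S)}(e^{\mu(e)z}-1), \qquad \lambda:=\lambda(\partial S)+\lambda(E(S)).
\]
In this form $|e^{\mu(e)z}-1|\leq e^{\mu(e)|z|}-1\leq e^{\mu(e)(x+M)}-1$, by applying the triangle inequality term-by-term to $\sum_{k\geq 1}(\mu(e)z)^k/k!$. Combining with $|e^{-z\lambda}|=e^{-\mathrm{Re}(z)\lambda}\leq e^{-(x-M)\lambda}$ (as $\mathrm{Re}(z)\geq x-M$) gives
\[
|P(z)| \;\leq\; e^{-(x-M)\lambda}\prod_{e\in E(S)}(e^{\mu(e)(x+M)}-1) \;=\; e^{2M\lambda}\,P(x+M).
\]

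To conclude $|P(z)|\leq e^{2M|S|}P(x+M)$, I would then check $\lambda\leq |S|$. Expanding $\sum_{v\in V(S)}\sum_{w\in V}\mu(vw)=|S|$ edge-by-edge, each edge with both endpoints in $V(S)$ is counted twice and each edge with exactly one endpoint in $V(S)$ once; writing $\partial S=\partial S_1\cup\partial S_2$ with $\partial S_i$ the $\partial S$-edges having $i$ endpoints in $V(S)$, this gives $|S|=2\lambda(E(S))+2\lambda(\partial S_2)+\lambda(\partial S_1)$, so $|S|-\lambda=\lambda(E(S))+\lambda(\partial S_2)\geq 0$. The main obstacle the rewriting above circumvents is that a direct per-factor estimate of $|1-e^{-\mu(e)z}|/(1-e^{-\mu(e)(x+M)})$ can only yield $e^{\mu(e)(x+M)}$, whose product blows up in $x$ and cannot match the $x$-independent bound $e^{2M|S|}$ for $x\gg M$; the reformulation in terms of $e^{\mu(e)z}-1$ makes each per-factor ratio at most $1$, localising all $x$-dependence into the single prefactor $e^{-z\lambda}$, which is then trivially controlled via $|\mathrm{Re}(z)-(x+M)|\leq 2M$.
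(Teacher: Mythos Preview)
Your proof is correct and follows essentially the same route as the paper's: the identical factor-shuffling rewrite $P(z)=e^{-z\lambda}\prod_{e\in E(S)}(e^{\mu(e)z}-1)$ (the paper writes $\lambda$ as $\mu(S)$), the same Taylor-series triangle-inequality bound $|e^{\mu(e)z}-1|\leq e^{\mu(e)(x+M)}-1$, and the same use of $\lambda\leq |S|$ from the normalisation $\sum_w\mu(vw)=1$. If anything, your justification of the exponential factor via $\mathrm{Re}(z)\geq x-M$ is slightly more transparent than the paper's, and your explicit edge-counting verification of $\lambda\leq |S|$ fills in a step the paper leaves as a one-line assertion.
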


The proof of this is similar to that of \Lr{C equals S NN}, but as our function $P(t)$ is not exactly a polynomial now we will need some reshuffling of terms and the following basic fact about complex numbers.

\begin{proposition}\label{absz}
For every $\mu>0$ and every $z\in \C$ we have
$$|e^{\mu z}-1| \leq e^{\mu |z|}-1.$$ 
\end{proposition}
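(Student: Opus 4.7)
The plan is to use the Maclaurin series expansion of the exponential function together with the triangle inequality. Since $e^{\mu z}$ is entire, we can write
\[
e^{\mu z} - 1 = \sum_{n=1}^{\infty} \frac{(\mu z)^n}{n!},
\]
where the constant term $1$ has been peeled off so that only positive powers of $z$ remain.

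Applying the triangle inequality to this absolutely convergent series, and using that $\mu > 0$ so that $|(\mu z)^n| = \mu^n |z|^n$, we obtain
\[
|e^{\mu z} - 1| \leq \sum_{n=1}^{\infty} \frac{|\mu z|^n}{n!} = \sum_{n=1}^{\infty} \frac{(\mu |z|)^n}{n!} = e^{\mu |z|} - 1,
\]
which is exactly the claim.

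There is really no obstacle to overcome here: the point of subtracting $1$ before applying the triangle inequality is precisely that it removes the only term on which the inequality would otherwise be an equality regardless of $z$, and the positivity of $\mu$ means that absolute values commute cleanly with the monomials $(\mu z)^n$. The estimate is sharp in the limit $z \to 0$ along the positive real axis, as both sides behave like $\mu |z|$ to leading order.
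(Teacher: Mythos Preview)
Your proof is correct and follows exactly the same approach as the paper: expand $e^{\mu z}-1$ via its Maclaurin series, apply the triangle inequality termwise, and recognise the resulting series as $e^{\mu|z|}-1$.
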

\begin{proof} 
Expressing $e^{\mu z}$ via its Maclaurin expansion and using  
the triangle inequality yields
\begin{align}\label{triangle}
|e^{\mu z}-1|=\abs*{\sum_{j=1}^\infty \dfrac{(\mu z
)^j}{j!}}\leq \sum_{j=1}^\infty \dfrac{|z
\mu|^j}{j!}.
\end{align}

Since $\mu>0$, the last expression coincides with the Maclaurin expansion of 
$e^{\mu r}-1$ evaluated at $r= |z|$, from which we obtain $|e^{\mu z}-1| \leq e^{\mu |z|}-1$.

\end{proof}

\begin{proof}[Proof of \Lr{C equals S LR}]
Similarly to \eqref{Sproducts NN}, we have 
\labequ{Sproducts LR}{\Pr_t(C(o)=S)=\prod_{e\in \partial S}e^{-t\mu(e)}\prod_{e\in E(S)}\big(1-e^{-t\mu(e)}\big),}
because the event $\{C(o)=S\}$ is satisfied exactly when all edges in $\partial S$ are absent and all edges in $E(S)$ present. Multiplying the second product by $\prod_{e\in E(S)}e^{t\mu(e)}$ and the first by its inverse, we obtain
\labequ{Sproducts II}{\Pr_t(C(o) = S)=\prod_{e \in \partial S \cup E(S)} e^{-t\mu(e)}\prod_{e\in E(S)}\big(e^{t\mu(e)} -1\big) =  e^{-t\mu(S)}\prod_{e\in E(S)}\big(e^{t\mu(e)} -1\big),}
where $\mu(S):= \sum_{e \text{ incident with $S$}} \mu(e)<\infty$
because the edges incident with $S$ are exactly the elements of $\partial S \cup E(S)$. This function clearly admits an entire extension, which we will still denote by $P=P(z)$ with a slight abuse. 

To prove the upper bound, we will bound each of the two products appearing in \eqref{Sproducts II} separately. Easily, 
$$|e^{-z \mu(S)}| \leq e^{2M|S|} e^{-(x+M) \mu(S)}$$
when $z\in D(x,M)$ because $|z|\leq x+|z-x|\leq x+M$ and $\mu(S)\leq |S|$. For the second product, we apply  \Prr{absz} to each factor to obtain
\labequ{series}{
|e^{z\mu(e)}-1| \leq e^{|z| \mu(e)}-1 \leq e^{(x+M)\mu(e)}-1} 
\fe\ for $z\in  D(x,M)$.

Combining these two inequalities, and then applying \eqref{Sproducts II} with $t=x+M$, we obtain the desired bound:
$$|P(z)| \leq e^{2M|S|} e^{-(x+M) \mu(S)} \prod_{e\in E(S)}\big( e^{(x+M)
\mu(e)}-1 \big) = e^{2M|S|} P(x+M).$$
 
\end{proof}

Again, in this proof we can replace $E(S)$ and $\partial S$ with any two disjoint finite sets of edges $D,F \subset E$, to obtain, in analogy with \Cr{D and not F NN}, the following statement:

\begin{corollary}\label{D and not F LR}
For every two disjoint finite sets of edges $D,F \subset E$, the function $P(t) :=\Pr_t(D \subseteq \omega \text{ and } F \cap \omega = \emptyset)$ (i.e.\ the probability that all edges in $D$ are occupied and all edges in $F$ are vacant) admits an entire extension $P(z), z\in \C$, \st\ 
$|P(z)| \leq e^{2M|V(D \cup F) |} P(x+M)$ \fe\ $M>0$, $x\geq 0$ and $z\in D(x,M)$, where $V(D \cup F)$ denotes the set of vertices that are incident with some edge  in  $D \cup F$. \qed
\end{corollary}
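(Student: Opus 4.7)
The plan is to mimic the proof of \Lr{C equals S LR} almost verbatim, with $E(S)$ replaced by $D$ and $\partial S$ replaced by $F$. By independence of the edge states, the probability that every edge in $D$ is occupied and every edge in $F$ is vacant factorises as
$$P(t) = \prod_{e\in F} e^{-t\mu(e)} \prod_{e\in D}\bigl(1 - e^{-t\mu(e)}\bigr).$$
Multiplying the second product by $\prod_{e\in D} e^{t\mu(e)}$ and the first by its inverse, and writing $\mu(D\cup F):= \sum_{e\in D\cup F}\mu(e)$, this becomes
$$P(t) = e^{-t\mu(D\cup F)} \prod_{e\in D}\bigl(e^{t\mu(e)} - 1\bigr),$$
which is a finite product of entire functions of $t$ and hence admits an entire extension $P(z)$.

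For the upper bound on $|P(z)|$ when $z\in D(x,M)$, I would bound the two factors separately, exactly as in \Lr{C equals S LR}. Since $\mathrm{Re}(z)\geq x-M$, we have
$$|e^{-z\mu(D\cup F)}| \leq e^{(M-x)\mu(D\cup F)} = e^{2M\mu(D\cup F)}\cdot e^{-(x+M)\mu(D\cup F)}.$$
For each factor of the remaining product, \Prr{absz} combined with $|z|\leq x+M$ yields $|e^{z\mu(e)}-1| \leq e^{(x+M)\mu(e)}-1$. Multiplying these bounds and comparing with the expression for $P(x+M)$, we conclude
$$|P(z)| \leq e^{2M\mu(D\cup F)}\, P(x+M).$$

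The only new ingredient (relative to the proof of \Lr{C equals S LR}, where the bound $\mu(S)\leq|S|$ came from the normalisation $\sum_{w}\mu(vw)=1$ at each vertex $v$) is to pass from $\mu(D\cup F)$ to the more concrete quantity $|V(D\cup F)|$ in the exponent. This is immediate: each edge of $D\cup F$ is incident with a vertex of $V(D\cup F)$, so
$$\mu(D\cup F) \leq \sum_{v\in V(D\cup F)}\sum_{w\in V}\mu(vw) = |V(D\cup F)|,$$
yielding the stated bound $|P(z)| \leq e^{2M|V(D\cup F)|}\,P(x+M)$.

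There is no genuine obstacle in this proof: it is a mechanical adaptation of \Lr{C equals S LR}, and the small subtlety is simply to notice that the overcounting of edges by vertices is harmless because the $\mu(vw)$ summed over $w$ are normalised to $1$. Hence the corollary follows with essentially no extra work.
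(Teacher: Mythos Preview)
Your proof is correct and is precisely the adaptation the paper has in mind: it simply replaces $E(S)$ by $D$ and $\partial S$ by $F$ in the proof of \Lr{C equals S LR}, and your final observation that $\mu(D\cup F)\le |V(D\cup F)|$ via the normalisation $\sum_w\mu(vw)=1$ is exactly the analogue of the bound $\mu(S)\le|S|$ used there.
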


Similarly, if we replace $E(S)$ in  \Lr{C equals S LR} with a set of edges incident to a vertex $o$ and $\partial S$ with the remaining edges that are incident to $o$ we obtain the following corollary. We let $N(o)$ denote the neighbourhood of $o$ in the percolation cluster, i.e.\ the set of vertices sharing an occupied edge with $o$. 

\begin{corollary}\label{neighbourhood}
For every $o\in V$ and every  $L\subset V$, the function\\ $P(t) :=\Pr_t(N(o)=L)$ admits an entire extension $P(z), z\in \C$, \st\ $|P(z)| \leq e^{2M} P(x+M)$ \fe\ $M>0$, $x\geq 0$ and $z\in D(x,M)$.
\end{corollary}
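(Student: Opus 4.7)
The plan is to specialise the argument of \Lr{C equals S LR} to the set of edges incident with $o$, which will collapse the prefactor $e^{2M|S|}$ appearing there to the single-vertex contribution $e^{2M}$. The event $\{N(o)=L\}$ holds exactly when every edge $ov$ with $v\in L$ is occupied and every edge $ow$ with $w\in V\setminus(L\cup\{o\})$ is vacant, so by independence
$$P(t)=\prod_{v\in L}\bigl(1-e^{-t\mu(ov)}\bigr)\prod_{w\neq o,\,w\notin L}e^{-t\mu(ow)}.$$
Mimicking the rearrangement leading to \eqref{Sproducts II}, I would multiply and divide by $\prod_{v\in L}e^{t\mu(ov)}$ to obtain
$$P(t)=e^{-t\mu(o)}\prod_{v\in L}\bigl(e^{t\mu(ov)}-1\bigr),\qquad\text{where }\mu(o):=\sum_{y\in V}\mu(oy)\leq 1,$$
since the edges incident with $o$ are precisely $\{ow:w\neq o\}$. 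When $L$ is finite this is a finite product of entire functions in $z$; when $L$ is infinite, $\sum_v\mu(ov)<\infty$ forces $\mu(ov)\to 0$ along $L$, so the factors $e^{z\mu(ov)}-1$ tend to $0$ uniformly on compacta, whence $P(z)\equiv 0$ and the claim is trivial.

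For the bound on $D(x,M)$, I would estimate the two factors separately. Using $\Re(z)\geq x-M$ and $\mu(o)\leq 1$,
$$|e^{-z\mu(o)}|=e^{-\Re(z)\mu(o)}\leq e^{(M-x)\mu(o)}=e^{2M\mu(o)}\,e^{-(x+M)\mu(o)}\leq e^{2M}\,e^{-(x+M)\mu(o)}.$$
For each factor of the product, \Prr{absz} combined with $|z|\leq x+M$ gives $|e^{z\mu(ov)}-1|\leq e^{(x+M)\mu(ov)}-1$. Multiplying these estimates together and recognising the remaining right-hand side as the formula for $P(x+M)$ yields $|P(z)|\leq e^{2M}P(x+M)$, as required.

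The only step with any subtlety is verifying that the product formula really defines an entire $P(z)$ when $|L|=\infty$; everything else is a direct transposition of the proof of \Lr{C equals S LR}. The essential gain over that lemma is that only the single vertex $o$ now carries an exponential weight $e^{-z\mu(o)}$, rather than a whole cluster $S$ contributing $e^{-z\mu(S)}$, which is precisely why the factor $e^{2M|S|}$ of \Lr{C equals S LR} is replaced by the $|S|$-independent $e^{2M}$ here.
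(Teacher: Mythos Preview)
Your proof is correct and follows essentially the same approach as the paper: the paper simply notes that the corollary follows from the proof of \Lr{C equals S LR} by replacing $E(S)$ with the edges from $o$ to $L$ and $\partial S$ with the remaining edges incident with $o$, and you have written out precisely those details, including the key observation that the exponent collapses from $2M|S|$ to $2M$ because now only the single vertex $o$ contributes to $\mu(o)\leq 1$. Your explicit handling of the case $|L|=\infty$ is a small addition the paper does not spell out.
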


\subsubsection{Analyticity of the probability of a given cluster size} \label{secPF}
Next, we prove that $p_m(t) :=\Pr_t(|C(o)| = m)$ is analytic, in the full generality of our \lrm s as above. For nearest-neighbour models this is trivial, because the corresponding probability can be expressed as a polynomial, but the long-range variant is more interesting. In addition to analyticity, the following result also provides the upper bound that we will plug into the Weirstrass M-test to deduce the analyticity of the susceptibility $\chi$ for subcritical \lrm s (\Tr{chi NN}).

\begin{theorem}\label{Pm entire}
For every $m\in \N$ and every $o\in V$, the function\\ $p_m(t) :=\Pr_t(|C(o)| = m)$ admits an entire extension $p_m(z), z\in \C$, \st\ $|p_m(z)| \leq e^{2Mm} p_m(x+M)$ \fe\ $M>0$, $x\geq 0$ and $z\in D(x,M)$.
\end{theorem}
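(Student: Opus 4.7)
The plan is to realise $p_m$ as a countable sum of the individual probabilities treated in \Lr{C equals S LR}, and then to read off both entirety and the claimed bound simultaneously from the Weierstrass machinery.

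First I would decompose the event $\{|C(o)|=m\}$ into the disjoint union of the events $\{C(o)=S\}$, where $S$ ranges over connected graphs with $m$ vertices on subsets of $V$ containing $o$. Countable additivity of $\Pr_t$ then yields
$$p_m(t)=\sum_{S} \Pr_t(C(o)=S).$$
By \Lr{C equals S LR}, each summand extends to an entire function $P_S(z)$ satisfying $|P_S(z)|\leq e^{2Mm}P_S(x+M)$ on any disc $D(x,M)$ with $x\geq 0$, $M>0$, where I have used $|V(S)|=m$.

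Next I would fix such a disc $D(x,M)$ and apply the Weierstrass M-test to the series $\sum_S P_S(z)$. The majorants $e^{2Mm}P_S(x+M)$ are non-negative reals whose sum is $e^{2Mm}\sum_S P_S(x+M)=e^{2Mm}p_m(x+M)\leq e^{2Mm}$, since the events $\{C(o)=S\}$ are pairwise disjoint and their union has probability at most $1$. Hence $\sum_S P_S(z)$ converges uniformly on $D(x,M)$, and by Weierstrass' \Tr{thmWei} the limit $p_m(z):=\sum_S P_S(z)$ is holomorphic on $D(x,M)$. Since every $z_0\in\C$ lies in $D(0,M_0)$ for any $M_0>|z_0|$, this shows that $p_m$ is entire. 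The required bound follows immediately by the triangle inequality applied term by term:
$$|p_m(z)|\leq \sum_S|P_S(z)|\leq e^{2Mm}\sum_S P_S(x+M)=e^{2Mm}\,p_m(x+M).$$

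The one point requiring attention, as opposed to the nearest-neighbour case mentioned just before the theorem (where the sum is finite and the result is a polynomial identity), is that in the long-range setting there are infinitely many connected $S$ on $m$ vertices containing $o$, so one must justify convergence of $\sum_S P_S(z)$ and not merely that each $P_S$ is entire. This is the main conceptual step, but it is essentially free: disjointness of the events $\{C(o)=S\}$ already bounds $\sum_S P_S(x+M)$ by $1$, and then \Lr{C equals S LR} turns this real-variable convergence into uniform complex convergence on every disc of the indicated form.
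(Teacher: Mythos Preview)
Your proof is correct and follows essentially the same route as the paper: decompose $p_m$ as $\sum_S P_S$ over the (countably many) candidate clusters $S$, apply \Lr{C equals S LR} termwise, and use the M-test with majorants $e^{2Mm}P_S(x+M)$ summing to $e^{2Mm}p_m(x+M)$ to obtain both uniform convergence and the stated bound. The only cosmetic difference is that you explicitly restrict to connected $S$ and cover $\C$ by discs $D(0,M_0)$, whereas the paper simply notes that uniform convergence on every $D(x,M)$ gives uniform convergence on compacta.
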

\begin{proof} 
For $m\in \N$, let $\mathcal G_m(V)$ denote the set of finite graphs whose vertex set in a subset of $V$ with $m$ elements containing $o$ (to be thought of as possible percolation clusters of $o$).
For every such $S\in \mathcal G_m(V)$, \Lr{C equals S LR} yields an entire extension $P_S$ of $\Pr_{t}(C(o) = S)$.
We claim that the sum 
\labequ{sumS}{\sum_{S\in \mathcal G_m(V)} P_S(z),}
which for $t\in \R, t>0$ coincides with $\Pr_{t}(|C_o| = m)$, converges uniformly on each closed disc $D(x,M), M>0$, $x\geq 0$ to a function $p_m: \C \to \C$ which coincides with $\Pr_{t}(|C_o| = m)$ for $t\in \R, t>0$. By Weierstrass' \Tr{thmWei}, this means that $p_m$ admits an entire extension.

Indeed, this uniform convergence follows from the Weierstrass M-test: each summand $P_S$ can be bounded by $|P_S(z)| \leq e^{2M|S|} P_S(x+M) =e^{2Mm} P_S(x+M) $ \fe\ $M>0$, $x\geq 0$ and $z\in D(x,M)$ by \Lr{C equals S LR}. Moreover, the sum of these bounds satisfies
$$\sum_{S\in \mathcal G_m(V)}  e^{2Mm} P_S(x+M) = e^{2Mm} p_{m}(x+M) < \infty.$$ 
Thus the Weierstrass M-test can be applied to deduce that \eqref{sumS} converges uniformly on $D(x,M)$, and therefore on any compact subset of $\C$.

Finally, the above bounds also prove that $|p_m(z)| \leq e^{2Mm} p_m(x+M)$  as desired.
\end{proof} 
 
\begin{corollary}\label{fm entire}
For every $m\in \N$ and every $o\in V$, the function $f_m(t) :=\Pr_t(|C(o)| \geq m)$ admits an entire extension.
\end{corollary}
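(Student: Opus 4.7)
The plan is to reduce this to a finite computation via \Tr{Pm entire}. The key observation is that for any $\omega \in \Omega$, the vertex $o$ always belongs to its own cluster, so $|C(o)| \geq 1$ almost surely. Consequently, for any $m \geq 1$, the events $\{|C(o)| = k\}$ for $k = 1, 2, \ldots, m-1$ together with $\{|C(o)| \geq m\}$ form a partition of $\Omega$, which gives the identity
\[
f_m(t) = 1 - \sum_{k=1}^{m-1} p_k(t)
\]
for all $t \geq 0$. First, I would note that the right-hand side is a \emph{finite} sum, which sidesteps any convergence issue.

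Next, I would invoke \Tr{Pm entire} to obtain an entire extension $p_k(z)$ for each $k$ with $1 \leq k \leq m-1$. Defining
\[
f_m(z) := 1 - \sum_{k=1}^{m-1} p_k(z), \quad z\in \C,
\]
produces a finite linear combination of entire functions, which is therefore entire. This function agrees with $f_m(t)$ on the real axis by construction, so it is the desired entire extension. No obstacles arise: the only subtlety is the trivial one of remembering that $\Pr_t(|C(o)|=0) = 0$, so the sum starts at $k=1$ rather than $k=0$, and that the case $m=1$ reduces to the constant function $1$.
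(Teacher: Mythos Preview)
Your proof is correct and follows exactly the same approach as the paper's own proof, which simply cites the identity $\Pr_t(|C(o)|\geq m)=1-\sum_{i=1}^{m-1}\Pr_t(|C(o)|=i)$ together with \Tr{Pm entire}. Your version just spells out the finite-sum reasoning in a bit more detail.
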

\begin{proof}
It follows from the formula $\Pr_t(|C(o)| \geq m)=1-\sum_{i=1}^{m-1}\Pr_t(|C(o)| = i)$ and \Tr{Pm entire}.
\end{proof}

\subsection{Analyticity of $\chi$ in the subcritical regime} \label{secChi}

In this section we prove that the \defi{susceptibility} $\chi(t):= \mathbb{E}_t(|C(o)|)$ of our models is an analytic function of the parameter in the subcritical interval. This applies to both nearest-neighbour and \lrm s. For this we need to assume that our model has the \ABP.

\begin{theorem}\label{chi LR}  
For every long-range model with the \ABP\ (in particular, \fe\ transitive model), $\chi(t)$ is  real-analytic in the interval $[0,t_c)$.
\end{theorem}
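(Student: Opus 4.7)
The plan is to apply directly the basic technique developed in \Sr{sec tools}: for real $t\in[0,t_c)$ we have $\chi(t)=\sum_{m\geq 1} m\, p_m(t)$ by definition, and \Tr{Pm entire} gives entire extensions of each term. To conclude analyticity of the sum via Weierstrass' \Tr{thmWei}, I will show that $\sum_m m\, p_m(z)$ converges uniformly on a complex disc $D(x,M)$ around each real $x\in[0,t_c)$, by the Weierstrass M-test (\Tr{MT}).

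First I would fix $x\in[0,t_c)$ and choose the radius $M$ so that the growth factor from \Tr{Pm entire} is beaten by the exponential decay from the \ABP. To this end, pick some $x'\in(x,t_c)$, so that $\chi(x')<\infty$ by the \ABP, and take $M:=\min\!\left(x'-x,\,1/(20\chi(x')^{2})\right)>0$. By monotonicity of $\chi$ in $t$, $\chi(x+M)\leq\chi(x')$, and so $\alpha:=\tfrac{1}{5\chi(x+M)^{2}}-2M>0$.

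Next I would establish the M-test bound. For $z\in D(x,M)$, \Tr{Pm entire} gives $|m\, p_m(z)|\leq m\, e^{2Mm}\, p_m(x+M)$, while $p_m(x+M)\leq\Pr_{x+M}(|C(o)|\geq m)=O\!\left(e^{-m/(5\chi(x+M)^{2})}\right)$ by the \ABP\ (\Tr{AB thm}). Combining these, $|m\, p_m(z)|\leq C\,m\, e^{-\alpha m}$ uniformly on $D(x,M)$, and since $\sum_m C\,m\, e^{-\alpha m}<\infty$, the M-test delivers uniform convergence of $\sum_m m\, p_m(z)$ on $D(x,M)$. Weierstrass' theorem then gives analyticity of the limit on $D(x,M)$, and the limit coincides with $\chi$ on the real segment $(x-M,x+M)\cap[0,t_c)$ by definition of $\chi$. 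Hence $\chi$ admits a local analytic extension at each $x\in[0,t_c)$, proving real-analyticity on the whole subcritical interval.

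The only delicate point is the choice of $M$ balancing the growth factor $e^{2Mm}$ against the decay $e^{-m/(5\chi^{2})}$; this will be handled by exploiting the finiteness of $\chi$ strictly below $t_c$ together with its monotonicity in $t$. All the substantial work has already been packaged into \Tr{Pm entire} and the \ABP, so the remaining argument reduces to elementary complex analysis.
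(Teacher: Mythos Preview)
Your proof is correct and follows essentially the same approach as the paper's: both expand $\chi(t)=\sum_m m\,p_m(t)$, invoke \Tr{Pm entire} for the entire extensions and the bound $|p_m(z)|\leq e^{2Mm}p_m(x+M)$, pick an intermediate point $x'<t_c$ to access the \ABP\ exponential decay, and choose $M$ small enough to balance the growth against the decay before applying the M-test and Weierstrass. The only cosmetic difference is your explicit constant $1/(20\chi(x')^2)$ versus the paper's $1/(10\chi^2)$, which is immaterial.
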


\begin{theorem}\label{chi NN} 
For every bounded-degree nearest-neighbour model with the\\ \ABP\ (in particular, \fe\ vertex-transitive graph), $\chi(p)$ is real-analytic in the interval $[0,p_c)$.
\end{theorem}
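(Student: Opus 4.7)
The plan is to write $\chi(p) = \sum_{m=1}^\infty m \, p_m(p)$ and show this series extends uniformly to a complex-analytic function on a neighbourhood of each $x\in [0,p_c)$. For nearest-neighbour models each $p_m$ is a polynomial in $p$, so it admits an entire extension $p_m(z)$. By Weierstrass' Theorem it suffices to show that for every $x\in [0,p_c)$ there is an $M>0$ such that $\sum_m m\, p_m(z)$ converges uniformly on $D(x,M)$, and I will apply the M-test to prove this.

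First I would choose $M>0$ so small that $x+M<p_c$; this is possible since $x<p_c$. Fix any vertex $o$ and let $\mathcal{G}_m$ denote the (finite, since $G$ has bounded degree $d$ and $G$ is locally finite) collection of connected subgraphs $S$ containing $o$ with $|V(S)|=m$. By \Lr{C equals S NN} combined with \Cr{rem d}, for each such $S$ and every $z\in D(x,M)$,
\[
|P_S(z)| \;\leq\; c_{M,x}^{|\partial S|}\, P_S(x+M) \;\leq\; c_{M,x}^{d m}\, P_S(x+M),
\]
where $c_{M,x}=\frac{1-x+M}{1-x-M}$. Summing over $S\in \mathcal{G}_m$ yields
\[
|p_m(z)|\;\leq\; c_{M,x}^{d m}\, p_m(x+M),
\]
so that in particular $p_m$ agrees on $\R$ with a function that is analytic on $D(x,M)$; this extension must coincide with the entire one.

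Now apply the Aizenman--Barsky exponential decay (\Tr{AB thm}) at the real point $x+M<p_c$: there exists $\lambda=\lambda(x+M)>0$ and a constant $C$ such that $p_m(x+M)\leq C e^{-\lambda m}$ for all $m$. Combining this with the previous bound gives the uniform majorant
\[
m\,|p_m(z)| \;\leq\; C\, m\, \bigl(c_{M,x}^{d}\, e^{-\lambda}\bigr)^{m}
\qquad \text{for all } z\in D(x,M).
\]
The key quantitative point, which is the main (mild) obstacle of the argument, is to ensure $c_{M,x}^{d}\, e^{-\lambda}<1$: since $c_{M,x}\to 1$ as $M\to 0$ while $\lambda=\lambda(x+M)>0$ is bounded below on a small interval around $x$ (the interval where we keep $M$), shrinking $M$ if necessary makes this product strictly less than $1$. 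Once this holds the series $\sum_m m\,(c_{M,x}^{d}\, e^{-\lambda})^{m}$ converges, so by the Weierstrass M-test (\Tr{MT}) the series $\sum_m m\, p_m(z)$ converges uniformly on $D(x,M)$.

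By Weierstrass' Theorem (\Tr{thmWei}), the uniform limit of analytic functions is analytic, so $\chi$ extends to an analytic function on $D(x,M)$. Since $x\in[0,p_c)$ was arbitrary, $\chi(p)$ is real-analytic on $[0,p_c)$, completing the proof. The argument is essentially Kesten's, stripped of the lattice-animal combinatorics: the bounded-degree hypothesis enters only through the crude estimate $|\partial S|\le d|S|$, and the transitivity (or any structural feature of the graph) enters only through the validity of the Aizenman--Barsky property.
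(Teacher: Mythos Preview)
Your proof is correct and follows essentially the same approach as the paper's: write $\chi=\sum_m m\,p_m$, use \Lr{C equals S NN} with Remark~\ref{rem d} to bound $|p_m(z)|\le c_{M,x}^{dm}p_m(x+M)$, combine with the Aizenman--Barsky exponential decay at the real point $x+M$, shrink $M$ so the product of growth and decay rates is less than $1$, and conclude via the M-test and Weierstrass. The paper handles the ``$\lambda$ bounded below'' step slightly more cleanly by first fixing a $y\in(x,p_c)$ and using monotonicity so that a single decay rate works for all $p\le y$, but your version is equivalent.
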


The proofs of these facts are very similar, and follow Kesten's proof \cite{Ke81} of the corresponding statement for (nearest-neighbour) lattices in $Z^d$, except that we simplify it by avoiding any mention to lattice animals.

\begin{proof}[Proof of \Tr{chi LR}]
Each summand in the definition\\ $\chi(t)=\sum_{m=1}^\infty m \Pr_t(|C(o)|= m)$ of $\chi$ admits an analytic extension to $\C$ by \Tr{Pm entire}. By Weierstrass' \Tr{thmWei}, it suffices to prove that for every $x\in [0,t_c)$ there is an open disk $D$ centred at $x$ such that $\sum_{m=1}^\infty m \Pr_t(|C(o)|= m)$ converges uniformly in $D$. 

Pick an arbitrary $x\in [0,t_c)$ and $x<y<t_c$. Since we are assuming the \ABP, there is a constant $c=c(y,o)>0$ such that $\Pr_y(|C(o)|\geq m)\leq e^{-cn}$ for every $n\geq 1$.
Since $\Pr_t(|C(o)|\geq m)$ is an increasing function of $t$, we deduce
\begin{align}\label{bound}
p_m(t)\leq e^{-cn}
\end{align}
for every $t\leq y$. Pick $M>0$ small enough that $x+M \leq y$ and $e^{2M}e^{-c}<1$. 
Combined with \Tr{Pm entire}, this implies that $|p_m(z)|\leq Ca^m$ for $z\in D(x,M)$, where $C$ is a positive constant and $a<1$. Since $\sum_{m=1}^\infty Cm a^{m}<\infty$,
we can use the Weierstrass M-test to conclude that the sum $\sum_{m=1}^\infty mp_m(z)$ converges uniformly on $D(x,M)$ and since each $p_m$ is analytic the sum is also analytic. Moreover, this sum coincides with 
$\chi(t)$ for $t\in D(x,M)\cap [0,t_c)$, and so our statement follows.
\end{proof}

\begin{proof}[Proof of \Tr{chi NN}]
This is similar to the above, but instead of \Tr{Pm entire} we use the corresponding statement for \nnm s. This is easier, as the sum \eqref{sumS} is finite. Applying \Lr{C equals S NN} (using the bounded degree assumption, see also Remark~\ref{rem d}) yields an upper bound of the form $|p_m(z)|\leq  c^{dm} p_m(x+M)$ which we use instead of that of \Tr{Pm entire} in our application of the M-test. The rest of the proof is identical to that of \Tr{chi LR}.
\end{proof}

The above proofs show that there is an open disk centred at any subcritical value $x$ of the parameter where $p_m$ converges exponentially fast to $0$. Easily, every higher moment $\mathbb{E}(|C(o)|^k)=\sum_{m=1}^\infty m^k \Pr_{t}(|C(o)|=m)$ (or for the same reason, the expectation of every sub-exponential function of $|C(o)|$) admits an analytic extension on the same disk, and so we obtain

\begin{corollary} 
Every moment $\mathbb{E}_x(|C(o)|^k)$ is an analytic function of the parameter $x$ in the subcritical interval for all models as in \Tr{chi NN} or \Tr{chi LR}. 
\end{corollary}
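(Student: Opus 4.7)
The plan is to mimic the proofs of \Tr{chi LR} and \Tr{chi NN} almost verbatim, the only new ingredient being that a polynomial factor $m^k$ in each term of the defining series is still dominated by the exponential decay established there. I would write
\begin{equation*}
\mathbb{E}_t(|C(o)|^k)=\sum_{m=1}^\infty m^k\, p_m(t),
\end{equation*}
where $p_m(t):=\Pr_t(|C(o)|=m)$, and recall that each $p_m$ admits an entire extension: in the long-range case by \Tr{Pm entire}, and in the nearest-neighbour case because $p_m$ is already a polynomial (or via \Lr{C equals S NN} after summing over the finitely many relevant $S$).

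Fix $x\in[0,t_c)$ and pick $y\in(x,t_c)$. The \ABP\ applied at $y$ gives $p_m(y)\leq e^{-m/5\chi(y)^2}$ for all $m$. Choose $M>0$ with $x+M\leq y$ and $e^{2M}e^{-1/5\chi(y)^2}<1$, exactly as in the proof of \Tr{chi LR}. Then \Tr{Pm entire} (respectively \Lr{C equals S NN} together with the bounded-degree bound from Remark~\ref{rem d}) yields, for all $z\in D(x,M)$,
\begin{equation*}
|p_m(z)|\leq e^{2Mm}\,p_m(x+M)\leq C\, a^m
\end{equation*}
with some constant $C>0$ and some $a<1$ independent of $m$.

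Now the key observation is that, since $a<1$, the series
\begin{equation*}
\sum_{m=1}^\infty m^k\, C\, a^m
\end{equation*}
converges for every fixed $k\in\N$: polynomial growth is dominated by geometric decay. Hence the Weierstrass M-test applies to $\sum_m m^k p_m(z)$ on the closed disc $D(x,M)$, giving uniform convergence of a series of entire functions. By Weierstrass' \Tr{thmWei} the limit is analytic on $D(x,M)$, and on $D(x,M)\cap[0,t_c)$ it coincides with $\mathbb{E}_t(|C(o)|^k)$. Since $x\in[0,t_c)$ was arbitrary, analyticity on the whole subcritical interval follows.

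There is really no obstacle here: the whole point is that the exponential bounds already produced in \Sr{secChi} are wasteful enough to absorb any extra polynomial factor in $m$, so the same disc and the same M-test that handled $\chi$ handle all higher moments (and, as noted in the excerpt preceding the statement, any sub-exponential function of $|C(o)|$ in place of $m^k$) without modification.
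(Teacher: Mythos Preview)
Your proof is correct and is essentially identical to the paper's argument: the paper does not give a separate proof of this corollary, but simply remarks (in the paragraph immediately preceding it) that the exponential decay of $|p_m(z)|$ on a disc around each subcritical point, already established in the proofs of \Tr{chi NN} and \Tr{chi LR}, absorbs any polynomial (indeed any sub-exponential) factor, so the same M-test argument applies to $\sum m^k p_m$.
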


Let us summarize the ideas used to prove the analyticity of $\chi$. Our proofs had little to do with $\chi$ itself. The main idea was to express $\chi$ as a sum of multiples of probabilities of events, namely $\chi(t)=\sum_{m=1}^\infty m \Pr_t(|C(o)|= m)$, and use the exponential decay of those probabilities (\Tr{AB thm}) to counter the exponential growth of their complex extensions (as in \Lr{C equals S NN}) in small enough discs around every point $p$. The rest of the proof was standard complex analysis, namely the Weierstrass M-test and \Wthm. As we are going to use the same proof structure several times, we reformulate it as the following corollary, which is a straightforward generalisation of the proof of \Tr{chi NN}. To formulate it, we need the following definition.

\begin{definition} \label{complexity}
We say that an event $E$ ---of a \nnm\ on a graph \G--- has \defi{complexity} $n$, if it is a disjoint union of a family of events $(F_i)_{i\in A}$ where each $F_i$ is measurable \wrt\  a set of edges of \g of cardinality $n$ and $A$ is a set of indices.
\end{definition}

\begin{corollary}\label{cor general}
Let $\Pr_p$ denote the law of a nearest-neighbour model, and let
$f(p)$ be a function that can be expressed as 
$f(p)=\sum_{n\in\mathbb{N}}\sum_{i\in L_n} a_i \Pr_p(E_{n,i})$ in an interval
$(a,b) \subseteq [0,1]$, where $a_n\in \mathbb{R}$, $L_n$ is a finite index set, and each $E_{n,i}$ is an event measurable with respect to
$\Pr_p$ (in particular, the above sum converges absolutely for every $p\in (a,b)$).
Suppose that 
\begin{enumerate}
\item \label{cg i} $E_{n,i}$ has complexity of order $\Theta(n)$, and 
\item \label{cg ii} there is a constant $0<c<1$ \st\ $\sum_{i\in L_n} a_i \Pr_p(E_{n,i}) = O(c^n)$ for $p\in (a,b)$. 
\end{enumerate}
Then there is a constant $\varepsilon>0$ such that $f(p)$ is analytic in $(a-\varepsilon,b+\varepsilon)$. 
\end{corollary}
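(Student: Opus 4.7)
The plan is to reproduce, in the abstract setting stated, the three-step machinery of \Sr{secChi}: (1) extend every probability to an entire function via \Lr{C equals S NN}/\Cr{D and not F NN}, (2) control the modulus of the extension on a small disc by $c_{M,x}^{\Theta(n)}$ times the value at a nearby real point, and (3) combine with hypothesis \ref{cg ii} and the Weierstrass M-test, concluding analyticity on the whole of $I$ via \Wthm{thmWei}.

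First, I would unfold the complexity hypothesis \ref{cg i}. By \Dr{complexity}, there is a constant $K>0$ such that each $E_{n,i}$ is a countable disjoint union $E_{n,i}=\bigsqcup_j F_{n,i,j}$ of events, every $F_{n,i,j}$ being measurable with respect to some edge set of size at most $Kn$. Each $F_{n,i,j}$ is then a finite disjoint union of elementary cylinders of the form $\{D\subseteq\omega,\ F\cap\omega=\emptyset\}$ with $D,F$ disjoint and $|D|+|F|\leq Kn$. \Cr{D and not F NN} gives an entire extension of each such cylinder probability with a bound $c_{M,x}^{|F|}\leq c_{M,x}^{Kn}$ times its value at $x+M$ on $D(x,M)$. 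Summing the cylinder contributions (they converge absolutely by the same bound, since the real-valued sum equals $\Pr_{x+M}(F_{n,i,j})$), and then summing over $j$ using Weierstrass' \Tr{thmWei} and the M-test once, we obtain an entire extension $P_{n,i}(z)$ of $p\mapsto\Pr_p(E_{n,i})$ satisfying
\[
|P_{n,i}(z)|\ \leq\ c_{M,x}^{Kn}\,\Pr_{x+M}(E_{n,i})\qquad\text{for all } z\in D(x,M),
\]
whenever $0\leq x$ and $M>0$ with $x+M<1$.

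Second, I would localise. Pick any $x\in I$ and choose an open subinterval $J\subset I$ containing $x$ together with a radius $M>0$ such that $[x-M,x+M]\subset J$ and $x+M<1$. Since $c_{M,x}=\tfrac{1-x+M}{1-x-M}\to 1$ as $M\to 0$ while, by hypothesis \ref{cg ii}, $c_J<1$ is independent of $M$, we may shrink $M$ further so that $q:=c_{M,x}^{K}\,c_J<1$. For $z\in D(x,M)$ the bound of Step~1 together with hypothesis \ref{cg ii} (applied at the real parameter $x+M\in J$, and, where necessary, with $a_i$ replaced by $|a_i|$ as is the case in all our applications) yields
\[
\sum_{n\in\N}\sum_{i\in L_n}|a_i|\,|P_{n,i}(z)|\ \leq\ \sum_{n\in\N} c_{M,x}^{Kn}\cdot O(c_J^n)\ =\ \sum_{n\in\N}O(q^n)\ <\ \infty.
\]
By the Weierstrass M-test the double series $\sum_n\sum_{i\in L_n} a_i P_{n,i}(z)$ converges uniformly on $D(x,M)$, and since each summand is entire, its limit is holomorphic on $D(x,M)$ by \Tr{thmWei}. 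The limit coincides with $f$ on $D(x,M)\cap[0,1]\subset I$ by hypothesis. As $x\in I$ was arbitrary, $f$ is real-analytic on $I$.

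The one genuinely delicate step is the absolute-value version of hypothesis \ref{cg ii} that enters the M-test: the statement as given bounds the signed sum $\sum_i a_i\Pr_p(E_{n,i})$, but the M-test requires control of $\sum_i|a_i|\Pr_p(E_{n,i})$. In every intended application (e.g.\ the inclusion-exclusion formula \eqref{iepintro}, where $|a_i|=1$, and the higher-moment variants where $a_i\geq 0$) the stronger bound holds automatically, so I would either tacitly read hypothesis \ref{cg ii} with $|a_i|$ in place of $a_i$, or note explicitly that the applications below satisfy it in this form. All remaining steps are routine manipulations of the complex analytic machinery already developed in \Sr{NNM}.
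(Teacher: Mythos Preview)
Your proposal is correct and follows essentially the same approach as the paper, which simply says to imitate the proof of \Tr{chi LR} using \Cr{D and not F NN} in place of \Lr{C equals S NN} and hypothesis \ref{cg ii} in place of the \ABP. Your version is more explicit, and in particular you correctly flag the one genuine subtlety the paper glosses over: hypothesis \ref{cg ii} bounds the signed sum, whereas the M-test needs the absolute sum---this is harmless in all the applications (where $|a_i|\leq 1$ or $a_i\geq 0$), and your remark to that effect is appropriate.
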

(The analyticity on the larger interval $(a-\varepsilon,b+\varepsilon)$ rather than $(a,b)$ is needed to handle the case $p=1$, in which case we simply choose $b=1$. The proof shows that \ref{cg ii} holds on $(a-\varepsilon,b+\varepsilon)$ with $c$ replaced by some other constant smaller than $1$.)
\begin{proof}
We imitate the proof of Theorem \ref{chi LR}, except that instead of the \ABP\ we use our assumption \ref{cg ii}, and instead of \Lr{C equals S NN} we use its
generalisation Corollary \ref{D and not F NN}, which we apply to the sequence of events witnessing that $(E_{n,i})$ satisfies \ref{cg i}. (Note that the complexity of an event governs the exponential growth rate of the maximum modulus of the extension of its probability to a complex disc as a function of the radius of that disc.) For $p\in (a,b)$, we can use either \eqref{ubound} or \eqref{lbound}. To obtain the analyticity at a neighbourhood of $a$ we need to use \eqref{ubound}, while to obtain the analyticity at a neighbourhood of $b$ we need to use \eqref{lbound}.
\end{proof}

\comment{
\begin{corollary}  \label{cor general}
Let $\Pr_p$ denote the law of a \nnm, 
and let $f(p)$ be a function that can be expressed as $f(p)=\sum_{\nin} a_n \Pr_p(E_n)$ in an interval $p\in (a,b) \subseteq [0,1]$, where $a_n\in \R$ and each $E_n$ is an event measurable \wrt\ $\Pr_p$ (in particular, the above  sum converges absolutely \fe\ $p\in I$). 

Suppose that 
\begin{enumerate}
\item \label{cg i} $E_n$ has complexity of order $\Theta(n)$, and 
\item \label{cg ii} there is a constant $0<c<1$ \st\ $a_n \Pr_p(E_n) = O(c^n)$ on $(a,b)$. 
\end{enumerate}
Then there is a constant $\varepsilon>0$ such that $f(p)$ is analytic in $(a-\varepsilon,b+\varepsilon)$.
\end{corollary}
\begin{proof}
We imitate the proof of \Tr{chi LR}, except that instead of the \ABP\ we use our assumption \ref{cg ii}, and instead of \Lr{C equals S NN} we use its generalisation \Cr{D and not F NN}, which we apply to the sequence of events witnessing that $(E_n)$ satisfies \ref{cg i}. 
\end{proof}
}

{\bf Remark:} A similar statement for \lrm s can be formulated, and proved, along the same lines, except that we use the total $\mu$-weight rather than the cardinality of an edge-set in \Dr{complexity}.

\section{$p_\C< 1$ for non-amenable graphs} \label{sec na}


The \defi{(edge)-Cheeger constant} of a graph $G$ is defined as $h_E(G):=\inf_{S} \frac{|\partial_E S|}{|S|}$, where the infimum ranges over all finite subgraphs $S$ of \G.  When $h_E(G)>0$ we say that \g is \defi{non-amenable}. A well-known theorem of Benjamini \& Schramm \cite{BeSchrPer} states that $p_c(G)\leq \frac1{1+h_E(G)}$. We show here that the same bound applies to $p_\C$. We use the same technique as in the subcritical case (\Sr{secChi}), except that we replace the \ABP\ with an observation of Pete that the arguments of Benjamini \& Schramm imply the exponential decay above the aforementioned threshold of the `truncated' cluster size for non-amenable graphs.

\begin{theorem} \label{thm nonam}	
For every bounded degree graph $G$ with $h:=h_E(G)>0$, we have 
$p_\C \leq \frac1{1+h_E(G)}$.
\end{theorem}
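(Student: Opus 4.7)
The strategy is to mimic the proof of \Tr{chi NN}, but apply it in the \emph{supercritical} regime and to the series
$$1-\theta(p)=\Pr_p(|C(o)|<\infty)=\sum_{m\ge 1}p_m(p),\quad p_m(p):=\Pr_p(|C(o)|=m),$$
rather than to the series expressing $\chi$. Each $p_m(p)=\sum_{S\ni o,\ |S|=m}p^{|E(S)|}(1-p)^{|\partial S|}$ is a polynomial in $p$, and hence extends to an entire function; moreover the event $\{|C(o)|=m\}$ is the disjoint union, over connected $m$-vertex subgraphs $S\ni o$, of the events $\{C(o)=S\}$, each of which is measurable with respect to the $|E(S)|+|\partial S|=O(dm)$ edges incident to $S$ (using the bounded-degree hypothesis, as in Remark~\ref{rem d}). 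Thus $\{|C(o)|=m\}$ has complexity $\Theta(m)$ in the sense of \Dr{complexity}.

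The key ingredient replacing the \ABP\ in the non-amenable, supercritical setting is Pete's observation that the Benjamini--Schramm proof of $p_c(G)\leq \frac{1}{1+h_E(G)}$ actually yields exponential decay of $p_m(p)$ in $m$ for every $p>\frac{1}{1+h_E(G)}$. With this in hand, for any interior point $x\in(\frac{1}{1+h},1)$ I would pick $M>0$ small enough that $x-M>\frac{1}{1+h}$ and $c_{M,x}^{d}\cdot a<1$, where $a<1$ is the decay rate for $p_m(x+M)$ coming from Pete's bound. The estimate of \Lr{C equals S NN} combined with Remark~\ref{rem d} then gives $|p_m(z)|\leq c_{M,x}^{dm}\,p_m(x+M)=O((c_{M,x}^{d}a)^m)$ for $z\in D(x,M)$, making the Weierstrass M-test applicable, so that \Wthm\ yields analyticity of $\sum_m p_m(z)$ on that disc. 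This is exactly an instance of \Cr{cor general}, applied with $L_n=\{1\}$, $a_1=1$, $E_{n,1}=\{|C(o)|=n\}$, and $I=(\frac{1}{1+h},1)$.

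To cover the endpoint $p=1$, which \Cr{cor general} does not directly handle, I would invoke instead the ``moreover'' clause of \Lr{C equals S NN}: on $D(1,M)$ it gives $|p_m(z)|\leq c_M^{dm}\,p_m(1-M)$, and choosing $M$ so small that $1-M>\frac{1}{1+h}$ and $c_M^{d}$ times the resulting exponential rate remains $<1$, the same M-test argument provides analyticity on a complex disc around $1$. Combining the two cases, $\theta$ is analytic on $(\frac{1}{1+h_E(G)},1]$, whence $p_\C\leq \frac{1}{1+h_E(G)}$. The only step not already encapsulated by the machinery of \Sr{sec tools} is Pete's exponential-decay statement, and I expect the main (though routine) work to be there: one has to extract from the Peierls estimate $\Pr_p(C(o)=S)\le (1-p)^{h|S|}$ together with a suitable count of candidate $S$ a tail bound that is uniform on compact subsets of $(\frac{1}{1+h},1)$ and survives multiplication by the $c_{M,x}^{dm}$ factor coming from the complex-analytic step.
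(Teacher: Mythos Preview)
Your proposal is correct and follows essentially the same approach as the paper: both express $1-\theta$ as $\sum_m p_m$, note that $\{|C(o)|=m\}$ has complexity $O(dm)$, invoke Pete's exponential-decay result \cite[Proposition~12.9]{PeteBook} for $p>\frac{1}{1+h}$, and feed this into \Cr{cor general}. Your explicit treatment of the endpoint $p=1$ via the ``moreover'' clause of \Lr{C equals S NN} is a welcome bit of care (the paper's short proof sweeps this under the rug of \Cr{cor general}), and the condition $x-M>\frac{1}{1+h}$ you impose is harmless but unnecessary---only $x+M<1$ is needed for the bound, since Pete's decay holds at $x+M$ automatically.
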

\begin{proof}
By the definitions, we have $1-\theta(p)= \sum_n \Pr_p(|C(o)|=n)$. 

The statement follows if we can apply \Cr{cor general} for $I=(\frac1{1+h_E(G)},1]$ and $E_n:=\{|C(o)|=n\}$ (and $a_n=1$). So let us check that the assumptions of \Cr{cor general} are satisfied.
 
The exponential decay condition \ref{cg ii} is established in \cite{Anchored}, which states that for every $p\in (\frac1{1+h_E(G)},1]$ we have
$\Pr_p(|C(o)|=n)\leq \Pr_p(n \leq |C(o)| < \infty) < e^{-rn}$ for some constant $r=r(p)>1$, and it is clear from the proof that $r(p)$ is monotone in $p$.   

For condition \ref{cg i}, we note that if $d$ is the maximum degree of $G$, then $E_n$ has complexity at most $dn$, as it is the disjoint union of the events of the form $C(o)=S$ where $S$ ranges over all connected subgraphs of \g with $n$ vertices containing $o$. 
We have thus proved that all assumptions of \Cr{cor general} are satisfied as claimed.
\end{proof}

\comment{
	\begin{proof}[Long proof]
Let $o$ be any vertex of $G$ and $p\in (\frac1{1+h_E(G)},1]$. We will concentrate on the case $p<1$; the case $p=1$ can be shown along the same lines. 

By the definitions, we have $1-\theta(p)= \sum_n \Pr_p(n \leq |C(o)| < \infty)$.
It is proved in \cite[Proposition~12.9]{PeteBook} that
$\Pr_p(n \leq |C(o)| < \infty) < e^{-rn}$ for some constant $r=r(p)>1$, and it is clear from the proof that $r(p)$ is monotone in $p$. 
Moreover, we have $|\partial C(o)|\leq d |C(o)|$, where $d$ is the maximum degree of $G$.
Combining this fact with \Lr{C equals S NN} we obtain
\begin{align*}
\begin{split}
|P_z(|C(o)|=n, |\partial C(o)|=m)|\leq c^{m} P_{p+M}(|C(o)|=n, |\partial C(o)|=m) \\ \leq c^{dn}P_{p+M}(|C(o)|=n, |\partial C(o)|=m),
\end{split}
\end{align*}
\fe\ $z\in D(p,M)$, where $M$ is a positive number such that $p+M<1$, and $c=c_{M,p}=\Big(\frac{1-p+M}{1-p-M}\Big)$.
Thus summing over all possible $m$ we deduce 
$$|P_z(|C(o)|=n)|\leq c^{dn}P_{p+M}(|C(o)|=n).$$ 
Combining this with the above exponential decay inequality we have
\labtequ{rpM}{$|P_z(|C(o)|=n)|\leq c^{dn} e^{-r(p+M)n}$.}
The assertion follows as in the proof of \Tr{chi LR} and \Tr{chi NN}: choosing $M$ small enough that $c_{M,p}^d< e^{r(p+M)}$ guarantees that the sum of the upper bounds provided by \eqref{rpM} over all $n$ is finite, and so we can use the M-test and Weierstrass'  \Tr{thmWei} to deduce the analyticity of $\theta$ in $D(p,M)$.
	\end{proof}
}

{\it Remark}: The same proof applies if we replace $\theta$ by some other subexponential funtion of the restriction of $|C(o)|$ to finite values.

It is well-known that when $G$ is amenable and transitive, there can never be more than one infinite cluster, whence $p_c = p_u$ \cite{BKunique} where
$$p_u=\inf \{p\in [0,1]: \text{ there exists a unique infinite cluster}\}.$$ 
On the other hand, Benjamini \& Schramm \cite{BeSchrPer} conjectured that $p_c<p_u$ holds  on every non-amenable transitive graph.

It is natural to ask whether $\theta$ witnesses the phase transition at $p_u$ whenever $p_c<p_u$, i.e. whether $\theta$ is non-analytic at  $p_u$. It turns out that this is not the case, i.e.\ there are examples of \Cg s where $\theta$ is analytic at  $p_u$. 
Indeed, Thom \cite{ThomRemark}, refining the result of Pak \& Smirnova-Nagnibeda \cite{PakSmir}, proved that whenever the spectral radius $\rho(G)$ of $G$ is at most $1/2$ we have $p_c<p_u$. In fact, it follows from their proof that $p_u>\frac1{1+h_E(G)}$. Moreover they proved that $\rho(G)\leq 1/2$, and so $p_u>\frac1{1+h_E(G)}$, for some Cayley graph of any non-amenable group. (See \cite{SchonMulti} for other conditions that imply $p_u>\frac1{1+h_E(G)}$.) But then \Tr{thm nonam} yields that $\theta$ is analytic at $p_u$.


\section{Trees} \label{sec trees}

In this section we study the analyticity of $\theta$ on trees. We start with the case of regular trees as a warm-up, before we consider the more general Galton-Watson trees in \Sr{sec GW}.

\subsection{Regular trees}
It is well-known that if \g is a $d$-regular tree for $d>2$, then $p_c= \frac1{d-1}$ \cite{LyonsBook,PeteBook}, and it is easy to prove that $h_E(G)= d-2$ in this case. Thus \Tr{thm nonam} immediately yields
\begin{corollary} \label{cor tree}
If $T$ is the $d$-regular tree, then  $p_\C=p_c= \frac1{d-1}$.
\end{corollary}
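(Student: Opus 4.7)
The plan is to combine the general upper bound from Theorem \ref{thm nonam} with an elementary lower bound $p_\C \geq p_c$ that holds for any graph, and to observe that both coincide with $\frac{1}{d-1}$ for the $d$-regular tree $T$.

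First I would verify the claim $h_E(T) = d-2$. Let $S$ be a finite subset of $V(T)$ with $m$ edges inside $T[S]$. Since $T$ is a tree, the induced subgraph $T[S]$ is a forest, so $m \leq |S|-1$. A standard degree-count gives $|\partial_E S| = d|S| - 2m \geq (d-2)|S| + 2$, so
\[
\frac{|\partial_E S|}{|S|} \geq (d-2) + \frac{2}{|S|} \geq d-2,
\]
with the bound approached by any sequence of finite subtrees whose sizes tend to infinity. Hence $h_E(T) = d-2 > 0$, and Theorem \ref{thm nonam} yields $p_\C(T) \leq \frac{1}{1 + (d-2)} = \frac{1}{d-1} = p_c(T)$.

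For the matching lower bound $p_\C \geq p_c$, I would argue as follows. By definition, $\theta(p) = 0$ for $p \in [0, p_c]$, while $\theta(p) > 0$ for $p \in (p_c, 1]$. Suppose, for contradiction, that $\theta$ is real-analytic on some interval $(p, 1]$ with $p < p_c$. Then $\theta$ vanishes on the subinterval $(p, p_c)$, which has an accumulation point inside the (connected) domain of analyticity, and the identity principle for real-analytic functions would force $\theta \equiv 0$ on $(p, 1]$, contradicting $\theta(q) > 0$ for $q \in (p_c, 1]$. Therefore $p_\C \geq p_c$.

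Combining the two inequalities with $p_c = \frac{1}{d-1}$ (a classical computation on the $d$-regular tree, e.g.\ \cite{LyonsBook,PeteBook}) gives $p_\C = p_c = \frac{1}{d-1}$. There is no serious obstacle here; the only small point to watch is the correct justification of $p_\C \geq p_c$, which relies on the identity principle rather than on any probabilistic input.
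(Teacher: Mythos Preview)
Your proposal is correct and follows exactly the paper's own argument: compute $h_E(T)=d-2$, apply \Tr{thm nonam} to obtain $p_\C\le \frac1{d-1}=p_c$, and combine with the general lower bound $p_\C\ge p_c$ (the paper states this implication elsewhere, e.g.\ in the proof of \Tr{thm pc triang}, using the same identity-principle reasoning you spell out). You have simply made explicit the two ``well-known'' or ``easy'' facts the paper invokes without proof.
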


For $d=3$ this is rather trivial, since $\theta(p)$ can be computed exactly using a recursive formula: we have $1- \theta = (1-p\phi)^{d}$, where $\phi$ satisfies the equation $1- \phi = (1-p\phi)^{d-1}$. For $d=3$ we have $\phi(p)=\frac{2p-1}{p^2}$ and hence $\theta(p)= 1- (1-\frac{2p-1}{p})^3$. We remark that this function is convex, corroborating the common belief about the shape of $\theta$ in general (see e.g.\ \cite[p.~148]{Grimmett}). The cases $d=4,5$ can also be solved exactly as they boil down to finding roots of polynomials of degree 3 and 4 respectively. For high values of $d$ the Abel--Ruffini theorem kicks in, and Galois theory implies that our equation is in general not soluble in terms of radicals.



For $d\geq 6$, an alternative way to prove \Cr{cor tree} is as follows. Consider the function $F(p,s)=s+(1-ps)^{d-1}-1$ and let $(p,s)$ be such that $F(p,s)=0$ and $p\in (\frac{1}{d-1},1]$. We will prove that $\dfrac{\partial F}{\partial s}\neq 0$. Then the implicit function theorem gives that $\phi$, hence $\theta$, is analytic. To this end, consider the function $g(x)=x^{d-1}$, and note that $\dfrac{\partial F}{\partial s}=1-pg'(1-ps)$. For $p=1$, we have $\dfrac{\partial F}{\partial s}=1$. For $p<1$, since $g$ is a strictly convex function on $(0,\infty)$, it lies above its tangents, i.e. $$g(1)>g(1-ps)+g'(1-ps)ps=1-s+g'(1-ps)ps=1-s\dfrac{\partial F}{\partial s},$$ which easily implies that $\dfrac{\partial F}{\partial s}>0$, and so we have re-proved \Cr{cor tree}.


Our technique yields a more probabilistic approach which some readers may prefer.

\subsection{Galton-Watson trees} \label{sec GW}

In this section we prove that $p_\C= p_c$ holds for almost every supercritical Galton-Watson tree $T$ defined by any progeny distribution. This answers a question of 
Michelen, Pemantle  \& Rosenberg \cite[Question 3]{MiPeRoQue}, who observed the analogous annealed statement (which is easier, and can be proved using the implicit function theorem as in the previous section). 

Before stating the result formally we introduce the relevant terminology. 
Let $\Pr$ denote the law of a Galton-Walton tree with a fixed progeny distribution $\{p_n\}_{\nin}$ with mean $\mu>1$. A random tree sampled from $\Pr$ will be denoted by $T$, and its root will be denoted by $o$. We will assume  that $p_0=0$ in order to avoid repeating the conditioning to non-extinction in our statements; it is well-known that this assumption can be made without loss of generality (see \cite{MiPeRoQue}, where $p_0=0$ is also assumed), and our proofs below easily adapt to the general case. 

Given a locally finite rooted tree $T$ with root $o$, we denote $\Pr_{p,T}$ the bond percolation probability measure on $T$, and $C_o$ the random cluster of $o$ sampled from $\Pr_{p,T}$.

\begin{theorem} \label{thm GW}
For $\Pr$-almost every $T$, the percolation density $\theta_T(p)$ is analytic in the interval $(p_c,1]$.
\end{theorem}

We remark that it is perhaps a-priori not obvious that the analyticity of $\theta_T(p)$ in $(p_c,1]$ is $\Pr$-measurable, but our proof establishes it indirectly by showing that it is implied by other properties that are clearly $\Pr$-measurable and are satisfied almost surely. 

This proof of this is also based on our general method via \Cr{cor general}. The required exponential decay will be established in the following lemmas. 

Percolation on a Galton-Watson tree can be realised as another Galton-Watson tree. We will write $\Pr_p$ for the corresponding probability measure of the annealed model, i.e.\ $\Pr_p$ first samples a Galton-Watson tree with law $\Pr$, and then percolates it with parameter $p$. A random tree sampled from $\Pr_p$ will be denoted as $\mathcal{T}_p$. We write $\theta_T(p):= \Pr_{p,T}(|C_o|=\infty)$ for the corresponding `quenched' percolation density. It is well-known that $p_c(T)$ is almost surely equal to $1/\mu$, and so we let $p_c:= 1/\mu$. See \cite{Remco, LyonsBook} for an introduction to Galton-Watson trees and these statements.

It is well-known that conditioned on extinction, a supercritical Galton-Watson tree is distributed as a subcritical Galton-Watson tree with offspring distribution having exponential moments, hence the total progeny has an exponential tail (see e.g. \cite[Theorem 3.8]{Remco}). When $p>p_c$, $\mathcal{T}_p$ is a supercritical Galton-Watson tree, hence we have

\begin{proposition}\cite{Remco}\label{classic}
For every $p\in (p_c,1]$, there is a constant $c=c(p)>0$ such that $\Pr_p(|\mathcal{T}_p|=n)\leq e^{-cn}$ for every $n\geq 1$.
\end{proposition}

In the next lemma we turn this annealed statement into a quenched one, i.e.\ we prove the exponential decay of $\Pr_{p,T}(|C_o|=n)$ for every fixed $p>p_c$ and $\Pr$-almost every $T$.

\begin{lemma}\label{Borel-Cantelli}
Let $p\in (p_c,1]$, and let $c=c(p)>0$ be a constant such that $\Pr_p(|\mathcal{T}_p|=n)\leq e^{-cn}$ for every $n\geq 1$. Then for $\Pr$-almost every $T$ there is some $N=N(T,p)$ large enough  that 
$\Pr_{p,T}(|C_o|=n)\leq e^{-cn/2}$ for every $n\geq N$.
\end{lemma}
\begin{proof}
Let $B_n$ denote the following event, measured in the $\sigma$-algebra of $\Pr$:
$$\{\Pr_{p,T}(|C_o|=n) \geq e^{cn/2} \Pr_p(|\mathcal{T}_p|=n)\}.$$
Clearly, $$\Ex\big( \Pr_{p,T}(|C_o|=n)\big)=\Pr_p(|\mathcal{T}_p|=n),$$ 
and so Markov's inequality implies that $$\Pr(B_n)\leq e^{-cn/2}.$$
Since $\sum_{n=1}^\infty e^{-cn/2}<\infty$, we can apply the Borel-Cantelli lemma to obtain that $\Pr$-almost surely, $B_n$ occurs only for finitely many values of $n$. In other words, for $\Pr$-almost every $T$ there is some $N=N(T,p)$ large enough such that $$\Pr_{p,T}(|C_o|=n) \leq e^{cn/2} \Pr_p(|\mathcal{T}_p|=n)\leq e^{-cn/2}$$ for every $n\geq N$, as desired.
\end{proof}

In order to apply \Cr{cor general}, we need to extend this exponential decay uniformly to an open interval of each $p>p_c$. We do so in \Lr{exp dec trees} below, by using the following statement that compares $\Pr_{r,T}(|C_o|=n)$ for nearby values of $r$.

\begin{lemma}\label{cover}
Consider a locally finite tree $T$. Let $p\in (0,1)$ and $n\geq 1$. Then $$\Pr_{r,T}(|C_o|=n)\leq (r/p)^{n-1}\Pr_{p,T}(|C_o|=n)$$ 
for every $r\geq p$. Moreover, there is a constant $l=l(p)>0$ such that 
$$\Pr_{r,T}(|C_o|=n)\leq \Big(\dfrac{1-r}{1-p}\Big)^{s(n-1)}\Pr_{p,T}(|C_o|=n)+e^{-l(n-1)}$$
for every $p\geq r\geq \frac{p}{1+p}$, where $s=\frac{1+p}{p}$.
\end{lemma}
\begin{proof}
By the definitions, for every $r\in [0,1]$ we have $$\Pr_{r,T}(|C_o|=n)=\sum_{S\in \mathcal{S}_n(T)} \Pr_{r,T}(C_o=S),$$ where $\mathcal{S}_n(T)$ is the set of all finite subtrees $S$ of $T$ containing the root with $n$ vertices. Clearly $\Pr_{r,T}(C_o=S)= r^{n-1} (1-r)^{|\partial S|}$, where $\partial S$ is the the set of those edges of $T$ with exactly one endvertex in $S$. We can easily compute that for every $r\geq p$ and any $S\in \mathcal{S}_n(T)$
$$r^{n-1} (1-r)^{|\partial S|}\leq r^{n-1} (1-p)^{|\partial S|}= (r/p)^{n-1}  p^{n-1}(1-p)^{|\partial S|}.$$
Hence for every $r\geq p$ we have $$\Pr_{r,T}(|C_o|=n)\leq (r/p)^{n-1}\Pr_{p,T}(|C_o|=n)$$ 
as claimed. For the second statement, for every $r\leq p$ and any $S\in \mathcal{S}_n(T)$ we have 
$$r^{n-1} (1-r)^{|\partial S|}\leq p^{n-1} (1-r)^{|\partial S|}=\Big(\dfrac{1-r}{1-p}\Big)^{|\partial S|} p^{n-1} (1-p)^{|\partial S|}.$$

We will consider two cases according to whether $|\partial S|\leq s(n-1)$ or not, where $s=\frac{1+p}{p}$. For those $S\in\mathcal{S}_n(T)$ with $|\partial S|\leq s(n-1)$ the above inequality gives 
$$\Pr_{r,T}(C_o=S)\leq \Big(\dfrac{1-r}{1-p}\Big)^{s(n-1)} \Pr_{p,T}(C_o=S).$$
On the other hand, the function $g(r)=r(1-r)^s$ is strictly decreasing on the interval $[\frac{1}{1+s},1]$. This implies that there is a constant $c<1$ such that $$r(1-r)^s \leq c q(1-q)^s$$ for every $r\geq \frac{p}{1+p}$, where $q=\frac{1}{1+s}$. It is easy to see that for every $t\geq s$,
$$r(1-r)^t=r\Big(\dfrac{1-r}{1-q}\Big)^t (1-q)^t\leq r\Big(\dfrac{1-r}{1-q}\Big)^s (1-q)^t \leq cq(1-q)^t.$$
Therefore, for every $p\geq r\geq \frac{p}{1+p}$ we have
\begin{gather*}
\Pr_{r,T}(|C_o|=n)\leq \Big(\dfrac{1-r}{1-p}\Big)^{s(n-1)}\Pr_{p,T}(|C_o|=n)+c^{n-1}  \Pr_{q,T}(|C_o|=n) \leq \\ \Big(\dfrac{1-r}{1-p}\Big)^{s(n-1)}\Pr_{p,T}(|C_o|=n)+c^{n-1},
\end{gather*}
as desired.
\end{proof}

We are now ready to prove the exponential decay of $\Pr_{p,T}(|C_o|=n)$ in an open interval of $p$ for all values of $p>p_c$.

\begin{lemma}\label{exp dec trees}
For $\Pr$-almost every $T$ and every $p\in (p_c,1)$, there is an interval $(a,b)\subset (p_c,1)$ containing $p$, and a constant $t=t(T,p)>0$, such that $\Pr_{r,T}(|C_o|=n)\leq e^{-tn}$ for any $r\in (a,b)$ and any $n\geq 1$.
\end{lemma}
\begin{proof}
Consider some $p'>p_c$, and let $c=c(p')>0$ be the constant of \Lr{Borel-Cantelli}. Then there are positive constants $a=a(p'),b=b(p')$ with $\dfrac{p'}{1+p'}<a<p'<b$ such that both $re^{-c/2}/p'<1$ and $\Big(\dfrac{1-r}{1-p'}\Big)^s e^{-c/2}<1$ hold for every $r\in [a,b]$, where $s=\dfrac{1+p'}{p'}$.
As $p'$ varies over the interval $(p_c,1)$, the collection of the intervals $(a,b)$ covers $(p_c,1)$. Hence we can extract a countable collection of intervals $(a_i,b_i)$, $i\in I$ that covers $(p_c,1)$. In particular, at least one of these intervals $(a_i,b_i)$ contains $p$, and we choose $a:=a_i$ and $b:=b_i$. Write $p_i$ for the point in $(p_c,1)$ giving rise to $(a_i,b_i)$. 

Applying \Lr{Borel-Cantelli} to all $p_i$ and using the union bound, we deduce that for $\Pr$-almost every $T$ there is some $N=N(T,p_i)$ such that $$\Pr_{p_i,T}(|C_o|=n) \leq e^{-c_i n/2}$$ for every $n\geq N$ and every $i\in I$, where $c_i=c(p_i)$. By \Lr{cover} and our choice of $a_i$, $b_i$, for those Galton-Watson trees $T$, there is a constant $k=k(p_i)>0$ such that $$\Pr_{r,T}(|C_o|=n) \leq e^{-kn}$$ for every $r\in (a_i,b_i)$ and every $n\geq N$. For $n\leq N$ we can use the continuity of $\Pr_{r,T}(|C_o|=n)$ as a function of $r$ and the fact that $\Pr_{r,T}(|C_o|=n)<1$ to conclude that there is a constant $t$ with the desired properties.
\end{proof}

As we will see, \Tr{exp dec trees} implies the analyticity of $\theta_T$ in the open interval $(p_c,1)$. To prove the analyticity at a neighbourhood of $1$, we need the following lemma.

\begin{lemma}\label{calculations}
Consider a constant $h>0$ and the function $f(p) = p^n(1-p)^m$, where $n,m$ are 
positive integers such that $m\geq hn$. Then there is a constant $0<s<1$ such that $f(p)\leq s^m f(r)$ for every $p\in [t,1]$, where $r=1/(1+h)$ and $t=2/(2+h)$.
\end{lemma}
\begin{proof}
Consider the function $g(p) = p^{1/h}(1-p)$ and observe that
$f(p) = (g(p))^m p^{n-m/h}$. For every $p\in [t,1]$ we have that $p^{n-m/h}\leq r^{n-m/h}$, because $n-m/h\leq 0$. We will now compare $g(p)$ with $g(r)$. To this end, observe that $g$ is strictly decreasing on the interval $[r,1]$. Hence there is some constant $0 < s < 1$ such that $g(p)\leq
sg(r)$ for every $p\in [t,1]$. Raising the inequality to the power of $m$, we obtain $(g(p))^m \leq  s^m (g(r))^m$, which implies that $f(p)\leq s^m f(r)$,
as desired.
\end{proof}

We are now ready to prove the analyticity of $\theta_T$.

\begin{proof}[Proof of \Tr{thm GW}]
For the analyticity of $\theta_T(p)$ in $(p_c,1)$, we write $\theta_T(p)=1-\sum_{n=1}^{\infty} \Pr_{p,T}(|C_o|=n)$. The complexity of the events $\{|C_o|=n\}$ is not necessarily of order $n$ when $T$ has unbounded degree, and so we cannot immediately use Corollary \eqref{cor general}, but since we have $|E(C_o)|=|C_o|-1$, we can use the theorems of Weierstrass and \eqref{lbound}, as in the proof of Corollary \eqref{cor general}, coupled with \Tr{exp dec trees} to conclude.  

To prove the analyticity at a neighbourhood of $1$, we will use a different expression, namely $\theta_T(p)=1-\sum_{m=1}^{\infty} \Pr_{p,T}(|C_o|<\infty,|\partial C_o|=m)$. To this end, recall that $T$ is non-amenable almost surely \cite[Theorem 6.52]{LyonsBook}. Let us denote the Cheeger constant of $T$ by $h=h(T)$. For each possible connected component $S$ of $o$, we have $\Pr_{p,T}(C(o)=S)=p^n(1-p)^m$ where $m\geq hn$. Using \Lr{calculations} we obtain $\Pr_{p,T}(C(o)=S)\leq s^m \Pr_{r,T}(C(o)=S)$ for every $p\in [t,1]$, where $s,r,t$ are as in \Lr{calculations}, and thus $$\Pr_{p,T}(|C_o|<\infty,|\partial C_o|=m)\leq s^m\Pr_{r,T}(|C_o|<\infty,|\partial C_o|=m)\leq s^m$$ for every $p\in [t,1]$. We can now use Corollary \eqref{cor general} to conclude.
\end{proof}

\subsection{A tree with $\theta$ nowhere analytic on $(p_c,1]$} \label{nowhere}
We finish this section by giving an example of a (deterministic) tree $T$ for which $\theta$ is nowhere analytic on the interval $(p_c,1]$. In fact, we will show that $\theta$ is not differentiable at every $p\in (p_c,1)\cap \mathbb{Q}$. 
We recall that $\theta$ is an increasing function of $p$. It is a standard fact that increasing functions are differentiable almost everywhere, so in some sense, the constructed $\theta$ is the least well-behaved percolation density possible.

For every $q\in(1/2,1)\cap \mathbb{Q}$, we consider a rooted tree $T_q$, and write $\theta_{T_q}$ for the corresponding percolation density with respect to the root, which satisfies the following properties:
\begin{enumerate}[(i)]
\item \label{p_c} $p_c(T_q)=q$,
\item \label{zero} $\theta_{T_q}(p_c)=0$,
\item \label{derivative} the right derivative $\dfrac{d\theta_{T_q}(p_c)}{dp^+}$ exists and is strictly positive.
\end{enumerate}
For example, we can sample $T_q$ from the law $\Pr^q$ of the Poisson Galton-Watson distribution of parameter $1/q$, i.e. the Galton-Watson tree with progeny distribution $p_k=\lambda^k e^{-\lambda}/k!$, $\lambda=1/q$. We recall that properties \eqref{p_c},\eqref{zero} are satisfied $\Pr^q$-almost-surely conditioning on non-extinction \cite{LyonsTrees}. To ensure that \eqref{derivative} is satisfied, we can use e.g \cite[Proposition 2.2, Theorem 1.2]{MiPeRoQue}, which state that the right derivative of the annealed percolation density $g(p)$ at $p_c$ exists and is strictly positive, that the right derivative of the quenched percolation density at $p_c$ exists $\Pr^q$-almost-surely, and that $$\dfrac{dg(p_c)}{dp^+}=\mathbb{E}\Big(\dfrac{d\theta_{T_q}(p_c)}{dp^+}\Big).$$

We now construct $T$ as follows. Let $P$ be an infinite path with vertex set $\{u_0=o,u_1,\ldots\}$ and edge set  $\{\{u_n,u_{n+1}\} \mid \nin\}$. Consider a sequence $(T'_n)$ of trees, where each $T'_n$ coincides with some $T_q$, and each $T_q, q\in(1/2,1)\cap \mathbb{Q}$ appears infinitely many times in the sequence. (We think of $T_q$ as a fixed deterministic tree, although we used randomness to prove its existence.) Now define the tree $T''_n$ by taking $k_n$ copies of $T'_n$ and identifying their roots, where $k_0=1$ and for every $n\geq 1$, $k_n$ is a large enough integer that
\begin{equation}\label{large kn}
k_n p^n(1-p)\dfrac{d\theta_{T'_n}(p)}{dp^+}\prod_{i=0}^{n-1} (1-\theta_{T''_i}(p))\geq n
\end{equation}
for $p=p_c(T'_n)$. That such a number $k_n$ exists follows from \eqref{derivative} and the fact that $\theta_{T''_i}(p)<1$ at $p=p_c(T'_n)<1$.
Then for every $n\in\mathbb{N}$, we identify the root of $T''_n$ with $u_n$ to obtain $T$ from $P$. Clearly, $p_c(T)=1/2$.

We claim that $\dfrac{d\theta_{T}(p)}{dp^+}=\infty$ for every $p\in(1/2,1)\cap \mathbb{Q}$, which immediately implies that $\theta_{T}$ is nowhere analytic in $(1/2,1]$. To prove the claim, expressing $\theta=\theta_{o,T}$ according to the largest open path in $P$ starting from $o$ we obtain
$$\theta(p)=1-\sum_{n=0}^\infty p^n (1-p) \prod_{i=0}^n (1-\theta_{T''_i}(p))$$
for every $p\in(0,1)$. Consider some $p\in(1/2,1)\cap \mathbb{Q}$ and $0<h<1-p$. Then we have
\begin{equation} \label{thetah}
\begin{gathered}
\dfrac{\theta(p+h)-\theta(p)}{h}=\sum_{n=0}^\infty \dfrac{p^n(1-p)-(p+h)^n(1-p-h)}{h}\prod_{i=0}^n (1-\theta_{T''_i}(p))\\
+\sum_{n=0}^\infty (p+h)^n(1-p-h)\dfrac{\prod_{i=0}^n (1-\theta_{T''_i}(p))-\prod_{i=0}^n (1-\theta_{T''_i}(p+h))}{h}
\end{gathered}
\end{equation}
by adding and subtracting the term $\sum_{n=0}^\infty \dfrac{(p+h)^n(1-p-h)}{h}\prod_{i=0}^n (1-\theta_{T''_i}(p))$.

We claim that the first sum remains bounded when $0<h\leq h_0$, where $h_0$ is any constant in $(0,1-p)$. Indeed, we have $0\leq \prod_{i=0}^n (1-\theta_{T''_i}(p))\leq 1$ and $$\Big\lvert\dfrac{p^n(1-p)-(p+h)^n(1-p-h)}{h}\Big\rvert\leq n(p+h_0)^{n-1}(1-p)+(p+h_0)^n$$ by the Mean value theorem. Since the latter bound is clearly summable, our claim is proved.  So let us focus on the second sum of \eqref{thetah}, which we denote by $f(p,h)$. By the monotonicity of $\theta_{T''_i}$ we have that for every $n$,
\begin{equation}\label{deri bound}
\begin{aligned}
f(p,h) &\geq (p+h)^n(1-p-h)\dfrac{\prod_{i=0}^n (1-\theta_{T''_i}(p))-\prod_{i=0}^n (1-\theta_{T''_i}(p+h))}{h} \\
&\geq (p+h)^n(1-p-h)\dfrac{\theta_{T''_n}(p+h)-\theta_{T''_n}(p)}{h}\prod_{i=0}^{n-1} (1-\theta_{T''_i}(p)).
\end{aligned}
\end{equation}
Recalling the definition of $T''_n$, we see that $\theta_{T''_n}=1-(1-\theta_{T'_n})^{k_n}$. Choosing an integer $n$ so that $p_c(T'_n)=p$ and using the assumptions \eqref{zero}, \eqref{derivative}, we deduce that the right derivative of $\theta_{T''_n}$ at $p=p_c(T'_n)$ exists and is equal to 
$k_n\dfrac{d\theta_{T'_n}(p)}{dp^+}$. Hence the last term in \eqref{deri bound} converges to the left hand side of \eqref{large kn} as $h$ tends to $0$.  By construction there are infinitely many $n$ such that $p_c(T'_n)=p$, so letting $n$ tend to infinity along those values shows that
$\dfrac{d\theta_{T}(p)}{dp^+}=\infty$, as desired.

\section{Analyticity above the threshold for planar lattices} \label{sec th pl}

A \defi{\qtl} (in $\R^2$) is a connected, locally finite, plane graph $L$ \st\ for some pair of linearly independent vectors $v_1,v_2 \in \R^2$, translation by each $v_i$ preserves $L$, and this action has finitely many orbits of vertices. We will assume, as we may, that the edges of $L$ are piecewise linear curves.

\medskip
{\it Remark:} The seemingly more general definition as a plane graph admitting a semiregular action of the group $\Z^2$ (by isometries of $\R^2$ preserving $L$, or even more generaly by arbitrary graph-theoretic isomorphisms) with finitely many orbits of vertices can be proved to be in fact equivalent, but we will not go into the details; the main idea is to embed a fundamental domain of $L$ \wrt\ that action in a square, and then tile $\R^2$ by copies of that square. Another approach can be found in \cite[Proposition 2.1]{BeSchrPerHyp}. \Tr{theta analytic 2D} does not apply to a lattice $H$ in hyperbolic 2-space just because \Tr{pc star} below fails, but our proof shows that $p_\C(H) \leq 1-p_c(H^*)$, where $H^*$ denotes the dual of $H$. In this case we have $1-p_c(H^*)= p_u(H)$ by \cite[Theorem 3.8]{BeSchrPerHyp}\footnote{The fact that non-amenability is equivalent to hyperbolicity in this setup is well-known, see e.g.\ \cite{hypamen}.}; in other words, we have shown analyticity of $\theta$ above $p_u$ for all planar lattices. 
\medskip

In this section we prove

\begin{theorem}\label{theta analytic 2D}
For Bernoulli bond percolation on any \qtl\ we have $p_\C= p_c$.
\end{theorem}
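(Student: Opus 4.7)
The plan is to apply Corollary \ref{cor general} to the identity \eqref{iepintro}, that is, to write
\[
1-\theta(p)=\sum_{I\in \MS}(-1)^{c(I)+1} P_I(p),
\]
group the summands according to a natural size parameter $n=|\per(I)|+|\bou(I)|$, and verify the two hypotheses (complexity and exponential decay) on the subintervals $J\subset (p_c,1]$. First I would carefully set up the notion of outer-interface used in the introduction (the pair $(\per,\bou)$ cut out by a finite cluster) and its disjoint-union version \MS, and justify \eqref{iepintro} via an inclusion--exclusion argument over the set of outer-interfaces enclosing $o$ that actually occur around $C(o)$: only the innermost one belongs to the cluster of $o$, so pairing the ``occurs'' events of nested interfaces via inclusion--exclusion isolates exactly the event $|C(o)|<\infty$ and produces the alternating sign $(-1)^{c(I)+1}$. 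Since $P_I$ is a polynomial for each $I$, Corollary \ref{D and not F NN} already supplies an entire extension with the modulus bound we need for condition \ref{cg i} of Corollary \ref{cor general}; indeed an element $I\in\MS$ of total size $n$ depends on $n$ edges, so it has complexity $\Theta(n)$.

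The heart of the argument is the exponential decay condition \ref{cg ii}, i.e.\ bounding $\sum_{I\in\MS_n} P_I(p)$ by $c_J^n$ for $p\in J\Subset (p_c,1]$, where $\MS_n$ is the set of elements of \MS\ of total size $n$. For this I would follow the three-step recipe outlined right after \eqref{iepintro}. Step (a): by Hardy--Ramanujan (Theorem \ref{HRthm}), the number of ways to split $n$ as an unordered sum $n_1+\dots+n_k$ grows sub-exponentially, so it suffices to bound, for each fixed partition, the contribution of interfaces whose $k$ components have sizes $n_1,\ldots,n_k$. Step (b): for each component of an interface, the dual edge set corresponding to $\bou$ forms a connected subgraph of $L^*$ surrounding some cluster and hence crossing the horizontal ray through $o$; by a standard planar argument, the possible locations $x_i$ at which the $i$-th component crosses this ray are confined to an interval of length $O(\sum_{j<i} n_j)$, so the number of admissible choices of $(x_1,\ldots,x_k)$ is bounded by a polynomial in $n$ --- in particular sub-exponentially. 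Step (c): for each fixed choice of partition and of crossing vertices, the event ``the prescribed interface configuration occurs'' is contained in a disjoint occurrence of $k$ events, each requiring that a single outer-interface of size $n_i$ is rooted at $x_i$; by the BK inequality (Theorem \ref{BKthm}) its probability is bounded by $\prod_i \Pr_p(\text{some outer-interface of size } n_i \text{ rooted at } x_i)$.

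Therefore the bound reduces to the single-interface exponential tail. Here is where duality enters: an outer-interface of size $n$ around $o$ gives rise, through the edge set $\bou$, to a connected cluster in the dual lattice $L^*$ under the dual percolation with parameter $1-p$. For $p>p_c(L)$ we have $1-p<1-p_c(L)=p_c(L^*)$ by Theorem \ref{pc star} (the planar duality result invoked earlier), so we are in the subcritical regime for $L^*$ and may apply the Aizenman--Barsky--Newman exponential decay (Theorem \ref{AB thm}) to conclude that the probability of a dual cluster of size $\geq n$ --- and hence of an outer-interface of size $n$ --- decays as $e^{-\alpha n}$ for some $\alpha=\alpha(p)>0$. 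Combining step (a), the polynomial factor from step (b), and the product of exponentials from step (c) yields the desired bound $\sum_{I\in\MS_n}P_I(p)\le C e^{-\alpha' n}$ with $\alpha'>0$ uniform on any compact $J\subset(p_c,1]$. Plugging this into Corollary \ref{cor general} gives analyticity of $\theta$ on $(p_c,1]$ and hence $p_\C\le p_c$; the reverse inequality $p_\C\ge p_c$ is automatic from the phase transition. The main obstacle is the careful planar-combinatorial bookkeeping of step (b), which must pin down the crossing vertices $x_i$ cleanly enough to offset the loss of information caused by summing over nested components; treating the boundary case $p=1$ also requires the second clause of Lemma \ref{C equals S NN} adapted to the interface polynomials $P_I$.
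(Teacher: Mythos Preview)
Your plan matches the paper's approach closely---the inclusion--exclusion formula, the grouping by size, the partition count via Hardy--Ramanujan, the BK inequality, and the duality combined with Aizenman--Barsky decay all appear in the paper's proof (Lemmas~\ref{HR bound} and~\ref{exp dec Z2}). There are, however, two gaps in your step~(b)/(c).

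First, the claim that the number of admissible tuples $(x_1,\ldots,x_k)$ is polynomial in $n$ is false. By Proposition~\ref{quasi geo} the $i$-th dual component, which surrounds $o$, crosses $X^+$ within its first $fn_i$ edges, so each $x_i$ ranges over $\Theta(n_i)$ positions; the product $\prod_i n_i$ over a partition of $n$ can be genuinely exponential in $n$ (take $n_i\equiv 3$, $k=n/3$). The paper does not try to count tuples at all: it absorbs the linear factor into the single-interface tail, observing that since $\Pr_{1-p}(C(m))\le fm\,c^m$ and $\Pr_{1-p}(C(m))<1$ for every $m$, one can choose $c'\in(c,1)$ with $\Pr_{1-p}(C(m))\le (c')^m$ uniformly in $m$, after which the product over components collapses to $(c')^n$ regardless of the partition (see the proof of Lemma~\ref{exp dec Z2}).

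Second, your step (c) bounds, for each fixed choice of partition and crossing points, the \emph{probability} that some compatible $I$ occurs, but the quantity you need to control is the \emph{sum} $\sum_{I\in\MS_n}P_I$, which is an expected count. The paper bridges this via Lemma~\ref{HR bound}: the crossing-point bookkeeping you sketch shows that at most $r^{\sqrt n}$ elements of $\MS_n$ can occur in any single configuration (each occurring $M$ is determined by the subset of $X^+$ it meets, and this subset encodes a partition of a number at most $fkn$), so that $\sum_M Q_M\le r^{\sqrt n}\Pr_p(\MS_n\text{ occurs})$, and only then is BK applied to the probability on the right. Your outline conflates these two steps.
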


This result is new even for the standard square lattice \Zs, i.e.\ the Cayley graph of $\Z^2$ \wrt\ the standard generating set $\{(0,1),(1,0)\}$. Slightly more effort is needed to prove it in the generality of \qtl s. The reader that just wants to see a simplest possible proof for the lattice $L=\Zs$ is advised to:
\begin{itemize} 
\item ignore \Tr{pc star}, and just recall that $p_c(\Zs)=1/2$ and $\Zsd=\Zs$; 
\item skip the definition of $X$ in \Sr{prel qtl}, and instead take $X$ to be the horizontal `axis' of \Zs, and $X^+$ the right `half-axis' starting at the origin $o$; and
\item notice that \autoref{quasi geo} holds trivially with $f=1$. 
\end{itemize} 

We will use the following important fact about the relation between the percolation thresholds in the primal and dual lattice. The history of this result starts with the Harris-Kesten theorem that $p_c(\Zs)=1/2$. A special case was obtained by Bollob{\'a}s \& Riordan \cite{BoRiPer}, and almost simultaneously the general case was proved by Sheffield \cite[Theorem 9.3.1]{SheRan} in a rather involved way. A shorter proof can be found in \cite{DCRaTaSha}.

\begin{theorem}[\cite{SheRan}; see also \cite{DCRaTaSha}] \label{pc star}
\Fe\ \qtl\ $L$, we have $p_c(L) + p_c(L^*)=1$.
\end{theorem}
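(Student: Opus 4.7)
The plan is to prove the equality by establishing the two inequalities $p_c(L) + p_c(L^*) \leq 1$ and $p_c(L) + p_c(L^*) \geq 1$ separately. Both rely on the same planar geometric bridge: a finite primal cluster $C(o)$ is enclosed by a circuit in the dual lattice $L^*$ consisting of the duals of the edges in the edge boundary of $C(o)$, and conversely every dual circuit around $o$ forces $|C(o)|<\infty$. Under the coupling $\omega \mapsto \omega^*$ where $e^*$ is occupied iff $e$ is vacant, primal percolation at parameter $p$ corresponds to dual percolation at parameter $1-p$.

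For the upper bound $p_c(L) \leq 1 - p_c(L^*)$, I would fix $p$ with $1-p < p_c(L^*)$ and apply \Tr{AB thm} to the dual model at parameter $1-p$ to obtain exponential decay of dual connectivity. The Peierls estimate would then bound the expected number of dual circuits around $o$ by a sum $\sum_n N(n)\rho(n)$, where $N(n)$ is the number of possible dual circuits of length $n$ around $o$ and $\rho(n)$ bounds the probability that such a circuit is entirely occupied in the dual. Quasi-transitivity of $L^*$ ensures $N(n)$ grows at most exponentially with the connective constant of $L^*$, while the BK inequality (\Tr{BKthm}) combined with exponential decay of the dual two-point function yields exponential decay of $\rho(n)$ in a window below $p_c(L^*)$. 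A renormalisation of scales (partitioning space into annuli of increasing radii and extracting a geometric sequence of independent circuit failure events) then converts this into $\Pr_p(|C(o)|=\infty)>0$, so $p \geq p_c(L)$.

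For the harder lower bound $p_c(L) \geq 1 - p_c(L^*)$, I would fix $p < p_c(L)$ and aim to show that dual percolation at parameter $1-p$ is at or above criticality. By \Tr{AB thm} applied in the primal, crossing probabilities of long primal rectangles at parameter $p$ decay exponentially. Planar duality converts this into the statement that, at dual parameter $1-p$, the dual rectangles are crossed the \emph{other way} with probability tending to $1$. An RSW-type construction, combined with FKG and finite-energy modifications, glues these one-way crossings into dual circuits around arbitrarily large annuli and finally into an infinite dual cluster, forcing $1-p \geq p_c(L^*)$. The principal obstacle is the RSW step in the general quasi-transitive setting: for $\Zs$ the classical Russo--Seymour--Welsh machinery applies verbatim, but for arbitrary quasi-transitive $L$ one must control the aspect ratios and axis directions adapted to the fundamental domain of the $\Z^2$-action, and use the quasi-transitive symmetry to transport crossings between orbits. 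This is precisely the technical core of Sheffield's proof in \cite{SheRan} and the shorter route via randomised decision-tree (OSSS) inequalities in \cite{DCRaTaSha}, either of which I would invoke at this juncture rather than reprove.
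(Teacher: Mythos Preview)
The paper does not prove \Tr{pc star}: it is stated with attribution to \cite{SheRan} and \cite{DCRaTaSha}, and the accompanying paragraph explicitly refers the reader to those sources rather than supplying an argument. There is therefore no in-paper proof to compare your proposal against; by ultimately deferring the decisive RSW step to precisely these references, your proposal is in that sense doing the same thing the paper does.

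One minor point on your Peierls half: the probability that a \emph{given} dual circuit of length $n$ is dual-open is simply $(1-p)^n$ by independence --- BK is not the relevant tool there --- and the naive Peierls sum $\sum_n N(n)(1-p)^n$ only converges for $1-p$ below $1/\mu(L^*)$, not throughout $(0,p_c(L^*))$. To push the argument to the full interval one should instead bound the probability that \emph{some} dual circuit around $o$ exists via the Aizenman--Barsky exponential tail of dual \emph{cluster} sizes (any such circuit is contained in the dual cluster of a vertex on $X^+$, cf.\ \Prr{quasi geo}), and then close with FKG and a finite-energy modification rather than a renormalisation.
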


The analogue of \Tr{theta analytic 2D} for Bernoulli site percolation 
can be proved along the same lines, see \Cr{triangular}. 

\subsection{Preliminaries on \qtl s} \label{prel qtl}

We will construct a 2-way infinite path $X$ in any \qtl\ $L$, which can be thought of as a  `quasi-geodesic' of both $L$ and $L^*$. Alternatively, we could take $X$ to be a 2-way infinite geodesic of $L$ and prove \Prr{quasi geo} below differently, but the approach we follow is not more complicated and has the advantage that it avoids the axiom of (countable) choice.

Since $L$ is a plane graph, we naturally identify $V(L)$ with a set of points of $\R^2$. Let $o\in \R^2$ be a vertex of $L$ and recall that $o + k v_1 \in V(L)$ for some non-zero vector $v_1\in \R^2$ and every $k\in \Z$. Fix a path $P$ from $o$ to $o+v_1$. We may assume \obda\ that $P$ does not contain $o+k v_1$ for $k\neq 0,1$, for otherwise we can replace $v_1$ by one of its multiples and $P$ by a subpath. Note that the union $\bigcup_k P + k v_1$ of its translates along multiples of $v_1$ contains a 2-way infinite path $X$. Moreover, it is not too hard to see that we can choose $X$ to be periodic, i.e.\ to satisfy $X+ t v_1= X$ for some $t\in \N$.
For convenience, we will assume without loss of generality that $o$ lies in $X$. Let $X^+=(x_0=o,x_1,\ldots), X^-=(\ldots,x_{-1},x_0=o)$ denote the two $1$-way infinite sub-paths of $X$ starting at $o$.

\begin{proposition} \label{quasi geo}
Let $L$ be a \qtl\ and $X^+$ the infinite path defined above. Then there is a constant $f=f(L)>0$ such that for every connected subgraph of $L$ that surrounds $o$ and has $N$ edges must contain one of the first $fN$ vertices $x_0,x_1,\ldots,x_{fN-1}$ of $X^+$. Similarly, every connected subgraph of $L^*$ that surrounds $o$ and has at most $N$ edges must cross one of the first $fN$ edges $e_1,e_2,\ldots,e_{fN}$ of $X^+$.
\end{proposition}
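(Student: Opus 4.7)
The plan combines three elementary ingredients: (a) the edges of $L$ and of $L^*$ have uniformly bounded Euclidean length, since by quasi-transitivity each lattice has only finitely many $\Z^2$-orbits of edges; (b) any graph-theoretic cycle enclosing $o$ is a simple closed curve whose Euclidean diameter is at most half its total length, so every point of it lies within that distance of $o$; and (c) the periodicity $X + t v_1 = X$ forces only $O(r)$ vertices or edges of $X^+$ to lie within Euclidean distance $r$ of $o$.

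For the primal statement, write $D$ for the maximum Euclidean length of an edge of $L$ and let $S \subseteq L$ have at most $N$ edges and surround $o$. Since a planar forest encloses no point, $S$ must contain a cycle $C$ whose image in $\R^2$ is a simple closed curve with $o$ in its bounded component. Its total Euclidean length is at most $|E(C)|\, D \leq ND$, so by (b) every point of $C$ lies within Euclidean distance $ND/2$ of $o$. The ray $X^+$ begins at $o$ inside $C$ and goes to infinity outside $C$, so it must cross $C$; since $L$ is a plane graph with no edge crossings, the crossing occurs at a vertex shared by $X^+$ and $C \subseteq S$, which therefore lies within Euclidean distance $ND/2$ of $o$. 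The dual statement follows by the same argument with the maximum edge length $D^*$ of $L^*$ in place of $D$: a surrounding subgraph $S^* \subseteq L^*$ of size at most $N$ contains a cycle $C^*$ enclosing $o$ of Euclidean length at most $ND^*$; by planar duality the crossing between $X^+$ and $C^*$ occurs in the interior of a primal edge $e$ of $X^+$ whose dual belongs to $C^* \subseteq S^*$; and $e$ lies within Euclidean distance $ND^*/2$ of $o$.

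It remains to deduce (c). Let $N_0$ be the number of vertices of $X$ per period. Periodicity identifies the $(kN_0)$-th vertex of $X^+$ with $o + k t v_1$, and within a single period $X$ has Euclidean diameter bounded by a constant depending only on $L$. Hence any vertex or edge of $X^+$ lying within Euclidean distance $r$ of $o$ must be among the first $N_0 r/(t|v_1|) + O(1)$ vertices or edges of $X^+$. Plugging in $r = ND/2$ and $r = ND^*/2$ and enlarging the resulting constant to absorb both the additive $O(1)$ and the edge cases at small $N$ produces a single $f = f(L)$ for which both assertions hold. The main obstacle I foresee is the rigorous justification of step (b): one needs the elementary but slightly subtle fact that a compact planar region has the same diameter as its boundary, from which together with the pigeonhole argument bounding $\mathrm{diam}(C)$ by $\mathrm{length}(C)/2$ one concludes that every point of $C$ lies within $\mathrm{length}(C)/2$ of $o$ whenever $o$ lies in the bounded component of $C$.
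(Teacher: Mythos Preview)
Your proof is correct and follows essentially the same strategy as the paper's: bound the Euclidean extent of the surrounding subgraph using the uniform bound $B$ on edge lengths coming from quasi-transitivity, then invoke the periodicity of $X$ to translate a Euclidean-distance bound into an index bound along $X^+$. The paper's own proof is in fact only a sketch (it trails off mid-sentence), so you have supplied considerably more detail than the paper does.

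There is one small stylistic difference worth noting. The paper's sketch hints at a \emph{two-ray} argument: a surrounding subgraph must cross both $X^+$ and $X^-$, and since the two crossing points are joined inside $S$ by a path of at most $N$ edges, they are within Euclidean distance $NB$ of each other; periodicity then forces the $X^+$-crossing to have index $O(N)$, since the $m$-th vertex of $X^+$ is at Euclidean distance $\Theta(m)$ from $X^-$. You instead argue \emph{one-sidedly}, extracting a cycle $C\subseteq S$ enclosing $o$, bounding the diameter of the closed region it bounds by half its length, and concluding that the $X^+$-crossing lies within $ND/2$ of $o$ itself. Both routes are valid; yours requires the small extra observation that the diameter of a closed Jordan domain equals the diameter of its bounding curve, which you correctly flag and which follows since the diameter of a compact set is always realised on its boundary.
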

\begin{proof}
We first claim that $X$ is a \textit{quasi-geodesic}, i.e. there is a constant $c>0$ such that for every $x_i,x_j\in X$ we have $d_L(x_i,x_j)\geq c |i-j|$, where $d_L$ denotes the graph-distance in $L$.
This is not too hard to see directly, but it can be immediately deduced from the \v{S}varc--Milnor lemma \cite{Mi68}, which states that if 
a group $\Gamma$ acts  properly discontinuous and co-compactly by isometries on a geodesic metric space $M$, then any finitely generated \Cg\ of $\Gamma$ is canonically quasi-isometric to $M$. We apply this with $\Gamma =\Z^2$ twice, once with $M= \R^2$ and once with  $M= L$ endowed with its graph-metric, to deduce that $X$ is a quasi-geodesic of $L$ from the fact that $X$ is a quasi-geodesic of $\R^2$ by construction. Here the action of $\Z^2$ on $\R^2$ and $L$ is given by the translation by the vectors $v_1,v_2$ as in the definition of a \qtl.

We can apply the same argument to $L^*$ to deduce that $d_{L^*}(e_i,e_j)\geq c' |i-j|$ for some constant $c'>0$. Indeed, the aforementioned action of  $\Z^2$ on $\R^2$ induces an action of $\Z^2$ on $L^*$ by the definitions, and this action is co-compact too because $L$ is locally finite.

\medskip
Suppose now that some connected graph $S\subset L$ surrounds $o$. Then $S$ must separate $o$ from infinity, and so it must contain a vertex $x^+$ in $X^+$, and a vertex $x^-$ in $X^-$ ($x^+$ and $x^-$ may possibly coincide). If $S$ has at most $N$ edges, then the graph-distance between $x^+$ and $x^-$ is at most $N$ because $S$ is a connected graph. Since $X$ is a quasi-geodesic, the indices of $x^+$ and $x^-$ differ by at most $N/c$. We now see that $x^+$ is one of the first $1+N/c$ vertices of $X^+$.

The second sentence can be proved similarly by using the inequality\\  $d_{L^*}(e_i,e_j)\geq c' |i-j|$ from above.

\end{proof}

\subsection{Main result} \label{sec proof th}

Let $L$ be a \qtl, and $o$ a vertex of $L$ fixed throughout this section.
A subgraph $S$ of $L$ is called an \defi{\scv} (of $o$) if there is a finite connected subgraph $H$ of $L$ containing $o$ such that $S$ consists of the vertices and edges incident with the unbounded face of $H$.

The \defi{boundary $\partial S$} of an \scv\ $S$ is the set of edges of $L$ that are incident with $S$ and lie in the unbounded face of $S$. It is important to remember that $\partial S$ may contain edges that have both their end-vertices in $S$; our proof will break down (at \Lr{scs boundary}) if we exclude such edges from the definition of $\partial S$. Let $|S|:=|E(S)|$ be the number of edges in $S$.

\comment{\begin{lemma} \label{scs axis}	
	Every {\scv } contains one of the vertices $x_0,\ldots, x_{|S|-1}$, where $x_i:=(i,0)$ is the $i$th vertex on the positive horizontal axis of \Zs.
\end{lemma}
	\begin{proof}
}

Given a realisation $\oo\in 2^{E(L)}$ of our Bernoulli percolation on $L$, we say that an \scv\ $S$ \defi{occurs} in \oo\ if $S$ is the boundary of the unbounded face of some cluster of \oo. This happens exactly when all edges of $S$ are occupied and all edges in $\partial S$ are vacant. 

The following is an easy consequence of the definitions.
\begin{lemma} \label{scs disjoint}	
If two occurring {\scv s}  share a vertex then they coincide. \qed
\end{lemma}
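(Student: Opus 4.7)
The plan is to argue that occurrence of an \scv\ $S$ essentially identifies $S$ with the outer boundary of a specific cluster, and then use the disjointness of clusters to force the conclusion.

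First I would recall the structural content of $S$ occurring in $\omega$: by the definition of \scv, $S$ is the set of vertices and edges incident with the unbounded face of some finite connected subgraph $H \ni o$, so $S$ is itself a connected subgraph of $L$ (the boundary of the outer face of a connected finite plane graph is connected, even if it is traversed as a non-simple closed walk). By the definition of occurrence, every edge of $S$ is occupied in $\omega$ and every edge of $\partial S$ is vacant. Since $S$ is connected and entirely occupied, its vertex set is contained in a single cluster $C$ of $\omega$. Moreover, because $\partial S$ is precisely the set of edges of $L$ that are incident with $S$ and lie in the unbounded face of $S$, and all such edges are vacant, no occupied edge joins a vertex of $S$ to a vertex outside the closed bounded region enclosed by $S$. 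Hence $S$ is in fact the boundary of the unbounded face of the cluster $C = C(\omega, S)$.

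Now suppose $S_1$ and $S_2$ are two occurring \scv s that share a vertex $v$. By the previous paragraph, each $S_i$ is the boundary of the unbounded face of a cluster $C_i$ of $\omega$, and $v \in V(S_i) \subseteq C_i$ for $i = 1, 2$. Since distinct clusters are vertex-disjoint, $C_1 = C_2$. But the boundary of the unbounded face of a fixed finite subgraph of a plane graph is uniquely determined, so $S_1$ and $S_2$, both being this boundary for the common cluster $C_1 = C_2$, must coincide.

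I do not expect a real obstacle here; the statement is essentially bookkeeping. The only point worth being careful about is that the two auxiliary facts I used---connectedness of $S$ and uniqueness of the outer boundary of a finite connected plane graph---are consequences of $L$ being a plane graph and $H$ being finite and connected, and both follow from standard planar-graph considerations. Once these are in hand, the proof reduces to the single observation that vertex-disjoint clusters cannot share a vertex.
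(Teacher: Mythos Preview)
Your argument is correct and is exactly the natural way to make the lemma precise. The paper gives no proof at all: it simply states that the lemma is ``an easy consequence of the definitions'' and marks it with a \qed. Your write-up supplies the details the paper leaves implicit, namely that an occurring \scv\ is connected and occupied, hence lies in a single cluster $C$, that the vacancy of $\partial S$ forces $C$ to stay in the closed bounded region cut out by $S$, and that consequently $S$ coincides with the outer boundary of $C$, which is uniquely determined by $C$.

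One very minor remark: in your last step you identify $S$ with ``the \scv\ of $C$'', but note that the paper's definition of \scv\ requires the witnessing subgraph $H$ to contain $o$, while the cluster $C$ need not (cf.\ the outer interface in Figure~1). This does not affect your argument, since what you actually use is that $S$ equals the boundary of the unbounded face of $C$, a notion that makes sense and is unique regardless of whether $o\in C$; just be careful with the terminology.
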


The following is one of the reasons why our proof only applies to lattices rather than arbitrary planar graphs. Roughly speaking, it states that \scv s are uniformly non-amenable.

\begin{lemma} \label{scs boundary}	
For every {\scv } $S$ we have $| \partial S| \geq |S|/k$ for some integer $k=k(L)$. 
\end{lemma}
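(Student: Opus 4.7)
The strategy is a face-counting argument using the fact that a \qtl\ $L$ has only finitely many face orbits under the translation action, hence a constant $k'=k'(L)$ with $|\partial f|\leq k'$ for every face $f$ of $L$. Let $F$ denote the unbounded face of $H$, let $\Phi$ be the collection of faces $f$ of $L$ with $f\subseteq F$ and $\partial f\cap E(S)\neq\emptyset$, and for $f\in\Phi$ set $a_f:=|\partial f\cap E(S)|$ and $b_f:=|\partial f\cap\partial S|$.

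The first step is the identity $\sum_{f\in\Phi}a_f=|E(S)|$: every edge $e\in E(S)$ has $F$ on one of its two sides (since $e$ bounds $F$), and the face of $L$ on the $F$-side of $e$ is a uniquely determined element $f(e)\in\Phi$; the fibres of $e\mapsto f(e)$ are counted by the $a_f$'s. The second step refines the trivial bound $a_f\leq|f|$ to $a_f\leq|f|-1$ for every $f\in\Phi$. Indeed, if $a_f=|f|$, then $\partial f\subseteq E(S)\subseteq E(H)$, which forces $f$ to coincide with a connected component of $\R^2\setminus H$, i.e.\ $f$ is a bounded face of $H$; but $f\subseteq F$ and $F$ is the unbounded face of $H$, a contradiction. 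Combining these two observations yields $|\Phi|\geq|E(S)|/(k'-1)$.

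The third step shows $b_f\geq 1$ for every $f\in\Phi$ by a transition argument along the boundary of $f$. Since $1\leq a_f<|f|$, some pair of consecutive edges $e,e'$ of $\partial f$ satisfies $e\in E(S)$ and $e'\notin E(S)$; their shared endpoint $v$ lies in $V(S)$. The edge $e'$ cannot lie in $E(H)\setminus E(S)$, since such edges have bounded faces of $H$ on both sides while $e'$ is on $\partial f\subseteq \overline{F}$. Hence $e'$ lies in $F$ and is incident to $v\in V(S)$, so $e'\in\partial S$. This is exactly where it matters that $\partial S$ may contain edges with both endpoints in $V(S)$: we make no assumption on the other endpoint of $e'$. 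Each edge of $\partial S$ lies on the boundary of at most two faces of $L$, so $\sum_{f\in\Phi}b_f\leq 2|\partial S|$, giving $|\Phi|\leq 2|\partial S|$. Combining with the previous step, $|\partial S|\geq|E(S)|/(2(k'-1))$, so the lemma holds with $k:=2(k'-1)$.

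The main obstacle will be making the topological argument in the second step airtight, namely turning the intuitive claim ``$\partial f\subseteq E(H)$ forces $f$ to be a bounded face of $H$'' into a rigorous statement, and similarly verifying that the face $f(e)$ in the first step genuinely lies inside $F$ even when $H$ has a complicated structure such as pendant edges sticking out into $F$. The remaining steps are straightforward combinatorics.
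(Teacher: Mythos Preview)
Your proof is correct and is essentially the same argument as the paper's, which phrases it as a walk along the boundary of $F$ (``we will never traverse $k'$ or more edges of $S$ without encountering an edge in $\partial S$, and we will encounter each edge in $\partial S$ at most twice'') rather than an explicit face count; both arrive at $k=2(k'-1)$. One small correction: your identity $\sum_{f\in\Phi}a_f=|E(S)|$ should be the inequality $\sum_{f\in\Phi}a_f\geq|E(S)|$, since an edge $e\in E(S)$ with $F$ on \emph{both} sides (precisely your pendant-edge scenario) contributes to two $a_f$'s---but this goes the right way and the rest of the argument is unaffected.
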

For example, if $L$ is the square lattice $\Zs$, then $k=2$. (And not $k=1$, because it can happen that most edges in $\partial S$ have both their end-vertices in $S$; for example, we can have a `space filling' \scv\ whose vertex set is an $n \times n$ box of \Zs. The following proof will give a worse bound than $k=2$, but we can afford to be generous.)
\begin{proof}
By quasi-transitivity, any face of $L$ has at most $C$ edges for some $C>0$. 
Recall that $S$ consists of the vertices and edges incident with the unbounded face $F$ of some $H \subset L$. If we walk along the boundary $S$ of $F$, we will never traverse $C$ or more edges of $S$ without encountering an edge in $\partial S$, and we will encounter each edge in $\partial S$ at most twice. Thus our assertion holds for $k= 2(C-1)$. 
\end{proof}

A \defi{\smc} $M$ is a finite set of pairwise vertex-disjoint \scv s. 

\begin{lemma} \label{dual connd}	
For every \scv\ $S$, the edge-set $\partial S^*$ spans a connected subgraph of $L^*$ surrounding $o$. Similarly, for every \smc\ $M$, the edge-set $\partial M^*$ spans a subgraph of $L^*$ the number of  components of which equals the  number of \scv s in $M$ (and each of these components surrounds $o$).
\end{lemma}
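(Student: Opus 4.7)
My plan is to prove the first claim by tracing the topological boundary of the unbounded face in $L$ and translating it into a closed walk in $L^*$ whose edges are exactly $\partial S^*$. Fix a finite connected subgraph $H\subseteq L$ with $o\in V(H)$ such that the \scv\ $S$ consists of the vertices and edges of $H$ incident with the unbounded face $F_\infty(H)$. Since $H$ is a connected plane graph, the standard boundary walk $W$ of $F_\infty(H)$ is a well-defined closed walk that traverses each edge of $E(S)$ once if $F_\infty(H)$ lies on only one side of it and twice otherwise; equivalently, $W$ visits each vertex $v\in V(S)$ exactly once per sector of $F_\infty(H)$ at $v$.

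For each sector-visit of $W$ to a vertex $v$, let $e,e'\in E(S)$ be the edges by which $W$ enters and exits $v$. The edges incident to $v$ that lie strictly between $e$ and $e'$ in the cyclic order around $v$ (within the $F_\infty(H)$-sector) are precisely the edges of $\partial S$ at $v$ in this sector; list them as $g_1,\ldots,g_r$. Between consecutive edges in the resulting fan $e,g_1,\ldots,g_r,e'$ sits a face of $L$ lying in $F_\infty(H)$, so the dual edges $g_1^*,\ldots,g_r^*$ form a path in $L^*$ from the face of $L$ adjacent to $e$ on the $F_\infty(H)$-side to the analogous face for $e'$. Concatenating these local dual paths as $W$ traverses its sector-visits yields a closed walk $W^*$ in $L^*$, since the outgoing face along $e'$ at $v$ coincides with the incoming face along $e'$ at the following sector-visit.

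Every edge of $\partial S$ has an endpoint in $V(S)$ and is drawn in $F_\infty(H)$, so it belongs to at least one such fan; conversely the fan edges are all in $\partial S$. Hence the edge set of $W^*$ equals $\partial S^*$, and being a single closed walk, $W^*$ spans a connected subgraph of $L^*$. Moreover, $W$ encloses the bounded complement of $F_\infty(H)$, which contains $o$, so $W$ has winding number $\pm 1$ around $o$; perturbing $W$ slightly outward into $F_\infty(H)$ produces a curve homotopic in $\R^2\setminus\{o\}$ to the polygonal path traced by $W^*$ through faces and dual edges, and this shows that $\partial S^*$ surrounds $o$.

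For the second claim, since every $S_i\in M$ surrounds $o$ and the $S_i$ are pairwise vertex-disjoint, the elements of $M$ are nested around $o$. If $S_i$ lies strictly inside $S_j$, then any edge of $\partial S_j$ lies in $F_\infty(H_j)$ with an endpoint in $V(S_j)$, whereas $V(S_i)$ lies in the bounded complement of $F_\infty(H_j)$; hence no single edge of $L$ can belong to both $\partial S_i$ and $\partial S_j$, giving $\partial S_i\cap\partial S_j=\emptyset$. Therefore $\partial M^*=\bigsqcup_i\partial S_i^*$ decomposes as a disjoint union of the $|M|$ connected subgraphs produced by the first part, each surrounding $o$, so it has exactly $|M|$ components. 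The main subtlety throughout is handling cut vertices of $H$ at which $F_\infty(H)$ appears in multiple sectors; treating each sector as a separate sector-visit of $W$ resolves this uniformly.
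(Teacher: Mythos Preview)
Your argument for the first assertion is correct and is essentially the paper's Jordan-curve argument written out combinatorially: the paper simply takes a Jordan curve $J$ just outside $H$ that crosses exactly the edges of $\partial S$, and reads off the closed walk in $L^*$ from the cyclic sequence of $L$-faces and $L$-edges that $J$ meets. Your boundary-walk-plus-fans construction produces the same closed walk, just assembled locally.

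For the second assertion there is a genuine gap. You establish that the \emph{edge sets} $\partial S_i$ are pairwise disjoint, and then conclude that the subgraph of $L^*$ spanned by $\partial M^*=\bigsqcup_i \partial S_i^*$ has exactly $|M|$ components. But an edge-disjoint union of connected subgraphs can have fewer components than summands whenever two of them share a \emph{vertex} of $L^*$, i.e.\ a face of $L$. You have not excluded the possibility that some face of $L$ is incident both to an edge of $\partial S_i$ and to an edge of $\partial S_j$ for $i\neq j$; if that happened, $\partial S_i^*$ and $\partial S_j^*$ would lie in a single component of the spanned subgraph. This possibility can be ruled out (if $S_i$ lies in a bounded face of $S_j$, then every open edge of $\partial S_i$, and hence every $L$-face adjacent to it, lies in a bounded face of $S_j$, whereas every $L$-face adjacent to an edge of $\partial S_j$ lies in the unbounded face of $S_j$), but it needs to be said. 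The paper takes a slightly different route: it observes that the edges of the outer of $S_i,S_j$ dualise to a cut of $L^*$ separating $\partial S_i^*$ from $\partial S_j^*$, and that $\partial M^*$ contains no dual edge of either $S_k$ by vertex-disjointness, so no path in the spanned subgraph can cross that cut.
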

\comment{
\begin{proof}
Recall that $S$  consists of the vertices and edges incident with the unbounded face $F$ of a finite connected subgraph $H$ in a fixed embedding of $L$. Let $J$ be a Jordan curve disjoint from $H$ such that $H$ lies in the bounded side of $J$, and $J$ is close enough to $H$ that it meets all edges in $\partial S$ and no other edges of $L$. Then the cyclic sequence of faces and edges of $L$ visited by $J$ defines a closed walk in $L^*$, proving that $\partial S^*$ spans a connected subgraph of $L^*$. That this subgraph surrounds $o$ is an immediate consequence of the definition of an \scv.

Now let $M$ be a \smc\ comprising the \scv s $S_1, \ldots, S_m$. We just proved that each $\partial S_i^*$ spans a connected subgraph of $L^*$, so it only remains to show that $\partial M^*$ contains no path between $\partial S_i^*$ and $\partial S_j^*$ for $i\neq j$. Since $S_i$ and $S_j$ are vertex-disjoint, one of them is contained in a bounded face of the other. Let us assume that $S_i$ is contained in a bounded face of $S_j$. Then it is easy to see that the edges of $S_i$ contain a cut of $L^*$ separating $\partial S_i^*$ from $\partial S_j^*$. Since $\partial M^*$ contains no edge of $S_i^*$ by the vertex-disjointness of the $S_i$, this proves our claim that $\partial M^*$ contains no path between $\partial S_i^*$ and $\partial S_j^*$.
\end{proof}
}
\begin{proof}
Recall that $S$ consists of the vertices and edges incident with the unbounded face $F$ of a finite connected subgraph $H$ in a fixed embedding of $L$. Hence $S$ coincides with the topological boundary of the unbounded component of the complement of $S$ in $\mathbb{R}^2$. Let $S'$ be the union of the bounded components of the complement of $S$. We claim that there is a Jordan curve $J$ disjoint from $H$ such that $H$ lies in the bounded side of $J$, and $J$ is close enough to $H$ that it meets all edges in $\partial S$ and no other edges of $L$. Indeed, since the action defined by the vectors $u_1,u_2$ has finitely many orbits of vertices, for each endvertex $u$ of an edge in $S$, there is a constant $\epsilon=\epsilon(u)>0$ such that every other vertex of $L$ has distance at least $\epsilon$ from $u$. Consider $0<\delta<\epsilon/2$ small enough such that for every edge $e$ incident to $u$, the subset of $e$ lying in the open disk $D(u,\delta)$ is a line segment. Notice that all these disks are pairwise disjoint. In this way we can cover the vertices of $V(S)$ and some initial segments of its edges. By compactness, we can cover the uncovered subarcs of any edge in $S$ by finitely many open disks that are disjoint from any edge of $\partial S$ in such a way that disks corresponding to distinct edges are disjoint. Taking the union of $S'$ with all these open disks covering the edges of $S$, we obtain a simply connected domain. Hence its boundary is connected, and in fact a Jordan curve because it consists of a finite number of circular arcs. This Jordan curve has clearly the desired properties.

The cyclic sequence of faces and edges of $L$ visited by $J$ defines a closed walk in $L^*$. From the construction of $J$ we have that $J$ crosses each edge of $\partial S$ only once or twice (when both endvertices lie in $V(S)$). Therefore, the closed walk in $L^*$ is finite, proving that $\partial S^*$ spans a connected subgraph of $L^*$. That this subgraph surrounds $o$ is an immediate consequence of the definition of an \scv.

Now let $M$ be a \smc\ comprising the \scv s $S_1, \ldots, S_m$. We just proved that each $\partial S_i^*$ spans a connected subgraph of $L^*$, so it only remains to show that $\partial M^*$ contains no path between $\partial S_i^*$ and $\partial S_j^*$ for $i\neq j$. Since $S_i$ and $S_j$ are vertex-disjoint, one of them is contained in a bounded face of the other. Let us assume that $S_i$ is contained in a bounded face of $S_j$. Then it is easy to see that the edges of $S_i$ contain a cut of $L^*$ separating $\partial S_i^*$ from $\partial S_j^*$. Since $\partial M^*$ contains no edge of $S_i^*$ by the vertex-disjointness of the $S_i$, this proves our claim that $\partial M^*$ contains no path between $\partial S_i^*$ and $\partial S_j^*$.
\end{proof}

Let $\MS$ denote the set of \smc s of $L$. We say that $M \in \MS$  \defi{occurs} if each of the \scv s it contains occurs.  Let $|M| := \sum_{S_i \in M} |S_i|$ be the total number of edges in $M$. Let $\partial M:= \bigcup_{S_i \in M} \partial S_i$, and let
 $\MS_n:= \{ M \in \MS \mid |\partial M|=n\}$ be the set of \smc s with $n$ boundary edges. 

\begin{lemma} \label{HR bound}	
There is a constant $r\in \R$ \st\ \fe\ $n\in \N$ at most $r^{\sqrt{n}}$ elements of $\MS_n$ can occur simultaneously in any percolation instance \oo.
\end{lemma}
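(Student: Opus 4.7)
The plan is to parametrize the \scv s occurring in $\omega$ by distinct (dual) edges of the quasi-geodesic $X^+$, express each $M \in \MS_n$ as a subset of this parametrized family, and then invoke the subexponential growth of the partition function $p(m)$.

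Let $\mathcal{T} = \mathcal{T}(\omega)$ denote the set of all \scv s that occur in $\omega$. By \Lr{scs disjoint}, the elements of $\mathcal{T}$ are pairwise vertex-disjoint, and since each of them surrounds $o$ in the plane they are linearly nested. Applying \Lr{dual connd} to the \smc\ $\{T,T'\}$ formed by any two distinct elements of $\mathcal{T}$ shows that $\partial T^*$ and $\partial {T'}^*$ lie in different components of $\partial (\{T,T'\})^*$; since each of these components is connected, the edge sets $\partial T$ and $\partial T'$ are disjoint in $E(L)$.

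I then apply the second (dual) part of \Prr{quasi geo} to each $T\in \mathcal{T}$: by \Lr{dual connd}, $\partial T^*$ is a connected subgraph of $L^*$ surrounding $o$ with $|\partial T|$ edges, so it contains (the dual of) one of the first $f|\partial T|$ edges of $X^+$, where $f=f(L)$. Combined with the edge-disjointness just established, distinct elements of $\mathcal{T}$ select distinct edges of $X^+$, whence
\[
|\{T \in \mathcal{T} \,:\, |\partial T| \leq A\}| \leq fA \quad \text{for every } A\ge 1.
\]
Enumerating $\mathcal{T} = \{T_1, T_2, \ldots\}$ in nondecreasing order of $b_i := |\partial T_i|$, this yields the crucial linear lower bound $b_i \geq i/f$.

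Every $M \in \MS_n$ occurring in $\omega$ is a finite subset of $\mathcal{T}$, and hence is encoded by a finite index set $J \subset \mathbb{N}$ satisfying $\sum_{i \in J} b_i = n$; distinct $M$'s give distinct $J$'s. The inequality $b_i \geq i/f$ forces $\sum_{i \in J} i \leq fn$, so $J$ is a set of distinct positive integers summing to at most $fn$, i.e.\ a partition of some $m \leq fn$ into distinct parts. The number of such sets is at most $\sum_{m=0}^{fn} q(m) \leq (fn+1)\,p(fn)$, where $q(m)\le p(m)$ counts partitions of $m$ into distinct parts. By the \HRT\ (\Tr{HRthm}), or merely the subexponential bound $p(m)\leq e^{c\sqrt m}$ proved just afterwards, this is at most $r^{\sqrt n}$ for a suitable constant $r=r(L)$.

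The step that will require the most care is the edge-disjointness of the $\partial T$'s across $\mathcal{T}$; fortunately this is a direct consequence of \Lr{dual connd} once one notices that any pair of occurring \scv s automatically forms a \smc. Once that is in place, the structural bound $b_i \geq i/f$, together with Hardy--Ramanujan, converts the counting problem into a routine estimate and produces the claimed $r^{\sqrt n}$.
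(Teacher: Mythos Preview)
Your proof is correct and follows the same core strategy as the paper: parametrize the occurring \scv s by where they hit the quasi-geodesic $X^+$, obtain a linear lower bound on the parameter in terms of the \scv's size, and conclude via Hardy--Ramanujan that the number of admissible index sets is $O(r^{\sqrt n})$.

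The implementation differs slightly. The paper works on the \emph{primal} side: each occurring \scv\ $S$ meets $X^+$ in a vertex $x_i$ with $i < f|S|$, and one then passes from $|S|$ to $|\partial S|$ via \Lr{scs boundary} to get a constraint of size $fkn$ on the index sum. You instead work on the \emph{dual} side: $\partial T^*$ contains (the dual of) one of the first $f|\partial T|$ edges of $X^+$, so your parameter is directly controlled by $|\partial T|$ without passing through \Lr{scs boundary}. The price you pay is needing edge-disjointness of the $\partial T$'s rather than the immediate vertex-disjointness of the $T$'s; your deduction of this from \Lr{dual connd} is correct and neat. Your approach yields the slightly better constraint $\sum_{i\in J} i \le fn$ rather than $fkn$, though this makes no difference to the conclusion.
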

\begin{proof}
Suppose $M\in \MS_n$ occurs in \oo. 
Since occurring \scv s meet $X^+$ by \autoref{quasi geo}, and they are vertex-disjoint by \Lr{scs disjoint}, $M$ is uniquely determined by the subset $D$ of $\{x_0, x_1, \ldots\}$ it meets, in other words, $M= \bigcup_{x_i\in D} S(x_i,\oo)$, where $S(x_i,\oo)$ denotes the occurring \scv\ containing $x_i$.

Note that  $|S(x_i,\oo)| > i/f $ \fe\ $x_i \in D$ by \autoref{quasi geo}. Since $k n\geq |M| = \sum_{x_i\in D} |S(x_i,\oo)|$ by \Lr{scs boundary} and the above remark, we deduce $f k n> \sum_{x_i\in D} i$. This means that $D$ uniquely determines a partition of a number smaller than $f k n$. Moreover, distinct occurring \smc s in $\MS_n$ determine distinct subsets $D$ of $\{x_0, x_1, \ldots\}$, and therefore distinct partitions. By the \HRT\ (\Tr{HRthm}), the number of such partitions is less than $r^{\sqrt{n}}$ for some $r>0$. Thus less than $r^{\sqrt{n}}$ elements of $\MS_n$ can occur simultaneously in \oo.
\end{proof}

If $C(o)$ is finite, then there is exactly one \scv\ that occurs and is contained in $C(o)$, namely the boundary of the unbounded face of $C(o)$.
We denote the probability of this event by $P_S$, that is, we set 
$$P_S(p):= \Pr(\text{$S$ occurs and } S\subset C(o)).$$
Thus we can write the probability $\theta_o(p)$ that $C(o)$ is finite by summing $P_S$ over all $S\in \CS$, where $\CS$ denotes the set of \scv s:
\labtequ{thetaS2}{$1-\theta_o(p)= \sum_{S\in \CS} P_S(p)$}
\fe\  $p\in (p_c,1]$.

As usual, our strategy to prove the analyticity of $\theta$, is to express $\theta$ as an infinite sum of functions that admit analytic extensions, namely, probabilities of events that depend on finitely many edges, and then apply \Cr{cor general}. 
Formula \eqref{thetaS2} is a first step in this direction, however, the functions $P_S$ are not fit for our purpose: the event $\{\text{$S$ occurs and } S\subset C(o) \}$ is not measurable with respect to the set of edges incident with $S$ only. Therefore, we would prefer to express  $\theta$ in terms of the simpler functions 
$$Q_S:= \Pr_p(\text{$S$ occurs}).$$
These functions have the advantage that comply with the premise of \Cr{D and not F NN}, and hence $|Q_S(p)|$ is bounded in $D(p,M)$ by $e^{C_{M,p}|S|}Q_S(p+M)$, where $C_{M,p}$ is independent of $S$. 
But when trying to write $\theta$ as a sum involving these $Q_S$, we have to be more careful: we have $$1-\theta_o(p)=\Pr_p(|C(o)|<\infty)=\Pr_p(\text{at least one $S\in \CS$ occurs})$$ by the definitions, but more than one $S\in \CS$ might occur simultaneously. Therefore, we will apply the inclusion-exclusion principle to the set of events $\{\text{$S$ occurs}\}_{S\in \CS}$. We claim that
%
\labtequ{thetaQ}{$1-\theta_o(p)= \sum_{M\in \MS} (-1)^{c(M)+1} Q_M(p)$}
\fe\  $p\in (p_c,1]$, where $c(M)$ denotes the number of \scv s in the \smc\ $M$. 

To prove this, we need first of all to check that the sum in the right hand side converges. This is implied by \Lr{exp dec Z2} below, which states that\\ $\sum_{M\in \MS_n} Q_M(p)$ decays exponentially in $n$, and therefore our sum converges absolutely. Then, we need to check that this sum agrees with the inclusion-exclusion formula. This is so because, \fe\ set $I$ of \scv s, we have $\Pr(\text{every $S\in I$ occurs})=0$ unless the elements of $I$ are pairwise vertex-disjoint ---that is, $I \in \MS$--- by \Lr{scs disjoint} and so we can restrict the inclusion-exclusion formula to $\MS$ rather than consider sets of \scv s that intersect.

The main part of our proof is to show that the probability for at least one \smc\ in $\MS_n$ to occur  decays exponentially in $n$, which will imply the following lemma. The rest of the arguments used to prove \Tr{theta analytic 2D} are identical to those of e.g.\ \Tr{chi NN} or \ref{thm nonam}.
\begin{lemma} \label{exp dec Z2}
\Fe\ 	$p\in (p_c,1]$ there are constants $c_1=c_1(p)$ and $c_2=c_2(p)$ with $c_2<1$, \st\  \fe\ $\nin$,
\labtequ{dec Q}{$\sum_{M\in \MS_n} Q_M(p) \leq c_1 {c_2}^{n}.$} 
Moreover, if $[a,b]\subset (p_c,1]$, then the constants $c_1$ and $c_2$ can be chosen independent of $p$ in such a way that \eqref{dec Q} holds for every $p\in [a,b]$.
\end{lemma}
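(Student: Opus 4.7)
The strategy is to exploit the duality between supercritical primal and subcritical dual percolation. By \Tr{pc star}, $q:=1-p<p_c(L^*)$ for $p\in(p_c,1]$, so the \ABP\ applied to bond percolation on $L^*$ at parameter $q$ yields constants $c_0,\alpha>0$ (depending on $p$) with $\Pr_q(|C_{L^*}(e^*)|\geq m)\leq c_0 e^{-\alpha m}$ for every dual edge $e^*$. The first substantive step is to bound the single-interface contribution $a_m:=\sum_{S:|\partial S|=m}Q_S(p)$. For each \scv\ $S$ with $|\partial S|=m$, $\partial S^*$ is a connected subgraph of $L^*$ with $m$ edges surrounding $o$ by \Lr{dual connd}, so \Prr{quasi geo} places some edge of $\partial S^*$ among the first $fm$ edges of $X^+$; let $e_S^*$ be the first such edge. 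For any fixed anchor $e^*$, at most one \scv\ with $e_S^*=e^*$ can occur in a given instance: two distinct occurring such \scv s would together form a multi-interface $M$ by \Lr{scs disjoint}, but then by \Lr{dual connd} their duals would be edge-disjoint components of $\partial M^*$, contradicting $e^*\in\partial S_1^*\cap\partial S_2^*$. Hence
\[
\sum_{S\,:\,e_S^*=e^*,\,|\partial S|=m} Q_S(p)\leq \Pr_q(|C_{L^*}(e^*)|\geq m)\leq c_0 e^{-\alpha m},
\]
and summing over the $fm$ anchor positions gives $a_m\leq fm\,c_0 e^{-\alpha m}$.

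Next I would exploit independence across components. For $M=\{S_1,\ldots,S_c\}\in\MS$, the edge-sets $E(S_i)\cup\partial S_i$ are pairwise disjoint: $E(S_i)\cap E(S_j)=\emptyset$ by vertex-disjointness; $\partial S_i\cap\partial S_j=\emptyset$ because the $\partial S_i^*$ are edge-disjoint components of $\partial M^*$ (\Lr{dual connd}); and $E(S_i)\cap\partial S_j=\emptyset$ because an edge in $\partial S_j$ has an endpoint in $S_j$ while edges of $E(S_i)$ have both endpoints in $S_i$. Therefore the events $\{S_i\text{ occurs}\}$ are mutually independent and $Q_M(p)=\prod_i Q_{S_i}(p)$. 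Bounding $\sum_{M\in\MS_n}Q_M$ by summing over ordered $c$-tuples (an upper bound, since we drop the vertex-disjoint constraint) yields
\[
\sum_{M\in\MS_n}Q_M(p)\leq \sum_{c\geq 1}\frac{1}{c!}[z^n]\,a(z)^c = [z^n]\bigl(e^{a(z)}-1\bigr),
\]
where $a(z):=\sum_m a_m z^m$ is analytic on $|z|<e^{\alpha}$ by the bound on $a_m$. Choosing any $r\in(1,e^{\alpha})$, Cauchy's coefficient estimate gives $[z^n]e^{a(z)}\leq e^{a(r)}\,r^{-n}$; set $c_1:=e^{a(r)}$ and $c_2:=1/r<1$.

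The main technical obstacle is the per-anchor uniqueness argument in the first paragraph: one must combine the topological structure of \scv s (via \Lr{dual connd}) with the combinatorics of first intersection with $X^+$ to upgrade \Prr{quasi geo} into a clean ABP-type tail bound. For the uniform version on a closed subinterval $[a,b]\subset(p_c,1]$, the ABP constants can be chosen monotone in $p$ (as $q$ decreases further into the subcritical phase of $L^*$), giving uniform $\alpha$ and $r$, hence uniform $c_1,c_2$. This approach bypasses the Hardy--Ramanujan formula and the BK inequality hinted at in the introduction: the identity $\sum_c a(z)^c/c!=e^{a(z)}$ absorbs partition counting into a single generating function, and the strict independence on disjoint edge-sets supersedes BK.
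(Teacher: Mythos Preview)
Your argument is correct and takes a genuinely different route from the paper's. Both proofs rest on the same underlying input---the ABP tail for the connected dual object $\partial S^*$---but you exploit it more directly. The paper first bounds the multiplicity of simultaneously occurring elements of $\MS_n$ by $r^{\sqrt n}$ via the \HRT\ (\Lr{HR bound}), and then controls $\Pr_p(\MS_n\text{ occurs})$ by decomposing over partitions of $n$ and invoking the BK inequality. You instead (a) use \Lrs{scs disjoint} and~\ref{dual connd} to show that at most one occurring \scv\ can have a given $e^*$ in its dual boundary, converting $\sum_S Q_S$ directly into a single ABP tail probability; (b) observe that the edge-sets $E(S_i)\cup\partial S_i$ are genuinely pairwise disjoint for $M\in\MS$ (again via \Lr{dual connd}), yielding the exact factorisation $Q_M=\prod_i Q_{S_i}$, which is strictly sharper than BK; and (c) absorb the sum over all decompositions of $n$ into the exponential generating function $e^{a(z)}-1$, with the Cauchy bound immediate since all coefficients are nonnegative. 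Your proof is shorter and avoids both the partition asymptotics and the correlation inequality. The paper's machinery, on the other hand, is more portable: the Hardy--Ramanujan and BK/Reimer steps are reused almost verbatim in \Srs{sec cont}--\ref{sec fp}, where the planar disjointness underlying your factorisation $Q_M=\prod_i Q_{S_i}$ is no longer available.
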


The proof of this is based on the fact that the size of the boundary of an \scv\ $S$ that contains a certain vertex $x$ has an exponential tail. This is because $\partial S$ is contained in a component of the dual $L^*$ by \Lr{dual connd}, and as our percolation is subcritical on $L^*$,  the \ABP\ holds. Still, the exponential tail of each $|\partial S|$ does not easily imply \Lr{exp dec Z2}. First of all, the sum in the left hand side of \Lr{exp dec Z2} is larger than the probability $\Pr(\MS_n \text{ occurs})$ that a \smc\ of $\MS_n$ occurs. Second, a \smc\ might consist of plenty of \scv s. Nevertheless, we will be able to overcome these difficulties. Using \Lr{HR bound} we prove that the aforementioned sum does not grow too fast when compared with the probability that a \smc\ of $\MS_n$ occurs. 

\begin{proof}[Proof of \Lr{exp dec Z2}]
We start by noticing that $$\sum_{M\in \MS_n} Q_M(p)=\mathbb{E}_p(\sum_{M\in \MS_n}\mathbb{\chi}_{\{M \text{ occurs}\}}),$$
where $\mathbb{\chi}_{A}$ denotes the characteristic function of 
the occurence of an event $A$. The number of \smc s\ $M\in \MS_n$ that can occur simultaneously is bounded above by $r^{\sqrt{n}}$ for some $r>0$ by \Lr{HR bound}. It follows that $$\sum_{M\in \MS_n}\mathbb{\chi}_{\{M \text{ occurs}\}}\leq r^{\sqrt{n}} \mathbb{\chi}_{\{\MS_n \text{ occurs}\}}$$ which in turn implies that $$\sum_{M\in \MS_n} Q_M(p)\leq r^{\sqrt{n}} \Pr_p(\MS_n \text{ occurs}).$$ 

Hence it suffices to show that $\Pr_p(\MS_n \text{ occurs})$ decays exponentially in $n$.
In order to do so we will employ the exponential tail of the size of a certain (subcritical) cluster in the dual $L^*$ given by the \ABP. 
For this we will use the natural coupling of the percolation processes on $L$ and $L^*$: given a percolation instance $\oo\in 2^{E(L)}$ on $L$, we obtain a percolation instance $\oo^*$ on $L^*$ by changing the state of each edge, i.e.\ letting $\oo^*(e^*) = 1-\oo(e)$ \fe\ $e\in E(L)$.
Let $C(k)$ denote the event that there is a connected subgraph of $\oo^*$ which crosses one of the first $fk$ edges of $X^+$, and has at least $k$ edges, where $f$ is the constant of \autoref{quasi geo}. \mymargin{I changed the definition of C(k)} Note that $C(k)$ is an increasing event for $\omega^*$ that depends on finitely many edges. We claim that 
\labtequ{BKclaim}{$\Pr_p(\MS_n \text{ occurs})\leq \sum_{\{m_1,\ldots,m_k\}\in P_n} \Pr_{1-p}(C(m_1)\circ \ldots\circ C(m_k)),$}
where $\circ$ means that the events occur edge--disjointly (see \Sr{secBK}). Here $P_n$ is the set of partitions $\{m_1,\ldots,m_k\}$ of $n$. Once this claim is established, we will be able to employ the BK inequality (\Tr{BKthm}) to bound $\Pr_p(\MS_n \text{ occurs})$.

To prove \eqref{BKclaim}, we remark that each \smc\ $M\in \MS_n$ defines a partition $\{m_1,\ldots,m_k\}$ of $n$ by letting $m_i$ stand for the number of edges in the $i$-th component $K_i$ of the subgraph of $L^*$ spanned by $\partial M^*$. By \autoref{quasi geo} if $M$ occurs, then $K_i$ is a witness of $C(m_i)$, and these witnesses are pairwise edge-disjoint. Thus the occurrence of $M$ implies the occurrence of the event $C(m_1)\circ \ldots\circ C(m_k)$ in $\oo^*$. To conclude that \eqref{BKclaim} holds, we apply the union bound to the family of events of the latter form, ranging over all partitions $\{m_1,\ldots,m_k\}\in P_n$.


The BK inequality \cite{Grimmett} states that $$\Pr_{1-p}(C(m_1)\circ\ldots\circ C(m_k))\leq \Pr_{1-p}(C(m_1)) \cdot \ldots \cdot \Pr_{1-p}(C(m_k)).$$
Using the union bound and applying the \ABP, we obtain $\Pr_{1-p}(C(m_i)) \leq f m_i c^{m_i}$ for some constant $0<c=c(p)<1$. In addition, if $[a,b]\subset (p_c,1]$, then 
the monotonicity of $\Pr_{1-p}(C(m_i))$ implies that the constant $c$ can be chosen uniformly for $p\in [a,b]$. As $\Pr_{1-p}(C(n))<1$ for every $n$, we deduce that $\Pr_{1-p}(C(m_i))\leq (c+\varepsilon)^{m_i}$ for some $\varepsilon>0$ such that $c+\varepsilon<1$; indeed, for any $\varepsilon$, this is satisfied for large enough $m_i$, and increasing $\varepsilon$ we can make it true for the smaller values of $m_i$. 
 
Combining all these inequalities starting with \eqref{BKclaim} we conclude that 
$$\Pr_p(\MS_n \text{ occurs})\leq |P_n|(c+\varepsilon)^{n}.$$
We have $|P_n| \leq h^{\sqrt{n}}$ for some constant $h$ by the \HRT\ (\Tr{HRthm}), and so 
$$\Pr_p(\MS_n \text{ occurs})\leq h^{\sqrt{n}}(c+\varepsilon)^n.$$
Thus $\Pr_p(\MS_n \text{ occurs})$ decays exponentially in $n$ as claimed.
\end{proof}

We are now ready to prove Theorem \ref{theta analytic 2D}. 

\begin{proof}[Proof of Theorem \ref{theta analytic 2D}]
As already explained, \Lr{exp dec Z2} implies that the inclusion--exclusion expression \eqref{thetaQ} holds. The assertion follows if we can apply Corollary \ref{cor general} for 
$I = (p_c,1]$, $L_n=\MS_n$, and $(E_{n,i})$ an enumeration of the events $\{M \text{ occurs}\}_{M\in \MS_n}$. 
So let us check that the assumptions of
Corollary \ref{cor general} are satisfied.

By definition, every $M\in \MS_n$ has $n$ vacant edges. Moreover, $|M|\leq k(L) n$ by \Lr{scs boundary}. 
Thus assumption \ref{cg i} of Corollary \ref{cor general} is satisfied. The fact that assumption \ref{cg ii} is satisfied is exactly the statement of \Lr{exp dec Z2}.
\end{proof}

\subsection{Site percolation}

We will now extend our results to site percolation on \qtl s. Whereas duality was key to bond percolation on \qtl s, the corresponding property for site percolation is that of a \defi{matching}. The matching version $L'$ of a \qtl\ $L$ is defined by adding all diagonals to all faces of $L$. Note that $L'$ is a quasi-transitive graph that shares the same vertex sets as $L$. Moreover, 
\labtequ{vertex cut}{for every finite subgraph $H$ of $L$, the minimal vertex cut in $L\setminus H$ separating $H$ from infinity forms a cycle in $L'$.}

Analogously to the case of bond percolation, we have that 
\begin{equation}\label{matching}
\pcs(L)+\pcs(L')=1.
\end{equation} 
Although we could not find a reference for \eqref{matching} in the generality that we need it, we believe that it is well-known to the experts. The result is a consequence of \eqref{vertex cut}, the \ABP\, the uniqueness of the infinite cluster and the following non-coexistence result.
Couple the percolation processes on $L$ and $L'$: given a percolation instance $\omega\in 2^{V(L)}$ on $L$, we define a percolation instance $\omega'$ on $L'$ by letting $\omega'(v)=1-\omega(v)$ for every $v\in V(L)$. Then for every $p\in [0,1]$, $\Pr_p$-almost surely, one of $\omega$ or $\omega'$ does not contain a unique infinite cluster. The analogous statement for bond percolation has been proved in \cite{SheRan,DCRaTaSha}, and one can check that the proof in \cite{DCRaTaSha} generalizes to the setting of site percolation.

\begin{corollary}\label{triangular}
For Bernoulli site percolation on a \qtl\ $L$, the percolation density $\theta(p)$ is analytic for $p\in (\pcs(L),1]$. 
\end{corollary}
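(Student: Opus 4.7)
The plan is to follow the proof of \Tr{theta analytic 2D} with the adaptations required to move from bond percolation on a planar lattice to site percolation on the triangular lattice $T$. For a finite occupied cluster $C(o)$, I would define its \emph{\scv} to be a pair $S=(\per,\bou)$, where $\per$ is the set of occupied sites of $C(o)$ incident with the unbounded face of $C(o)$, and $\bou$ is the set of vacant sites of $T$ adjacent to $\per$ from that unbounded side. The role previously played by planar duality will now be played by the triangulation property of $T$: since every face of $T$ is a triangle, consecutive vacant neighbours around any occupied site are themselves adjacent in $T$, and from this it follows that $\bou$ is a connected subgraph of $T$ that surrounds $o$, and that two occurring \scv s sharing any site must coincide.

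Next I would establish the site analogues of \Lr{scs disjoint}, \Lr{scs boundary}, \Lr{dual connd}, and \Lr{HR bound}. Disjointness is immediate from the uniqueness observation above. The size comparison $|\per|\le k|\bou|$ follows because $T$ is a locally finite periodic triangulation with finitely many face-orbits, so walking along the boundary of the unbounded face of $C(o)$ one cannot traverse too many $\per$-sites between consecutive $\bou$-neighbours. For the multi-interface count, I would reuse the quasi-geodesic $X^+$ constructed in \Sr{prel qtl} verbatim: every \scv\ surrounding $o$ must meet $X^+$ within its first $f|S|$ vertices, so a simultaneous family of vertex-disjoint occurring \scv s encodes a partition of an integer linear in $n$, and the \HRT\ supplies the $r^{\sqrt n}$ bound.

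The key step, in place of \Lr{exp dec Z2}, is exponential decay of $\sum_{M\in \MS_n} Q_M(p)$, where $Q_M(p):=\Pr_p(M\text{ occurs})$ under the site law and $\MS_n$ is the set of vertex-disjoint unions of \scv s having $n$ vacant boundary sites in total. Here the identity $\pcs(T)=1/2$ is the essential input that replaces duality: for $p\in(1/2,1]$ we have $1-p<1/2=\pcs(T)$, so the vacant-site process on $T$ itself is subcritical, and the \ABP\ (\Tr{AB thm}) yields an exponential tail for the size of the vacant cluster of any fixed site. Because $\bou$ is a connected set of vacant sites surrounding $o$, the BK-inequality plus partition argument in the proof of \Lr{exp dec Z2} can be rerun inside $T$ (rather than inside a dual), giving $\Pr_p(\MS_n\text{ occurs})\le h^{\sqrt n}(c+\varepsilon)^n$ with $c+\varepsilon<1$ locally uniformly in $p\in(1/2,1]$.

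With exponential decay in hand, the inclusion-exclusion identity $1-\theta(p)=\sum_{M\in\MS}(-1)^{c(M)+1}Q_M(p)$ is justified on $(1/2,1]$, and \Cr{cor general} applied to the events $\{M\text{ occurs}\}_{M\in\MS_n}$ (which have complexity $\Theta(n)$ since $|\per|+|\bou|=\Theta(n)$) delivers analyticity of $\theta$ on $(1/2,1]$. The main obstacle I anticipate is the first step: the clean connectedness of $\bou$ as a subgraph of $T$ surrounding $o$ is a combinatorial consequence of $T$ being a triangulation, and it is precisely this that breaks for site percolation on a general planar lattice; once it is secured, the remainder of the argument is a routine transfer of the bond-percolation machinery.
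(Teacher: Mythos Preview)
Your proposal is correct and is essentially the paper's own argument: the paper likewise replaces the duality step by the observation that, because $T$ is a triangulation, the vacant sites adjacent to an occurring \scv\ form a connected vacant subgraph of $T$ surrounding $o$, and then invokes $\pcs(T)=1/2$ together with the \ABP\ to get the needed exponential decay, with the remainder of the proof of \Tr{theta analytic 2D} carried over verbatim. You have also correctly identified that the connectedness of $\bou$ is exactly where the triangulation hypothesis is used and why the argument does not extend to site percolation on general planar lattices.
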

\begin{proof}
The proof is similar to that of \Tr{theta analytic 2D}. The only difference is that instead of the coupling with percolation on the dual $L^*$ that we used there, which we combined with \Tr{pc star} to deduce the exponential decay of the probability that a fixed vertex lies on an \scv\ of length $n$, we now obtain this exponential decay by the coupling with percolation on the matching version $L'$ mentioned above. Indeed, notice that if an \scv\ $S$ occurs in $\omega$, then the vertices incident with $\partial S$ that do not lie in $S$ form a connected occupied subgraph in $\omega'$. But occupied subgraphs in $\omega'$ are subcritical when $p>\pcs(L)=1-\pcs(L')$, and so applying the \ABP\ to them yields the desired exponential decay.
\end{proof}

\section{Analyticity of $\theta$ in all dimensions}\label{theta}

In this section, we will prove that for percolation on $\mathbb{Z}^d , d\geq 2$, the percolation density $\theta$ is analytic on the supercritical interval. The case $d=2$ has already been handled in \Sr{sec th pl}, and although the current section can in principle be read independently, we recommend reading \Sr{sec th pl} first as a warm-up.

The \defi{cubic lattice} $\mathbb{L}^d$ is the standard \Cg\ of  $\mathbb{Z}^d$. In other words, we put an edge between two points $x,y\in \R^d$ with integer coordinates whenever $|x-y|=1$.

\begin{theorem}\label{analytic}
For Bernoulli bond percolation on $\mathbb{L}^d , d\geq 2$, the percolation density $\theta(p)$ is analytic on $(p_c,1]$. 
\end{theorem} 

As we will see in \Sr{sec fp}, our notion of interfaces can be generalised to all dimensions $d\geq 3$, and the method developed in \Sr{sec th pl} still yields the analyticity of $\theta$ for the values of $p$ close to $1$. However, several challenges arise when one tries to extend this result to the whole supercritical interval. The main obstacle is that for values of $p$ in the interval $(p_c,1-p_c)$, the distribution of the size of the interface of $C_o$ has only a stretched exponential tail, which follows from the work of Kesten and Zhang \cite{KeZhaPro}. In order to overcome this obstacle, we will employ 
renormalisation techniques 
similar to those of \cite{KeZhaPro}. Here we use the more refined version of Pete \cite{Pete}.

\subsection{Setting up the renormalisation} \label{sec boxes}

We start by introducing some necessary definitions. Consider a positive integer $N$. For every vertex $x$ of $\Z^d$, we let $B(x)=B(x,N)$ denote the box $\{y\in \Z^d: \lVert y-Nx \rVert_{\infty}\leq 3N/4\}$. With a slight abuse, we will use the same notation $B(x)$ to also denote the corresponding subset of $\R^d$, namely\\ $\{y\in \R^d: \lVert y-Nx \rVert_{\infty}\leq 3N/4\}$.

The collection of all these boxes can be thought of as the vertex set of graph canonically isomorphic to $\Z^d$. We will denote this graph by $N\mathbb{L}^d$. Whenever we talk about  percolation (clusters) from now on, we will be referring to percolation, with a fixed parameter $p>p_c$, on $\mathbb{L}^d$ and not on $N\mathbb{L}^d$; we will never percolate the latter. 

For any  percolation cluster $C$, 
we denote by $C(N)$ the set of boxes $B$ such that the subgraph of $C$ induced by its vertices lying in $B$ has a component of diameter at least $N/5$. The boxes with this property will be called \defi{C-substantial}. Notice that $C(N)$ is a connected subgraph of $N\mathbb{L}^d$.
The internal boundary of $C(N)$ is denoted by $\partial C(N)$ following the terminology of \Sr{sec GT}. Notice that $\partial C(N)$ is not necessarily connected. For technical reasons, we would like it to be, and therefore we modify our lattice by adding the diagonals: we introduce a new graph $N\mathbb{L}^d_\sboxt$, the vertices of which are the boxes $B(x), x\in \Z^d$, and we connect two boxes with an edge of $N\mathbb{L}^d_\sboxt$ whenever they have non-empty intersection. When $N=1$, the vertex set of $\mathbb{L}^d_\sboxt$ is simply $\Z^d$. It is not too hard to show (see  \cite[Theorem 5.1]{TimCut}) that 
\labtequ{timar}{If $C$ is finite then $\partial C(N)$ is a connected subgraph of $N\mathbb{L}^d_\sboxt$.}

Given two diagonally opposite neighbours $x$, $y$ of $\mathbb{L}^d$, we will write $B(x,y)$ for the intersection $B(x) \cap B(y)$. A percolation cluster $C$ is a \defi{crossing cluster}  for some box $B(x)$ or $B(x,y)$, if $C$ contains a vertex from each of the $(d-1)$-dimensional faces of that box. We say that a box $B(x)$ is \defi{good} in a percolation configuration $\omega$ if it has a crossing cluster $C$ with the property that the intersection of $C$ with each of the boxes $B(x,y)$ contains a crossing cluster (of $B(x,y)$), and every other cluster of $B(x)$ has diameter less than $N/5$. A box that is not good will be called \defi{bad}. It is known \cite[Theorem 7.61]{Grimmett} that, for every $p>p_c$, the probability of having a crossing cluster and no other cluster of diameter greater than $N/5$ converges to $1$ as $N\to \infty$. Combining this with a union bound we easily deduce that 
\labtequ{good to 1}{for every $p>p_c$, the probability of any box being good converges to $1$ as $N\to \infty$.} 
We will say that a set of boxes is bad if all its boxes are bad.

Our definition of good boxes is slightly different than the standard one in that it asks for all boxes $B(x,y)$ to contain a crossing cluster. The reason for imposing  this additional property is because now 
\labtequ{star}{every $N\mathbb{L}^d_\sboxt$-component $B$ of good boxes contains a unique percolation cluster $C$ such that some box of $B$ is $C$-substantial (and in fact all boxes of $B$ are $C$-substantial).}  
This follows easily once we notice that this holds for pairs of neighbouring boxes.

Observe that the boxes in $\partial C(N)$ are never good. Indeed, if some box $B\in \partial C(N)$ is good, then $C$ connects all the $(d-1)$-dimensional faces of $B$, hence all $N\mathbb{L}^d$-neighbouring boxes of $B$ contain a connected subgraph of $C$ of diameter at least $N/5$, and so they lie in $C(N)$. This contradicts the fact that $B$ belongs to $\partial C(N)$. 

Having introduced the above definitions, our aim now is to find a suitable expression for $1-\theta$ in terms of good and bad boxes surrounding $o$. 


With the above definitions we have that, conditioning on the event that $C_o$ is finite and has diameter at least $N/5$, there is a non-empty $N\Ls$-connected subgraph of bad boxes that separates $o$ from infinity, namely $T:=\partial C_o(N)$. However, the event $\{|C_o|<\infty\}$ is not necessarily measurable with respect to the configuration inside $T$. In other words, we cannot express $1-\theta$ in terms of just the configuration inside $T$, and instead we have to explore the configuration inside the finite components surrounded by $T$. To this end, we will expand $\partial C_o(N)$ into a larger object. 

\subsection{Separating components} \label{sec SCs}

A \defi{separating component} is a $N\mathbb{L}^d_\sboxt$-connected set $S$ of boxes, such that $o$ lies either inside $S$ or in a finite component of $N\mathbb{L}^d_\sboxt\setminus S$. We will write $\partial_\sboxt S$ for its vertex boundary ---defined in \Sr{sec GT}--- when viewed as a subgraph of $N\mathbb{L}^d_\sboxt$. We say that $S$ \defi{occurs} in a configuration $\omega$ if all the following hold:
\begin{enumerate}
\item \label{occ i} all boxes in $S$ are bad; 
\item \label{occ ii} all boxes in $\partial_\sboxt S$ are good, and
\item \label{occ iii} there is a configuration $\omega'$ which coincides with $\omega$ in $S\cup \partial_\sboxt S$, such that $C_o(\omega')$ is finite, and $S$ contains $\partial C_o(\omega')(N)$.
\end{enumerate}
We will say that $\omega'$ is a \defi{witness} for the occurrence of $S$ if \ref{occ i}--\ref{occ iii} all hold. 

One way to interpret \ref{occ iii} is that there exists a minimal cut set $F$ surrounding $o$ with the property that all its edges inside $S\cup \partial_\sboxt S$ are closed in $\omega$. If there is an infinite path in $\omega$ starting from $o$, then it has to avoid the edges of $F$ lying in $S\cup \partial_\sboxt S$. As we will see, \ref{occ ii} makes this impossible without violating that $C_o(\omega')$ is finite.

Note that \ref{occ iii} implies that 
\labtequ{S omega}{$\partial^V C_o(\omega')$ (and $C_o(\omega')$) does not share a vertex with the infinite component of $\mathbb{L}^d \setminus  S$.}

\subsection{Expressing $\theta$ in terms of the probability of the occurrence of a separating component} \label{theta sum S}

In this section we show that $C_o$ is finite exactly when some separating component occurs, unless $diam(C_o)< N/5$ which is a case that is easy to deal with. This will allow us to express $\theta(p)$ in terms of the probability of the occurrence of a separating component (see \eqref{cases}). In the following section we will expand the latter as a sum (with inclusion-exclusion) over all possible separating components. The summands of this sum are well-behaved polynomials, that will allow us to apply Corollary \ref{cor general} to deduce the analyticity of $\theta(p)$.

\begin{lemma} \label{an S occurs}
For every $p>p_c$ there is $N\in \N$ and an interval $(a,b)$ containing $p$ such that the following holds for every $q\in (a,b)\cap (p_c,1]$. Conditioning on $C_o$ being finite, and $diam(C_o)\geq N/5$, at least one separating component occurs almost surely.
\end{lemma}
\begin{proof}
Let $S$ be the maximal connected subgraph of $N\mathbb{L}^d_\sboxt$ that contains $\partial C_o(N)$ and consists of bad boxes only. This $S$ exists whenever $C_o$ is finite and $diam(C_o)\geq N/5$ because $\partial C_o(N)$ is connected by \eqref{timar}.

We claim that there is some $N$ and an interval $(a,b)$ containing $p$ such that $S$ is $\Pr_q$-almost surely finite for every $q\in (a,b)\cap (p_c,1]$. For this, it suffices to show that for some large enough $N$, the probability $\Pr_q(\text{S has size at least $n$})$ converges to $0$ as $n$ tends to infinity for each such $q$. The latter follows by combining the union bound with \Lr{exp dec} below, which states that 
$$\sum_{T \text{ is a separating component of size } n} \Pr_q(\text{T is bad})\leq e^{-tn}$$ 
for some constant $t=t(p)>0$, for some $N$, and every $q$ in an interval $(a,b)\cap(p_c,1]$.

Note that conditions \ref{occ i} and \ref{occ ii} are automatically satisfied by the choice of $S$. The configuration $\omega':=\omega$ satisfies condition \ref{occ iii}, since $C_o(\omega)$ is finite, and $S$ contains $\partial C_o(\omega)(N)$ by definition. Thus $S$ occurs in $\omega$, as desired.
\end{proof}




Note that the proof of \Lr{an S occurs} finds a concrete occurring separating component whenever $C_o$ is finite and $diam(C_o)\geq N/5$; we denote this separating component by $\mathcal{S}_o$ in this case.

\medskip
The next two lemmas provide a converse to \Lr{an S occurs}, namely that $C_o$ is finite whenever 
some separating component occurs. 

Whenever $\omega'$ is a witness for the occurrence of $S$, we let $R_o(\omega')$ denote the set of vertices of the infinite component of $\mathbb{L}^d\setminus C_o(\omega')$ lying in $S$. 

\begin{lemma}\label{finite}
Consider a separating component $S$, and assume that $S$ occurs in $\omega$. Let $\omega'$ be a witness of the occurrence of $S$. Then no vertex of $R_o(\omega')$ lies in $C_o(\omega)$.
\end{lemma}
\begin{proof}
Assume that some vertex $u$ of $R_o(\omega')$ lies in $C_o(\omega)$; we will obtain a contradiction.  

Since $C_o(\omega)$ contains $u$, there must exist a path $P$ in $\omega$ connecting $o$ to $u$. This path cannot lie entirely in $S\cup\partial_\sboxt S$ because $\omega$ and $\omega'$ coincide in that set of boxes and $u\not\in C_o(\omega')$. Hence $N\Ls\setminus (S\cup\partial_\sboxt S)$ must have some finite component. Let $E$ denote the minimal edge cut of $C_o(\omega')$. Clearly, $P$ must intersect $E$, since $u$ lies in the infinite component of $\mathbb{L}^d\setminus C_o(\omega')$. Let $e$ be an edge of $E$ that $P$ contains. Notice that no common edge of $P$ and $E$ lies in $S\cup \partial_\sboxt S$, because the edges of $E$ are closed in $\omega'$, the edges of $P$ are open in $\omega$, and the two configurations coincide in $S\cup \partial_\sboxt S$. Hence $e$ must lie in one of the finite components $\mathcal{B}_{in}$ of $N\Ls\setminus (S\cup\partial_\sboxt S)$. Write $\mathcal{B}$ for the set of those boxes in $\partial_\sboxt S$ that have a $N\mathbb{L}^d_\sboxt$-neighbour in $\mathcal{B}_{in}$. (Thus $\mathcal{B}$ is the vertex boundary of $\mathcal{B}_{in}$.) See Figure \ref{drawing-box}.

\begin{figure}[!ht] 
   \centering
   \noindent

\begin{overpic}[width=.6\linewidth]{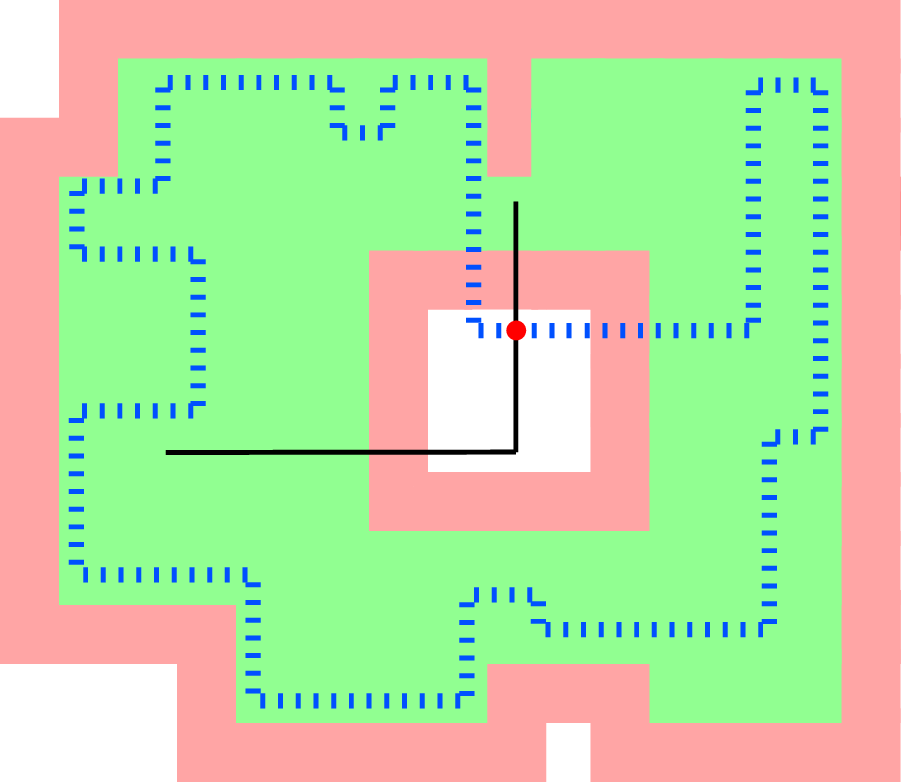} 
\put(23,36){$o$}
\put(62,65){$u$}
\put(32,73){$E$}
\put(36,43){$P$}
\end{overpic}
\caption{The situation in the proof of \Lr{finite}. The separating component $S$ is depicted in green and its boundary $\partial_\sboxt S$ in red (if colour is shown). When two boxes of $S$ and $\partial_\sboxt S$ overlap, their intersection is depicted also in green. The dashes depict the edges of the cut $E$, and $e$ is highlighted with a (red) dot.}\label{drawing-box}
\end{figure}

It is not hard to see that some box $B$ of $\mathcal{B}$ is $C_o(\omega')$-substantial, which then implies that all boxes of $\mathcal{B}$ are $C_o(\omega')$-substantial because they are all good. Indeed, notice that one of the two endvertices of $e$ lies in $C_o(\omega')$ by the definition of the set $E$. As $S$ contains a  $C_o(\omega')$-substantial box, some box $B$ of $\mathcal{B}$ must be $C_o(\omega')$-substantial, as claimed, because $\mathcal{B}$ is the vertex boundary of $\mathcal{B}_{in}$. 

Our aim now is to show that we can connect $u$ to the subgraph of $C_o(\omega')$ inside $\mathcal{B}$ with a path in $\omega'$ lying entirely in $S\cup\partial_\sboxt S$. This will imply that $u$ belongs to $C_o(\omega')$, contradicting that $u\in R_o(\omega')$.

For this, consider the subpath $Q$ of $P$ that starts at $u$ and ends at the last vertex of the intersection of $\mathcal{B}_{in}$ and $\mathcal{B}$ (notice that although $\mathcal{B}_{in}$ and $\mathcal{B}$ are disjoint sets of boxes, the subgraphs of $\mathbb{L}^d$ inside them overlap). If $Q$ is not contained in $S\cup\partial_\sboxt S$, then we can modify it to ensure that it does lie entirely in $S\cup\partial_\sboxt S$. Indeed, notice that each $N\mathbb{L}^d_\sboxt$-component $F$ of $\partial_\sboxt S$ contains a unique $\omega$-cluster $C$ such that some box of $F$ is $C$-substantial by \eqref{star}, because all its boxes are good. Moreover, each time $Q$ exits $S\cup \partial_\sboxt S$, it has to first visit the unique such percolation cluster of some $N\mathbb{L}^d_\sboxt$-component $F$ of $\partial_\sboxt S$, and then eventually revisit the same percolation cluster of $F$. We can thus replace the subpaths of $Q$ that lie outside of $S\cup\partial_\sboxt S$ by open paths lying entirely in $\partial_\sboxt S$ that share the same endvertices. Thus we may assume that $Q$ is contained in $S\cup\partial_\sboxt S$ as claimed.

Now notice that $Q$ contains a subpath of diameter greater than $N/5$ lying entirely in some box $B$ of $\mathcal{B}$. This box is $C_o(\omega')$-substantial, hence $C_o(\omega')$ and $Q$ must meet. Then following the edges of $Q$, which are all open in $\omega'$, we arrive at $u$, and thus $u$ belongs to $C_o(\omega')$, as desired. 
\end{proof}

We now use this to prove

\begin{lemma}\label{Co finite}
Whenever some separating component occurs in a configuration $\omega$, the cluster $C_o(\omega)$ is finite.
\end{lemma}
\begin{proof}
We will prove the following slightly stronger statement: whenever a separating component $S$ occurs in a configuration $\omega$, a minimal (finite) edge cut of closed edges occurs in $\omega$ which separates $o$ from infinity and lies in $S\cup \partial_\sboxt S$. 

For this, consider a witness $\omega'$ of the occurrence of $S$, and let $\omega''$ be the configuration which coincides with $\omega$ (and $\omega'$) on every edge lying in $S\cup\partial_\sboxt S$, and every other edge of $\omega''$ is open.  Note that $S$ occurs in $\omega''$ since it occurs in $\omega$. Thus $C_o(\omega'')$ contains no vertex of $R_o(\omega')$ by \Lr{finite}. This implies that $C_o(\omega'')$ contains no vertex in the infinite component $X$ of $N\Ls\setminus S$, because any path $P$ in $\mathbb{L}$ connecting $o$ to $X$ has to first visit $R_o(\omega')$. To see that the latter statement is true, consider the last vertex $u$ of $\partial^V C_o(\omega')$ that $P$ contains. Notice that the subpath of $P$ after $u$, which is denoted $Q$, visits only vertices of the infinite component of $\mathbb{L}^d\setminus C_o(\omega')$, and furthermore that $u$ lies either in $S$ or in a finite component of $\mathbb{L}^d\setminus S$ by \eqref{S omega}. In the first case, $u$ lies in $R_o(\omega')$. In the second case, $Q$ has to visit $S$, hence $R_o(\omega')$.

We have just proved that $C_o(\omega'')$ can only contain vertices in $S$ and the finite components of $N \Ls\setminus S$. Since $S$ is a finite set of boxes, $C_o(\omega'')$ is finite as well. Hence a minimal edge cut of closed edges separating $o$ from infinity occurs in $\omega''$. This minimal edge cut must lie entirely in $S\cup \partial_\sboxt S$, because all edges not in $S\cup \partial_\sboxt S$ are open. This is the desired minimal edge cut since it occurs in $\omega$ as well. We will 
denote it by $\partial^b \mathcal{S}_o$.
\end{proof}

Lemmas~\ref{an S occurs} and~\ref{Co finite} combined allow us to express the event that $C_o$ is finite in terms of the event that some separating component occurs. To do so, let us write $D_N$ to denote the event $\{diam(C_o)<N/5\}$.
Thus we have proved that  
\begin{align}\label{cases}
\begin{split}
1-\theta(p)=&\Pr_p(C_o \text{ is finite})\\ = &\Pr_p(D_N)+\Pr_p(|C_o|<\infty, D^\mathsf{c}_N)\\ =&\Pr_p(D_N)+\Pr_p(\text{some separating component occurs},D^\mathsf{c}_N).
\end{split}
\end{align}
Here and below, the notation $X,Y,\ldots$ denotes the intersection of the events $X,Y,\ldots$. 

\subsection{Expanding $\theta$ as an infinite sum of polynomials} \label{expand}

Notice that $\Pr_p(D_N)$ is a polynomial in $p$, since the event $D_N$ depends only on the state of finitely many edges. 

Following our technique from \Sr{sec th pl}, we will now use the inclusion-exclusion principle to expand the right-hand side of \eqref{cases} as an infinite sum of polynomials, corresponding to all possible separating components that could occur.

Notice that any two occurring separating components are disjoint because they are connected, their boxes are bad, and they are surrounded by good boxes by definition.

\begin{lemma}\label{inc-ex lemma}
For every $p>p_c$ there is some integer $N=N(p)>0$ and an interval $(a,b)$ containing $p$ such that the expansion
\begin{align}\label{inc-ex}
\Pr_q(\text{some S occurs},D^\mathsf{c}_N)=\sum_{S\in MS^N} (-1)^{c(S)+1} \Pr_q(\text{S occurs},D^\mathsf{c}_N)
\end{align}
holds for every $q\in (a,b)\cap(p_c,1]$, where $MS^N$ denotes the set of all finite collections of pairwise disjoint separating components $S$, and $c(S)$ denotes the number of separating components of $S$. 
\end{lemma}

\Lr{inc-ex lemma} follows easily from the next lemma. We will use the notation $MS^N_n$ to denote the set of those finite collections of pairwise disjoint separating components $\{S_1,S_2,\ldots,S_k\}$ such that $|S_1|+|S_2|+\ldots+|S_k|=n$. The superscript reminds us of the dependence of the boxes on $N$. 

\begin{lemma}\label{exp dec}
For every $p>p_c$, there are $N=N(p)>0$, $t=t(p)>0$ and an interval $(a,b)$ containing $p$ such that 
\begin{align}\label{ineq}
\sum_{S\in MS^N_n} \Pr_q(\text{S is bad})\leq e^{-tn}
\end{align}
for every $n\geq 1$ and every $q\in (a,b)\cap (p_c,1]$. 
\end{lemma}
\begin{proof}
To prove the desired exponential decay we will use a standard renormalisation technique  with a few modifications. We will first prove the exponential decay when $q=p$, and then we will use a continuity argument to obtain the desired assertion.

We will first show that there exists a constant $k>0$ depending only on $d$ such that for every $S\in MS^N_n$ we have $$\Pr_p(\text{S is bad})\leq c^{n/k},$$ where $c:=\Pr_p(B(o) \text{ is bad})$. Indeed, it is not hard to see that there is a constant $k=k(d)>0$ such that for every $S\in MS^N_n$ there is a subset $Y$ of $S$ of size at least $n/k$, all boxes of which are pairwise disjoint. As each box of $Y$ is bad whenever $S$ occurs, we have $$\Pr_p(\text{S is bad})\leq \Pr_p(\text{Y is bad}).$$
By independence $\Pr_p(\text{Y is bad})=c^{n/k}$ and the assertion follows.

We will now find an exponential upper bound for the number of elements of $S\in MS^N_n$. Since $N\Ls$ is isomorphic to $\Ls$, there is a constant $\mu>0$ depending only on $d$ and not on $N$, such that the number of connected subgraphs of $N\Ls$ with $n$ vertices containing a given vertex is at most $\mu^n$. However, an element of $MS^N_n$ might contain multiple separating components, and there are in general several possibilities for the reference vertices that each of them contains. To remedy this, consider one of the $d$ axis $X=(-x_1,x_0=B(o),x_1)$ of $N\Ls$ that contain the box $B(o)$, and let $X^+$, $X^-$ be its two infinite subpaths starting from $B(o)$. We will first show that any separating component of size $n$ contains one of the first $n$ elements of $X^+$. Indeed, consider an occurring separating component $S$ of size $n$, and notice that $S$ has to contain some vertex $x^+$ of $X^+$, and some vertex $x^-$ of $X^-$. The graph distance between $x^+$ and $x^-$ is at most $n$, as there is a path in $S$ connecting them. This implies that $x^+$ is one of the first $n$ elements of $X^+$, as desired.

Consider now a constant $M>0$ such that $m\mu^m\leq M^m$ for every integer $m\geq 1$. Consider also a partition $\{m_1,m_2,\ldots,m_k\}$ of $n$. It follows that the number of collections $\{S_1,S_2,\ldots,S_k\}$ with $|S_i|=m_i$ is at most $m_1m_2\ldots m_k \mu^n\leq M^n$, since we have at most $m_i\mu^{m_i}$ choices for each $S_i$. Recall that the number of partitions of $n$ is at most $r^{\sqrt{n}}$ for some constant $r>0$ by \Tr{HRthm} (even an exponential bound would be good enough at this point). We can now deduce that the size of $MS^N_n$ is at most $r^{\sqrt{n}}M^n$, implying that 
$$\sum_{S\in MS^N_n} \Pr_p(\text{S is bad})\leq r^{\sqrt{n}}M^n c^{n/k}.$$
Notice that in the right hand side of the above inequality only $c$ depends on $N$. It is a standard result that $c$ converges to $0$ as $N$ tends to infinity \cite[Theorem 7.61]{Grimmett}. Choosing $N$ large enough so that $Mc^{1/k}<1$, we obtain the desired exponential decay.

Now notice that $c(q)=\Pr_q(B(o) \text{ is bad})$ is a polynomial in $q$, hence a continuous function, since it depends only on the state of the edges inside $B(o)$. This implies that we can choose an interval $(a,b)$ containing $p$ such that $Mc(q)^{1/k}<1$ for every $q\in (a,b)\cap (p_c,1]$. This completes the proof.
\end{proof}

We are now ready to prove \Tr{analytic}.

\begin{proof}[Proof of \Tr{analytic}]
Consider some $p\in (p_c,1]$. Let $N,t>0$, and the interval $(a,b)$ containing $p$, be as in \Lr{exp dec}. Then the expression $$1-\theta(q)=\Pr_q(D_N)+\sum_{n=1}^\infty \sum_{S\in MS^N_n} (-1)^{c(S)+1} \Pr_q(\text{S occurs},D^\mathsf{c}_N)$$ holds for every $q\in (a,b)\cap (p_c,1]$, and furthermore $$\Bigl\lvert\sum_{S\in MS^N_n} (-1)^{c(S)+1} \Pr_q(\text{S occurs},D^\mathsf{c}_N)\Bigr\rvert\leq e^{-tn}$$ for every $q\in (a,b)\cap (p_c,1]$.
The probability $\Pr_q(D_N)$ is a polynomial in $q$, hence analytic, because it depends on finitely many edges. Moreover, the event $\{\text{S occurs},D^\mathsf{c}_N\}$ depends only on the state of the edges lying in $S\cup\partial_\sboxt S$ and the box $B(o,N)$. The number of edges of each box is $O(N^d)$, hence the event $\{\text{S occurs},D^\mathsf{c}_N\}$ depends only on $O(N^d n)$ edges. The desired assertion follows now from \Tr{cor general}.
\end{proof}

\subsection{Exponential tail of $\partial^b \mathcal{S}_o$} \label{Pete} 

\Lr{exp dec} easily implies that the size of $\partial^b \mathcal{S}_o$, as defined in the proof of \Lr{Co finite}, has an exponential tail:

\begin{theorem}\label{bad boundary}
For every $p>p_c$, there are constants $N=N(p)>0$ and $t=t(p)>0$ such that 
$$\Pr_p(|\partial^b \mathcal{S}_o|\geq n)\leq e^{-tn}$$ for every $n\geq 1$.
\end{theorem}
\begin{proof}
Assume that $|\partial^b \mathcal{S}_o|\geq n$, and consider the separating component $S$ associated to $C_o$. Then the boxes of $S\cup \partial_\sboxt S$ must contain at least $n$ edges. Hence the number of boxes of $S\cup \partial_\sboxt S$ is at least $cn/N^d$ for some constant $c>0$. Moreover, we have $|\partial_\sboxt S|\leq (3^d-1)|S|$, because each box of $\partial_\sboxt S$ has at least one neighbour in $S$, and each box in $S$ has at most $3^d-1$ neighbours. Therefore, $S$ contains at least $cn/(3N)^d$ boxes. The desired assertion follows from \Lr{exp dec}.
\end{proof}

We recall that for every $p\in (p_c,1-p_c]$, the probability $\Pr_p(|\partial C_o|\geq n)$ does not decay exponentially in $n$ \cite{KeZhaPro,ExpGrowth}. This implies that for those values of $p$, $\partial^b \mathcal{S}_o$ has typically smaller order of magnitude than the standard minimal edge cut of $C_o$.

\medskip
As a corollary, we re-obtain a result of Pete \cite{Pete} which states that when $C_o$ is finite, the number of touching edges between $C_o$ and the unique infinite cluster, which we denote $C_{\infty}$, has an exponential tail. A \defi{touching edge} is an edge in $\partial^E C_o\cap \partial^E C_{\infty}$. We denote the number of (closed) touching edges joining $C_o$ to the infinite component $C_{\infty}$ by $\phi(C_o,C_{\infty})$.

\begin{corollary}\label{touching}
For every $p>p_c$, there is some $c=c(p,d)>0$ such that
$$\Pr_p(|C_o|<\infty,\phi(C_o,C_{\infty})\geq t)\leq e^{-ct}$$
for every $t\geq 1$.
\end{corollary}
\begin{proof}
The result follows from \Tr{bad boundary} by observing that $C_{\infty}$ has to lie in the unbounded component of $\mathbb{L}^d\setminus \partial^b \mathcal{S}_o$, hence all relevant edges belong to $\partial^b \mathcal{S}_o$.
\end{proof}
 
\subsection{Analyticity of $\tau$}\label{section tau}

In the previous section we proved that $\theta$ is analytic above $p_c$. Some further challenges arise when one tries to prove that other functions describing the macroscopic behaviour of our model are analytic functions of $p$. The main obstacle is that events of the form $\{x \text{ is connected to }y\}$ are not fully determined, in general, by the configuration inside $S\cup \partial_\sboxt S$. In this section we show how one can remedy this issue, and we will prove that the $k$-point function $\tau$ and its truncated version $\tau^f$ are analytic functions above $p_c$ for every $d\geq 2$. We will then deduce that the truncated susceptibility $\mathbb{E}(|C_o|; |C_o|<\infty)$ and the free energy $\mathbb{E}(|C_o|^{-1})$ are analytic functions as well. Using similar arguments one can prove that the analogous statements hold also for percolation on \qtl s. A proof can be found in the preprint version of the current paper \cite{analyticity}.

Given a $k$-tuple $\vx=\{x_1,\ldots,x_k\}, k\geq 2$ of vertices of $\Z^d$, the function $\tau_{\vx}(p)$ denotes the probability that $\vx$ is contained in a cluster of Bernoulli percolation on $\Z^d$ with parameter $p$. Similarly, $\tau^f_{\vx}(p)$ denotes the probability that $\vx$ is contained in a {\it finite} cluster. We will write $MS^N(\vx)$ for the set of all finite collections of separating components surrounding some vertex of $\vx$, and $MS^N_n(\vx)$ for the set of those elements of $MS^N(\vx)$ that have size $n$.

Arguing as in the proof of \Lr{exp dec} we obtain the following: 
\begin{lemma}\label{exp dec x}
For every $p>p_c$, there are $N=N(p)>0$, $t=t(p)>0$, and an interval $(a,b)$ containing $p$, such that 
\begin{align}\label{ineq x}
\sum_{S\in MS^N_n} \Pr_q(\text{S occurs})\leq e^{-tn}
\end{align}
for every $n\geq 1$ and every $q\in (a,b)\cap (p_c,1]$. 
\end{lemma}

We are now ready to prove that $\tau$ and $\tau^f$ are analytic. 

\begin{theorem}\label{tau}
For every $d\geq 2$ and every finite set $\vx$ of vertices of $\Z^d$, the functions $\tau_{\vx}(p)$ and $\tau^f_{\vx}(p)$ admit analytic extensions to a domain of $\C$ that contains the interval $(p_c, 1]$.

Moreover, for every $p\in (p_c, 1]$ and every finite set $\vx$ such that $\mathrm{diam}(\vx)\geq N/5$, there is a closed disk $D(p,\delta),\delta>0$ and positive constants $c_1=c_1(p,\delta),c_2=c_2(p,\delta)$ such that $$|\tau^f_{\vx}(z)|\leq c_1 e^{-c_2{\mathrm{diam}(\vx)}}$$
\fe\ $z\in D(p,\delta)$ for such an analytic extension $\tau^f_{\vx}(z)$ of $\tau^f_{\vx}(p)$.
\end{theorem}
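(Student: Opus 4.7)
The plan is to generalize the outer-interface framework of \Tr{theta analytic 2D} to handle several vertices at once. I would prove the analyticity of $\tau^f_\vx$ first, and then deduce that of $\tau_\vx$. Define an \scv\ $S$ to \emph{surround} $\vx$ if every $x_i$ lies in the bounded component of $\R^2\setminus S$, and write $\CS_\vx,\MS_\vx$ for the corresponding families of \scv s and multi-interfaces. The crucial geometric fact, strengthening \Prr{quasi geo}, is that any $S\in\CS_\vx$ has $|\partial S|\ge c\cdot diam(\vx)$: the dual $\partial S^*$ is a connected subgraph of $L^*$ enclosing all of $\vx$ in the plane, and so its total length exceeds the perimeter of the convex hull of $\vx$ up to a lattice-dependent constant. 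This is the source of the $c_2^{diam(\vx)}$ decay in the ``moreover'' clause.

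For each subset $I\subseteq\vx$, I would first show that $T_I(p):=\Pr(\bigcap_{i\in I}\{|C(x_i)|<\infty\})$ is analytic on $(p_c,1]$. Generalising the inclusion--exclusion identity \eqref{thetaQ}, $T_I$ can be written as a signed sum $\sum_{M}(-1)^{c(M)+1}Q_M(p)$ over multi-interfaces $M$ that collectively surround every vertex of $I$ (i.e.\ each $x_i$, $i\in I$, lies in the interior of some \scv\ of $M$). The core exponential estimate $\sum_{M:|\partial M|=n}Q_M(p)\le c_1 c_2^n$ is proved by the triple combination used in \Lr{exp dec Z2}: the \HRT\ controls the number of integer partitions of $n$, a pigeonhole argument on a quasi-geodesic through $\vx$ restricts the points where \scv s of $M$ may meet $X$, and the BK inequality combined with the subcritical \ABP\ on the dual lattice $L^*$ (via \Tr{pc star}) controls the joint probability of several disjoint dual surrounding cycles. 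Analyticity of $T_I$ then follows from \Cr{cor general} on any disk $D(p,\delta)\subseteq(p_c,1]$.

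To extract $\tau^f_\vx$ from the $T_I$'s, I would use Möbius inversion on the partition lattice of $\vx$. For a partition $\pi$ of $\vx$ define $\tau^f_\pi(p)=\Pr(\text{cluster partition induced on }\vx\text{ equals }\pi,\text{ all clusters finite})$ and $\widehat T_\pi(p)=\sum_{\sigma\ge\pi}\tau^f_\sigma=\Pr(\text{each block of }\pi\text{ lies in a common finite cluster})$; in particular $\tau^f_\vx=\widehat T_{\{\vx\}}$. Each $\widehat T_\pi$ is shown analytic by running a multi-interface IEP analogous to the one above, with one surrounding subfamily per block of $\pi$, the BK inequality being used to decouple the subfamilies and reduce the analysis back to the single-set case. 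Since $|\partial M|\ge c\cdot diam(\vx)$ for any multi-interface surrounding $\vx$, the Weierstrass $M$-test then delivers both the analyticity of $\tau^f_\vx$ and the diameter bound $|\tau^f_\vx(z)|\le c_1 c_2^{diam(\vx)}$ on the disk $D(p,\delta)$.

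Finally, $\tau_\vx(p)=\tau^f_\vx(p)+\Pr(\vx\text{ all in the infinite cluster})$, and uniqueness of the infinite cluster on the amenable 2-dimensional lattice $L$ (Burton--Keane) reduces the second term to $1-\sum_{\emptyset\ne I\subseteq\vx}(-1)^{|I|+1}T_I(p)$, analytic by the $T_I$-argument. The step I expect to be most delicate is the Möbius bookkeeping linking $\tau^f_\vx$ to the multi-interface sums: a naive IEP on events $\{S\text{ occurs and surrounds }\vx\}$ computes the probability that \emph{some} occurring \scv\ surrounds $\vx$, which in general is strictly between $\tau^f_\vx$ and $T_\vx$, so the partition lattice is exactly the framework needed to disentangle the intermediate configurations where $\vx$ is enclosed but split among several smaller clusters.
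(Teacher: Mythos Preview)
Your high-level strategy (multi-interface inclusion--exclusion, BK inequality, subcritical duality on $L^*$, and the geometric bound $|\partial S|\gtrsim diam(\vx)$ for \scv s enclosing $\vx$) is the right one, but the combinatorial decomposition of $\tau^f_\vx$ has a genuine gap. Your IEP for $T_I$ is incorrect: take $I=\{x_1,x_2\}$ and a configuration where exactly two \scv s occur, $S_1$ surrounding $x_1$ only and $S_2$ surrounding $x_2$ only. Then $T_I$ holds, yet the unique multi-interface ``collectively surrounding every vertex of $I$'' is $\{S_1,S_2\}$, and your sum contributes $(-1)^{2+1}=-1$ instead of $+1$. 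No sign shift repairs this (try the case where only a single \scv\ $S_3$ encloses both). Intersections of unions simply do not admit a single-layer IEP of this shape.

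The partition-lattice route is then circular. You observe $\tau^f_\vx=\widehat T_{\{\vx\}}$, so no M\"obius inversion is needed; the burden is to show $\widehat T_{\{\vx\}}$ analytic \emph{directly}, and your ``one surrounding subfamily per block'' IEP for the one-block partition is exactly the naive IEP you correctly flag as computing $U_\vx:=\Pr(\text{some occurring \scv\ encloses all of }\vx)$, not $\tau^f_\vx$. Moreover, the $T_I$'s alone cannot recover $\tau^f_\vx$: they record only \emph{finiteness} of the clusters of the $x_i$, not whether those clusters coincide, so no linear combination of them equals $\tau^f_\vx$.

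The paper resolves this with a different decomposition that avoids intersections entirely: it writes $\tau^f_\vx=\Pr(A)-\Pr(B)$ where $A=\bigcup_i\{|C(x_i)|<\infty\}$ and $B=\bigcup_{i<j}\{\text{some occurring \scv\ separates }x_i,x_j\}$. Both $A$ and $B$ are \emph{unions} of interface events, so each has a clean IEP over $\MS(\vx)$ (resp.\ the subfamily of separating multi-interfaces), and the exponential-decay machinery you describe applies verbatim. For the diameter bound the paper uses a second, more delicate expansion: $\tau^f_\vx=\sum_{M\in\MS^\forall(\vx)}(-1)^{c(M)+1}\Pr(\{M\text{ occurs}\}\cap\{\vx\text{ connected}\})$, observes that the innermost \scv\ $M_0$ makes the two events in each term independent, and controls the ``$\vx$ connected inside $M_0$'' factor by a further nested IEP whose complex extension is bounded uniformly in $M$. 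This factorisation step is the missing idea needed to turn your correct observation $|\partial M|\gtrsim diam(\vx)$ into the bound $|\tau^f_\vx(z)|\le c_1 c_2^{diam(\vx)}$.
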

\begin{proof}
We start by showing that $\tau^f_{\vx}(p)$ is analytic. Suppose $\vx=\{x_1,\ldots,x_k\}$, and let $A$ be the event that $\mathrm{diam}(C_{x_i})\geq N/5$ for every $i\leq k$. We will write $\{\vx \text{ is connected}\}$ to denote the event that all vertices of $\vx$ belong to the same cluster, which we denote $C_{\vx}$. When $C_{\vx}$ is finite and both events $\{\vx \text{ is connected}\}$ and $A$ occur, we will write $\mathcal{S}_{\vx}$ for the separating component of the latter cluster, namely the $N\Ls$-component of $\partial C_{\vx}(N)$. The event $\{\text{S occurs}\}$ is defined as in the previous section except that now $C_o$ is replaced by $C_{\vx}$, i.e. the event $\{\vx \text{ is connected}\}$ occurs in a witness $\omega'$, and $S$ contains $\partial C_{\vx}(\omega')(N)$.  With the above definitions we have $$\tau^f_{\vx}(p)=\Pr_p(A^\mathsf{c}, \vx \text{ is connected})+\sum_S \Pr_p(A,\vx \text{ is connected}, \mathcal{S}_{\vx}=S),$$
where the sum ranges over all possible separating components separating all of $\vx$ from infinity. 

Our aim is to further decompose the events of the above expansion into simpler ones that we have better control of, and then use the inclusion-exclusion principle. We will first introduce some notation. Given a separating component $S$ as above, we first decompose $\vx$ into two sets $\vx_{out}$ and $\vx_{in}$, where $\vx_{out}$ denotes the set of those vertices of $\vx$ lying in some finite component of $N\Ls\setminus (S\cup \partial_\sboxt S)$, and $\vx_{in}:= \vx \backslash \vx_{out}$ its complement. We write $\{\vx \rightarrow S\}$ for the event that no separating component separating some $x_i\in \vx$ from $S$ occurs; to be more precise, the event $\{\vx \rightarrow S\}$ means  that for each $x_i\in \vx_{out}$, no separating component that surrounds $x_i$ and lies entirely in some of the finite components of $N\Ls\setminus (S\cup \partial_\sboxt S)$ occurs. 

Consider now some vertex $x$ in $\vx_{out}$, and let $F$ be the component of $\partial_\sboxt S$ that separates $x$ from $S$. We claim that when $S$ and the events $A$, $\{\vx \rightarrow S\}$ all occur, then $x$ is connected to the unique large cluster of $F$. In particular, if another vertex of $\vx$ lies in the same finite component of $N\Ls\setminus (S\cup \partial_\sboxt S)$ as $x$ does, then both vertices are connected to the unique large cluster of $F$, hence they are connected to each other. To see that the claim holds, notice that $C_{x}$ has to be finite, because $S\cup\partial_\sboxt S$ contains a minimal edge cut of closed edges that surrounds all vertices of $\vx$, hence $x$. Now $\partial C_{x}(N)$ has to intersect $S$, because it cannot lie entirely in $N\Ls\setminus (S\cup \partial_\sboxt S)$ by our assumption. This implies that $x$ is connected to some vertex inside $S$, hence it must first visit the unique large cluster of $F$, as desired.

We now define $\mathcal{C}$ to be the event that 
\begin{itemize}
\item all vertices of $\vx_{in}$ are connected to each other with open paths lying in $S\cup \partial_\sboxt S$,
\item the unique large percolation clusters of the components $F$ of $\partial_\sboxt S$ that separate some $x_i\in \vx_{out}$ from $S$ are connected to each other with open paths lying $S\cup \partial_\sboxt S$,
\item all vertices of $\vx_{in}$ are connected to all such percolation clusters with open paths lying in $S\cup \partial_\sboxt S$.
\end{itemize}
(It is possible that either $\vx_{in}$ or $\vx_{out}$ is the empty set, in which case the third item and one of the first two are empty statements.)
We claim that when $S$ and the events $A$, $\{\vx \rightarrow S\}$ and $\{\vx \text{ is connected}\}$ all occur, then the event $\mathcal{C}$ occurs as well. Indeed, consider a vertex $x\in\vx_{out}$, and let $F$ be the component of $\partial_\sboxt S$ that separates $x$ from $S$, as above. Any open path connecting $x$ to some vertex of $\vx_{out}$ which does not lie in the same finite component of $N\mathbb{L}^d_\sboxt$ that $x$ does, has to first visit the unique large percolation cluster of $F$. Hence it suffices to prove that when two vertices $x_i$ and $x_j$ of $\vx_{in}$ lie in the same cluster, there is always an open path connecting them lying entirely in $S\cup \partial_\sboxt S$.
To this end, assume that there is a path $P$ in $\omega$ connecting $x_i$ to $x_j$, which does not lie entirely in $S\cup \partial_\sboxt S$. Arguing as in the proof of \Lr{finite}, we can modify $P$ to obtain an open path $P'$ connecting $x_i$ to $x_j$ which lies entirely in $S\cup \partial_\sboxt S$. The desired claim follows now easily.

Combining the above claims, we conclude that the events $\{A,\vx \text{ is connected},$ $\mathcal{S}_{\vx}=S\}$ and $\{A,\mathcal{C},\vx\rightarrow S,S \text{ occurs}\}$ coincide, and thus $$\Pr_p(A,\vx \text{ is connected}, \mathcal{S}_{\vx}=S)=\Pr_p(A,\mathcal{C},\vx\rightarrow S,S \text{ occurs}).$$
Using the inclusion-exclusion principle we obtain that 
\begin{align}
\begin{split}
\Pr_p(A,\mathcal{C},\vx\rightarrow S,S \text{ occurs})=\Pr_p(A,\mathcal{C},S \text{ occurs})+ \\ \sum_{T}(-1)^{c(T)}\Pr_p(A,T \text{ occurs},\mathcal{C},S \text{ occurs}),
\end{split}
\end{align}
where the latter sum ranges over all finite collections $T$ of separating components separating $\vx$ from $S$. Collecting now all the terms 
we obtain that 
\begin{align}
\begin{split}
\tau^f_{\vx}(p)=\Pr_p(A^\mathsf{c},\vx \text{ is connected})+\\ \sum_S\Big(\Pr_p(A,\mathcal{C},S \text{ occurs})+\sum_{T}(-1)^{c(T)}\Pr_p(A,T \text{ occurs},\mathcal{C},S \text{ occurs})\Big).
\end{split}
\end{align}

Notice that by combining $S$ and $T$ we obtain an element of $MS^N(\vx)$, hence we can use \Lr{exp dec x}, and then argue as in the proof of \Tr{analytic} to obtain that $\tau^f_{\vx}$ is analytic above $p_c$.

\bigskip
We will now prove the analyticity of $\tau_{\vx}$.  Since $\tau^f_{\vx}$ is analytic, it suffices to prove that $\tau_{\vx}-\tau^f_{\vx}$ is analytic.
It is well-known that the infinite cluster is unique in our setup \cite{BKunique}, and this implies that 
$\tau_{\vx}-\tau^f_{\vx} = \Pr(|C_{x_1}|=\infty,\ldots,|C_{x_k}|=\infty)$. The  latter probability is complementary to $\Pr(\cup_{i=1}^k \{|C_{x_i}|<\infty\})$, which is in turn equal to
$$\Pr(\cup_{i=1}^k \{|C_{x_i}|<\infty\})=\Pr(A^\mathsf{c})+\Pr(\big(\cup_{i=1}^k \{|C_{x_i}|<\infty\}\big)\cap A).$$
Define the event $\{\text{S occurs for some } x_i\in \vx\}$ as in the previous section expect that now we require the existence of a witness $\omega'$ such that $S$ contains $\partial C_{x_i}(\omega')(N)$ for some $x_i\in \vx$.
We can expand the latter term as an infinite sum using the inclusion-exclusion principle to obtain
$$\Pr(\big(\cup_{i=1}^k \{|C_{x_i}|<\infty\}\big)\cap A)=\sum (-1)^{c(S)+1}\Pr(\text{S occurs for some } x_i\in \vx, A),$$
where now we require our separating components to surround some $x_i\in \vx$.
Arguing as in the proof of \Tr{analytic} we obtain that $\tau_{\vx}-\tau^f_{\vx}$ is analytic, as desired.

\bigskip
For the second claim of the theorem, notice that when $\mathrm{diam}(\vx)\geq N/5$, the probability $\Pr(A^\mathsf{c}, \vx \text{ is connected})$ is equal to $0$. Hence our expansion for $\tau^f_{\vx}$ simplifies to 
$$\tau^f_{\vx}(p)=\sum_S\Big(\Pr(A,\mathcal{C},S \text{ occurs})+\sum_{T}(-1)^{c(T)}\Pr(A,T \text{ occurs},\mathcal{C},S \text{ occurs})\Big).$$
Our goal is to show that for every $p>p_c$ there are some constants $\delta,t>0$  such that 
\begin{align}\label{abs}
\Bigl\lvert\sum_{|S|=n}\Big(\Pr_p(A,\mathcal{C},S \text{ occurs})+\sum_{T}(-1)^{c(T)}\Pr_p(A,T \text{ occurs},\mathcal{C},S \text{ occurs})\Big)\Bigr\rvert \leq e^{-tn}
\end{align}
for every $z\in D(p,\delta)$ for the analytic extensions of the above probabilities. Then the desired claim will follow easily from the observation that any plausible separating component $S$ of $\vx$ must have size $\Omega(\mathrm{diam}(\vx))$. 

Notice that the event $A$ depends only on the edges in the boxes $B(x_i)$, $x_i\in \vx$. Moreover, the events $\mathcal{C}$ and 
$\{S \text{ occurs}\}$ depend on $O(|S|)$ edges, while the event $\{T \text{ occurs}\}$ depends on $O(|T|)$ edges. We can now use \Lr{C equals S NN} to conclude that there is a constant $c=c(p,\delta,N)>1$ (perhaps slightly larger than that of \Lr{C equals S NN}) such that
$$|\Pr_z(A,\mathcal{C},S \text{ occurs})|\leq c^{|S|} \Pr_{p'}(A,\mathcal{C},S \text{ occurs})$$
and $$|\Pr_z(A,T \text{ occurs},\mathcal{C},S \text{ occurs})|\leq c^{|S|+|T|} \Pr_{p'}(A,T \text{ occurs},\mathcal{C},S \text{ occurs})$$
for every $z\in D(p,\delta)$, where $p'=p+\delta$ if $p<1$, and $p'=1-\delta$ if $p=1$. Moreover, we can always choose $c$ in such a way that $c\rightarrow 1$ as $\delta\rightarrow 0$. Hence we have 
\begin{align}
\begin{split}
\Bigl\lvert\sum_{|S|=n}\Big(\Pr_z(A,\mathcal{C},S \text{ occurs})+\sum_{T}(-1)^{c(T)}\Pr_z(A,T \text{ occurs},\mathcal{C},S \text{ occurs})\Big)\Bigr\rvert \leq \\
c^n\sum_{|S|=n}\Big(\Pr_{p'}(A,\mathcal{C},S \text{ occurs})+\sum_{T}c^{|T|}\Pr_{p'}(A,T \text{ occurs},\mathcal{C},S \text{ occurs})\Big)
\end{split}
\end{align}
by the triangle inequality. It follows from \Lr{exp dec x} that the sum 
$$\sum_{|S|=n}\Big(\Pr_{p'}(A,\mathcal{C},S \text{ occurs})+\sum_{T}\Pr_{p'}(A,T \text{ occurs},\mathcal{C},S \text{ occurs})\Big)$$ decays exponentially in $n$, and by choosing $\delta$ small enough we can ensure that 
$$c^n\sum_{|S|=n}\Big(\Pr_{p'}(A,\mathcal{C},S \text{ occurs})+\sum_{T}c^{|T|}\Pr_{p'}(A,T \text{ occurs},\mathcal{C},S \text{ occurs})\Big)$$ decays exponentially in $n$ as well, hence
\eqref{abs} holds.
The proof is now complete.
\end{proof}

Using \Tr{tau} we can now prove the following results.

\begin{theorem}\label{expectation}
For every $k\geq 1$ and every $d\geq 2$, the functions $\chi_k^f(p):=\mathbb{E}_p(|C(o)|^k; |C(o)|<\infty)$ are analytic in $p$ on the interval $(p_c,1]$. 
\end{theorem}
\begin{proof}
Let us show that $\chi^f(p):=\mathbb{E}(|C(o)|; |C(o)|<\infty)$ is analytic. The case $k\geq 2$ will follow similarly.
We observe that, by the definitions,
$$\chi^f(p)=\sum_{x\in \Z^d} \tau^f_{\{o,x\}}=1+\sum_{x\in \Z^d\setminus \{o\}} 
\tau^f_{\{o,x\}}.$$ The probabilities $\tau^f_{\{o,x\}}$ admit analytic extensions by \Tr{tau}, and so it suffices to prove that the 
sum $\sum_{x\in \Z^d\setminus \{o\}} \tau^f_{\{o,x\}}$ converges uniformly on an open neighbourhood of $(p_c,1]$. This follows easily from the estimates of the second sentence of \Tr{tau}, and the polynomial growth of $\Z^d$.
\end{proof}

\begin{theorem}\label{free energy}
For every $d\geq 2$, the free energy $\kappa=\mathbb{E}(|C_o|^{-1})$ is analytic in $p$ on the interval $(p_c,1]$. 
\end{theorem}
\begin{proof}
It is known \cite{AizKeNew} that $\kappa$ is differentiable on $(p_c,1)$ with derivative equal to $$f(p):=\dfrac{1}{2(1-p)}\sum_{x\in N(o)} \big(1-\tau_{\{o,x\}}(p)\big).$$ Since each $\tau_{\{o,x\}}$ is analytic on the interval $(p_c,1]$, and $\tau_{\{o,x\}}(1)=1$, $f$ is analytic on $(p_c,1]$ as well. So far we know that $\kappa$ coincides with a primitive $F$ of $f$ only on  $(p_c,1)$, which implies that $\kappa$ is analytic on that interval. In fact, $\kappa$ coincides with $F$ on the whole interval $(p_c,1]$. Indeed, we simply need to verify that $\kappa$ is continuous from the left at $1$. To see this notice that $\kappa(1)=1-\theta(1)=0$ and $\kappa(p)\leq 1-\theta(p)$. Since $\theta$ is continuous from the left at $1$, which follows e.g. by \Tr{analytic}, we have that $\kappa$ is continuous from the left at $1$ as well, hence coincides with $F$ on the whole interval $(p_c,1]$. It now follows that $\kappa$ is analytic in $p$ on the interval $(p_c,1]$, as desired. 
\end{proof}

\comment{
	\section{$p_\C= p_c$ in 2 dimensions (by AG)}

\begin{theorem}\label{theta analytic Z2}
For Bernoulli bond percolation on any planar lattice we have $p_\C= p_c$.
\end{theorem}
\begin{proof} 

\end{proof}
We will be concentrating on the square lattice $L$, i.e.\ the Cayley graph of the group $\Z^2$ \wrt\ the standard generating set $\{(0,1),(1,0)\}$. The same arguments apply to other lattices, and we will be pointing out the differences. We will be using the dual , but not the self-duality of $L$. We need the following important fact about the relation between the percolation thresholds in the primal and dual lattice. 

\begin{theorem}\label{pc star}
$p_c + p_c^*=1$ ...
\end{theorem}

A subgraph $S$ of $\Z^2$ is called a \defi{\scv} of $o$ if $o\in V(S)$ and there is a connected subgraph $G$ of $\Z^2$ cointaining $o$ such that $S$ consists of the vertices and edges incident with the unbounded face of $G$. \mymargin{For $d>2$ define $S$ as the set of points that can reach infinity in $\R^d \sm G$.}

The \defi{boundary $\partial S$} of an \scv\ $S$ is the set of edges of $\Z^2$ that are incident with $S$ and lie in the unbounded face of $S$. It is important to remember $\partial S$ may contain edges that have both their end-vertices in $S$; our proof will break down (at the first sentence of \Lr{scs boundary}) if we exclude such edges from the definition of $\partial S$.

\begin{lemma} \label{scs boundary}	
For every {\scv } $S$ we have $| \partial S| \geq |S|/2$. Moreover, the dual $\partial S^*$ of $\partial S$ is a connected subgraph of \Zsd.
\end{lemma}
(The factor $\frac12$ cannot be increased because it can happen that most edges in $\partial S$ have both their end-vertices in $S$; for example, we can have a `space filling' \scv\ whose vertex set is an $n \times n$ box of \Zs.)
\begin{proof}

\end{proof}

\begin{lemma} \label{scs axis}	
Every {\scv } $S$ of $o$ contains one of the vertices $x_0,\ldots, x_{|S|-1}$, where $x_i:=(i,0)$ is the $i$th vertex on the positive horizontal axis of \Zs.
\end{lemma}
\begin{proof}
By the definitions, $S$ must separate $o$ from infinity, and so it must meet $x_0,x_1\ldots$. If it contains $x_i$ for some $i\geq|S|$, then since $S$ is a connected graph, all its vertices lie at distance less than $|S|$ from $x_i$. This means that $S$ contains none of the vertices $\ldots,  x_{-1}, x_0$, contradicting the fact that it separates $o=x_0$ from infinity.
\end{proof}

Given a realisation $\oo$ of our Bernoulli percolation on $\Z^2$, we say that an \scv\ $S$ \defi{occurs} in \oo\ if $S$ is the boundary of the unbounded face of some cluster of \oo. This happens exactly when all edges of $S$ are occupied and all edges in $\partial S$ are vacant, and so the occurence of $S$ is a basic event\sss.

The following is an easy consequence of the definitions.
\begin{lemma} \label{scs disjoint}	
If two distinct occurring {\scv s} of $o$ share a vertex then they coincide. \qed
\end{lemma}

A \defi{\smc} $S$ of $o$ is a finite set of pairwise vertex-disjoint \scv s of $o$. 
We say that $S$  \defi{occurs} if each of the \scv s it contains occurs. Let $\MS_n:= \{ S \in \MS \mid |S|=n\}$, where $|S|$ denotes the number of vertices contained in all the \scv s in $S$.

\begin{lemma} \label{HR bound}	
There is a constant $r\in \R$ \st\ \fe\ $n\in \N$ at most $r^{\sqrt{n}}$ elements of $\MS_n$ can occur simultaneously in any \oo.
\end{lemma}
\begin{proof}
Suppose $S\in \MS_n$ occurs in \oo. 
Since occurring \scv s are vertex-disjoint by \Lr{scs disjoint}, $S$ is uniquely determined by the subset $D$ of $\{x_0, x_1, \ldots\}$ it meets, in other words, $S= \bigcup_{x_i\in D} S(x_i,\oo)$, where $S(x_i,\oo)$ denotes the occurring \scv\ of $o$ containing $x_i$.

Note that  $|S(x_i,\oo)| > i $ \fe\ $x_i \in D$ by \Lr{scs axis}. Since $n=|S| = \sum_{x_i\in D} |S(x_i,\oo)|$  by the above remark, we deduce $n> \sum_{x_i\in D} i$. This means that $D$ uniquely determines a partition of a number smaller than $n$. Moreover, distinct occurring \smc s in $\MS_n$ determine distinct subsets $D$ of $\{x_0, x_1, \ldots\}$, and therefore distinct partitions. By the Hardy--Ramanujan theorem, the number of such partitions is less than .... Thus less than ... elements of $\MS_n$ can occur simultaneously in any \oo.
\end{proof}

If $C(o)$ is finite, then there is exactly one \scv\ $S\in \CS$ that occurs and is contained in $C(o)$, namely the boundary of the unbounded face of $C(o)$.
We denote the probability of this event by $P_S$, that is, we set 
$$P_S(p):= \Pr(\text{$S$ occurs and } S\subset C(o)).$$
Thus we can write the probability $\theta_o(p)$ that $C(o)$ is finite by summing $P_S$ over all $S\in \CS$:
\labtequ{thetaS2}{$\theta_o(p)= \sum_{S\in \CS} P_S(p)$}
\fe\  $p\in (1/2,1]$.

As usual, our strategy to prove the analyticity of $\theta$, is to express it as an infinite sum of functions that admit analytic extensions, namely, probabilities of events that depend on finitely many edges, and then apply Weierstrass' \Tr{thmWei} to this sum using \Lr{C equals S} or \Cr{D and not F} to bound the absolute values of these functions in appropriate complex discs. Formula \ref{thetaS2} is a first step in this direction, however, the functions $P_S$ are not fit for our purpose: the event $\{\text{$S$ occurs and } S\subset C(o) \}$ is not measurable with respect to the set of edges incident with $S$ only. Therefore, we would prefer to express  $\theta$ in terms of the simpler functions 
$$Q_S:= \Pr_p(\text{$S$ occurs}).$$
These functions have the advantage that comply with the premise of \Cr{D and not F NN}, and hence $|Q_S(p)|$ is bounded in $D(p,M)$ by $e^{2M|S|}Q_S(p+M)$. 
When trying to write $\theta$ as a sum involving these $Q_S$, we have to be more careful: we have $\theta_o(p)=\Pr_p(\text{at least one $S\in \CS$ occurs})$ by the definitions, but more than one $S\in \CS$ might occur simultaneously. Therefore, we will apply the inclusion-exclusion principle to the set of events $\{\text{$S$ occurs}\}_{S\in \CS}$. We claim that
%
\labtequ{thetaQ}{$\theta_o(p)= \sum_{S\in \MS} (-1)^{c(S)+1} Q_S(p)$}
\fe\  $p\in (1/2,1]$, where $c(S)$ denotes the number of \scv s in the \smc\ $S$. 

To prove this, we need first of all to check that the sum in the right hand side converges. This is implied by \Lr{exp dec Z2} below, which states that\\ $\sum_{S\in \MS, |S|=n} (-1)^{c(S)} Q_S(p)$ decays exponentially in $n$, and therefore our sum converges absolutely. Then, we need to check that this sum agrees with the inclusion-exclusion formula. This is so because, \fe\ set $I$ of \scv s of $o$, we have $\Pr(\text{every $S\in I$ occurs})=0$ unless the elements of $I$ are pairwise vertex-disjoint, that is, $I \in \MS$, and so we can restrict the inclusion-exclusion formula to $\MS$ rather than consider sets of \scv s that intersect.

The main part of our proof is to show that the probability for at least one \smc\ in $\MS_n$ to occur  decays exponentially in $n$; the rest of the arguments are identical to those of e.g.\ \Tr{chi NN} or \ref{thm nonam}. More precisely,  we have
\begin{lemma} \label{exp dec Z2}
\Fe\ 	$p\in (1/2,1]$ there is a constant $c=c(p)$, $c<1$, \st\  \fe\ $\nin$,
\labtequ{dec Q}{$\sum_{S\in \MS_n} Q_S(p) \leq c^{n}.$}
\end{lemma}

The proof of this is based on the fact that for any vertex $x$, the length of the \scv\ $S(x)$ incident with $x$ has an exponential tail. This is because $\partial S(x)$ has a size of the same order as $|S(x)|$ and  is contained in a component of the dual \Zsd\ by \Lr{scs boundary}, and as our percolation is subcritical on \Zsd,  \Tr{} of Aizenman \& Barsky applies. Still, the exponential tail of each $|S(x)|$ does not easily imply \Lr{exp dec Z2}: a \smc\ might consist of plenty of \scv s, and although each of them has an exponential tail, it is not clear why their union should have an exponential tail too.
	... BK ...
} 

\section{Continuum Percolation} \label{sec cont}

In this section we will prove analyticity results for the Boolean model in $\mathbb{R}^2$ analogous to \Tr{theta analytic 2D}, answering a question of \cite{PenroseBooleanModel}. 

\medskip
Let $P_{\lambda}$ be a Poisson point process in $\mathbb{R}^d$ of intensity $\lambda$ and let
$\mathcal{N}(B)$ denote the number of points inside a bounded subset $B$ of $\mathbb{R}^d$. The Boolean model is obtained by taking the union $\Zn$ of disks of random radius $r$, called \defi{grains}, centred at the points of $P_{\lambda}$. The random radii are independent random variables and have the same distribution as another non-negative random variable $\rho$. They are also independent from $P_{\lambda}$. We denote $(P_{\lambda},\rho)$ the Boolean model with random radii sampled from $\rho$. If $\rho$ is equal to a positive constant $r$ we will write $(P_{\lambda},r)$. 

The random set $\Zn$ is called the \defi{occupied region} and its complement $\mathcal{V}$ is called the \defi{vacant region}. We will denote by $W(0)$ the connected component of $\Zn$ containing $0$ ($W(0)=\emptyset$ if $0$ is not occupied) and $V(0)$ the connected component of $\mathcal{V}$ containing $0$ ($V(0)=\emptyset$ if $0$ is occupied).

It is well-known that for every non-negative random variable $\rho$, there is a critical value $\lambda_c$ such that for every $\lambda>\lambda_c$ there is almost surely a (unique) occupied unbounded connected component $Z_{\infty}$, but no unbounded connected components exist whenever $\lambda<\lambda_c$. It is possible that the critical value is equal to $0$ or infinity. Under the assumptions that $\mathbb{E}(\rho^{2d-1})<\infty$, where $d$ is the dimension of our space, and $\Pr(\rho=0)<1$ we have $0<\lambda_c<\infty$. An important tool in the study of $Z_{\infty}$ is the \defi{perolation density} $\theta_0 :=\Pr_{\lambda}(0\in  Z_{\infty})$ of $Z_{\infty}$ (also called `volume fraction' or `percolation function'). For an introduction to the subject see \cite{MeesterRoyContPerc,PenRgg}. 

Under general assumptions on the grain distribution, $\theta_0$ is continuous for every $\lambda\neq \lambda_c$ and $d\geq 2$, and $\theta_0(\lambda_c)=0$ when $d=2$ \cite{MeesterRoyContPerc}. Similarly to the standard percolation model on $\mathbb{Z}^d$, it is expected that the latter holds for every $d\geq 3$ as well. 

Much more is known about the behaviour of $\theta_0$ on the interval $(\lambda_c,\infty)$. Recently, it has been proved in \cite{PenroseBooleanModel} that $\theta_0$ is infinitely differentiable on $(\lambda_c,\infty)$ under general assumptions on the grain distribution. The authors asked whether $\theta_0$ is analytic in that interval, and we answer this question in the affirmative when $d=2$. For simplicity we will assume that all discs have radius $1$, although our proof easily extends to the case where the radii are bounded above and below.

\begin{theorem}\label{Boolean}
Consider the Boolean model $(P_{\lambda},1)$ in $\mathbb{R}^2$. Then $\theta_0$ is analytic on $(\lambda_c,\infty)$.
\end{theorem}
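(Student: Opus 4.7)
The proof adapts the strategy of \Tr{theta analytic 2D} to the continuum setting. Three ingredients need to be transplanted: (a) probabilities of events depending on the Poisson process inside a bounded region are entire functions of $\lambda$, since they are finite products of terms of the form $\lambda^k e^{-\lambda A}$ with $A$ the area of a bounded set; (b) the Boolean model in $\mathbb{R}^2$ admits a sensible notion of outer-interface; and (c) for $\lambda>\lambda_c$ the vacant region around $0$ has exponentially decaying `size'. Given these, the proof of \Tr{theta analytic 2D} translates almost verbatim.

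First, for any finite occupied cluster $W(0)$ define its \emph{outer-interface} $\Gamma$ as the topological boundary of the unbounded component of $\mathcal{V}$ restricted to $W(0)$; this is a Jordan curve consisting of finitely many arcs of grain-circles. To count and index interfaces, fix a small $\eps>0$, partition $\R^2$ into $\eps$-squares, and encode $\Gamma$ by the cyclic sequence of squares it traverses together with a combinatorial description of which grains contribute each arc; this yields a countable family $\CS$ of potential interface shapes, each one occurring iff (i) for each specified grain there is a Poisson point in a specified $\eps$-square, and (ii) a specified bounded `annular' region just outside $\Gamma$ contains no Poisson point that would merge $W(0)$ with its exterior. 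Both conditions depend only on the Poisson process in a bounded region, so $\Pr_\lambda(\Gamma \text{ occurs})$ is an entire function of $\lambda$.

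Next, the analog of formula \eqref{thetaQ} holds:
\begin{equation*}
1-\theta_0(\lambda) \;=\; \sum_{M \in \MS}(-1)^{c(M)+1}\Pr_\lambda(M\text{ occurs}),
\end{equation*}
where $\MS$ is the set of finite disjoint unions of interfaces surrounding $0$ and $c(M)$ counts the components; this follows from the inclusion--exclusion principle applied to the events $\{\Gamma\text{ occurs}\}_{\Gamma\in \CS}$, together with the observation (analog of \Lr{scs disjoint}) that two distinct occurring interfaces are disjoint. Absolute convergence of this series will follow from the continuum analog of \Lr{exp dec Z2}.

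The main substantive step is establishing the exponential decay: for each $\lambda > \lambda_c$ there is $c=c(\lambda)<1$ with
\begin{equation*}
\sum_{M\in \MS_n}\Pr_\lambda(M\text{ occurs}) \leq c^n,
\end{equation*}
where $\MS_n$ is the set of multi-interfaces whose discretized length totals $n$. I would combine three ingredients mirroring \Sr{sec th pl}: (1) a continuum BK-type inequality for Poisson processes (available in the literature, see e.g.\ van den Berg--Jonasson) to factor the probability of simultaneous disjoint interface occurrences; (2) exponential decay for a single vacant circuit around $0$ of `length' $k$, which holds for $\lambda>\lambda_c$ by the Aizenman--Barsky/Menshikov/DCT-type result established for the Boolean model (e.g.\ \cite{DCRaTaSha} or Ziesche's sharpness result), replacing the planar duality argument of \Tr{pc star}; and (3) the Hardy--Ramanujan bound (\Tr{HRthm}) and the fact that an occurring interface of length $n$ must cross one of $O(n)$ squares on a fixed ray from $0$, which is an immediate continuum analog of \autoref{quasi geo}.

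Finally, the continuum analogs of \Lr{C equals S LR} and \Cr{D and not F LR} hold by the same Maclaurin estimate (\Prr{absz}), giving $|\Pr_z(M\text{ occurs})|\leq e^{2M\,n}\Pr_{x+M}(M\text{ occurs})$ in discs $D(x,M)\subset \C$. Plugging these bounds into the Weierstrass M-test exactly as in the proofs of \Tr{chi LR} and \Tr{theta analytic 2D}, and appealing to \Wthm{thmWei}, yields analyticity of $\theta_0$ on $(\lambda_c,\infty)$. The main obstacle is step (2), the sharpness of the phase transition in the vacant phase of the Boolean model --- this is a non-trivial input from the continuum percolation literature, but once it is in place the rest of the argument is a routine translation of the lattice proof, with the discretization providing the combinatorial bookkeeping that replaces the underlying graph structure.
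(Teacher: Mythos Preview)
Your high-level strategy matches the paper's: inclusion--exclusion over outer-interfaces, exponential decay of multi-interface probabilities via a disjoint-occurrence inequality plus Hardy--Ramanujan, then Weierstrass. But there is a genuine gap at your step (2), and it is exactly the difficulty the paper singles out.

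You cite sharpness results to obtain ``exponential decay for a single vacant circuit around $0$ of length $k$''. The available exponential decay for supercritical $\lambda$ is for the \emph{area} of the vacant component $V(0)$, not for the length of a vacant curve. The trouble is that an occurring interface of large ``length'' $n$ need not produce a vacant region of area comparable to $n$: the strip $S(Y)=J(Y)+\overline{D}$ contains the occupied grains of $Y$ themselves, and the genuinely vacant part of $S(Y)$ can be an arbitrarily small fraction of $\mu(S(Y))$ (think of grains packed tightly along $J$). The paper handles this by coupling $(P_\lambda,1)$ with $(P_\lambda,1-\varepsilon)$ for small $\varepsilon$ with $\lambda>\lambda_c(1-\varepsilon)$: in the reduced-radius model the set $S(\varepsilon)=J(Y)+D(0,\varepsilon)$ is \emph{entirely} vacant, and the substantial geometric \Lr{ssp area} (using Steiner's inequality and a careful sector-decomposition argument) shows $\mu(S(\varepsilon))\geq\gamma_1\mu(S)-\gamma_2$. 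Only then does the area-based exponential decay bite. Your discretization does not sidestep this: a discretized interface of length $n$ still gives no lower bound on vacant area without an argument of this kind.

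A secondary point: your claim that the analytic extension and bound $|P_z|\leq e^{2Mn}P_{x+M}$ follow ``by the same Maclaurin estimate'' understates the work. In the continuum the relevant quantities are expectations $\mathbb{E}_\lambda(N(\{m_1,\ldots,m_k\}))$, which are not simple products of exponentials but integrals over Poisson point configurations; the paper's \Lr{estimates} spends real effort writing these as explicit sums over $\mathcal{N}(6nD)$ and Taylor-expanding the integrand to get both the entire extension and the bound $|f(z)|\leq e^{4nM}f(\lambda+M)$. Finally, the paper uses Reimer's inequality (\Tr{Reimer}) rather than BK, since the vacant events are not increasing in the Poisson configuration.
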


The proof of \Tr{Boolean} will follow the lines of that of \Tr{theta analytic 2D}. One of the main tools in the proof of the latter is the exponential decay of the probability $\Pr_p(\text{some } S\in \MS_n \text{ occurs})$, which follows from the \ABP, duality, and the BK inequality. In the case of the Boolean model we will define another notion of \scv\ and our goal once again is to show that the probability of having large \smc s decays exponentially in their size. However, the Boolean model lacks a notion of duality which leads to certain complications. Nevertheless, it is still true that the probability $\Pr_{\lambda}(\mu(V(0))\geq a)$, where $\mu(V(0))$ denotes the area of $V(0)$, decays exponentially in $a$ for every fixed $\lambda>\lambda_c$, which we will combine with the more general Reimer inequality \cite{ContReimer}, instead of the BK inequality, to show the desired exponential decay.

Before stating the Reimer inequality let us fix some notation. We denote a sample of the Boolean model $(P_{\lambda},\rho)$ by $\omega=\{(x_i,r_i): i=1,2,\ldots\}$, where $(x_i)$ is the sequence of points of the Poisson point process and $(r_i)$ the associated sequence of radii. The \defi{restriction} of $\omega$ to a set $K\subset \mathbb{R}^d$ is
$$\omega_K :=\{(x_i,r_i)\in \omega: x_i\in K\}.$$
We also define $$[\omega]_K:=\{\omega': \omega'_K=\omega_K\}.$$
We say that an event $A$ \defi{lives on} a set $U$ if $\omega\in A$ and $\omega'\in [\omega]_U$ imply $\omega'\in A$. For $A$ and $B$ living on a bounded region $U$ we define the event
\begin{align}
\begin{split}
A\square B=\{\omega: \text{ there are disjoint sets } K,L, \text{ each a finite union of } \\ \text{ rectangles with rational coordinates, with } [\omega]_K \subset A, [\omega]_L \subset B\}.
\end{split}
\end{align}
When $A\square B$ occurs we say that $A$ and $B$ \defi{occur disjointly}.

\begin{theorem}\text{(Reimer inequality)}\cite{ContReimer} \label{Reimer}
Let $U$ be a bounded measurable set in $\mathbb{R}^d$. For any two events $A$
and $B$ living on $U$ we have $$\Pr(A\square B)\leq \Pr(A) \Pr(B).$$
\end{theorem}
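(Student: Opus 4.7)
The plan is to reduce the continuum Reimer inequality to its discrete counterpart on finite product spaces by approximating the Boolean model via a grid discretization, and then passing to the limit. The essential feature that makes the reduction tractable is that $U$ is bounded, so almost every sample $\omega$ restricted to $U$ contains only finitely many points of $P_\lambda$.

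First, I would fix a sequence $(\mathcal{G}_n)_{n\in\N}$ of partitions of $U$ into axis-aligned rectangles with rational coordinates, each refining the previous one, with mesh tending to $0$. For each $n$, I would also discretize the radius distribution of $\rho$ into an approximating distribution $\rho_n$ supported on finitely many rational values. Under $(P_\lambda, \rho_n)$, the restriction of $\omega$ to $U$ is described by a family of independent variables indexed by cells of $\mathcal{G}_n$, one recording the number of points in each cell and the pairs (discretized position, discretized radius) of these points; this yields a product measure on a countable product space.

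Next, I would approximate the given events $A$ and $B$ by events $A_n$ and $B_n$ that are measurable with respect to the $\sigma$-algebra generated by $\mathcal{G}_n$ and the discretized radii, with $\Pr(A \bigtriangleup A_n), \Pr(B \bigtriangleup B_n) \to 0$; this is possible by standard monotone class arguments, since the increasing union of these $\sigma$-algebras equals the $\sigma$-algebra of events living on $U$. At the discrete level, Reimer's classical inequality for product measures on finite (or countable) product spaces applies, so $\Pr(A_n \square B_n) \leq \Pr(A_n) \Pr(B_n)$. Here the crucial compatibility is that disjoint witnesses $K, L$ for the continuum event $A_n \square B_n$ can be taken to be unions of cells of $\mathcal{G}_n$ (which are rational rectangles), so the continuum $\square$ and the discrete disjoint-occurrence coincide for grid-measurable events.

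The final step would be to take $n \to \infty$. The right-hand side tends to $\Pr(A)\Pr(B)$ by the $L^1$ approximation of $A_n, B_n$. The main obstacle is showing $\Pr(A_n \square B_n) \to \Pr(A \square B)$: the operation $\square$ involves an existential quantifier over disjoint witnesses, so it is not immediately continuous under $L^1$ approximation of the underlying events. The argument I would use is that, since any two disjoint finite unions of rational rectangles eventually become unions of cells of $\mathcal{G}_n$, the continuum disjoint-occurrence event equals an increasing limit (modulo refinement) of its grid counterparts; combined with $\Pr(A\bigtriangleup A_n)\to 0$ this gives convergence in both directions. This measure-theoretic bookkeeping is the delicate part of the proof and is the essence of the extension of Reimer's theorem from discrete product measures to Poisson processes carried out in \cite{ContReimer}.
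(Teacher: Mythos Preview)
The paper does not prove this theorem; it merely states it and cites \cite{ContReimer} for the proof, using it as a black-box tool in the analysis of the Boolean model. So there is no proof in the paper to compare your proposal against.

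That said, your outline is the standard route to such continuum extensions of Reimer's inequality: discretize the underlying Poisson process to a finite product space, invoke the classical discrete Reimer inequality, and pass to the limit. This is essentially the strategy of the reference the paper cites. The genuinely delicate point, which you correctly identify, is the continuity of the $\square$ operation under approximation; your proposal handles this only at the level of a heuristic, and making it rigorous (controlling both directions of the limit, and ensuring the discrete and continuum notions of disjoint occurrence match up along the sequence) is where the actual work lies. If you intend to include a full proof rather than a sketch, that step would need to be fleshed out carefully.
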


Before delving into the details of the proof of \Tr{Boolean} let us give some more definitions. Let $x\in \mathbb{R}^2$ and let $\Omega$ be a bounded domain in $\mathbb{R}^2$ with piecewise $C^1$ boundary (the sets $\Omega$ we will consider are finite unions of disks). We define $dist(x,\Omega)=\inf_{y\in \Omega} \{|x-y|\}$ to be the Hausdorff distance between $x$ and $\Omega$. The area of $\Omega$ is denoted by $\mu(\Omega)$ and the length of its boundary $\partial \Omega$ by $\mathcal{L}(\partial \Omega)$.

The \defi{Minkowski sum} of two sets $\Omega_1,\Omega_2 \subset \mathbb{R}^2$ is defined as the set
$$\Omega_1+\Omega_2 :=\{a+b: a\in \Omega_1, b\in \Omega_2\}.$$ We also define $$r\Omega:=\{ra: a\in \Omega\}$$ for $r\in \mathbb{R}_{\geq 0}$. For $x\in\mathbb{R}^2$ we will write $x+\Omega$ instead of $\{x\}+\Omega$. Analogously, the \defi{Minkowski difference} is defined as the set
$$\Omega_1-\Omega_2:=\{x\in \mathbb{R}^2: x+\Omega_2\subset \Omega_1\}.$$ Note that in general $(\Omega_1-\Omega_2)+\Omega_2 \neq \Omega_1$. However, for the kind of sets we will consider equality will hold.

For $r\in \mathbb{R}_{\geq 0}$ the \defi{outer $r$-parallel set} of $\Omega$ is the set 
$$\Omega_r:=\Omega+r\overline{D},$$ where $\overline{D}=\overline{D(0,1)}$ is the closed unit disk. We will write $\overline{D(x)}$ for the closed unit disk centred at $x$. Notice that $\Omega_r$ coincides with the set $$\{x\in\mathbb{R}^2: dist(x,\Omega)\leq r\}.$$
Moreover it follows by the definitions that $(\Omega_r)_s = \Omega_{r+s}$. 

The \defi{inner $r$-parallel set} of $\Omega$ is the set 
$$\Omega_{-r}:=\Omega-r\overline{D}.$$ This set could be empty for some value of $r$ and for this reason we define the \defi{inradius} $r(\Omega)$ of $\Omega$ by
$$r(\Omega):=\sup\{r: \exists x\in \mathbb{R}^2 \text{ with } x+r\overline{D} \subset \Omega\}.$$

Given $Y=\{x_1,x_2,\ldots,x_n\} \subset \mathbb{R}^2$ we define $$\Omega(Y):=\cup_{i=1}^n \overline{D(x_i)}.$$ 
In case $\Omega(Y)$ is not simply connected, consider the bounded connected components $C_1,C_2,\ldots,C_k$ of its complement and define 
$$\tilde{\Omega}=\tilde{\Omega}(Y):= (\cup_{i=1}^k C_k) \cup \Omega(Y).$$

The next theorem upper bounds the measure of $\Omega_r$ in terms of the measure of $\Omega$ and the length of its boundary. It will be useful in the proof of \Tr{Boolean}. 

\begin{theorem}\textbf{(Steiner's inequality)}\label{Steiner}
Let $\Omega\subset \mathbb{R}^2$ be a compact simply connected set with piecewise $C^1$ boundary. Then
$$\mu(\Omega_r)\leq \mu(\Omega)+\mathcal{L}(\partial \Omega)r+\pi r^2.$$
If $\Omega$ is convex, then this inequality holds with equality. 
\end{theorem}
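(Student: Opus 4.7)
My plan is to first handle the convex case by direct computation, and then reduce the general simply connected case to a perimeter bound via the coarea formula.

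For the convex case I would parametrize $\partial\Omega$ by arclength $s\in[0,L]$ with outward unit normal $n(s)$, and consider the map $\Phi:[0,L]\times[0,r]\to \Omega_r\setminus \Omega$ defined by $\Phi(s,t)=\gamma(s)+tn(s)$. Convexity makes $\Phi$ a bijection (outward normals from distinct boundary points of a convex set do not cross outside it), and a Frenet-frame computation gives Jacobian $1+t\kappa(s)$, where $\kappa\ge 0$ is the signed curvature. Integrating and using the total-curvature identity $\int_0^L\kappa\,ds=2\pi$ for a simple closed curve,
\[
\mu(\Omega_r)-\mu(\Omega)=\int_0^L\int_0^r(1+t\kappa(s))\,dt\,ds=Lr+\pi r^2,
\]
which is the equality case. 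Piecewise $C^1$ convex boundaries are handled identically, with each corner treated as an atomic curvature contribution $\pi-\theta_v$ at the vertex; the total is still $2\pi$ by the polygonal Gauss--Bonnet identity $\int_0^L\kappa\,ds+\sum_v(\pi-\theta_v)=2\pi$.

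For the general simply connected case, I would apply the coarea formula to the distance function $d(\cdot,\Omega)$ (which satisfies $|\nabla d|=1$ almost everywhere on $\R^2\setminus\Omega$) to obtain $\mu(\Omega_r)-\mu(\Omega)=\int_0^r\mathcal{L}(\partial\Omega_t)\,dt$, reducing the task to showing $\mathcal{L}(\partial\Omega_t)\le L+2\pi t$ for each $t\in[0,r]$. The boundary $\partial\Omega_t$ is the outer envelope of the parametric parallel curve $\gamma_t(s):=\gamma(s)+tn(s)$ together with circular arcs of radius $t$ at each convex corner of $\partial\Omega$; by Gauss--Bonnet the naive length of this union is exactly $L+2\pi t$. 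In the non-convex case $\gamma_t$ may self-intersect at concave features, and the folded portions of $\gamma_t$ fall inside $\Omega_t$ and are excluded from $\partial\Omega_t$, so the actual length can only be smaller than the naive value.

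The principal obstacle is the rigorous bookkeeping of the folded portions in the general case, i.e.\ verifying that the excluded contributions have the correct sign. A cleaner route that avoids this delicacy is to approximate $\Omega$ from inside by simple polygons $P^{(n)}\subset \Omega$ with $\mathcal{L}(\partial P^{(n)})\to L$ and $\mu(P^{(n)})\to\mu(\Omega)$; for a simple polygon, $(P^{(n)})_r$ decomposes explicitly as the polygon itself, one outward rectangle per edge (total area $\mathcal{L}(\partial P^{(n)})\cdot r$), a circular sector of area $\tfrac12(\pi-\theta_v)r^2$ at each convex vertex, and an overlap region at each reflex vertex of area at least $\tfrac12(\theta_v-\pi)r^2$ (a local computation using the two outward normals). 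The sectors and overlaps recombine through $\sum_v(\pi-\theta_v)=2\pi$ to contribute at most $\pi r^2$, and any further non-local overlaps between distant boundary portions only strengthen the bound. Monotone convergence $\mu((P^{(n)})_r)\uparrow \mu(\Omega_r)$ then transfers the inequality to $\Omega$, and tracking equality through the argument shows it forces the absence of reflex vertices, i.e.\ convexity.
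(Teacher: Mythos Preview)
The paper does not prove this theorem at all: it quotes Steiner's inequality as a classical result and refers to Federer for the convex case and to Hadwiger and Treibergs for the general simply connected case. So there is no ``paper's own proof'' to compare against, and your attempt is already more than what the paper provides.

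Your argument is along standard lines and is essentially sound. The convex computation via the normal map and total curvature is the textbook proof. For the general case, the polygonal approximation is the right idea; two points deserve care. First, your local overlap bound $\tfrac12(\theta_v-\pi)r^2$ at a reflex vertex tacitly assumes the two incident edges have length at least of order $r$; for short edges the adjacent-rectangle overlap can be smaller, and one must then account for overlaps with non-adjacent pieces to recover the deficit. Second, insisting on $P^{(n)}\subset\Omega$ with $\mathcal L(\partial P^{(n)})\to\mathcal L(\partial\Omega)$ is delicate for non-convex $\Omega$, since polygons with vertices on $\partial\Omega$ need not lie inside $\Omega$; it is cleaner to take $P^{(n)}\to\Omega$ in Hausdorff distance with perimeters converging (easy for piecewise $C^1$ boundary) and pass to the limit via $\mu((P^{(n)})_r)\to\mu(\Omega_r)$. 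With these adjustments the argument goes through.
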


See \cite{FedererGeoMeasure} for a proof when $\Omega$ is convex
\cite{HadwigerSteinerschen,TreibergsIsoperimetric} for the general case.

\medskip

Let us now focus on the function $\theta_0$. If $0\not \in Z_{\infty}$, then there are two possibilities:
\begin{enumerate}
\item either there is no point of $P_{\lambda}$ in $\overline{D}$,
\item or there are points $x_1,x_2,\ldots,x_n$ of $P_{\lambda}$ in $W(0)$ such that $\Omega:=\Omega(\{x_1,\ldots,x_n\})$ is connected and there is no point of $P_{\lambda}\setminus \{x_1,\ldots,x_n\}$ at distance $r\leq 1$ from $\partial \tilde{\Omega}$.
\end{enumerate}

This observation leads to the following definition. Suppose that\\ $Y=\{x_1,x_2,\ldots,x_n\}$ is a  subset of $\mathbb{R}^2$ satisfying 
\begin{enumerate}\label{conditions}
\item \label{con i} $\Omega:=\Omega(Y)$ is connected; 
\item \label{con ii} $0\in \tilde{\Omega}$; and 
\item \label{con iii} $\overline{D(x_i)}\cap \partial \tilde{\Omega}$ contains an arc of positive length for every $i=1,\ldots,n$.
\end{enumerate}
Then we call $\partial \tilde{\Omega}$ an \defi{\scv} and we denote it by $J(Y)$. The set $S(Y):=J(Y)+\overline{D}$ is called a \defi{\ssp}. We say that a set $Y$ as above \defi{happens to separate in} $P_{\lambda}$ if $Y\subset P_{\lambda}$ and no other point of $S(Y)$ belongs to $P_{\lambda}$. We say that $S(Y)$ \defi{occurs} whenever $Y$ happens to separate in $P_{\lambda}$.

There is subtle point in the latter definition. It is possible that the boundary of $S(Y)$ contains points of the Jordan domain enclosed by $J(Y)$ ($\tilde{\Omega}$ with the above notation) that do not belong in $Y$. Moreover, it can happen that some of these points are occupied. However, having such a $Y$ in $P_{\lambda}$ is an event of measure $0$ and so we can disregard it.

To avoid such trivialities, we will always assume that no pair of points $x_i,x_j$ of $P_{\lambda}$ have distance $2$, which implies that no pair of disks touch. We can do so as this event has measure $0$.

The following lemma is an easy consequence of the definitions.
\begin{lemma} \label{ssp disjoint}	
If $Y_1$ and $Y_2$ happen to separate in $P_{\lambda}$ and $S(Y_1), S(Y_2)$ have non empty intersection, then $Y_1=Y_2$. \qed
\end{lemma}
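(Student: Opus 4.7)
The plan is to reduce this lemma to a single geometric claim and then finish by a direct appeal to the definition of "happens to separate". The essential input is that for each $x \in Y_i$, condition (iii) on an \scv\ requires $\overline{D(x)} \cap J(Y_i)$ to contain an arc of positive length, so there exists $q \in J(Y_i)$ with $|q-x|=1$; this places $x$ in the closed $1$-neighborhood $J(Y_i)+\overline{D}=S(Y_i)$. Thus $Y_i \subseteq S(Y_i)$ for $i=1,2$. Combined with the hypothesis (which unpacks to $P_\lambda \cap S(Y_i)=Y_i$), this gives the trivial containments
\[
Y_1 \cap S(Y_2) \subseteq P_\lambda \cap S(Y_2) = Y_2 \quad \text{and} \quad Y_2 \cap S(Y_1) \subseteq Y_1,
\]
which hold independently of whether the strips meet.

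The strategy is therefore to establish the geometric claim that $S(Y_1) \cap S(Y_2) \neq \emptyset$ forces $Y_1 \subseteq S(Y_2)$ and $Y_2 \subseteq S(Y_1)$. Once this is done, the containments above give $Y_1 \subseteq Y_2$ and $Y_2 \subseteq Y_1$, hence $Y_1=Y_2$. To prove the geometric claim I would exploit that, by condition (ii), both $J(Y_1)$ and $J(Y_2)$ are Jordan curves enclosing the origin, and that each strip $S(Y_i)$ is a connected annular neighborhood of the corresponding curve. If the two strips meet, then the two Jordan curves come within distance $2$ of each other somewhere; combined with the fact that both encircle $0$, a winding argument shows they must nest in such a way that $J(Y_i)$ lies entirely within $S(Y_j)$ (otherwise $J(Y_i)$ would have to cross the annular strip $S(Y_j)$ from its inner boundary to its outer boundary without passing through $S(Y_j)$, which is absurd). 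Since every $y \in Y_i$ sits within distance $1$ of $J(Y_i)$, this containment of curves upgrades to $Y_i \subseteq S(Y_j)$.

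The main obstacle is the geometric step in the previous paragraph: one must rule out pathological configurations where the two Jordan curves touch their neighborhoods only tangentially while some center in $Y_i$ still manages to drift outside $S(Y_j)$. The cleanest route is probably by contradiction --- assuming $x \in Y_1$ with $x \notin S(Y_2)$ and tracing the arc of $J(Y_1)$ through $x$ until it enters a neighborhood where $S(Y_1)$ and $S(Y_2)$ overlap --- and invoking condition (iii) together with $P_\lambda \cap S(Y_j)=Y_j$ to extract a Poisson point that violates one of the separating conditions. Every other ingredient is immediate from the definitions.
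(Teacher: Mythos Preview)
The paper gives no proof (only a \qed), so there is nothing to compare against; I will just comment on the soundness of your argument.

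Your first paragraph is fine: $Y_i\subseteq S(Y_i)$ and the displayed containments are correct and are exactly what one needs once the implication $Y_1\subseteq S(Y_2)$, $Y_2\subseteq S(Y_1)$ is available.

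The gap is in your second paragraph. The ``geometric claim'' that $S(Y_1)\cap S(Y_2)\neq\emptyset$ forces $J(Y_i)\subseteq S(Y_j)$ is \emph{false as a purely geometric statement}, and your winding argument does not establish it. Knowing that the two Jordan curves come within distance $2$ at one point does not prevent $J(Y_i)$ from leaving $S(Y_j)$ elsewhere without ever crossing from the inner boundary of $S(Y_j)$ to the outer one: $J(Y_i)$ can simply touch the \emph{outer} part of the annular strip $S(Y_j)$ (entering from the unbounded side and leaving again to the unbounded side) while the rest of $J(Y_i)$ stays far outside $S(Y_j)$, all the while still enclosing~$0$. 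Nothing in ``both curves encircle $0$'' rules this out. Concretely, take $J(Y_j)$ a small circle near the origin and $J(Y_i)$ a large curve encircling $0$ with a single inward protrusion approaching $J(Y_j)$; then $S(Y_i)\cap S(Y_j)\neq\emptyset$ but $J(Y_i)\not\subseteq S(Y_j)$ and the centers of $Y_i$ on the far part of $J(Y_i)$ are nowhere near $S(Y_j)$.

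Your third paragraph correctly senses that the Poisson hypothesis is essential, not merely a cleanup for tangential cases, but the sketch there is too vague to count as a proof. A cleaner route is to use the Poisson condition from the start: show that if $Y$ happens to separate then the occupied component $W$ containing $\Omega(Y)$ satisfies $\tilde W=\tilde\Omega(Y)$ (no Poisson disk outside $\tilde\Omega(Y)$ can meet it, and none inside can protrude beyond it). Hence $J(Y)$, $S(Y)$ and finally $Y=P_\lambda\cap S(Y)$ are determined by $W$. The task then reduces to showing that $S(Y_1)\cap S(Y_2)\neq\emptyset$ forces $\Omega(Y_1)$ and $\Omega(Y_2)$ to lie in the same occupied component; this is where the nesting of the two Jordan domains and the condition $P_\lambda\cap S(Y_i)=Y_i$ actually do the work.
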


This leads us to define a \defi{\smc} as a finite set of pairwise disjoint \scv s and a \defi{\sms} as a finite set of pairwise disjoint \ssp s. A \sms\ \defi{occurs} if each of its \ssp s occurs.

Using the above definitions we obtain
$$1-\theta_0(\lambda)=\Pr_{\lambda}(0\not \in Z_{\infty})=\Pr_{\lambda}(\text{some } S(Y) \text{ occurs})$$ for every $\lambda>\lambda_c$.
The second equality follows from the fact that whenever $0\not \in Z_{\infty}$ and no $Y$ happens to separate in $P_{\lambda}$, $0$ belongs to an infinite vacant component, and this event has measure $0$ for every $\lambda>\lambda_c$ \cite{MeesterRoyContPerc}.

Once again we intend to use the inclusion-exclusion principle to obtain the formula 
$$\Pr_{\lambda}(\text{some } S(Y) \text{ occurs}) = \sum_{k=1}^\infty (-1)^{k+1}\mathbb{E}_{\lambda}(N(k))$$
\fe\  $\lambda\in (\lambda_c,\infty)$, where $N(k)$ is the number of occurring \sms s comprising $k$ \ssp s. 

To prove the validity of the above formula we will show that the alternating sum converges absolutely. In order to do so, we first express the above expectations as an infinite sum according to the area of $S(Y_i)$, i.e. $$\mathbb{E}_{\lambda}(N(k))=\sum_{\{m_1,\ldots,m_k\}} \mathbb{E}_{\lambda}(N(k,\{m_1,\ldots,m_k\})),$$ where the sum in the right hand side ranges over all multi-sets of positive integers with $k$ elements, and $N(k,\{m_1,\ldots,m_k\})$ is the number of occurring \sms s $S=\{S_1,\ldots,S_k\}$ with $\left \lfloor{\mu(S_i)}\right \rfloor=m_i$.

Let us define $P_n$ to be the set of partitions of $n$ and $\MS_n$ to be the set of \sms s 
$S=\{S_1,\ldots,S_k\}$ with $\left \lfloor{\mu(S_1)}\right \rfloor + \ldots + \left \lfloor{\mu(S_i)}\right \rfloor=n$. We denote by $N_n$ the number of occurring \sms s of $\MS_n$.
The analogue of \Lr{exp dec Z2} is

\begin{lemma} \label{exp dec R2}
\Fe\ 	$\lambda\in (\lambda_c,\infty)$ there are constants $c_1=c_1(\lambda)$ and $c_2=c_2(\lambda)$ with $c_2<1$  \st\  \fe\ $\nin$,
\labtequ{dec Q R2}{$\mathbb{E}_{\lambda}(N_n) \leq c_1 c_2^{n}.$}
\end{lemma}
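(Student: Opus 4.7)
The plan is to follow the blueprint of \Lr{exp dec Z2}, replacing the BK inequality by Reimer's \Tr{Reimer} and the subcritical decay of the dual lattice by the recalled exponential decay of $\Pr_\lambda(\mu(V(0))\ge a)$. The first observation is that for any occurring \sms\ $M=\{S(Y_1),\dots,S(Y_k)\}$, the separating curves $J(Y_i)=\partial\tilde\Omega(Y_i)$ are pairwise disjoint Jordan curves each enclosing $0$, and so are nested. I would order them from innermost to outermost so that each $J(Y_i)$ crosses the positive $x$-axis $\{(t,0):t\ge 0\}$ at a unique distance $t_i$ from the origin, with $t_1<t_2<\dots<t_k$. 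This plays the role of the axis argument used in \Lr{HR bound}.

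The geometric heart of the proof is to establish a lower bound of the form $\mu(S(Y_i))\ge c\,t_i$ for some absolute constant $c>0$. Because $J(Y_i)$ encloses $0$ and passes within distance $1$ of $(t_i,0)$, its length is at least $2(t_i-1)$. Combined with Steiner's inequality (\Tr{Steiner}) applied to $\tilde\Omega(Y_i)$ and its inner and outer $1$-parallel sets, and using the fact that $\tilde\Omega(Y_i)$ is simply connected and built from unit disks (which restricts how tightly $J(Y_i)$ can wind), this yields the desired linear bound. Discretising each $t_i$ to $\lfloor c t_i\rfloor$, any $M\in\MS_n$ therefore induces a strictly increasing sequence of positive integers summing to at most $Cn$, and by \Tr{HRthm} there are at most $h^{\sqrt{n}}$ such sequences for some $h=h(c)>1$.

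Next, for each fixed discretised crossing sequence I would bound the contribution to $\mathbb{E}_\lambda(N_n)$. Since the \ssp s in an \sms\ are pairwise disjoint, the events $\{S(Y_i)\text{ occurs}\}$ depend on $P_\lambda$ restricted to disjoint regions and are therefore independent by the Poisson property (and in particular occur disjointly in the sense of Reimer), giving
\[
\Pr_\lambda(M\text{ occurs})\le\prod_i\Pr_\lambda(S(Y_i)\text{ occurs}).
\]
When $S(Y_i)$ occurs, every grain center within distance $1$ of $J(Y_i)$ lies in $Y_i$, so the ``outer collar'' $S(Y_i)\setminus\tilde\Omega(Y_i)$ is a vacant set of area $\Theta(\mu(S(Y_i)))$ reachable from a point just outside $\tilde\Omega(Y_i)$; the recalled exponential bound on $\Pr_\lambda(\mu(V(0))\ge a)$ then supplies some $b=b(\lambda)<1$ with $\Pr_\lambda(S(Y_i)\text{ occurs})\le b^{\mu(S(Y_i))}$. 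Integrating over the continuous positions of the Poisson centers in each prescribed unit interval via a standard Campbell-type computation, the contribution of a single discretised sequence is at most $b_1^{n}$ for some $b_1<1$. Multiplying by the Hardy--Ramanujan count $h^{\sqrt n}$ and picking any $c_2\in(b_1,1)$ absorbs the subexponential factor, yielding $\mathbb{E}_\lambda(N_n)\le c_1 c_2^{n}$ as required.

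The main obstacle is the lower bound $\mu(S(Y_i))\ge c\,t_i$ of the second paragraph. In the lattice case this reduced to the elementary $|\partial S|\ge|S|/k$ of \Lr{scs boundary}, but in the continuum $\tilde\Omega(Y_i)$ can in principle have long thin tentacles whose $1$-tubular neighbourhoods overlap heavily, and one must argue carefully --- using the unit-disk structure and an inner/outer parallel-set decomposition of $\tilde\Omega(Y_i)$ via Steiner's inequality --- that this cannot cause $\mu(S(Y_i))$ to become sublinear in $t_i$.
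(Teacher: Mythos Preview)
Your overall architecture is right, but there is a genuine gap in the step where you pass from ``$S(Y_i)$ occurs'' to an exponentially unlikely vacancy event in the same Boolean model $(P_\lambda,1)$. You write that ``every grain center within distance $1$ of $J(Y_i)$ lies in $Y_i$, so the outer collar $S(Y_i)\setminus\tilde\Omega(Y_i)$ is a vacant set''. The premise is correct, but the conclusion is not: for a point $z$ in the outer collar to be vacant you need no grain \emph{center} within distance $1$ of $z$, which is a different condition from having no center within distance $1$ of $J(Y_i)$. A Poisson point $p$ just outside $S(Y_i)$, at distance $1+\delta$ from $J(Y_i)$, is perfectly compatible with $S(Y_i)$ occurring, yet its unit disk covers every point of the collar at distance $\ge \delta$ from $J(Y_i)$. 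Concretely, if $Y_i=\{0\}$ so that $\tilde\Omega=\overline{D(0,1)}$ and the outer collar is $\{1<|z|\le 2\}$, a Poisson point at $|p|=2.1$ covers the collar point $|z|=1.5$. So the outer collar need not be vacant at all, and the recalled bound on $\Pr_\lambda(\mu(V(0))\ge a)$ cannot be applied directly.

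The paper repairs exactly this point by an $\varepsilon$-shrinking trick: one fixes $\varepsilon>0$ small enough that $\lambda>\lambda_c(B_{1-\varepsilon})$ and couples $(P_\lambda,1)$ with $(P_\lambda,1-\varepsilon)$. Whenever $S(Y)$ occurs in $(P_\lambda,1)$, the thin tube $S(\varepsilon)=J(Y)+D(0,\varepsilon)$ is genuinely vacant in $(P_\lambda,1-\varepsilon)$: if $z\in S(\varepsilon)$ and $|p-z|\le 1-\varepsilon$ then $\mathrm{dist}(p,J)<1$, forcing $p\in Y$, whence $\overline{D(p)}\subset\tilde\Omega$ and no open ball around a boundary point $j\in J$ near $z$ can sit inside $\tilde\Omega$. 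The price is that $S(\varepsilon)$ is much thinner than $S(Y)$, and one needs the purely geometric \Lr{ssp area} to show $\mu(S(\varepsilon))\ge\gamma_1\mu(S)-\gamma_2$; this is the continuum analogue of your ``main obstacle'' paragraph, and it is where the unit-disk structure is really used. With this in hand, your Hardy--Ramanujan count together with Reimer's inequality (applied in $(P_\lambda,1-\varepsilon)$) finishes the proof along the lines you sketched. Your crossing-distance bound $\mu(S(Y_i))\ge c\,t_i$ is essentially the content of \Lr{ssp distance}, so that part of the plan is sound.
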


Notice that whenever a \ssp\ $S$ occurs, a subset of $S$ is vacant. Thus we are lead to use the exponential decay in $a$ of the probability \\ $\Pr_{\lambda}(\mu(V(0))\geq a)$ for every $\lambda>\lambda_c$ \cite{MeesterRoyContPerc}. However, we cannot directly apply the aforementioned exponential decay as it is possible for the area of the vacant subset of $S$ to be relatively small compared to the area of $S$. 

In order to overcome this difficulty we fix a $\lambda>\lambda_c$ and consider a small enough $1>\varepsilon>0$ such that $\lambda_c(B_{1-\varepsilon})<\lambda$, where $\lambda_c(B_{1-\varepsilon})$ is the critical point of the Poisson Boolean model $(P_{\lambda},1-\varepsilon)$. We couple the two models by sampling a Poisson point process with intensity $\lambda$ in $\mathbb{R}^2$ and placing two disks, one of radius $1$ and another of radius $1-\varepsilon$, centred at each point of the process. We notice that whenever a \ssp\ $S=S(Y)$ occurs in $(P_{\lambda},1)$, the set $S(\varepsilon) :=J(Y)+D(0,\varepsilon)$ is vacant in $(P_{\lambda},1-\varepsilon)$ in our coupling and our goal is to show that this happens with probability that decays exponentially in the area of $S$.

First we need to show that $\mu(S(\varepsilon))$ and $\mu(S)$ are of the same order. We do so in the following purely geometric lemma.

\begin{lemma}\label{ssp area}
Let $1>\varepsilon>0$. Then there are constants $\gamma_1=\gamma_1(\varepsilon)>0, \gamma_2=\gamma_2(\varepsilon)>0$ such that for every \ssp\ $S=S(Y)$ we have $$\mu(S(\varepsilon))\geq \gamma_1\mu(S)-\gamma_2.$$
\end{lemma}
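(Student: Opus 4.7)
The strategy is a simple Minkowski-type covering argument comparing the $1$-tube $S(Y)=J(Y)+\overline{D}$ to the $\varepsilon$-tube $S(\varepsilon)=J(Y)+D(0,\varepsilon)$. Writing $J:=J(Y)$ for brevity, the key observation is that since $\overline{D}$ can be covered by $O(\varepsilon^{-2})$ open disks of radius $\varepsilon$, the set $J+\overline{D}$ is in turn covered by that many translates of $S(\varepsilon)$, and subadditivity of Lebesgue measure then yields the desired inequality. In fact this approach proves the lemma even with $\gamma_2=0$.

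In more detail, I would first fix points $z_1,\dots,z_N\in\R^2$ with $N=N(\varepsilon)=O(\varepsilon^{-2})$ such that $\overline{D}\subseteq\bigcup_{k=1}^N D(z_k,\varepsilon)$; for instance, one may take a grid of axis-aligned squares of side $\varepsilon/\sqrt{2}$ intersecting $\overline{D}$ and let each $z_k$ be the centre of the corresponding square, so that each such square is contained in $D(z_k,\varepsilon)$. I would then verify the inclusion $S(Y)\subseteq\bigcup_{k=1}^N (z_k+S(\varepsilon))$: given $p\in S(Y)$, write $p=q+v$ with $q\in J$ and $v\in\overline{D}$, choose $k$ with $v\in D(z_k,\varepsilon)$, and observe that $p-z_k=q+(v-z_k)\in J+D(0,\varepsilon)=S(\varepsilon)$. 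Translation-invariance and subadditivity of Lebesgue measure then give $\mu(S(Y))\le\sum_{k=1}^N\mu(z_k+S(\varepsilon))=N\cdot\mu(S(\varepsilon))$, so the lemma follows with $\gamma_1:=1/N$ and any positive $\gamma_2$.

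The only subtlety is philosophical: one might have expected the intended proof to use the specific structure of $J$ as a Jordan curve built from unit-circle arcs, combined with Steiner's inequality (\Tr{Steiner}), so as to extract the asymptotically sharper rate $\gamma_1=\Omega(\varepsilon)$ matching the true scaling $\mu(S(\varepsilon))\sim\varepsilon\,\mu(S(Y))$ for tubes around smooth curves. Pursuing such a refined estimate would be the main technical obstacle, since one would have to control the cusps of $J$ formed at near-tangential intersections of pairs of unit disks. The covering argument above side-steps all such geometric subtleties, and the weaker rate $\gamma_1=\Omega(\varepsilon^2)$ that it produces is more than sufficient for the exponential-decay application in \Tr{Boolean}, which only requires $\gamma_1>0$.
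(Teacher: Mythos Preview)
Your proof is correct, and it is genuinely different from---and considerably simpler than---the argument given in the paper.

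The paper's proof exploits the specific geometric structure of $J(Y)$ as a Jordan curve made up of unit-circle arcs. It decomposes the inner portion $S_{-1}=S\cap\Omega$ into circular sectors $S(i)$ (one per arc $J_i$) together with sectors $T(E_i)$ at the cusps where two arcs meet, proves these sectors are essentially disjoint, and then compares each of them to its intersection with $S_{-1}(\varepsilon)$. Steiner's inequality is used to relate $\mu(\Omega_1)-\mu(\Omega)$ to $\mathcal{L}(J)$, which is in turn bounded by $2(\mu(\Omega)-\mu(\Omega_{-1}))$ via the sector decomposition. The cusp analysis requires a further covering argument with the auxiliary domains $\Omega(E_i)$ and a careful case distinction on the aperture angle. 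All of this yields $\gamma_1=\Theta(\varepsilon)$ (coming from the annular-sector estimate $\mu(S(i,\varepsilon))=(2\varepsilon-\varepsilon^2)\mu(S(i))$), at the cost of several pages of planar geometry.

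Your covering argument ignores all of this structure: it works for the Minkowski sum $J+\overline D$ of \emph{any} set $J$ whatsoever, and delivers $\gamma_1=\Theta(\varepsilon^2)$ with $\gamma_2=0$. As you correctly observe, the application to \Tr{Boolean} only needs $\gamma_1>0$, so the weaker rate is immaterial. The trade-off is that the paper's approach gives the sharper tube-scaling constant and, as a by-product, the inequality $\mathcal{L}(J)\le 2\mu(S)$ (used elsewhere in the paper via \Lr{ssp distance}), whereas your argument yields no information about $\mathcal{L}(J)$.
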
 
\begin{proof}
Let $J=J(Y)$ be the corresponding \scv\ of $S$. Easily, we can assume that $J$ is not a single circle. We define $\Omega=\Omega(Y)$ to be the closure of the Jordan domain bounded by $J$. Let $S_{-1}(\varepsilon)$ be the intersection of $S(\varepsilon)$ with $\Omega$. We will show that 
\begin{align}\label{first inequality}
\mu(\Omega_1)-\mu(\Omega) \leq 2(\mu(\Omega)-\mu(\Omega_{-1}))+\pi
\end{align}
and 
\begin{align}\label{second inequality}
\mu(S_{-1}(\varepsilon))\geq d(\mu(\Omega)-\mu(\Omega_{-1}))
\end{align} 
for some constant $d=d(\varepsilon)>0$ independent of $S$. Then the assertion follows immediately, as $\mu(S)=\mu(\Omega_1) - \mu(\Omega_{-1})$.

For inequality \eqref{first inequality} it suffices to prove that 
\begin{align}\label{length area}
\mathcal{L}(J)\leq 2(\mu(\Omega)-\mu(\Omega_{-1}))
\end{align} 
because by Steiner's inequality (\Tr{Steiner}) we have $$\mu(\Omega_1)\leq \mu(\Omega)+\mathcal{L}(J)+\pi.$$

For every $x\in Y$ the intersection of $J$ with the the circle $C(x)$ of radius $1$ centred at $x$ may contain several connected components. Let $(J_i)$ be an enumeration of all these connected components and $(x_i)$ the corresponding sequence of centres, i.e. $x_i$ is the centre of the arc $J_i$ (some $x\in Y$ may appear more than once). Every arc $J_i$ has two endpoints $A_i,B_i$ and each endpoint $E_i\in \{A_i,B_i\}$ belongs to two disks $\overline{D(x_i)}$ and $\overline{D(x_i')}$ for some $i'=i'(E_i)$. 

Let $S(i)$ be the open sector of $D(x_i)$ enclosed by the radii $x_i A_i$, $x_i B_i$ and the arc $J_i$. Notice that $S(i)$ is a subset of $\Omega\setminus \Omega_{-1}$. We claim that any two distinct $S(i), S(j)$ are disjoint. To see this, let $x_{i'}$ be the second center that has distance $1$ from $E_i$. Observe that no centres $x\in Y$ belong to the open disk $D(E_i)$, where $E_i\in \{A_i,B_i\}$, because otherwise $E_i$ would not belong to the boundary of $S$. 
Moreover, every segment $E_k x_j$ that intersects $E_i x_i$ has to intersect $C(x_i)$ as well, because $E_k$ belongs to the boundary of $S$ and thus it does not belong to the open disk $D(x_i)$. Hence if $E_k x_j$ intersects $E_i x_i$, then $x_j$ is at distance at most $1$ from $C(x_i)$. It is easy to deduce geometrically that for every $P\in C(x_i)$ the only points $Q$ of $\overline{D(P)}\setminus \{x_i\}$ such that $Q P$ intersects $E_i x_i$ belong to $D(E_i)$ (see \fig{figintersect}), which implies that the $S(i)$'s are disjoint.

\begin{figure}
\centering{
\resizebox{75mm}{!}{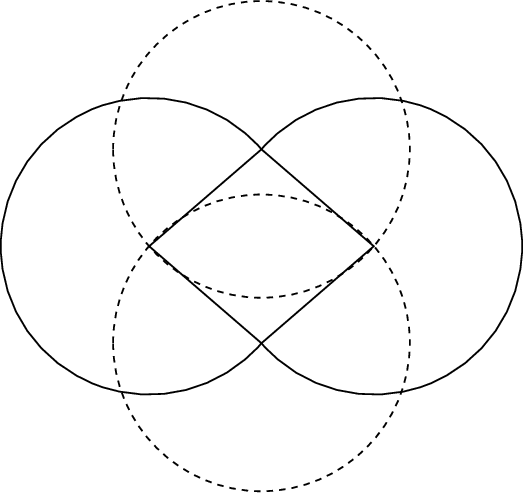}
\put(-167,99){$x_i$}
\put(-55.5,98.5){$x_{i'}$}
\put(-110.9,128){$E_i$}}
\caption{Four disks of radius $1$ centred at $x_i,x_{i'},E_i$ and another point of $C(x_i)$.} \label{figintersect}
\end{figure} 

These observations imply that $$\sum_{i}\mu(S(i))\leq \mu(\Omega)-\mu(\Omega_{-1}).$$ An elementary computation yields $\mathcal{L}(J_i)=2\mu(S_i)$, which implies that 
$$\mathcal{L}(J)=\sum_{i} \mathcal{L}(J_i)=2 \sum_{i}\mu(S(i))\leq 2(\mu(\Omega)-\mu(\Omega_{-1}))$$ 
establishing \eqref{length area}.

For inequality \eqref{second inequality} we will assume for technical reasons that $\varepsilon<1/2$. The case $\varepsilon\geq 1/2$ follows readily, because $S_{-1}(\varepsilon)$ increases as $\varepsilon$ increases.  

We will split both $S_{-1}$ and $S_{-1}(\varepsilon)$ into several smaller pieces. Let us first focus on $S_{-1}$. The two radii $E_i x_i$ and $E_i x_i'$ that emanate from the endpoint $E_i\in \{A_i,B_i\}$ of $J_i$ define an open sector $T(E_i)$ of $D(E_i)$. By the definitions, the collection of all the $\overline{T(E_i)}$'s together with all the $\overline{S(i)}$'s cover $S_{-1}$ (see \fig{figcover}). The elements of the collection are not necessarily pairwise disjoint, but this works only in our favour as we need a mere upper bound for the area of $S_{-1}$.

\begin{figure}
\centering{
\resizebox{75mm}{!}{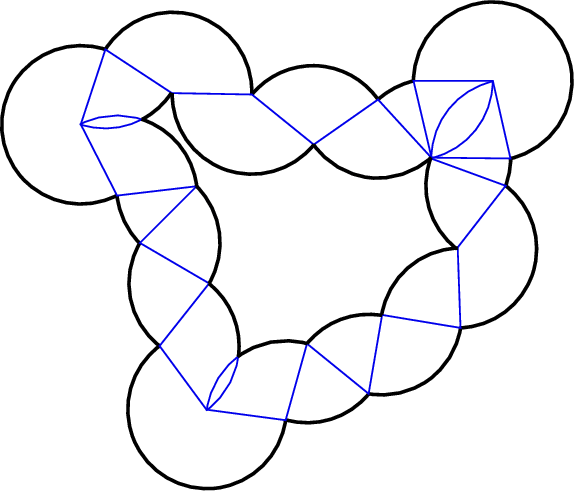}}
\caption{The domain $S_{-1}$ enclosed by the the black curves and the sectors $T(E_i)$ enclosed by the blue radii and the blue/black arcs.} \label{figcover}
\end{figure}

We will now compare the areas of $S(i)$ and $T(E_i)$ with those of their subsets $S(i,\varepsilon)=S(i)\cap S_{-1}(\varepsilon)$ and $T(E_i,\varepsilon)=T(E_i)\cap S_{-1}(\varepsilon)$. As the sectors $S(i)$ do not intersect, the sets $S(i,\varepsilon)$ do not intersect either. It is a matter of simple calculations to see that 
\begin{align}\label{first estimate}
\mu(S(i,\varepsilon))=(1-(1-\varepsilon)^2)\mu(S(i)).
\end{align}
On the other hand, the $T(E_i,\varepsilon)$'s may intersect. Our goal is to associate to every $T(E_i)$ a domain $\Omega(E_i)$ that contains $T(E_i)$ and every other $T(E_j)$ such that $T(E_j,\varepsilon)$ intersects $T(E_i,\varepsilon)$. Later on we will be generous and keep only some $\Omega(E_i)$ that we need to cover $S_{-1}$. In order to define $\Omega(E_i)$, notice first that whenever $T(E_i,\varepsilon)$ and $T(E_j,\varepsilon)$ intersect, $E_j$ has distance at most $2\varepsilon<1$ from $E_i$. Hence any other point of $T(E_j)$ has distance at most $1+2\varepsilon$ from $E_i$. Consider the points $y=y(E_i,x_i,\varepsilon)$ and $y'=y'(E_i,x_{i'},\varepsilon)$ in $E_i x_i$ and $E_i x_{i'}$, respectively, that have distance $2\varepsilon$ from $E_i$ (see \fig{figomega}). Extend each of $y x_{i'},y' x_{i},E_i x_i$ and $E_i x_{i'}$ up to distance $1+2\varepsilon$ from $E_i$, and let $z',z,w$ and $w'$ be the endpoints of these new segments. 
Define $\Omega(E_i)$ as the domain enclosed by the segments $E_i x_i,E_i x_{i'},x_{i} z, x_{i'} z'$ and the arc of the circle  $C(E_i,1+2\varepsilon)$ from $z$ to $z'$ that contains $w$ and $w'$. 

It is easy to see from the construction of $\Omega(E_i)$ that any $T(E_j)$ such that $T(E_j,\varepsilon)$ intersects $T(E_j,\varepsilon)$, is contained in $\Omega_i$. This follows from the fact that the $S(i)$'s are disjoint as proved above, and so no other sector $T(E_j)$ intersects $E_i x_{i}$ or $E_i x_{i'}$.

\begin{figure}
\centering{
\resizebox{75mm}{!}{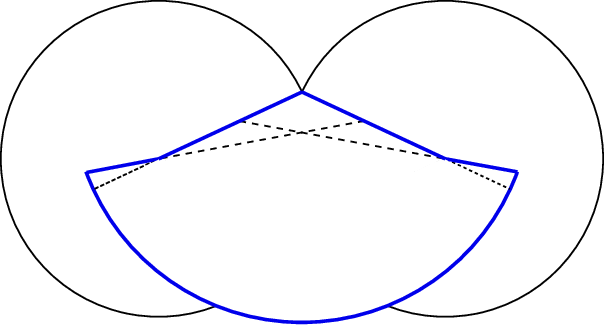} 
\put(-115,40){$\Omega(E_i)$}
\put(-189,43){$w$}
\put(-184.5,56){$z$}
\put(-161,51){$x_i$}
\put(-132.5,76.5){$y$}
\put(-110,71.5){$E_i$}
\put(-86,76){$y'$}
\put(-61,50.5){$x_{i'}$}
\put(-33,55.5){$z'$}
\put(-31.5,42.5){$w'$}}
\caption{The domain $\Omega(E_i)$.}
\label{figomega}
\end{figure}

We claim that there is a constant $\delta=\delta(\varepsilon)>0$ such that 
\begin{align}\label{second estimate}
\mu(T(E_i,\varepsilon))\geq \delta \mu(\Omega(E_i))
\end{align}
for every $i$. Indeed, the area of the sector $S(E_i,w,w')$ of $D(E_i,1+2\varepsilon)$ bounded by the radii $E_i w$ and $E_i w'$ is of the same order as the area of $\Omega(E_i)$, because the angles of the segments $x_{i} w, x_{i} z$ and $x_{i'} w', x_{i'} z'$ are of the same order as the angle $\theta$ of the segments $E_i x_i, E_i x_{i'}$. Moreover, there is some constant $\Theta=\Theta(\varepsilon)>0$ such that if $\theta$ is smaller than $\Theta$, then $x_i$ and $x_{i'}$ are close enough that the sector of $D(E_i,\varepsilon)$ defined by the segments $E_i x_i$ and $E_i x_{i'}$ is contained in $S_{-1}(\varepsilon)$. A simple computation shows that the area of this sector is of the same order as the area of $S(E_i,w,w')$. On the other hand, $\mu(T(E_i,\varepsilon))$ is bounded from below by a strictly positive constant for every $\theta\geq \Theta$. Combining all the above we conclude that \eqref{second estimate} holds.

Let us consider a set $F$ of endpoints that is maximal with respect to the property that $T(E_i,\varepsilon)$ and $T(E_j,\varepsilon)$ do not intersect for any $E_i,E_j\in F$ with $i\neq j$. The maximality of $F$ implies that the collection $\mathcal{S}$ of all the $S(i)$'s together with the collection $\mathcal{O}$ of the $\Omega(E_i)$'s for $E_i\in F$ cover $S_{-1}$, because for any other set $T(E_k)$ with $E_k\not\in F$, $T(E_k,\varepsilon)$ intersects some $T(E_i,\varepsilon)$ with $E_i\in F$ and thus $T(E_k)$ is contained in $\Omega(E_i)$. However, it is possible that some element of $\mathcal{S}$ intersects some element of $\mathcal{O}$. Nevertheless, each intersection point is counted exactly twice, because the elements of $\mathcal{S}$ and $\mathcal{O}$ are disjoint. Hence $$\mu(S_{-1}(\varepsilon))\geq 1/2 \big( \sum_{i} \mu(S(i,\varepsilon)) + \sum_{x \in F} \Omega(x) \big),$$ which combined with \eqref{first estimate} and \eqref{second estimate} implies inequality \eqref{second inequality}.
\end{proof}

Notice that every $S(\varepsilon)$ has a non-empty intersection with the non-negative real line $[0,\infty)$, because $S$ has this property. In fact if $x$ is the point of $J\cap [0,\infty)$ which has greatest distance from $0$, where $J$ is the \scv\ that defines $S$, then the interval $[x,x+\varepsilon)$ is contained in $S(\varepsilon)\cap [0,\infty)$. We conclude that $S(\varepsilon)$ contains one of the points
$\{0,\varepsilon,2\varepsilon,\ldots,N\varepsilon\}$ for some $N\in\mathbb{N}$ depending on  $S(\varepsilon)$. The next lemma provides a uniform upper bound for $N$ that depends only on the area of $S$.

\begin{lemma}\label{ssp distance}
For every \ssp\ $S=S(Y)$ we have
$$S\subset D(0,3\mu(S)).$$
\end{lemma}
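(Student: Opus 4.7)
The plan is to reduce the claim to two geometric bounds on $\mu(S)$ and then combine them. Let $r:=\sup_{s\in S}|s|$; by compactness of $S$ this supremum is attained at some $s^*\in S$, and since $S=J+\overline{D}$ there exists $y\in J$ with $|s^*-y|\leq 1$, giving $|y|\geq r-1$.

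The first step is to establish that $\mathrm{diam}(J)\geq r-1$. Since $\tilde{\Omega}$ is simply connected and $0\in\tilde{\Omega}$, the Jordan curve $J=\partial \tilde{\Omega}$ separates $0$ from infinity, so the ray from $0$ in the direction $-y/|y|$ must exit $\tilde{\Omega}$ at some point $y'\in J$. The points $y$ and $y'$ lie on opposite sides of $0$ along a common line through $0$, whence $|y-y'|=|y|+|y'|\geq |y|\geq r-1$, and in particular $\mathrm{diam}(J)\geq r-1$.

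The second and key step is the geometric inequality $\mu(S)\geq 2\,\mathrm{diam}(J)$. Let $p,q\in J$ realize the diameter $\delta:=\mathrm{diam}(J)$ and choose coordinates so that $p=(0,0)$ and $q=(\delta,0)$. Since $J$ is path-connected (being a Jordan curve), its projection onto the $x$-axis is a connected set containing $0$ and $\delta$, hence contains the whole interval $[0,\delta]$. So for every $t\in[0,\delta]$ there is $y_t\in J$ lying on the vertical line $\{x=t\}$, and the closed unit disk $\overline{D(y_t,1)}\subset S$ intersects this vertical line in a chord of length $2$. Thus the $1$-dimensional slice $S\cap\{x=t\}$ has Lebesgue measure at least $2$ for every $t\in[0,\delta]$, and Fubini's theorem yields $\mu(S)\geq\int_0^{\delta}2\,dt=2\delta$.

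Combining the two bounds with the trivial observation that $\overline{D(y,1)}\subset S$ for any $y\in J$ (so $\mu(S)\geq\pi$), we finish as follows: if $r\leq 3\pi$ then $3\mu(S)\geq 3\pi\geq r$; if instead $r>3\pi>6/5$, then $6(r-1)\geq r$, so $3\mu(S)\geq 6(r-1)\geq r$. In either case every $s\in S$ satisfies $|s|\leq r\leq 3\mu(S)$, which is the desired inclusion. The main obstacle is the Fubini-slicing lower bound $\mu(S)\geq 2\,\mathrm{diam}(J)$; it holds in this clean form precisely because the unit-disk thickening forces every vertical slice through a point of $J$ to contain a full diameter of the corresponding disk.
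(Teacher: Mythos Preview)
Your proof is correct and takes a genuinely different route from the paper's. The paper exploits the inequality $\mathcal{L}(J)\le 2\mu(S)$, which was already established as \eqref{length area} in the proof of \Lr{ssp area}, together with the observation that any point of the Jordan curve $J$ surrounding $0$ lies within distance $\mathcal{L}(J)$ of the origin; hence every point of $S$ lies within $\mathcal{L}(J)+1\le 2\mu(S)+1<3\mu(S)$. Your argument replaces the length bound by a diameter bound: you show $\mathrm{diam}(J)\ge r-1$ via the opposite-ray trick, and then obtain $\mu(S)\ge 2\,\mathrm{diam}(J)$ by a clean Fubini slicing argument (each vertical line through a point of $J$ meets $S$ in a set of measure at least $2$). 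The advantage of your approach is that it is self-contained and does not rely on the somewhat involved geometric decomposition used to prove \eqref{length area}; the paper's version is shorter only because that inequality is already available. One small presentational point: your change of coordinates in Step~2 is harmless since $\mu(S)$ and $\mathrm{diam}(J)$ are invariant under rigid motions, and the edge case $y=0$ in Step~1 (which would make $r\le 1$) is absorbed by your first case $r\le 3\pi$ in Step~3.
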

\begin{proof}
Let $J=J(Y)$ be the \scv\ that defines $S$, and $\Omega=\Omega(Y)$ the closure of the Jordan domain bounded by $J$. By the definition of $J$ we have $0\in \Omega$. Thus the distance of any point of $J$ from $0$ is bounded from above by $\mathcal{L}(J)$. This implies that the distance of any point in $S$ from $0$ is bounded from above by $\mathcal{L}(J)+1$. Combining \eqref{length area} with the fact that 
$\mu(\Omega)-\mu(\Omega_{-1})\leq \mu(S)$ we obtain $$\mathcal{L}(J)\leq 2\mu(S).$$ Moreover, $\mu(S)> 1$ because by definition $S$ contains a disk of radius $1$. Therefore 
$$\mathcal{L}(J)+1< 3\mu(S).$$ Combining these inequalities yields the desired assertion.
\end{proof}

We deduce from \Lr{ssp distance} that $N$ can be chosen to be $\left \lfloor{3\mu(S)/{\varepsilon}}\right \rfloor $. We are now almost ready to prove the desired exponential decay. Before we do so we need to upper bound the number of occurring \sms s of $\MS_n$.

\begin{lemma} \label{ssp occur}	
There is a constant $R\in \R$ \st\ \fe\ $n\in \N$ at most $R^{\sqrt{n}}$ elements of $\MS_n$ can occur simultaneously in any $\omega$.
\end{lemma}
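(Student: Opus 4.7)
The plan is to mirror the proof of \Lr{HR bound}, using the positive real axis $[0,\infty)$ and the lattice $\varepsilon\Z_{\geq 0}$ as continuous analogues of the quasi-geodesic path $X^+$ and its vertices $\{x_i\}$. First I would fix $0<\varepsilon<1$, and for each occurring \ssp\ $S=S(Y)$ in the sample $\omega$ set $k(S)$ to be the smallest $k\in\Z_{\geq 0}$ with $k\varepsilon\in S$. Such a $k$ will exist by the argument preceding \Lr{ssp distance}: since $0\in\tilde{\Omega}$, the curve $J=J(Y)$ must cross $[0,\infty)$ at some point $x$, and the $\varepsilon$-neighbourhood of $x$ lies in $S(\varepsilon)\subset S$, so the interval $[x,x+\varepsilon)\subset S\cap[0,\infty)$ contains a point of $\varepsilon\Z_{\geq 0}$.

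Next I would take an occurring \sms\ $M=\{S_1,\ldots,S_\ell\}\in\MS_n$ and observe that the integers $k(S_i)$ are pairwise distinct by \Lr{ssp disjoint}. Setting $m_i:=\lfloor\mu(S_i)\rfloor$, \Lr{ssp distance} will yield $k(S_i)\,\varepsilon\leq 3\mu(S_i)<3(m_i+1)$. Because every \ssp\ contains a closed unit disk, $\mu(S_i)\geq\pi$ and hence $m_i\geq 3$; thus $\ell\leq n/3$, giving
$$\sum_{i=1}^{\ell}k(S_i)\;\leq\;\frac{3}{\varepsilon}\sum_{i=1}^{\ell}(m_i+1)\;=\;\frac{3(n+\ell)}{\varepsilon}\;\leq\;\frac{4n}{\varepsilon}.$$

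The key reduction will be that, for a fixed $\omega$, the \sms\ $M$ is uniquely determined by its representative set $D(M):=\{k(S_1),\ldots,k(S_\ell)\}$: by \Lr{ssp disjoint} each $k\varepsilon\in D(M)$ lies in at most one occurring \ssp, so $M$ can be recovered as the collection of occurring \ssp s met by $\varepsilon\cdot D(M)$. Hence the number of elements of $\MS_n$ that occur simultaneously in $\omega$ is bounded by the number of finite subsets of $\Z_{\geq 0}$ with distinct elements summing to at most $4n/\varepsilon$. Such a subset corresponds to a partition of some integer $\leq 4n/\varepsilon$ into distinct parts, and by the \HRT\ (\Tr{HRthm}) their total number is at most $(1+4n/\varepsilon)\,p(\lfloor 4n/\varepsilon\rfloor)=O(R^{\sqrt{n}})$ for a suitable $R=R(\varepsilon)>1$. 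The only nontrivial step is the linear geometric bound $k(S_i)=O(m_i)$, which is immediate from \Lr{ssp distance}; the remaining combinatorics is essentially identical to that in \Lr{HR bound}.
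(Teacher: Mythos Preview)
Your proof is correct and follows essentially the same route as the paper's. The paper's version is marginally cleaner: since $S=J+\overline{D}$ has width $1$ on each side of $J$, one can observe directly that $S$ contains an interval $[x,x+1]\subset[0,\infty)$, hence an integer point in $\{0,1,\ldots,\lfloor 3\mu(S)\rfloor\}$ by \Lr{ssp distance}, and then invoke the argument of \Lr{HR bound} verbatim on the integer lattice. Your introduction of the auxiliary $\varepsilon$-lattice (via $S(\varepsilon)\subset S$) is unnecessary but harmless --- setting $\varepsilon=1$ recovers the paper's argument exactly.
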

\begin{proof}
Notice that a \ssp\ $S=S(Y)$ contains an interval of the form $[x,x+1]$ for some $x\in [0,\infty)$. Combined with \Lr{ssp distance} this implies that $S$ contains some element of the set $\{0,1,\ldots,\left \lfloor{3\mu(S)}\right \rfloor \}$. We can now proceed as in the proof of \Lr{HR bound}.
\end{proof}

We are now ready to prove \Lr{exp dec R2}.

\begin{proof}[Proof of \Lr{exp dec R2}]
Since $$N_n\leq R^{\sqrt{n}}\mathbb{\chi}_{\{\text{some } S\in\MS_n \text{ occurs}\}}$$ by \Lr{ssp occur}, we conclude that $$\mathbb{E}_{\lambda}(N_{n})\leq R^{\sqrt{n}} \Pr_{\lambda}(\text{some } S\in \MS_n \text{ occurs}).$$ Hence it suffices to show that $\Pr_{\lambda}(\text{some } S\in \MS_n \text{ occurs})$ decays exponentially. 

Recall our coupling between the Boolean models $(P_{\lambda},1)$ and $(P_{\lambda},1-\varepsilon)$, and the fact that whenever $Y$ happens to separate in $P_{\lambda}$ the set $S(\varepsilon)$ is a vacant connected subset of $(P_{\lambda},1-\varepsilon)$ in our coupling.
For $m\in \mathbb{N}$, let $V(m)$ denote the event that there is a subset $V$ of a vacant component with $\mu(V)\geq \gamma_1 m -\gamma_2$, where $\gamma_1,\gamma_2$ are the constants of \Lr{ssp area}, and some element of the set $\{0,\varepsilon,\ldots,\left \lfloor(3m+3)/\varepsilon \right \rfloor \varepsilon\}$ belongs to $V$, and $V$ is contained in $D(0,3m+3)$. We claim that
$$\Pr_{\lambda}(\text{some } S\in \MS_n \text{ occurs})\leq \sum_{\{m_1,m_2,\ldots,m_k\} \in P'_n} \Pr_{\lambda,1-\varepsilon}(V(m_1) \square \ldots \square V(m_k)),$$
where as above $\square$ means that the events occur disjointly, $P'_n$ is the set of partitions of $n$ with the property that for every $N\leq n$ at most $3N+3$ elements of the partition have size at most $N$, and the probability measure $\Pr_{\lambda,1-\varepsilon}$ refers to the Boolean model $(P_{\lambda},1-\varepsilon)$. The upper bound $3N+3$ on the number of elements of size at most $N$ comes from the fact that any \ssp\ $S=S(Y)$ contains some element of the set $\{0,1,\ldots,\left \lfloor{3\mu(S)}\right \rfloor \}$, as remarked in the proof of \Lr{ssp occur}. The inequality follows similarly to \eqref{BKclaim}.

Reimer's inequality \cite{ContReimer} states that 
$$\Pr_{\lambda,1-\varepsilon}(V(m_1) \square \ldots \square V(m_k))\leq \Pr_{\lambda,1-\varepsilon}(V(m_1))\cdot \ldots \cdot \Pr_{\lambda,1-\varepsilon}(V(m_k)).$$
Combining the fact that $\Pr_{\lambda,1-\varepsilon}(\mu(V(0))\geq a)\leq c^a$ \cite{MeesterRoyContPerc} for every $\lambda>\lambda_c$ and some $c=c(\lambda)<1$ with the union bound we obtain 
$$\Pr_{\lambda,1-\varepsilon}(\mu(V(m))\leq c_1{c_2}^m,$$
where $c_1=(\left \lfloor{(3m+3)/\varepsilon}\right \rfloor + 1)c^{-\gamma_2}$ and $c_2=c^{\gamma_1}<1$.
We can now argue as in the proof of \Lr{exp dec Z2} to obtain the desired exponential decay.
\end{proof}

We proceed by establishing the analyticity and the necessary estimates of the functions involved in \Lr{exp dec R2} that we will combine with their exponential decay to prove the analyticity of $\theta_0$.

Given a partition $\{m_1,m_2,\ldots,m_k\}$ of a number $n$, we define $N(\{m_1,\ldots,m_k\})$ to be the number of occurring \sms s $S=\{S_1,\ldots,S_k\}$ such that $\left \lfloor{\mu(S_i)} \right \rfloor = m_i$.

\begin{lemma}\label{estimates}
Let $\{m_1,m_2,\ldots,m_k\}$ be a partition of $n$. Then the function $f(\lambda):=\mathbb{E}_{\lambda}(N(\{m_1,\ldots,m_k\}))$ admits an entire extension satisfying 
\labtequ{bound f}{$|f(z)|\leq e^{4nM} f(\lambda+M)$} for every $\lambda\geq 0$, $M>0$ and $z\in D(\lambda,M)$.
\end{lemma}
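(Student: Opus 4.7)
The plan is to apply the Campbell--Mecke (Slivnyak) formula to express $f(\lambda)$ as an absolutely convergent series of integrals whose integrands extend to entire functions of $z$, then bound them on $D(\lambda, M)$ by elementary complex-analytic estimates.

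First, I would write
\begin{equation*}
f(\lambda) = \sum_{j_1,\ldots,j_k \geq 1} \frac{1}{k!\prod_i j_i!} \int \lambda^{j_1+\ldots+j_k}\, e^{-\lambda \sum_i \mu(S_i)}\, \one[\mathcal{E}(\mathbf{Y})]\, d\mathbf{Y},
\end{equation*}
where $\mathbf{Y} = (Y_1,\ldots,Y_k)$, each $Y_i$ ranges over ordered $j_i$-tuples in $\mathbb{R}^2$, and $\mathcal{E}(\mathbf{Y})$ is the event that each $Y_i$ defines a \ssp\ $S_i := S(Y_i)$ with $\lfloor \mu(S_i) \rfloor = m_i$ and that the $S_i$ are pairwise disjoint. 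The factor $\lambda^{j_1+\ldots+j_k}$ is the Palm intensity for having the prescribed points in $P_\lambda$, while $e^{-\lambda \sum_i \mu(S_i)}$ is the probability that no other point of $P_\lambda$ lies in $\bigcup_i S_i$ (using the disjointness). Each summand is entire in $\lambda$ because polynomial--exponential integrands are entire and the integration region is bounded by \Lr{ssp distance}: every centre of $S_i$ lies in $D(0, 3(m_i + 1))$.

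For $z \in D(\lambda, M)$ with $\lambda \geq 0$, one has $|z| \leq \lambda + M$ and $\mathrm{Re}(z) \geq \lambda - M$, hence for every $\mathbf{Y}$ in the domain of integration,
\begin{equation*}
\bigl|z^{\sum_i j_i}\, e^{-z \sum_i \mu(S_i)}\bigr| \leq e^{2M \sum_i \mu(S_i)}\,(\lambda + M)^{\sum_i j_i}\, e^{-(\lambda + M)\sum_i \mu(S_i)}.
\end{equation*}
The constraint $\lfloor \mu(S_i)\rfloor = m_i$ gives $\mu(S_i) < m_i + 1$, and since every $m_i \geq 1$ we have $k \leq n$, so $\sum_i \mu(S_i) < n + k \leq 2n$, yielding $e^{2M \sum_i \mu(S_i)} \leq e^{4Mn}$. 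Integrating this pointwise bound and summing over $\mathbf{j}$ gives $|f(z)| \leq e^{4Mn}\, f(\lambda + M)$.

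Finally, $f(\lambda + M)$ is finite because any admissible $\mathbf{Y}$ has all its centres in the bounded region $D(0, 3(\max_i m_i + 1))$, and $P_{\lambda + M}$ restricted to this region is almost surely finite with all polynomial moments finite. This provides an absolutely and uniformly convergent majorant on $D(\lambda, M)$, so by \Wthm{thmWei} the function $f$ admits an entire extension satisfying the claimed inequality. The main obstacle is setting up the Slivnyak--Mecke representation cleanly and verifying absolute convergence; once that is in place, the quantitative bound is immediate from the geometric observation $\sum_i \mu(S_i) < 2n$.
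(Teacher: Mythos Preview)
Your argument is correct and matches the paper's in all essential respects: both express $f(\lambda)$ as a sum of integrals whose integrands factor as $\lambda^{(\text{number of centres})}\,e^{-\lambda\sum_i\mu(S_i)}$ times an indicator supported on a bounded region (via \Lr{ssp distance}), and both obtain the bound \eqref{bound f} from the same geometric inequality $\sum_i\mu(S_i)\leq 2n$ together with $|z|\leq\lambda+M$ and $|e^{-z\alpha}|\leq e^{2M\alpha}e^{-(\lambda+M)\alpha}$. The only packaging difference is that you invoke Slivnyak--Mecke directly, whereas the paper restricts to the disk $6nD$, conditions on the total point count there, and expands accordingly; these are equivalent representations of the same Poisson expectation.

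One cosmetic slip: the prefactor $\tfrac{1}{k!\prod_i j_i!}$ is not quite right when the partition $\{m_1,\ldots,m_k\}$ has repeated parts --- the $\tfrac{1}{k!}$ should be replaced by the reciprocal of the product of the multiplicities' factorials. This affects only a partition-dependent constant and has no bearing on the analytic bound or on analyticity.
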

\begin{proof}
To ease notation we will prove the assertion for $k=2$ and $m_1\neq m_2$. The general case can be handled similarly.  

Given two disjoint sets $Y_1=\{x_1,\ldots,x_{j_1}\}$ and $Y_2=\{x_{j_1+1},\ldots,x_{j_1+j_2}\}$,
we let $L(x_1,\ldots,x_{j_1+j_2})$ denote the indicator function of the event that the sets $Y_1$ and $Y_2$ satisfy all three properties 
\eqref{con i}-\eqref{con iii} in the definition of a \ssp, and furthermore, $\left \lfloor{\mu(S(Y_i)} \right \rfloor = m_i$, $i=1,2$. The indicator function of the event $\{Y_i \text{ happens to separate in } P_{\lambda}\}$ is denoted by $\mathbb{\chi}_{Y_i}$. Let us also define the functions $$g(x_1,\ldots,x_{j_1+j_2}):= \mu(S(x_1,\ldots,x_{j_1}))+\mu(S(x_{j_1+1},\ldots,x_{j_2}))$$ and
$$h(x_1,\ldots,x_{j_1+j_2}):= L(x_1,\ldots,x_{j_1+j_2})e^{-\lambda g(x_1,\ldots,x_{j_1+j_2})}.$$

First, we will find a suitable formula for $f$. We claim that 
\labtequ{sum formula}{$$f(\lambda)=\sum_{j_1=1}^\infty \sum_{j_2=1}^\infty \dfrac{(\lambda \mu(6nD))^{j_1+j_2}}{{j_1}! {j_2}!} f(\lambda,j_1,j_2),$$}
where 
\labtequ{int formula}{$$f(\lambda,j_1,j_2)= \int_{6nD}\dfrac{dx_1}{\mu(6nD)} \ldots \int_{6nD}\dfrac{dx_{j_1+j_2}}{\mu(6nD)} h(x_1,\ldots,x_{j_1+j_2}).$$}

Indeed, expressing $f$ according to the size of $Y_1$ and $Y_2$ we obtain
$$f(\lambda)=\sum_{j_1=1}^\infty \sum_{j_2=1}^\infty \mathbb{E}_\lambda \Big(N\big(\{(m_1,j_1),(m_2,j_2)\}\big)\Big)$$
where $N\big(\{(m_1,j_1),(m_2,j_2)\}\big)$ denotes the number of sets $Y_1,Y_2$ that happen to separate with the property that $\left \lfloor{\mu(S(Y_i)} \right \rfloor = m_i$, $|Y_i|=j_i$, $i=1,2$. This expression holds because we have assumed that $m_1\neq m_2$, and so each $\{(m_1,j_1),(m_2,j_2)\}$ appears exactly once. Next notice that
\labtequ{measure}{$\mu(S(Y_1))+\mu(S(Y_2))\leq (k_1+1)+(k_2+1) \leq 2k_1+2k_2= 2n$,}
since $1\leq k_1,k_2$, which combined with \Lr{ssp distance}, implies that\\ $N\big(\{(m_1,j_1),(m_2,j_2)\}\big)$ depends only on the points of the Poisson point process inside the disk $6nD$.
Now regard $P_{\lambda}\cap 6nD$ as a finite Poisson point process whose total number of points has a Poisson distribution with parameter $\lambda \mu(6nD)$, each point being uniformly distributed over $6nD$. Notice that conditioned on the number of points $\mathcal{N}(6nD)$ inside $6nD$, the distribution of the sets $Y_1,Y_2$ depends only on their sizes. 

Conditionally on the event $\{\mathcal{N}(6nD)=m\}$ and the sets $Y_1=\{x_1,\ldots,x_{j_1}\}$ and $Y_2=\{x_{j_1 +1},\ldots,x_{j_1+j_2}\}$ being contained in $P_{\lambda}$, the expectation of $\mathbb{\chi}_{Y_1}\mathbb{\chi}_{Y_2}$ is equal to $$H_m(x_1,\ldots,x_{j_1 +j_2}):=L(x_1,\ldots,x_{j_1+j_2})\Big(\dfrac{\mu(6nD)-g(x_1,\ldots,x_{j_1+j_2})}{\mu(6nD)}\Big)^{m-j_1-j_2},$$
because every other point of the Poisson point process must lie outside of $S(Y_1)$, $S(Y_2)$. Hence expressing $f$ according to the number of points of the Poisson process inside $6nD$ and the size of the sets $Y_1,Y_2$ we obtain 
$$f(\lambda)=\sum_{j_1=1}^\infty \sum_{j_2=1}^\infty \sum_{m=j_1+j_2}^\infty e^{-\lambda \mu(6nD)}\dfrac{(\lambda \mu(6nD))^m}{m!}{m \choose j_1}{m-j_1 \choose j_2}F(j_1,j_2,m),$$
where
$$F(j_1,j_2,m)=\int_{6nD}\dfrac{dx_1}{\mu(6nD)} \ldots \int_{6nD}\dfrac{dx_{j_1+j_2}}{\mu(6nD)} H_m(x_1,\ldots,x_{j_1 +j_2}).$$

The factors $e^{-\lambda \mu(6nD)}\dfrac{(\lambda \mu(6nD))^m}{m!}$ and ${m \choose j_1}{m-j_1 \choose j_2}$ correspond to the probability $\Pr_{\lambda}(\mathcal{N}(6nD)=m)$ and the number of ways to choose two disjoint subsets of size $j_1$ and $j_2$ from a set of size $m$ (here the order of the sets matters because $m_1\neq m_2$), respectively. After changing the order of the second summation and integration, using the Taylor expansion 
$$\sum_{m=j_1+j_2}^\infty \dfrac{(\lambda(\mu(6nD)-g(x_1,\ldots,x_{j_1+j_2})))^{m-j_1-j_2}}{(m-j_1-j_2)!}=e^{\lambda(\mu(6nD)-g(x_1,\ldots,x_{j_1+j_2}))}$$ and cancelling some terms, we arrive at formula \eqref{sum formula}.

Using \eqref{sum formula} we see that $f$ extends to an entire function. Indeed, the assertion will follow from the standard tools once we have shown that every summand of $f$ is an entire function and that the upper bound \eqref{bound f} holds for the summands of $f$ in place of 
$f$.

First we express $e^{-\lambda g(x_1,\ldots,x_{j_1+j_2})}$ via its Taylor expansion
$$e^{-\lambda g(x_1,\ldots,x_{j_1+j_2})}=\sum_{s=0}^\infty \dfrac{(-\lambda g(x_1,\ldots,x_{j_1+j_2}))^s}{s!}.$$ We will plug this into \eqref{int formula}. We notice that the coefficient 
$$\int_{6nD} \frac{dx_1}{\mu(6nD)}\ldots \int_{6nD} \frac{dx_{j_1+j_2}}{\mu(6nD)} L(x_1,\ldots,x_{j_1+j_2}) (-g(x_1,\ldots,x_{j_1+j_2}))^s /{s!}$$
is bounded in absolute value by $(2n)^s/{s!}$, as $g(x_1,\ldots,x_{j_1+j_2})=\mu(S(Y_1))+\mu(S(Y_2))\leq 2n$ by \eqref{measure} and $0\leq L(x_1,\ldots,x_{j_1+j_2})\leq 1$. Therefore the function defined by the Taylor expansion 
$$\sum_{s=0}^\infty \lambda^s \int_{6nD} \frac{dx_1}{\mu(6nD)}\ldots \int_{6nD} \frac{dx_{j_1+j_2}}{\mu(6nD)} L(x_1,\ldots,x_{j_1+j_2}) (-g(x_1,\ldots,x_{j_1+j_2}))^s /{s!}$$ is entire and by reversing the order of summation and integration we conclude that it coincides with $f(\lambda,j_1,j_2)$.

Now let $\lambda\geq 0$ and $M>0$. Since $|z|^{j_1+j_2}\leq (\lambda+M)^{j_1+j_2}$ for every $z\in D(\lambda,M)$, inequality \eqref{bound f} will follow once we prove that 
\labtequ{j_1 j_2}{$|f(z,j_1,j_2)|\leq e^{4nM} f(\lambda+M,j_1,j_2)$ for every $z\in D(\lambda,M)$.} 
Using once again \eqref{measure} we obtain
\begin{align*}
\begin{split}
|e^{-zg(x_1,\ldots,x_{j_1+j_2})}|\leq e^{-(\lambda-M)g(x_1,\ldots,x_{j_1+j_2})}= \\
e^{2M g(x_1,\ldots,x_{j_1+j_2})}e^{-(\lambda+M)g(x_1,\ldots,x_{j_1+j_2})}\leq 
e^{4nM}e^{-(\lambda+M)g(x_1,\ldots,x_{j_1+j_2})}.
\end{split}
\end{align*}
Hence \eqref{j_1 j_2} follows from the triangle inequality. This proves \eqref{bound f}.

Combining \eqref{j_1 j_2} with \eqref{sum formula} and the theorems of Weierstrass in the Appendix imply that $f$ is analytic as well.
\end{proof}

We are finally ready to prove \Tr{Boolean}.

\begin{proof}[Proof of \Tr{Boolean}]
Consider the functions $$f(\lambda)=\sum_{k=1}^\infty (-1)^{k+1}\mathbb{E}_{\lambda}(N(k))$$
and $$g_n(\lambda):=\sum_{\{m_1,m_2,\ldots,m_k\} \in P_n} (-1)^{k+1} \mathbb{E}_{\lambda}(N(\{m_1,\ldots,m_k\}).$$
Notice that $$f=\sum_{n=1}^\infty g_n.$$
By \Lr{exp dec R2} we have 
$$\sum_{k=1}^\infty \mathbb{E}_{\lambda}(N(k))<\infty $$
for any $\lambda>\lambda_c$. Hence $f$ coincides with $1-\theta_0$ on the interval $(\lambda_c,\infty)$ by the inclusion-exclusion principle as remarked above. 
Combining \Lr{exp dec R2} with \Lr{estimates} we conclude that for every $\lambda>\lambda_c$ there are constants $M=M(\lambda)>0$, $c_1=c_1(\lambda)>0$ and $0<c_2=c_2(\lambda)<1$ such that $|g_n(z)|\leq c_1 {c_2}^n$ for every $z\in D(\lambda,M)$.
As usual, by the theorems of Weierstrass in the Appendix we conclude that $f$, and thus $\theta_0$, is analytic on the interval $(\lambda_c,\infty)$.
\end{proof}

\section{Finitely presented groups} \label{sec fp}

In this section we will prove that $p_\C<1$ holds for every finitely presented \Cg. The ideas used involve a refinement of Peierls' argument as in Timar's proof \cite{TimCut} of the theorem of Babson \& Benjamini \cite{BaBeCut} that $p_c<1$ for those graphs, combined with the ideas of \Sr{sec th pl}. We start with a sketch of these ideas.

\medskip
Peierls' classical argument for proving e.g.\ that $p_c<1$ for bond percolation on a planar lattice $G$ goes as follows. If the cluster $C(o)$ of the origin $o$ is finite in a percolation instance, then $C(o)$ is surrounded by a `cut' of vacant edges, which form a cycle in the dual lattice $G^*$. But the number of candidate cycles of $G^*$ with length $n$ is at most $d_*^n$, where $d_*$ is the degree of $G^*$, and each of them occurs with probability $(1-p)^n$ in a percolation instance. Therefore, the union bound implies that we can make the probability that at least one of them occurs smaller than 1 if we choose $p$ is close enough to 1, because the exponential decay of $(1-p)^n$ outperforms the at most exponential growth of the number of candidate cycles.

For this argument it was not crucial that the cut separating $C(o)$ from infinity was a cycle: to deduce that there are at most $c^n$ candidate cuts for some constant $c$, it suffices if the edges of any such cut $B$ are close to each other in the following sense. If we build an auxiliary graph, with vertex set $B$, by connecting any two edges of $B$ with an edge whenever their distance is at most some bound, then this auxiliary graph is connected. For if this is the case, then using the fact that every regular graph has at most exponentially many connected subgraphs containing a fixed vertex and $n$ further vertices (see \Sr{sec App Enum}), we deduce that there are at most $c^n$ candidates for our $B$. The upper bound on the closeness of the edges of $B$ arises from the length of the longest relator in the group-presentation of $G$. This is the aforementioned argument of Timar \cite{TimCut}.

Since Peierls' argument relies on the union bound, and many candidate cuts can occur simultaneously in a percolation instance, it is not good enough for our purposes because we need equalities rather than inequalities in formulas like \eqref{thetaQ}, where we add probabilities of events similar to the event that a cut as above occurs. To prove that $p_\C=p_c$ in the planar case we therefore considered the \scv\ rather than the cut separating $C(o)$ from infinity. An \scv\ consists of a connected (occupied) subgraph $I_O$ of $C(o)$, namely the boundary of its unbounded face, as well as the set $I_V$ of (vacant) edges disconnecting $I_O$ from infinity. 

\medskip
Most of the work of this section is devoted to combining these two ideas in the setup of a finitely presented \Cg\ $G$. We introduce a notion of interface $(I_V,I_O)$, generalising our interfaces from earlier sections, with the following properties. 
\begin{enumerate}
\item Given a percolation instance $\omega$, every finite cluster $C$ in $\omega$ is `bounded' by such an interface $(I_V,I_O)$, where
\item $I_V$ consists of the vacant edges separating $C$ from infinity, and 
\item $I_O$ defines a connected sub-cluster of $C$, incident with all edges in $I_V$.
\end{enumerate}
So far this is trivial to satisfy, as we could have taken $I_O=C$. But we need \begin{enumerate}
\setcounter{enumi}{3}
\item the size of $I_V$ to be proportional to that of $I_O$
\end{enumerate} 
in order to use a Peierls-type argument, so we need $I_O$ to be a `thin' layer near the boundary $I_V$ of $C$. In addition, we need 
\begin{enumerate}
\setcounter{enumi}{4}
\item $(I_V,I_O)$ to be unambiguously determined by $C$
\end{enumerate}
in order to express $\theta$ in an equality like \eqref{thetaQ} (see \eqref{thetaQ fp} below). Moreover, we need 
\begin{enumerate}
\setcounter{enumi}{5}
\item the event that $(I_V,I_O)$ is an interface of some cluster in a percolation instance to depend on the state of the edges in $I_V \cup I_O$ only,
\end{enumerate} 
in order to have a formula (of the form $p^{|I_O|} (1-p)^{|I_V|}$) for the probability of this event that we can do our complex analysis with. (Some complications here are imposed by the fact that we will use an inclusion-exclusion formula as above.) 

Finally, we want $I_V \cup I_O$ to span a connected subgraph of some power $\G^k$ of \G\ in order to guarantee that there are at most exponentially many `candidate' interfaces of $C(o)$, as in Timar's aforementioned proof. But we will be able to instead obtain a stronger statement by just letting $k=1$ with no additional effort:
\begin{enumerate}
\setcounter{enumi}{6}
\item $I_V \cup I_O$ to span a connected subgraph of \G.
\end{enumerate} 

Satisfying all these properties at once is non-trivial, as we need the balance of choices between too large and too small subgraphs of $C \cup \partial C$ to stabilise at a uniquely determined middle. After some preliminaries, we offer our notion of interface in \Dr{sec D pint}, followed by proofs of the aforementioned properties. We then exploit our notion to prove our analyticity results in \Sr{pint analyt}.

\medskip
{\em The reader wishing to get a feeling of the results of this section without all their combinatorial details may do so by reading \Sr{sec c c} up to \Dr{def con dir}, \Sr{sec D pint}, the statement of \autoref{uniq pint}, perhaps the proof of \autoref{prop b con}, and as much of \Sr{pint analyt} needed to be convinced that the above proof ideas can be carried out along the lines of the proof of the planar case. }

\subsection{The setup and notation} \label{sec 81}

The \defi{edge space}  of a graph \g is the direct sum $\ce(G):= \bigoplus_{e\in E(G)} \Z_2$, where   $\Z_2=\{0,1\}$ is the field of two elements, which we consider as a vector space over $\Z_2$. 
The cycle space $\cc(G)$ of \g is the subspace of $\ce(G)$ spanned by the \defi{circuits} of cycles, where a circuit is an element  $C\in \ce(G)$ whose non-zero coordinates $\{e\in E(G) \mid C_e=1\}$ coincide with the edge-set of a cycle of \G.

Let $P=\left< \cs \mid \cgr\right>$ be a group presentation, and let $\g=\text{Cay}(P)$ be the corresponding Cayley graph.
Let $\cp$ be the set of closed walks of \g induced by the relators in \cgr. It is straightforward to prove that \cp\ forms a basis of the cycle space $\cc(G)$ of \G.

More generally, we can let \g be an arbitrary graph, and let $\cp$ be any basis of $\cc(G)$. For the applications of the theory developed in this section to percolation it will be important for \g to be of bounded degree and 1-ended, and for the elements of \cp\ to have a uniform upper bound on their size. 

We will assume for simplicity that all elements of \cp\ are cycles (rather than more general closed walks with self-intersections); this assumption comes without loss of generality.


We let $vw=wv$ denote the edge of \g joining two vertices $v$ and $w$.
Every edge $e=vw\in E(G)$ has two \defi{directions} $\ar{vw},\ar{wv}$, which are the two directed sets comprising $v,w$. The head $head(\ar{vw})$ of $\ar{vw}$ is $w$.

For $F\subset E(G)$ we let $\dar{F}$ denote the set of directions of the edges of $F$. Thus $|\dar{F}|=2|F|$. In particular, $\dar{E(G)}$ denotes the set of directed edges of \G.

A percolation instance is an element \oo\ of $\OO=\{0,1\}^{E(G)}$.

\subsection{A connectedness concept} \label{sec c c}

We say that $(B_1,B_2)$ is a \defi{\pb}  of a set $B$, if $B_1\cup B_2=B$ and $B_1\cap B_2=\emptyset$ and $B_1,B_2 \neq \emptyset$.

Recall that Timar's argument involved the idea that the edges of the cut $B$ separating $C(o)$ from infinity form a connected auxiliary graph. This can be reformulated by saying that for every proper bipartition $(B_1,B_2)$ of $B$, there are edges $b_1\in B_1, b_2\in B_2$ that are `close' to each other. The measure of closeness used was that there is a relator in the presentation inducing a cycle containing both (in particular, $b_1, b_2$ are then close in graph distance). We use a similar idea here, but for technical reasons we need to reformulate this in the language of directed edges. 

A \defi{\cp-path connecting} two directed edges $\ar{vw}, \ra{yx} \in \dar{E(G)}$ is a path $P$ of \g \st\ the extension $vw P yx$ is a subpath of an element of \cp. Here, the notation $vw P yx$ denotes the path with edge set $E(P) \cup \{vw, yx\}$, with the understanding that the endvertices of $P$ are $w,y$. Note that $P$ is not endowed with any notion of direction, but the directions of the edges $\ar{vw}, \ra{yx}$ it connects do matter. We allow $P$ to consist of a single vertex $w=y$.

We will say that $P$ \defi{connects} an undirected edge $e\in E(G)$ to $\ar{f}\in \dar{E(G)}$ (respectively, to a set $J\subset \dar{E(G)}$), if $P$ is a \cp-path connecting one of the two directions of $e$ to $\ar{f}$ (resp.\ to some element of $J$).

\begin{definition} \label{def con dir}
We say that a set ${J}\subset \dar{E(G)}$ is \defi{$F$-connected} for some $F\subset E(G)$, if \fe\ proper bipartition $ ({J_1},{J_2})$ of ${J}$, there is a \cp-path in $G-F$ connecting an element of ${J_1}$ to an element of ${J_2}$.
\end{definition}


As usual, a notion of `connectedness' gives rise to a corresponding notion of `components'. 
In our case, an \defi{$F$-component} of any set ${K}\subset \dar{E(G)}$ is a maximal $F$-connected subset of ${K}$.
It is an immediate consequence of the definitions that if two sets ${J},{J'} \subset \dar{E(G)}$ are both $F$-connected, and their intersection is non-empty, then ${J} \cup {J'}$ is $F$-connected too. Therefore, 
\labtequ{com part}{the $F$-components of ${K}$ form a partition of any ${K}\subset \dar{E(G)}$. }
This implies the following monotonicity property of $F$-components.

\begin{proposition} \label{BFJ}
If $Y \subset \dar{E(G)}$ is contained in an $F$-component of some $J \subset \dar{E(G)}$ (with $J\supseteq Y$), then $Y$ is 
 contained in an $F'$-component of $J'$ whenever $F' \subseteq F$ and $J'\supseteq J$.
\end{proposition}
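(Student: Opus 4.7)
The plan is to unwind the definition of ``contained in an $F$-component'' via the partition property \eqref{com part}, and verify two independent monotonicities: one in the cut parameter $F$, and one in the ambient set $J$.

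First I would extract the hypothesis as follows: by \eqref{com part}, the $F$-components of $J$ form a partition of $J$, so the assumption that $Y$ is contained in an $F$-component of $J$ means precisely that there is an $F$-connected set $K \subseteq J$ with $Y \subseteq K$ (namely, the component itself). All the work then reduces to showing that this single set $K$ is contained in an $F'$-component of $J'$.

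Next I would establish monotonicity in $F$: if $K$ is $F$-connected and $F' \subseteq F$, then $K$ is $F'$-connected. This is immediate from \Dr{def con dir}: any $\cp$-path in $G - F$ witnessing connectedness for a bipartition of $K$ is automatically a $\cp$-path in $G - F'$, since the inclusion $F' \subseteq F$ gives $E(G-F) \subseteq E(G-F')$.

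Finally I would establish the monotonicity in $J$: since $K \subseteq J \subseteq J'$ and $K$ is $F'$-connected, $K$ lies inside a single $F'$-component of $J'$. Here I would invoke the fact, stated just before \eqref{com part}, that the union of two $F'$-connected sets with nonempty intersection is $F'$-connected. Indeed, let $C$ be any $F'$-component of $J'$ meeting $K$; then $K \cup C \subseteq J'$ is $F'$-connected (as $K \cap C \neq \emptyset$), so by maximality of $C$ we get $K \cup C = C$, i.e.\ $K \subseteq C$. Hence $Y \subseteq K \subseteq C$, as required. The proof is essentially a two-line observation once the right definitions are unpacked, so I do not expect any genuine obstacle; the only thing to be careful about is not to conflate ``$F$-component of $J$'' with ``$F$-component of $J'$'' when applying \eqref{com part}.
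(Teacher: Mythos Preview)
Your proof is correct. The paper argues by contradiction, pushing a bad bipartition of $J'$ down to a bipartition of $J$ and invoking $F$-connectedness there, whereas you argue directly by first showing the $F$-component $K$ remains $F'$-connected (monotonicity in $F$) and then that any $F'$-connected subset of $J'$ sits in a single $F'$-component (monotonicity in $J$); the underlying content is the same, and your explicit separation of the two monotonicities is arguably cleaner.
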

\begin{proof}
If $Y$ is not contained in an $F'$-component of $J'$, then in particular $J'$ is not $F'$-connected. As $J'$ is partitioned by its $F'$-components by \eqref{com part}, we can then find a \pb\ $(J'_1,J'_2)$ of $J'$ such that both $Y \cap J'_1$ and $Y \cap J'_2$ are non-empty and \ti\ no \cp-path in $G-F'$ connecting  $J'_1$ to $J'_2$. Consider then the bipartition  $(J'_1 \cap J,J'_2\cap J)$ of $J$, which is proper since both sides meet $Y$. As $Y$ is contained in an $F$-component of $J$, \ti\ a \cp-path $P$ in $G-F$ connecting  $J'_1\cap J$ to $J'_2\cap J$. But $P\subset G-F'$ since $F' \subseteq F$, and it connects $J'_1$ to $J'_2$, contradicting our assumption.
\end{proof}

It is easy to see that 
\labtequ{heads}{
if ${J}$ is $F$-connected, then there is a component of $G-F$ containing the head of every element of ${J}$. 
}

\subsection{\cp-Interfaces} \label{sec D pint}

Given $F\subset E(G)$ and a subgraph $D$ of $G$, let $\ard{F}{D}:= \{\ar{vz} \mid vz\in F, z\in V(D) \}$. Thus if $f \in F \cap \partial D$ then \ard{F}{D} contains the direction of $f$ towards $D$ only, if $f \in F \cap E(D)$ then \ard{F}{D} contains both directions of $f$, and otherwise \ard{F}{D} contains no direction of $f$. Fix a vertex $o\in V(G)$.

We now give the crucial definition of this section, following the intuition sketched in the beginning of this section.

\begin{definition} \label{def pint}

A \defi{\pint} is a pair $I= (I_V,I_O)$ of sets of edges of \g with the following properties
\begin{enumerate}
\item \label{pint a} $I_V$ 
separates $o$ from infinity; 
\item \label{pint x} There is a unique finite component $D$ of $G-I_V$ containing a vertex of each edge in $I_V$;
\item \label{pint c} 
\ard{I_V}{D} is $I_V$-connected; and\\
(Note that by \ref{pint x}, \ard{I_V}{D} contains at least one of the two directions of each edge in $I_V$. It may contain both directions of some edges.)
\item \label{pint b} $I_O = \{e \in E(D) \mid \text{ \ti\ a \cp-path in $G-I_V$ connecting $e$ to $\ard{I_V}{D}$ } \}$.\\
(This is equivalent to\\ $I_O = \{vz \in E(D) \mid \{\ar{vz}\} \cup \ard{I_V}{D} \text{ or } \{\ar{zv}\} \cup \ard{I_V}{D} \text{ is $I_V$-connected} \}$.)
\end{enumerate}
\end{definition}

Note that $I_V$ is always non-empty, but $I_O$ is empty when $I_V$ consists of the set of edges incident with $o$. It is not hard to see that $I_O\neq \emptyset$ for all other $I_V$ when \g is 1-ended. 

Clearly, $I_O$ is  determined by $I_V$ via \ref{pint b}, so any $I_V$ satisfying the other three properties introduces a \pint\ by defining $I_O$ via \ref{pint b}. The reason why we do not define $I_V$ alone to be the \pint\ is to satisfy the uniqueness property in \autoref{uniq pint} below. It follows from this definition that $I_V$ also separates $I_O$ from infinity.

\medskip

{\bf Examples:} if \cp\ is the standard presentation $\left<x,y \mid xy=yx \right>$ of $\Z^2$, then the \pint s coincide with the \scv s from \Sr{sec th pl}. 

An important aspect of the definition of a \pint\ is that (vacant) edges with both endvertices in the same cluster need to be accepted in $I_V$ to satisfy \autoref{port pint}. This is why in \ref{pint a} $I_V$ is declared to be a superset of a $o$--$\infty$~cut $B$, rather than $B$ itself. It is a good exercise to try to visualise a \pint\ of the standard presentation of $\Z^3$, i.e.\ the cubic lattice in $\R^3$ presented by its 4-cycles. A further good exercise is to try to visualise how \pint s of  $\Z^2$ or $\Z^3$ grow as we allow further (redundant) relators in our presentation, e.g. all cycles up to a given length.
\medskip

\subsection{Properties of \pint s} \label{pint props}

We now prove that the notion of \pint\ we introduced satisfies the many properties needed in order to carry out the Peierls-type argument sketched at the beginning of this section.

\medskip
From now on we assume that
\labtequ{assumptions}{\g is an infinite, 1-ended, finitely presented \Cg\ fixed throughout, or more generally, an 1-ended 
bounded degree graph, admitting a basis  \cp\ of $\cc(G)$ whose elements are cycles of bounded lengths (as discussed in \Sr{sec 81}).}
We say that a \pint\ $I= (I_V,I_O)$ \defi{occurs} in a percolation instance $\oo \in \{0,1\}^{E(G)}$, if every edge in $I_O$ is occupied and every edge in  $I_V$ is vacant in \oo. 

We say that $I$ \defi{meets} a cluster $C$ of \oo, 
if either $I_O \cap E(C)\neq \emptyset$, or $I_O=E(C)=\emptyset$ and $I_V= \partial C$ (in which case $C$ consists of $o$ only).

\begin{theorem} \label{uniq pint}
For every finite percolation cluster $C$ of \g such that $\partial C$ separates $o$ from infinity, \ti\ a unique \pint\ $(I_V,I_O)$ that meets $C$ and occurs. Moreover, we have $I_O\subseteq E(C)$ and $I_V\subseteq \partial C$ for that \pint.

Conversely, every occurring \pint\ meets a unique percolation cluster $C$, and $\partial C$ separates $o$ from infinity (in particular, $C$ is finite).
\end{theorem}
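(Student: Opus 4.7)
The plan is to prove the two implications separately; the forward direction (existence and uniqueness of $(I_V,I_O)$ from $C$) is the main work.

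For existence, given a finite cluster $C$ with $\partial C$ separating $o$ from infinity, I would first observe that $V(C)$ lies inside a single finite component $D_0$ of $G - \partial C$, since every edge of $E(C)$ is occupied and hence avoids $\partial C$; as a subgraph of $G - \partial C$, this component coincides with $C$. A natural first candidate is $I_V := \partial C$, but in general this may fail condition \ref{pint c} because $\ard{\partial C}{D_0}$ could split into several $\partial C$-components. I would therefore partition $\ard{\partial C}{D_0}$ into its $\partial C$-components via \eqref{com part}, and define $I_V$ as the set of edges whose direction toward $D_0$ lies in the outermost such component---the one whose corresponding edges still separate $o$ from infinity (ensuring \ref{pint a}). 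The set $I_O$ is then forced by \ref{pint b}. Each condition of \Dr{def pint} can then be verified: $I_V \subseteq \partial C$ holds by construction, $D := D_0$ satisfies \ref{pint x}, condition \ref{pint c} holds by the choice of $\partial C$-component, and $I_O \subseteq E(C)$ because any $\cp$-path in $G - I_V$ starting from $\ard{I_V}{D}$ stays inside $V(D) = V(C)$.

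For uniqueness, suppose $(I_V',I_O')$ is a second occurring interface meeting $C$. The crucial first step is showing $I_V' \subseteq \partial C$: any edge $e \in I_V' \setminus \partial C$ is vacant and has no endpoint in $V(C)$, yet by the $\cp$-connectivity in \ref{pint c} and the assumption that some edge of $I_O'$ lies in $E(C)$, such an $e$ would have to connect back to $V(C)$ through a relator-induced chain, contradicting $e \notin \partial C$. Once $I_V' \subseteq \partial C$ is secured, \Prr{BFJ} together with the canonical $\partial C$-component structure forces $I_V' = I_V$, and then $I_O' = I_O$ follows from \ref{pint b}.

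For the converse, given an occurring interface $(I_V, I_O)$, I would define $C$ as the percolation cluster containing any vertex incident to an edge of $I_O$ (and, in the degenerate case $I_O = \emptyset$, as $\{o\}$, which by \ref{pint a} forces $I_V$ to be the set of edges incident to $o$). Well-definedness requires all vertices incident to $I_O$ to lie in a single cluster: each edge $e \in I_O$ is joined to $\ard{I_V}{D}$ by a $\cp$-path $P$ in $G - I_V$ via \ref{pint b}, and the relator containing $P$ extended by $e$ is a cycle in $D$ whose remaining edges are either in $I_O$ or can be linked recursively, propagating cluster membership through occupied edges. Finiteness of $C$ is immediate from $V(C) \subseteq V(D)$ with $D$ finite, and $\partial C$ separating $o$ from infinity follows from $I_V \subseteq \partial C$. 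The main obstacle throughout is the uniqueness step: the four defining conditions are intertwined via the auxiliary role of $D$ in \ref{pint x} and the $\cp$-connectivity in \ref{pint c}, so that small perturbations of $I_V$ fail to produce valid interfaces, but making this rigorous requires a careful analysis of how $\cp$-paths traverse $\partial C$ and interact with the basis relators in $\cp$.
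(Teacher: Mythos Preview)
Your sketch captures the right overall shape but has a concrete error and glosses over precisely the step where the real difficulty lies. First, the error: you set $D := D_0$, the component of $G - \partial C$ containing $C$, and claim this satisfies \ref{pint x}. But \ref{pint x} requires $D$ to be a component of $G - I_V$, and since you (correctly) allow $I_V \subsetneq \partial C$, the component of $G - I_V$ containing $C$ can be strictly larger than $C = D_0$. This matters downstream: your argument that $I_O \subseteq E(C)$ because paths ``stay inside $V(D) = V(C)$'' fails for the same reason. The paper instead deduces $I_O \subseteq E(C)$ from \Prr{prop Io} (that $I_O$ spans a connected subgraph incident with all of $I_V$), combined with the fact that $I_O$ is occupied so avoids $\partial C$.

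Second, the main gap: you say condition \ref{pint c} ``holds by the choice of $\partial C$-component'', but you have only ensured that your chosen set is $\partial C$-connected, whereas \ref{pint c} demands that $\ard{I_V}{D}$ be $I_V$-connected. Since $I_V \subseteq \partial C$, the connectivity notion is looser (good), but the set $\ard{I_V}{D}$ may be strictly larger than your chosen $\partial C$-component (because $D \supseteq C$ can pick up both directions of some edges), and you haven't said which component to take---``outermost'' is not defined. The paper's construction is specific: let $B \subseteq \partial C$ be the minimal $o$--$\infty$ cut, and let $I_V$ be the edges whose directions lie in the union $J$ of the $\partial C$-components of $\dar{\partial C}$ meeting $\ard{B}{C}$. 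Proving that $\ard{I_V}{D}$ is then $I_V$-connected is the heart of the argument and uses \Prr{prop b con} (to get $\ard{B}{D}$ into one $I_V$-component), \Prr{BFJ} (to absorb each $X_i$), and \Prr{prop newer} (to absorb any reverse directions in $\ard{I_V}{D} \setminus J$). None of these steps is present in your outline. Similarly, in uniqueness the paper leans on \Prr{prop Io} to get $I_V' \subseteq \partial C$ and then runs two separate bipartition arguments to force $I_V' \subseteq I_V$ and $I_V \subseteq I_V'$; your appeal to \Prr{BFJ} alone is not enough, as that proposition gives containment of components under monotonicity, not equality of the interface edge sets.
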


The proof of this is rather involved, and needs some intermediate steps which we gather now.
We remark that the assumption of bounded lengths of the elements of $\cp$ is not needed for the proof of \Tr{uniq pint}; it will only be used in the next section.

\medskip
The following proposition is based on Timar's \cite{TimCut} aforementioned proof of the theorem of Babson \& Benjamini \cite{BaBeCut}, and contains the quintessence of the notion of a \pint.

A \defi{minimal cut} of \g is a minimal set of edges that disconnects \G. 
Note that if $B$ is a minimal cut, then $G-B$ has exactly two components, and every edge in $B$ has an end-vertex in each of these components.
\begin{proposition} \label{prop b con}
Let $B$ be a minimal cut of \g and let $L\subset E(G)$ be a superset of $B$ \st\ some component $D$ of $G - L$ contains a vertex of each edge in $B$. Then \ard{B}{D}\ is contained in an $L$-component of \ard{L}{D}.
\end{proposition}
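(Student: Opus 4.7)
The approach is to adapt Timar's argument \cite{TimCut} for the Babson--Benjamini theorem to the directed, $L$-avoiding setting. Since $B$ is a minimal cut of $G$, the subgraph $G - B$ has exactly two connected components $A_1, A_2$, and each $b \in B$ has one endpoint in each. Combined with the hypotheses that every edge of $B$ has a vertex in $V(D)$ and that $D$ is connected in $G - L \subseteq G - B$, we conclude $V(D) \subseteq V(A_1)$ for one of them (call it $A_1$), and each $b = vw \in B$ has $w \in V(D)$. This yields a bijection $B \to \ard{B}{D}$ sending $b = vw$ to $\ar{vw}$.

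By \eqref{com part} and \autoref{BFJ}, it suffices to show that all elements of $\ard{B}{D}$ belong to a single $L$-component of $\ard{L}{D}$. Assume the contrary: then there is a proper bipartition $B = B_1 \sqcup B_2$ whose images in $\ard{B}{D}$ meet distinct $L$-components of $\ard{L}{D}$. Pick $b_1 = v_1 w_1 \in B_1$ and $b_2 = v_2 w_2 \in B_2$; by the connectedness of $D$ there is a path $Q \subseteq E(D)$ from $w_1$ to $w_2$, and by the minimality of $B$ (which forces $A_2$ to be connected, since $B \setminus B'$ is not a cut for any proper $B' \subsetneq B$) a path $Q' \subseteq E(A_2)$ from $v_2$ to $v_1$. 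The cycle $C := b_1 \cup Q \cup b_2 \cup Q'$ satisfies $C \cap B = \{b_1, b_2\}$.

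Decompose $C = R_1 + \cdots + R_m$ in $\cc(G)$ with $R_i \in \cp$. Since $|C \cap B_1| = 1$ is odd, some $R_i$ has $|R_i \cap B_1|$ odd; as $B$ is a cut and $R_i$ a cycle, $|R_i \cap B|$ is even, forcing $|R_i \cap B_2|$ to be odd and non-zero. Pick $b'_1 \in R_i \cap B_1$ and $b'_2 \in R_i \cap B_2$, with $V(D)$-endpoints $w'_1, w'_2$, and let $\alpha$ be the arc of $R_i$ from $w'_1$ to $w'_2$ that avoids $v'_1, v'_2$, so that $\alpha$ lies in $V(A_1)$.

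If $\alpha \subseteq E(D)$ then $\alpha$ is a $\cp$-path in $G - L$ directly connecting $\ar{b'_1}$ and $\ar{b'_2}$ as elements of $\ard{B}{D}$, placing them in the same $L$-component of $\ard{L}{D}$ and contradicting the assumed split. The main obstacle is handling an $\alpha$ that exits $V(D)$ via edges in $L \cap E(A_1) \setminus B$, since en route $\alpha$ may encounter $L$-edges with no endpoint in $V(D)$, breaking the natural chaining in $\ard{L}{D}$. I expect this to be resolvable by an exchange argument exploiting the bounded length of elements of $\cp$: any excursion of $\alpha$ outside $V(D)$ is localised on the bounded scale of one relator, and replacing $R_i$ by $R_i + R$ for a suitably chosen $R \in \cp$ localised at the excursion strictly reduces $|R_i \setminus (E(D) \cup B)|$ while preserving the parities of $|R_i \cap B_1|$ and $|R_i \cap B_2|$. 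Iterating produces an $R_i$ whose inside arc $\alpha$ lies in $E(D)$, closing the argument.
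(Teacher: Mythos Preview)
Your setup and the construction of the cycle $C$ match the paper's, and your parity step correctly isolates some $R_i \in \cp$ with $|R_i \cap B_1|$ and $|R_i \cap B_2|$ both odd. The gap is precisely where you identify it: once the arc $\alpha$ of $R_i$ on the $A_1$-side may cross edges of $L \setminus B$, your chaining breaks, and the ``exchange argument'' you sketch does not work. Replacing $R_i$ by $R_i + R$ for some $R \in \cp$ need not produce a single cycle, and after decomposing $R_i + R$ into cycles there is no reason any of them retains odd intersection with $B_1$ while having strictly fewer edges outside $E(D) \cup B$; the bounded relator length gives no control here because the excursion of $\alpha$ outside $D$, though itself short, need not be ``killed'' by any particular relator. (There is also a smaller issue: your claim that $\alpha$ lies in $A_1$ is not justified unless $b'_1, b'_2$ are chosen as consecutive $B$-crossings along $R_i$, which does not follow just from the odd parities.)

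The fix is simple and is exactly what the paper does: do not bipartition $B$ (equivalently $\ard{B}{D}$), but bipartition all of $\ard{L}{D}$ into $(L_1, L_2)$ according to the $L$-component structure, with $e \in L_1$ and $f \in L_2$. Then for each $R_i \in \cp$, look at the subpaths of $R_i - L$. Any such subpath contained in $D$ is itself a $\cp$-path in $G - L$ connecting the two directed $L$-edges at its ends, which therefore lie in the same side of $(L_1, L_2)$; any subpath not meeting $D$ contributes nothing to either side. Hence each $R_i$ meets $L_1$ and $L_2$ in even numbers of directed edges, whereas your cycle $C$ meets each in exactly one (namely $e$ and $f$, since the $D$-path avoids $L$ and the $A_2$-path avoids $D$). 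This parity contradiction with $C = \sum R_i$ is clean and never needs the arc to stay in $D$. The point you missed is that the bipartition must live on $\ard{L}{D}$, not on $\ard{B}{D}$, so that the $L \setminus B$ edges are absorbed into the partition rather than treated as obstacles.
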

\begin{proof}

Suppose to the contrary that there are directed edges $e,f\in \ard{B}{D}$ that lie in distinct $L$-components of \ard{L}{D}. Note that $e,f$ cannot be the two directions of the same undirected edge because no edge of $B$ has both end-vertices in $D$ by the above remark about minimal cuts. Let 
$(L_1,L_2)$ be a \pb\ of \ard{L}{D} such that $e\in L_1, f\in L_2$, and there is no \cp-path in $G-L$ connecting $L_1$ to $L_2$, which exists by the definitions and the fact that \ard{L}{D} is partitioned by its  $L$-components by \eqref{com part}.

Let $R$ be an $e$-$f$~path in $D$, which exists because $D$ is assumed to contain a vertex of each edge in $B$. Let $Q$ be an $e$-$f$~path in the component of $G -B$ avoiding $D$; this component exists because $G-B$ has exactly two components, one of which contains $D$ since $L\supseteq B$ (Figure~5). 


\begin{figure}
\centering{
 \includegraphics[scale=.65]{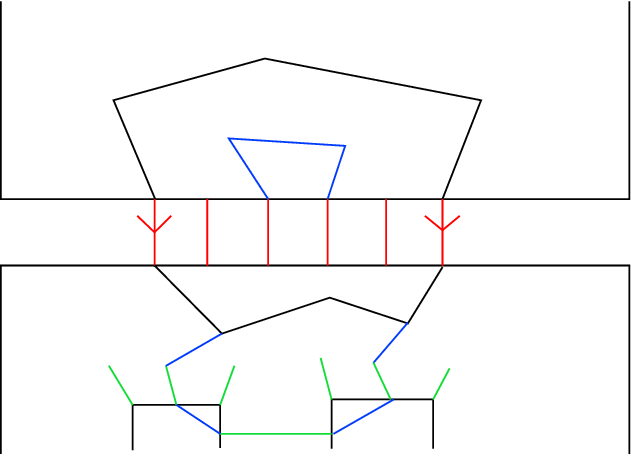}
\put(-65,40){$R$}
\put(-155,104){$Q$}
\put(-158,66){$e$}
\put(-67,64){$f$}
\put(-92,65){\textcolor{myred}{$B$}}
\put(-90,22){\textcolor{mygreen}{$L$}}
\put(-150,3){\textcolor{myblue}{$C_i$}}
\put(-181,30){$D$}
\caption{The situation in the proof of \Prr{prop b con}.}}
\label{figBL}
\end{figure} 

Let $K$ 
be the cycle obtained by joining these paths $R,Q$ using $e$ and $f$. Since \cp\ is a basis for the cycle space $\cc(G)$, we can express $K$ as a sum $\sum C_i$ of cycles $C_i\in \cp$, where this sum is understood as taking place in $\cc(G)$.

Note that no cycle $C_i$ contains a path in $G-L$ connecting $L_1$ to $L_2$, because no such path exists by the choice of $(L_1,L_2)$. 
Let $L_{C_i}:= \overleftrightarrow{L \cap E(C)}$ be the directions of edges of $L$ appearing in $C_i$.
The previous remark implies that $L_{C_i}$ has an even number of its elements in each of $L_1,L_2$, because each component of $C_i- L$ (which is a subpath of $C_i$) is incident with either 0 or 2 such elements pointing towards the component, and they lie both in $L_1$ or both in $L_2$ or both in none of the two.



This leads into a contradiction by a parity argument: notice that our cycle $K$  contains an odd number of directions of edges in each of $L_1,L_2$, namely exactly one in each ---$e$ and $f$ respectively--- because $P$ avoids $L$ and $Q$ avoids $D$, hence $\ard{L}{D}$, by definition. But then our equality $K=\sum C_i$ is impossible by the above claim because sums in $\cc(G)$ preserve the parity of the number of (directed) edges in any set. This contradiction proves our statement.
\end{proof}

We can use the same ideas to prove the following proposition.

\begin{proposition} \label{prop newer}
Let $L\subseteq E(G)$, let $D$ be a component of $G- L$, and let $e=vz$ be an edge of $L$ \st\ $v,z\in V(D)$. Then $\ar{vz},\ar{zv}$ lie in the same $L$-component of $\ard{L}{D}$.
\end{proposition}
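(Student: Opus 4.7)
The plan is to mirror the parity/cycle-space argument used in the proof of \Prr{prop b con}, with a single simplification: because both endpoints of $e$ already lie in $D$, we do not need an auxiliary path in the component of $G-B$ avoiding $D$; one path inside $D$ is enough.

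First I would assume for contradiction that $\ar{vz}$ and $\ar{zv}$ lie in distinct $L$-components of $\ard{L}{D}$. Since \eqref{com part} tells us that $\ard{L}{D}$ is partitioned by its $L$-components, this gives a proper bipartition $(L_1,L_2)$ of $\ard{L}{D}$ with $\ar{vz}\in L_1$ and $\ar{zv}\in L_2$, such that no \cp-path in $G-L$ connects $L_1$ to $L_2$. Since $v,z\in V(D)$ and $D$ is a component of $G-L$, there is a $v$--$z$ path $R$ in $D$. Joining $R$ with the edge $e$ yields a cycle $K$, which, as \cp\ is a basis of $\cc(G)$, we can express as a sum $K=\sum C_i$ with $C_i\in \cp$.

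The heart of the proof is the same parity count as in \Prr{prop b con}. For each summand $C_i$, every component of $C_i - L$ is a subpath of $C_i$ that is contained in a single component of $G-L$; at each of its two endpoints there is an incident edge of $L$, whose direction towards that subpath lies in $\ard{L}{D}$ exactly when the subpath is in $D$. If the subpath lies outside $D$, it contributes $0$ such directions; if it lies inside $D$, it contributes exactly $2$, and these two directions must belong to the same part $L_j$, for otherwise the subpath itself would be a \cp-path in $G-L$ connecting $L_1$ to $L_2$. Hence the number of directions contributed by $C_i$ to each of $L_1,L_2$ is even, and the same then holds mod $2$ for the symmetric-difference sum $K = \sum C_i$.

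On the other hand, $R$ avoids $L$ and so contributes no directions in $\ard{L}{D}$, while the edge $e$ contributes both $\ar{vz}\in L_1$ and $\ar{zv}\in L_2$, giving exactly one element of each $L_j$. This makes the count for $K$ odd in each part, contradicting the previous paragraph and establishing that $\ar{vz}$ and $\ar{zv}$ must lie in the same $L$-component of $\ard{L}{D}$. I do not foresee a serious obstacle here beyond careful bookkeeping of directions versus undirected edges; the main novelty relative to \Prr{prop b con} is just that minimality of a cut is no longer invoked, so the cycle $K$ is built from a single path $R$ in $D$ together with the ``chord'' $e$.
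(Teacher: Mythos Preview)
Your argument is correct and is precisely the direct adaptation of the proof of \Prr{prop b con} that the paper itself points to: the paper explicitly notes that ``the only difference is that instead of the cycle $K$ we now consider a cycle consisting of the edge $vz$ and a \pth{v}{z}\ in $D$'', which is exactly your $R\cup\{e\}$, and your parity bookkeeping matches the original.

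Where you differ from the paper is that the paper, after remarking that this adaptation works, chooses instead to \emph{deduce} the statement from \Prr{prop b con} applied to an auxiliary graph: it subdivides $e=vz$ by a new vertex $w$, takes $B=\{vw,wz\}$ as a minimal cut, replaces $L$ by $L'=(L\setminus\{vz\})\cup\{vw,wz\}$, and reads off that $\ar{wv},\ar{wz}$ lie in the same $L'$-component, from which the claim for $\ar{vz},\ar{zv}$ follows. Your route has the advantage of being self-contained and avoiding any translation between the original and subdivided graphs; the paper's route has the advantage of invoking \Prr{prop b con} as a black box rather than rerunning the parity argument. Both are short, and your version is the one the paper describes as the natural first proof.
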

\begin{proof}
It is not hard to adapt the proof of \Prr{prop b con} to our setup to prove our statement; the only difference is that instead of the cycle $K$ we now consider a cycle consisting of the edge $vz$ and a \pth{v}{z}\ in $D$. But we can in fact just apply \Prr{prop b con} to an auxiliary graph to deduce \Prr{prop newer} as follows. Subdivide the edge $vz$ into two edges $vw,wz$ by adding a new vertex $w$. Consider the minimal cut $B$ of the resulting graph that consists of these two edges $vw,wz$ (and separates $w$ from the rest of \G). Applying \Prr{prop b con} to this graph after replacing $L$ with $L':= L - vz \cup \{vw,wz\}$ we deduce that $\ar{wz},\ar{wv}$ lie in the same $L'$-component of $\ard{L'}{D}$, and it is straightforward to deduce that $\ar{vz},\ar{zv}$ lie in the same $L$-component of $\ard{L}{D}$ from this.
\end{proof}

Next, we prove one of the desired properties of \pint s, namely that $I_V \cup I_O$ spans a connected subgraph of \G.

\begin{proposition} \label{prop Io}
\Fe\ \pint\ $I= (I_V,I_O)$ of \G, the edge-set $I_O$ spans a connected subgraph of \g incident with all edges in $I_V$, unless $I_O = \emptyset$ (in which case $I_V$ is the set of edges incident with $o$).
\end{proposition}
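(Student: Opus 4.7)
The plan is to reformulate the $I_V$-connectedness hypothesis as ordinary graph connectedness. I define an auxiliary graph $\Gamma$ on vertex set $\dar{E(G)}$, putting an edge between $\vec{a}$ and $\vec{b}$ whenever there is a $\cp$-path in $G-I_V$ connecting them; then a set $J$ is $I_V$-connected exactly when $\Gamma[J]$ is connected, so property~\ref{pint c} of the interface gives that $\Gamma[J]$ is connected for $J := \ard{I_V}{D}$. I will call an edge of $\Gamma$ \emph{trivial} if its witnessing $\cp$-path has zero edges (equivalently, the two directed edges share a head-vertex and belong to two $I_V$-edges that are consecutive inside some relator of $\cp$), and \emph{non-trivial} otherwise. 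The key preliminary observation is that if $P$ is a non-trivial $\cp$-path in $G-I_V$ from $\vec{a}\in J$ to $\vec{b}\in J$, then \emph{every} edge of $P$ belongs to $I_O$: truncating $P$ at any of its interior vertices still yields a $\cp$-path in $G-I_V$ between a direction of that edge and $\vec{b}\in J$, which is exactly the witness required by clause~\ref{pint b}.

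The incidence claim is then proved by fixing $f = vz \in I_V$ with $z\in V(D)$ and considering the set $Z\subseteq J$ of directions reachable from $\vec{vz}$ in $\Gamma[J]$ along trivial edges only; all elements of $Z$ have head $z$. If some $\vec{wz}\in Z$ emits a non-trivial $\Gamma$-edge inside $J$, the first edge of that witnessing $\cp$-path lies in $I_O$ and is incident with $z$, and we are done. Otherwise $Z$ is a union of components of $\Gamma[J]$, so by connectedness $Z=J$; then every element of $J$ has head $z$, which forces every edge of $I_V$ to have $z$ as its $V(D)$-side endpoint. Under the standing assumptions~\eqref{assumptions}, combined with the requirement that $I_V$ separates $o$ from infinity and the uniqueness clause of property~\ref{pint x}, I claim this collapses $D$ to $\{z\}$ and gives $z=o$, $I_V$ equal to the star of $o$, and $I_O=\emptyset$, which is exactly the exceptional case allowed by the proposition.

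For the connectedness of the subgraph spanned by $I_O$, take two edges $e,e' \in I_O$ together with witnessing $\cp$-paths $P_e,P_{e'}$ running in $G-I_V$ from directions of $e,e'$ to some $\vec{a},\vec{a'}\in J$. By the preliminary observation both $P_e$ and $P_{e'}$ are themselves composed of $I_O$-edges, so $e$ is joined by an $I_O$-path to the head $u$ of $\vec{a}$ and $e'$ is joined by an $I_O$-path to the head $u'$ of $\vec{a'}$. Following any path from $\vec{a}$ to $\vec{a'}$ in the connected graph $\Gamma[J]$, trivial edges keep the head-vertex fixed while non-trivial edges contribute a further $I_O$-sub-path between consecutive head-vertices; concatenation yields an $I_O$-path from $u$ to $u'$ and hence from $e$ to $e'$.

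The main obstacle I anticipate is the careful treatment of trivial $\Gamma$-edges: they are perfectly legitimate witnesses of $I_V$-connectedness but contribute no edge to the $I_O$-subgraph, so one must rule out a scenario in which they alone carry the connectivity of $\Gamma[J]$ while the interface is genuine (i.e.\ $I_O\neq\emptyset$). Verifying that this degeneracy really collapses $D$ to the single vertex $\{o\}$, rather than permitting some more exotic 1-ended configuration in which $I_V$ sits on the star of a vertex $z\neq o$, is where the finite presentation and 1-endedness assumptions in~\eqref{assumptions}, together with the uniqueness clause of property~\ref{pint x}, must be invoked.
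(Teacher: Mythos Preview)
Your approach is essentially the paper's: both rest on the observation that every edge of a $\cp$-path in $G-I_V$ ending in $J=\ard{I_V}{D}$ lies in $I_O$ (your ``preliminary observation''), combined with the $I_V$-connectedness of $J$. The paper packages this via proper bipartitions of $J$ rather than your auxiliary graph $\Gamma$ and the trivial/non-trivial edge distinction, but the content is the same, and it is equally terse on the degenerate collapse to the star of $o$ that you correctly flag as the delicate step.
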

\begin{proof}
Let $D$ be defined as in \ref{pint x}  of \Dr{def pint}. By \ref{pint b} of \Dr{def pint}, for every $e\in I_O$ \ti\ a \cp-path $P$ in  $G-I_V$ connecting $e$ to the head of an element of $\ard{I_V}{D}$. Note that all edges of $P$ belong to $I_O$ as we can apply \autoref{pint b} to any of them, where we use the fact that since $P$ meets $D$, it is contained in $D$ because $D$ is a component of $G-I_V$. This means that every component of the graph $G_O \subseteq G$ spanned by the edges in $I_O$ contains the head of an element of $\ard{I_V}{D}$.

Therefore, if $G_O$ has more than one components, then these components define a proper bipartition $ ({J_1},{J_2})$  of $\ard{I_V}{D}$, by letting ${J_1}$ be the set of all ${j}\in \ard{I_V}{D}$ such that $head({j})$ lies in one of these components. Applying \Dr{def con dir} to this bipartition we obtain a contradiction, since for any \cp-path $P$ in $G-I_V$ connecting $j_1\in {J_1}$ to $j_2\in {J_2}$, all edges of $P$ lie in $I_O$ by the above remark, which implies that the heads of $j_1$ and $j_2$ lie in the same component of $G_O$. This proves that $G_O$ is connected as claimed.
\smallskip

Finally, if some $e\in I_V$ is not incident with $G_O$, then we can apply the same argument to the bipartition of $\ard{I_V}{D}$ one partition class of which consists of the one or two directions of $e$ that lie in $\ard{I_V}{D}$ (recall the remark after \ref{pint c} of \Dr{def pint}). If $I_O\neq \emptyset$, then this bipartition is proper because each component of $G_O$ is incident with an element of $\ard{I_V}{D}$ as we have proved, and we obtain a contradiction as above. 

If $I_O = \emptyset$, and there are at least two vertices $x,y$ of $D$ incident with  $I_V$, then we obtain a \pb\ of $I_V$ by letting one of the classes be the set of edges incident with $x$, say, and reach a contradiction with the same arguments. Thus all edges of $I_V$ are incident with a vertex $x$ of $D$ in this case, and  in order to satisfy \ref{pint a} $I_V$ must be the set of edges incident with $x=o$.
\end{proof}

We have now gathered enough tools to prove our main result about \pint s.
\begin{proof}[Proof of \Tr{uniq pint}]
{\bf Existence:} To begin with, given such a cluster $C$ we will find an occurring \pint\ $(I_V,I_O)$ such that $I_O\subseteq E(C)$ and $I_V\subseteq \partial C$.
For this, let
$$B:= \{e\in \partial C \mid \text{ \ti\ a path from ${e}$ to $\infty$ in $G-\partial C$}\}.$$  
This is the minimal subset of $\partial C$ separating $C$ from infinity. 

Fix an enumeration of the elements of \ard{B}{C} (this notation was introduced before \Dr{def pint}), and let $X_i, 1\leq i \leq |\ard{B}{C}|$ be the $\partial C$-component of $\dar{\partial C}$ containing the $i$th element of $\ard{B}{C}$ in that enumeration (the definition of $F$-components is given after \Dr{def con dir}). It will turn out that these components $X_i$ coincide with each other, but we cannot use this fact yet. Let ${J}:= \bigcup_i X_i$, and let $I_V$ be the corresponding undirected edges, that is, $I_V:=\{vw\in \partial C \mid \ar{vw} \in {J} \}$. 

\smallskip
We will start by proving that 
$I_V$ satisfies properties  \ref{pint a}, \ref{pint x} and \ref{pint c}, after which we can define $I_O$ via \ref{pint b} to ensure that $(I_V,I_O)$ is indeed a \pint.

To see that \ref{pint a} is satisfied, 
we recall that $B\subseteq I_V$ by the definitions, and we claim that $B$ 
%
%
separates $o$ from infinity. This is true because if $Q$ is an infinite path starting at $o$, then it has to contain an edge in  $\partial C$ by our assumption that $\partial C$ separates $o$ from infinity. The last such edge of $Q$ then lies in $B$ by the definitions. 
Thus all paths from $o$ to infinity  meet $B$, proving that \ref{pint a} is satisfied.

\medskip
It is easy to see that \ref{pint x} is satisfied by letting $D$ be the component of $G-I_V$ containing $C$, which exists since $I_V \subseteq \partial C$. Indeed, $C \subseteq D$ meets all edges in  $\partial C$, hence  all edges in  $I_V$.

\medskip
We will now check that \ard{I_V}{D} is $I_V$-connected, that is, \ref{pint c} is satisfied.  \Prr{prop b con} ---applied with  $L=I_V$, so that $D$ meets all edges in $B\subseteq I_V$ as remarked above--- yields that $\ard{B}{D}$ is contained in some $I_V$-component $X$ of \ard{I_V}{D}. 
We will prove that $X$ contains the other edges of \ard{I_V}{D} too. For this, recall that $X_i$ is a $\partial C$-component of $\dar{\partial C}$, and so $X_i$ is $\partial C$-connected by the definition of $\partial C$-components. We can reformulate this by saying that $X_i$ is (contained in) a $\partial C$-component of $X_i$. Recall that $J= \bigcup_i X_i$. Using \eqref{heads} with $F=\partial C$ we will show that $J\subseteq \ard{I_V}{D}$. Indeed, the component $C$ of $G- \partial C$ contains the head of an element of $X_i$ in $\ard{B}{C}$ by the definition of $X_i$, and so the head of every element of $J$ lies in $C$ by \eqref{heads}. Since $C\subseteq D$, we deduce $J\subseteq \ard{I_V}{D}$.  Plugging these facts into \Prr{BFJ} ---with $Y=X_i$--- we obtain that $X_i$ is contained in an $I_V$-component of $\ard{I_V}{D}$, because $X_i \subseteq \ard{I_V}{D}$ and $I_V\subseteq \partial C$. Since each $X_i$ meets $\ard{B}{D}$, which is contained in the $I_V$-component $X$, \eqref{com part} yields that $X$ contains $J= \bigcup_i X_i$. 

To conclude that \ard{I_V}{D} is $I_V$-connected, or in other words, that $X=  \ard{I_V}{D}$, it remains to show that if $e\in \ard{I_V}{D} - J$ then $e$ lies in $X$ as well. To see this, note that for any such $e= \ar{vz}$ the reverse direction $e':= \ar{zv}$ lies in $J$, because all edges of $I_V$ have at least one of their directions in $J$ by the definitions. Moreover, we have $z,v\in V(D)$ since $e,e' \in \ard{I_V}{D}$, where we used the fact that $J\subseteq \ard{I_V}{D}$. Thus \Prr{prop newer} ---with $L=I_V$--- yields that $e,e'$ lie in a common $I_V$-component of \ard{I_V}{D}. Using \eqref{com part} again, combined with the fact that $(e'\in) J\subseteq X$ proved above,
we deduce that $e\in X$ as desired. To summarize, we have proved that all elements of \ard{I_V}{D} lie in a common $I_V$-component $X$, in other words, \ard{I_V}{D} is $I_V$-connected, establishing \ref{pint c}.

\medskip
We proved above that $J\subseteq \ard{I_V}{D}$. Next, we claim that actually $\ard{I_V}{D}=J$, which will be used below. 
Suppose this is not the case, and consider the \pb\ $(J, \ard{I_V}{D} - J)$ of \ard{I_V}{D}. Since \ard{I_V}{D} is $I_V$-connected, \ti\ a \cp-path $P$ in $G-I_V$ connecting directed edges $e\in J$ to $f\in \ard{I_V}{D} - J$. Let $g$ be the first edge of $P$ that lies in $\partial C$, directed towards $e$, if such an edge exists, and let $g=f$ otherwise. In both cases, the subpath $P'$ of $P$ from $e$ to $g$ avoids $\partial C$, and hence proves that $e$ and $g$ lie in a common  $\partial C$-component of $\dar{\partial C}$. But then $g$ must lie in $J$ since $J$ is a union of $\partial C$-components of $\dar{\partial C}$. This contradicts that $g\not\in J$ when $g=f$ and  $g\not\in I_V$ otherwise. This contradiction proves that $\ard{I_V}{D}=J$.

\medskip
Thus using \ref{pint b} of \Dr{def pint} to define $I_O$, we obtain a \pint\ $I:=(I_V,I_O)$. Since $I_V\subseteq \partial C$ which is vacant, to show that $I$ occurs it remains to show that $I_O$ is occupied in \oo. This is true because if $P$ is a \cp-path in $G-I_V$ connecting some edge $e$ of $I_O$ to $\ard{I_V}{D}=J$
, then the last vacant edge $f$ of the extended path $\{e\} \cup P$, if such an edge $f$ exists, would have to lie in $I_V$ by the definitions and the fact that $\ard{I_V}{D}=J$, contradicting that $\{e\} \cup P$ avoids $I_V$. Hence no such $f$ exists, and in particular any $e\in I_O$ is occupied as desired. 
Moreover, $I$ meets $C$ because $I_O\cup I_V$ spans a connected subgraph of \g by \autoref{prop Io}, and that subgraph contains $B$, hence meets $C$. 

\medskip
To prove the claim that $I_O\subseteq E(C)$, recall that $I_O$ spans a connected subgraph $G_o$ of \g by \autoref{prop Io}. This subgraph meets $C$ unless it is empty, because $G_o$ is incident with all of $I_V \supseteq B$, and it cannot meet the infinite component of $G- B$ as it is contained in $D$. Since $I_O$, being occupied, avoids $\partial C$, we deduce that $I_O\subseteq E(C)$ indeed.

\comment{
Finally, we check that \ref{pint b} is satisfied. Recall that we defined $I_O$ as the set of edges $e\in E(C)$ \st\ there is an $e$-${J}$ connecting \cp-path in $G-I_V$, but we want $I_O$ to coincide with the set $I'_O$ of edges  $e\in E(D)$ \st\ there is an $e$-$\ard{I_V}{D}$ connecting \cp-path in $G-I_V$. Firstly, we have already proved that $J = \ard{I_V}{D}$ and so any $e$-${J}$ connecting path is an $e$-$\ard{I_V}{D}$ connecting path, proving that $I_O\subseteq I'_O$ since $C \subset D$. It is more interesting to show that $I'_O\subseteq I_O$. For this, we assume that for some $e\in E(D)$ there is an $\ar{e}$-$\ard{I_V}{D}$ connecting \cp-path in $G-I_V$ for some direction $\ar{e}$ of $e$, and wish to show that $e\in E(C)$. Let $L:= \{\ar{e} \} \cup \ard{I_V}{D}$, and consider the bipartition $(L_C,L'_{C})$ of $L$ where $L_C:= \{j\in L \mid head(j)\in C\}$ and $L'_C:= \{j\in L \mid head(j)\in D-C\}$; this is a bipartition of $L$ because every element of $L$ has its head in $D$ by the definitions.

Recall that we have proved that \ard{I_V}{D}\ is $I_V$-connected, and so it is easy to prove that $L$ is $I_V$-connected too by the definitions. Thus if the above bipartition of $L$ is proper, then \ti\ a \cp-path $P$ in $G-I_V$ connecting its two sides. 
This $P$ contains at least one edge in $\partial C$ because $\partial C$ separates $C$ from $D-C$. Let $P'$ be an (initial or final) subpath of $P-\partial C$ connecting such an edge $yz\in \partial C\cap E(P)$ to a directed edge $f$ in the side of our bipartition that does not contain $\are$, and assume that $z$ is an end-vertex of $P'$. Since $f\in \ard{I_V}{D} =J$, we have $head(f)\in V(C)$, hence also $z\in V(C)$ since $P'$ avoids $\partial C$. As in the proof of $\ard{I_V}{D}=J$, we deduce that $\ar{yz}$ must lie in $J$ contradicting that $P$ avoids $I_V$.

This contradiction proves that our bipartition is not proper, and so in particular $head(e)\in V(C)$ as $\ard{B}{C}\subseteq J$. Thus if $e\not\in E(C)$ then $e\in \partial C$. In this case, let $Q$ be a \cp-path in $G-I_V$ connecting ${e}$ to $\ard{I_V}{D}$, which we assumed to exist. Let $e'$ be the last edge of the path $Q':= \{e\} \cup Q$ that lies in $\partial C$; it is possible that $e'=e$. But then the final subpath of $Q'$ starting at $e'$ is an ${e}$-$\ard{I_V}{D}$ connecting \cp-path in $G-\partial C$. Since  $\ard{I_V}{D}=J$, this path proves that $e'\in I_V$, contradicting that $Q'$ avoids $I_V$. This contradiction proves that $e\in E(C)$, and so $I'_O= I_O$. This establishes \ref{pint b} and completes the proof that   $(I_V,I_O)$ is a  \pint\ with the desired properties.
}



\bigskip

{\bf Uniqueness:} Suppose that our cluster $C$ is met by a further occurring \pint\ $I'=(I_V',I_O') \neq I$. By \Lr{prop Io} the subgraph of \g spanned by $I_O'\cup I_V'$ is connected, and therefore contained in $C \cup \partial C$ since $I_O'$ meets $E(C)$. It follows that $I_V' \subseteq \partial C$ since $I'$ occurs. 

Let $D'$ be the component of $G - I_V'$ defined in \ref{pint x}. We claim that $B \subset I_V'$. 
Indeed, if $I_V'$ misses some edge of $B$, then $I_V'\subseteq \partial C$ does not separate $C$ from infinity, hence $C \cap D' = \emptyset$, contradicting that $I_O' \subseteq E(D')$ and $I_O'\cap E(C)\neq \emptyset$ unless $E(C)=\emptyset$, in which case $I_V'$ cannot separate $o$ from infinity violating \ref{pint a}.

Moreover,  we have $D' \supseteq C $ since  $I_V' \subseteq \partial C$ (because $I'$ occurs) and $I_O'$ meets $E(C)$.

We will first prove that $I_V' \subseteq I_V$. So let $f \in I_V'$, and suppose for a contradiction that $f \not\in I_V$. 
In this case, the  bipartition $(J, J':= \dar{\partial C} - J)$ of $\dar{\partial C}$, where $J$ is as in the definition of $I_V$ in the existence part, is \st\ $\ard{B}{C}\subseteq J$ and both directions $\ar{f},\ra{f}$ of $f$ lie in $J'$ and there is no \cp-path in $G- \partial C$ connecting $J$ to $J'$.

Consider now the bipartition $(J\cap \ard{I_V'}{D'}, J'\cap \ard{I_V'}{D'})$ of \ard{I_V'}{D'}, which is proper because $\ard{B}{C}\subseteq \ard{I_V'}{D'}$  (because $D'\supseteq C$ and $B \subset I_V'$) and $\{\ar{f},\ra{f}\}\cap \ard{I_V'}{D'}\neq \emptyset$ (by the definition of $D'$). Therefore, since $\ard{I_V'}{D'}$ is $I_V'$-connected by \ref{pint c}, there is a \cp-path $P$ in $G-I_V'$ connecting $J\cap \ard{I_V'}{D'}$ to $J'\cap \ard{I_V'}{D'}$. Let $e$ be the last edge of $P$ in $\partial C$, which exists because $P$ cannot avoid $\partial C$ by the aforementioned property of the bipartition $(J, J')$, and let $P'$ be the final subpath of $P$ starting at $e$. But then applying \ref{pint b} to $I'$ using the path $P'$ we deduce that $e\in I_O'$, contradicting that $I'$ occurs and $e\in \partial C$ is vacant. This contradiction proves that $I_V' \subseteq I_V$.

\medskip
Next, we prove that $I_V \subseteq I_V'$ as well. Indeed, if $I_V \not\subseteq I_V'$, then the bipartition $(\ard{I_V}{D} \cap \dar{I_V'}, \ard{I_V}{D} - \dar{I_V'})$ of \ard{I_V}{D} is proper because $\ard{B}{C} \subseteq \ard{I_V}{D} , \dar{I_V'}$. Since \ard{I_V}{D} is $I_V$-connected, \ti\ a \cp-path $P$ in $G- I_V$ connecting some edge $f\in {I_V'}$ to some edge $e\in {I_V} - {I_V'}$. Since we have proved that $I_V' \subseteq I_V$, we deduce that $P$ lies in $G- I_V'$. But then applying \ref{pint b} to $I'$ using the path $P$ we deduce that $e\in I_O'$, contradicting that $I'$ occurs and $e\in I_V \subseteq \partial C$ is vacant. This contradiction proves that $I_V \subseteq I_V'$, and hence $I_V'= I_V$.

\medskip
To conclude that $I$ is the unique occurring \pint\ that meets $C$, it remains to prove that $I_O'=I_O$. But this is now obvious from \ref{pint b}, since $I_V'= I_V$ and hence $D'=D$ by \ref{pint x}.

\bigskip
{\bf Converse:} Suppose now that $(I_V,I_O)$ is a \pint\ occurring in a percolation instance \oo. Then by \Lr{prop Io} it meets a unique cluster $C$ of \oo, and we have $I_V \subseteq \partial C$ by what we proved above. By \ref{pint a} $I_V$, and hence  $\partial C$, separates $o$ from infinity.
\end{proof}

\comment{
\begin{proposition} \label{prop heads}
Suppose $\ar{J}\subset \dar{E(G)}$ is $F$-connected for some $F\subset E(G)$. Then there is a component of $G-F$ containing all the heads of $\ar{J}$, i.e.\ the set $\{z\in V(G) \mid \vec{vz} \in \ar{J}\}$.
\end{proposition}
\begin{proof}
Suppose to the contrary that $\ar{J}$ contains two edges whose heads lie in distinct components $C,C'$ of $G-F$. Consider the bipartition $ (\ar{J_1},\ar{J_2})$ of $\ar{J}$ where $\ar{J_1}$ contains all $\ar{j}\in \ar{J}$ such that the head of $\ar{j}$ lies in $C$. Since no path in $G-F$ can meet two components of $G-F$, this bipartition contradicts the assumption that $\ar{J}$ is $F$-connected.
\end{proof}

Given an $F$-connected set $\ar{J}\subset \dar{E(G)}$, \autoref{prop heads} allows us to define $C(\ar{J},F)$ to be the component of $G-F$ containing all the heads of $\ar{J}$. Let us say that 

\begin{proposition} \label{prop halfedges}
Let $J,F\subset E(G)$ be edge-sets \st\ $J$ is $F$-connected. Let $\ar{K}$ be the set of directions of edges in $J$  is $F$-connected.
\end{proposition}
\begin{proof}

\end{proof}
}

\subsection{Using \pint s to prove analyticity} \label{pint analyt}

Define the \defi{boundary size} of a \pint\ $I=(I_V,I_O)$ to be $|I_V|$. Note that every set $S$ of edges which is $S$-connected (according to \Dr{def con dir}) corresponds to a connected induced subgraph of the $m$th power of the line graph $L(G)^m$ of $G$, where $m=m_\cp:=\left \lfloor{t/2} \right \rfloor$ and $t$ is the length of the longest cycle in \cp.  
The degree of each vertex of $L(G)$ is at most $2d-2$, where $d$ is the maximum degree of  \G\ (we are still assuming that \g satisfies \eqref{assumptions}), and so the degree of each vertex of $L(G)^m$ is at most $(2d-2)^m$. Applying the remark after Corollary~\ref{lattice animals} to $L(G)^m$, combined with the fact that any \pint\ $I=(I_V,I_O)$ is determined by $I_V$ by the definitions, we thus deduce that 

\begin{lemma} \label{grow pint}
The number of \pint s $(I_V,I_O)$ of \g of boundary size $n$ such that $I_V$ contains a fixed edge of \g is less than $c \gamma_{\cp}^n$, where $c$ is a constant, $\gamma_{\cp}=((2d-2)^{m_\cp}-1)e$,  and $d$ is the maximum degree of \G.
\end{lemma}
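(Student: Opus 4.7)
The plan is to reduce the counting of \pint s of boundary size $n$ to the classical enumeration of connected subgraphs in an auxiliary graph with bounded degree, following the hint given in the paragraph preceding the lemma.

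First I would argue that a \pint\ $I = (I_V, I_O)$ is uniquely determined by its vacant part $I_V$; this is immediate from property \ref{pint x} (which determines $D$ from $I_V$) and property \ref{pint b} (which recovers $I_O$ from $I_V$ and $D$). Hence it suffices to bound the number of candidate sets $I_V \subseteq E(G)$ with $|I_V| = n$ that contain a fixed edge.

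Next I would show that $I_V$, viewed as a vertex set in $L(G)^{m}$ where $m := m_\cp = \lfloor t/2 \rfloor$, induces a connected subgraph. Given any proper bipartition $(F_1, F_2)$ of $I_V$, the induced bipartition $(\ard{F_1}{D}, \ard{F_2}{D})$ of \ard{I_V}{D} is proper since each edge of $I_V$ contributes at least one direction to \ard{I_V}{D} (by \ref{pint x}). Property \ref{pint c} then supplies a \cp-path in $G - I_V$ connecting some $\ar{e_1} \in \ard{F_1}{D}$ to some $\ar{e_2} \in \ard{F_2}{D}$, so $e_1$ and $e_2$ lie on a common cycle of $\cp$ of length at most $t$. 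In particular they are at distance at most $\lfloor t/2 \rfloor = m$ in the line graph $L(G)$, hence adjacent in $L(G)^m$. This establishes the desired connectedness in $L(G)^m$.

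Finally I would bound the number of connected induced subgraphs of $L(G)^m$ with $n$ vertices containing a fixed vertex. Since $\Delta(L(G)) \leq 2d-2$, the number of vertices of $L(G)$ at distance at most $m$ from a given vertex is bounded by $1 + (2d-2) + \cdots + (2d-2)^m$, so $\Delta(L(G)^m) \leq (2d-2)^m - 1$ (after subtracting the center). Applying the standard lattice-animal bound recalled in Appendix~\ref{sec App Enum} (which gives $c \cdot (e \Delta)^n$ for the number of connected subgraphs of size $n$ through a fixed vertex in a graph of maximum degree $\Delta$) then yields at most $c \cdot \gamma_\cp^n$ such subgraphs, with $\gamma_\cp = ((2d-2)^{m_\cp}-1)e$, finishing the proof.

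The only mildly delicate step is the transfer of $I_V$-connectedness of \ard{I_V}{D} to connectedness of $I_V$ in $L(G)^m$; the rest is a direct invocation of the lattice-animals enumeration.
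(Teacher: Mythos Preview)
Your approach is essentially identical to the paper's: reduce to $I_V$ via uniqueness, show $I_V$ is connected in $L(G)^{m}$, and invoke the lattice-animal bound from the appendix. You are in fact more careful than the paper about transferring the $I_V$-connectedness of the directed set $\ard{I_V}{D}$ to connectedness of the undirected set $I_V$ in $L(G)^m$. One small arithmetic slip: your ball estimate $1 + (2d-2) + \cdots + (2d-2)^m$ minus the center does not give $(2d-2)^m - 1$; the correct degree bound for $L(G)^m$ is $(2d-2)^m$ (use $\Delta\sum_{k=0}^{m-1}(\Delta-1)^k \le \Delta^m$), which is exactly what is needed so that $(D-1)e = ((2d-2)^{m_\cp}-1)e = \gamma_\cp$.
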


The following is the analogue of \autoref{scs boundary}. 

\begin{lemma} \label{port pint}
\Fe\ \pint\ $I= (I_V,I_O)$ of \G, we have $|I_V| \geq  |I_O|/d^{t}$,  where $d$ is the maximum degree of \g and $t$ is the length of the longest cycle in \cp. \mymargin{can you improve this bound significantly?}
\end{lemma}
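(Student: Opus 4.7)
The plan is to build a many-to-one map $\phi : I_O \to I_V$ whose fibres have size at most $d^t$, and then apply the pigeonhole principle.

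The key observation comes from condition \ref{pint b} of \Dr{def pint}: for every $e \in I_O$ there is a \cp-path $P$ in $G - I_V$ connecting $e$ to an element of $\ard{I_V}{D}$. Unravelling the definition of a \cp-path, this means precisely that there exists an edge $f \in I_V$ (the underlying undirected edge of the element of $\ard{I_V}{D}$ that $P$ reaches) such that $\{e\} \cup E(P) \cup \{f\}$ is contained in the edge set of a single cycle $C \in \cp$. Since every cycle in $\cp$ has length at most $t$, this forces an endpoint of $e$ to lie within graph distance at most $t$ of an endpoint of $f$.

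With this in hand, I would define $\phi$ by choosing for each $e \in I_O$ some such $f = \phi(e) \in I_V$. To bound $|\phi^{-1}(f)|$, note that each $e \in \phi^{-1}(f)$ has a vertex in the ball of radius $t$ around an endpoint of $f$; since $G$ has maximum degree $d$, a standard geometric-series bound on the size of such a ball gives at most $d^t$ candidate edges (absorbing the small multiplicative constants, compatible with the marginal remark that acknowledges that $d^t$ is generous). Summing over $f \in I_V$ gives $|I_O| \leq d^t |I_V|$, which is equivalent to the stated inequality. There is no real obstacle here: the whole argument is a short unpacking of the definition of a \cp-path combined with a standard ball-volume bound in bounded-degree graphs.
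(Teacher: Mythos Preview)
Your proposal is correct and is essentially the same argument as the paper's: the paper too assigns to each $e\in I_O$ an edge $f(e)\in I_V$ that lies on a common cycle of $\cp$ with $e$ (via condition~\ref{pint b}), observes that $e$ and $f(e)$ are therefore within bounded graph distance, and bounds the fibres by a ball-volume estimate in the bounded-degree graph $G$. The only cosmetic difference is that the paper phrases the distance bound inside the subgraph spanned by $I_V\cup I_O$ rather than in $G$, which makes no difference since the former distances dominate the latter.
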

\begin{proof}
By \ref{pint b} of \Dr{def pint}, each $e\in I_O$ has distance less than $t$ from $I_V$ in the subgraph $G_I$ of \g spanned by $I_V \cup I_O$. Using this fact we can assign each $e\in I_O$ to an edge $f(e)$ of $I_V$ so that the distance between $e$ and $f(e)$ in $G_I$ is less than $t$. Then the number $|f^{-1}(g)|$ of edges of $I_O$ assigned to any $g\in I_V$ is at most the size of the ball of radius $t-1$ around $g$ in \G, which is at most $d^{t-1}$ since \g is $d$-regular. Thus $|I_V| \geq  |I_O|/d^{t-1}$ by the pigeonhole principle.
\end{proof}

Let $R=\ldots,r_{-1},r_0,r_1,\ldots$ be 2-way infinite geodesic with $r_0 = o$ (such a geodesic exists in every Cayley graph by an elementary compactness argument, provided we assume e.g.\ the Axiom of Countable Choice). Let $f_i$ denote the edge $r_i r_{i+1}$ of $R$. 

\begin{lemma}\label{fp scs axis}
For every \pint\ $I=(I_V,I_O)$ of $o$ with boundary size $|I_V|=n$, the set $I_V$ contains at least one of the edges $f_{0}, f_{1}, \ldots, f_{d^t n - 1}$.
\end{lemma}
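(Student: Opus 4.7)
The plan is to combine the connectedness of $I_V\cup I_O$ (from \Prr{prop Io}) with the fact that $R$ is a geodesic to convert the boundary size $n=|I_V|$ into a bound on how far along $R$ we must look before encountering an edge of $I_V$.

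First I would invoke property \ref{pint a} of \Dr{def pint}: since $I_V$ separates $o$ from infinity, every infinite path in $G$ starting at $o$ must contain an edge of $I_V$. Applying this to each of the two infinite rays of $R$ issuing from $o=r_0$, I would let $j^+\geq 0$ be minimal and $j^-\leq -1$ be maximal with $f_{j^+},f_{j^-}\in I_V$, respectively. The degenerate case $I_O=\emptyset$ is handled separately and trivially: \Prr{prop Io} then forces $I_V$ to be the set of edges incident with $o$, so in particular $f_0\in I_V$ and the conclusion holds.

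Next I would use that the subgraph $G_I$ spanned by $I_V\cup I_O$ is connected by \Prr{prop Io}. Both $r_{j^+}$ and $r_{j^-+1}$ lie in $V(G_I)$, being endpoints of the edges $f_{j^+},f_{j^-}\in I_V\subseteq E(G_I)$. Since $G_I$ is a connected subgraph of $G$, the distance between any two of its vertices in $G$ is at most $|V(G_I)|-1\leq |E(G_I)|=|I_V|+|I_O|$. The proof of \Lr{port pint} actually yields the sharper bound $|I_O|\leq d^{t-1}|I_V|=d^{t-1}n$, giving
\[
d_G(r_{j^+},r_{j^-+1})\leq n(1+d^{t-1}).
\]
On the other hand, because $R$ is a geodesic we have $d_G(r_{j^+},r_{j^-+1})=j^+-j^--1$, so combining with $j^-\leq -1$ yields $j^+\leq n(1+d^{t-1})$. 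An elementary check confirms $n(1+d^{t-1})\leq d^t n-1$ whenever $d\geq 2$, $t\geq 2$ and $n\geq 1$, which is the range covered by our standing assumptions \eqref{assumptions}.

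The only point of care is keeping the constant sharp enough: using the \emph{stated} bound $|I_O|\leq d^t|I_V|$ from \Lr{port pint} in place of the sharper $d^{t-1}$ bound extracted from its proof would only give $j^+\leq n(1+d^t)$, exceeding $d^t n - 1$; so one must invoke the $d^{t-1}$ estimate directly. Apart from this bookkeeping the argument is short, the main conceptual ingredients being \Prr{prop Io} and the geodesic property of $R$.
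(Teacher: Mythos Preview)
Your proof is correct and follows essentially the same route as the paper's: combine \Prr{prop Io} with \Lr{port pint} and the geodesic property of $R$ to bound how far along $R$ the set $I_V$ can first appear. The only differences are cosmetic---you work with the connected graph spanned by $I_V\cup I_O$ rather than just $I_O$, handle the degenerate case $I_O=\emptyset$ explicitly, and extract the sharper $d^{t-1}$ bound from the proof of \Lr{port pint} to make the arithmetic close; the paper works directly with $I_O$ and the stated $d^t$ bound, glossing over the resulting off-by-one.
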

\begin{proof}
Each of the two 1-way infinite subpaths of $R$ starting at $o$ connects $o$ to infinity, so $I_V$ must contain an edge from each of them. By \autoref{prop Io} and \autoref{port pint}, $I_O$ is connected, incident to both of these edges, and $|I_O|\leq d^t n$. Thus if $I_V$ contains some edge $r_i r_{i+1}$ with $i\geq d^t n$, then it cannot meet $\ldots r_{-1} r_0$ because $R$ is a geodesic.
\end{proof}

A \mpint\ $S$ is a finite set of \pint s $\{(I_V^i,I_O^i)\}_{1\leq i \leq k}$ such that the corresponding graphs $G_O^i$, i.e.\ the subgraphs of \g spanned by the edges in $I_O^i$, are pairwise vertex disjoint. Define the \defi{boundary} $\partial S$ of $S$ to be $\bigcup_{1\leq i \leq k} |I_V^i|$.
Let $\MS$ denote the set of \mpint s and $\MS_n$ the set of \mpint s of total boundary size $n$.
Using the above lemma and \autoref{fp scs axis} we can upper bound the number of elements of $\MS_n$ that can occur simultaneously in any \oo\ similarly to the proof of \autoref{HR bound}.

\begin{lemma}\label{fp HR bound} 
There is a constant $x\in \R$ \st\ \fe\ $n\in \N$ at most $x^{\sqrt{n}}$ elements of $\MS_n$ can occur simultaneously in any \oo.
\end{lemma}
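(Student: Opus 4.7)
The plan is to adapt the proof of \Lr{HR bound} by using \Lr{fp scs axis} in place of \Prr{quasi geo}, and \Tr{uniq pint} in place of the elementary \Lr{scs disjoint} as the deep structural input. Fix a percolation instance $\oo$ and an occurring \mpint\ $M = \{(I_V^i, I_O^i)\}_{1\leq i \leq k} \in \MS_n$, and write $n_i := |I_V^i|$, so that $\sum_i n_i = n$.

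First I will argue that the $k$ \pint s composing $M$ meet pairwise distinct percolation clusters. Each $I^i$ meets a unique cluster $C^i$ by \Tr{uniq pint}; conversely, if two distinct \pint s $I^i \neq I^j$ were to meet the same cluster $C$, the uniqueness half of \Tr{uniq pint} would force $I^i = I^j$, a contradiction. Since distinct percolation clusters are automatically vertex-disjoint, the sets $V(C^i)$ are pairwise disjoint.

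Next I associate to each \pint\ the canonical integer $a_i := \min\{a \geq 0 \mid r_a \in V(C^i)\}$. To see this is well-defined and bounded, note that by \Lr{fp scs axis} the set $I_V^i$ contains some edge $f_{j_i} = r_{j_i} r_{j_i+1}$ with $j_i < d^t n_i$; since $I_V^i \subseteq \partial C^i$ by \Tr{uniq pint}, one of $r_{j_i}, r_{j_i+1}$ must lie in $V(C^i)$, and hence $a_i \leq d^t n_i$. Pairwise disjointness of the $V(C^i)$ implies the $a_i$ are pairwise distinct, and because $a_i$ determines $C^i$ (as the cluster of $r_{a_i}$ in $\oo$) and $C^i$ then determines $I^i$ via \Tr{uniq pint}, the entire \mpint\ $M$ can be recovered from $\oo$ together with the set $A := \{a_1, \ldots, a_k\}$. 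Thus $A$ is an injective encoding of $M$, consisting of distinct non-negative integers with $\sum_i a_i \leq d^t \sum_i n_i = d^t n$.

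Finally, counting such sets $A$ reduces to counting partitions into distinct parts of some integer at most $d^t n$; by the \HRT\ (\Tr{HRthm}) this quantity is bounded above by $x^{\sqrt{n}}$ for some constant $x = x(d, t)$, where polynomial prefactors are absorbed by a small enlargement of the base. The main obstacle is the injectivity of the encoding $M \mapsto A$, which rests critically on \Tr{uniq pint}: without the deterministic uniqueness of the \pint\ associated to each cluster, two occurring \pint s could in principle share boundary edges along $R$ and collapse to the same canonical index, destroying the counting argument. Once \Tr{uniq pint} is invoked, all remaining steps mirror the planar argument of \Lr{HR bound} essentially verbatim.
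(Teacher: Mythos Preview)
Your proof is correct and follows essentially the same approach as the paper, which simply refers the reader back to the argument of \Lr{HR bound} with \Lr{fp scs axis} replacing \Prr{quasi geo}. Your explicit use of \Tr{uniq pint} to guarantee that distinct occurring \pint s meet distinct clusters --- and hence that the canonical indices $a_i$ along $R$ are distinct and determine $M$ --- is exactly the structural input needed to make the encoding injective, and is a clean way to handle the fact that the definition of a \mpint\ only demands vertex-disjointness of the $G_O^i$ (so the sets $I_V^i$ themselves need not be disjoint). Passing through the clusters $C^i$ rather than the interfaces directly also neatly handles the degenerate case $I_O^i = \emptyset$.
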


We will now prove that $p_\C<1$ for every finitely presented \Cg\ following the approach of \Sr{sec proof th}, replacing the use of exponential decay of the dual by \Lrs{grow pint} and \ref{port pint}. 

\begin{theorem} \label{thm pc fp}
Let \g be an 1-ended  \Cg\ with a finite presentation \cp. Then $p_\C\leq 1-1/\gamma_{\cp}$ for bond percolation on \G. 
\end{theorem}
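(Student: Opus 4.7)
The plan is to mirror the proof of \Tr{theta analytic 2D}, with \pint s from \Dr{def pint} playing the role of \scv s and the Peierls-type count in \Lr{grow pint} replacing the duality-based exponential tail. Since $p_\C\leq 1-1/\gamma_{\cp}$ is equivalent (by \eqref{def pC}) to the analyticity of $\theta_o$ on $(1-1/\gamma_{\cp},1]$, I aim to establish the latter. Because \g is 1-ended, $|C(o)|<\infty$ forces $\partial C(o)$ to separate $o$ from infinity, so by \Tr{uniq pint} a unique occurring \pint\ meets $C(o)$; conversely every occurring \pint\ forces $|C(o)|<\infty$ via \ref{pint a} of \Dr{def pint}. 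The uniqueness clause of \Tr{uniq pint} further implies that any finite set of simultaneously occurring \pint s meets pairwise distinct clusters, so their $G_O^i$'s are vertex-disjoint and the set is a \mpint. Applying inclusion-exclusion to the events that individual \pint s occur, and using \Cr{D and not F NN} to extend each $Q_M(p):=\Pr_p(M\text{ occurs})$ to an entire function, yields
\[
1-\theta_o(p)=\sum_{M\in\MS}(-1)^{c(M)+1}Q_M(p),
\]
with $c(M)$ the number of \pint s in $M$. Absolute convergence of this alternating series will come from the exponential tail established next.

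The heart of the argument is the fp analog of \Lr{exp dec Z2}: for every $p\in(1-1/\gamma_{\cp},1]$ there exist constants $c_1=c_1(p)$ and $c_2=c_2(p)<1$ with $\sum_{M\in\MS_n}Q_M(p)\leq c_1 c_2^n$. By \Lr{fp HR bound} this reduces, up to a subexponential $x^{\sqrt n}$ factor, to exponential decay of $\Pr_p(\MS_n\text{ occurs})$. Following the BK argument of the planar case, I will write
\[
\Pr_p(\MS_n\text{ occurs})\leq\sum_{(n_1,\ldots,n_k)\in T_n}\Pr_p\bigl(C(n_1)\circ\cdots\circ C(n_k)\bigr)\leq\sum_{(n_1,\ldots,n_k)\in T_n}\prod_{i=1}^k\Pr_p(C(n_i)),
\]
where $C(m)$ is the event that some \pint\ of boundary size $m$ surrounding $o$ occurs, and $T_n$ is the restricted set of partitions of $n$ in which for every $N\leq n$ at most $d^t N$ parts are of size $\leq N$ (the restriction licensed by \Lr{fp scs axis}, exactly as in \eqref{BKclaim}). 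The elementary bound
\[
\Pr_p(C(m))\leq d^t m\cdot c\gamma_{\cp}^m(1-p)^m,
\]
deduced from \Lr{grow pint}, \Lr{fp scs axis}, and the trivial estimate $Q_I(p)\leq(1-p)^{|I_V|}$, is exponentially small precisely when $p>1-1/\gamma_{\cp}$. Summing the BK products over the $\leq h^{\sqrt n}$ elements of $T_n$ via \Tr{HRthm} then produces the desired exponential tail.

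The main obstacle is the BK step, since the vacant edge sets $I_V^i$ of different \pint s in a \mpint\ need not be pairwise edge-disjoint: two \pint s can share a vacant edge running between the distinct clusters they meet, so one cannot directly use the $I_V^i$ themselves as edge-disjoint witnesses of the $C(n_i)$. The fix, modelled on the planar passage leading to \eqref{BKclaim}, is to extract for each $I^i$ an internal witness combining the connected subgraph $G_O^i$ (connected by \Prr{prop Io}) with enough directed boundary edges of $I_V^i$ to expose the $I_V^i$-connectedness asserted in \ref{pint c} of \Dr{def pint}; the vertex-disjointness of the $G_O^i$'s across $i$ then forces edge-disjointness of these witnesses, and \Lr{port pint} keeps the size of the $i$th witness of order $n_i$. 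Once this tail estimate is in hand, condition \ref{cg i} of \Cr{cor general} follows from the bound $|I_V|+|I_O|\leq(1+d^t)n$ supplied by \Lr{port pint}, while condition \ref{cg ii} is exactly the exponential decay just proved; \Cr{cor general} therefore yields analyticity of $1-\theta_o(p)$, hence of $\theta_o(p)$, on $(1-1/\gamma_{\cp},1]$, establishing $p_\C\leq 1-1/\gamma_{\cp}$.
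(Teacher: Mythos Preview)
Your overall architecture matches the paper's: inclusion-exclusion over \mpint s to get \eqref{thetaQ fp}, \Lr{fp HR bound} to reduce to an exponential tail for $\Pr_p(\MS_n\text{ occurs})$, and then \Cr{cor general} together with \Lr{port pint}. The divergence, and the gap, is in how you obtain that exponential tail.

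The BK route you propose is problematic. First, your events $C(m)$ are not increasing: the occurrence of a \pint\ demands that $I_O$ be occupied \emph{and} $I_V$ be vacant, so \Tr{BKthm} as stated does not apply. Second, even if you invoke Reimer instead, the disjoint-witness issue you identify is real and your fix is not a fix. To witness $C(n_i)$ you must certify that \emph{all} of $I_V^i$ is vacant; a witness built from $G_O^i$ together with only ``enough'' boundary edges does not certify that event. Since the $I_V^i$ can genuinely share edges (an edge between the two clusters met by $I^i$ and $I^j$ lies in both), you cannot produce edge-disjoint witnesses for the $C(n_i)$ in general.

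The paper avoids this altogether with a direct Peierls count: for $S\in\MS_n$, decompose $\partial S=\bigcup_i I_V^i$ into its components $X_1,\ldots,X_k$ in $L(G)^m$ with $|X_i|=n_i$, $\sum n_i=n$. Each $X_i$ contains some $I_V^j$ (since $I_V^j$ is $I_V^j$-connected, hence connected in $L(G)^m$), so by \Lr{fp scs axis} $X_i$ meets one of the first $d^t n_i$ edges of $R$. Counting partitions via \Tr{HRthm}, root choices, and connected subgraphs of $L(G)^m$ via the bound behind \Lr{grow pint} gives at most $r^{\sqrt n}\max\{c^k d^{kt}n_1\cdots n_k\}\gamma_{\cp}^n$ possible boundary sets $\partial S$. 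Each is entirely vacant with probability $(1-p)^n$, and a union bound yields \eqref{msn fp}; for $p>1-1/\gamma_{\cp}$ this decays exponentially. No BK is needed because you never factor over the individual \pint s---you just count the possible $n$-edge boundary sets and pay $(1-p)^n$ once.
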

\begin{proof}
Similarly to \eqref{thetaQ}, we claim that
\labtequ{thetaQ fp}{$1-\theta_o(p)= \sum_{S\in \MS} (-1)^{c(S)+1} Q_S(p)$}
\fe\  $p\in (q,1]$, where $c(S)$ denotes the number of \pint s in the \mpint\ $S$, and $Q_S(p):= \Pr_p(\text{$S$ occurs})$. \\
We will use \autoref{uniq pint} to prove that the above formula holds. 
By  that proposition, $C(o)$ is finite \iff\ it meets a \pint. Since for any pair of distinct occurring \pint s the graphs $G_O$ do not share a vertex, the inclusion-exclusion principle yields 
$$1-\theta_o(p) = \Pr(\text{ at least one \pint\ occurs }) = \sum_{S\in \MS} (-1)^{c(S)+1} Q_S(p)$$
provided the latter sum converges absolutely.

Once again $$\sum_{S\in \MS_n} Q_S(p)=\mathbb{E}_p\Big(\sum_{S\in \MS_n}\mathbb{\chi}_{\{S \text{ occurs}\}}\Big)$$ and by \autoref{fp HR bound} we conclude that 
$$\sum_{S\in \MS_n} Q_S(p)\leq x^{\sqrt{n}} \Pr_p(\text{some } S\in\MS_n \text{ occurs}).$$
The event $\{\text{some } S\in\MS_n \text{ occurs}\}$ implies that a set of edges with certain properties is vacant and our goal is to use Peierls' argument to conclude that the probability of 
the latter event decays exponentially for large enough $p$.

Let $S\in \MS_n$ and let $X_1,X_2,\ldots,X_k$ be the components of  the subgraph of $L(G)^m$ spanned by $\partial S$, where $m= \left \lfloor{t/2} \right \rfloor$. By the argument at the beginning of \Sr{pint analyt}, each $X_i$ contains the boundary of a \pint\ of size at most $n_i:=|X_i|$. Thus by \autoref{fp scs axis}, $X_i$ contains one of the edges of $f_{0}, f_{1}, \ldots, f_{d^t n_i - 1}$. The \HRT\ and \autoref{grow pint} now easily yield that the number of all possible boundaries of $\MS_n$ is at most $$r^{\sqrt{n}}\max\{c^k d^{kt} n_1 n_2\ldots n_k\} \gamma_{\cp}^n,$$ where the maximum ranges over all partitions $\{n_1,n_2,\ldots,n_k\}$ of $n$ such that every $N$ appears at most $d^t N$ times. As in the proof of \Tr{theta analytic 2D}, it is easy to check that the quantity $\max\{c^k d^{kt} n_1 n_2\ldots n_k\}$ grows subexponentially in $n$. Since each $S\in \MS_n$ occurs with probability at most $(1-p)^n$ by the definitions,  we conclude that 
\labtequ{msn fp}{$\Pr_p(\text{some } S\in\MS_n \text{ occurs})\leq r^{\sqrt{n}} \max\{c^k d^{kt} n_1 n_2\ldots n_k\} \gamma_{\cp}^n (1-p)^n$,} 
and thus $\Pr_p(\text{some } S\in\MS_n \text{ occurs})$ decays exponentially for every $p>1-1/\gamma_{\cp}$.

Finally, combining this exponential decay with \autoref{cor general} and\\ \autoref{port pint} we deduce that $\theta$ is analytic in $(1-1/\gamma_{\cp},1]$, arguing as in the end of the proof of \Tr{theta analytic 2D}.
\end{proof}

\subsection{Extending to site percolation} \label{sec site}

In this section we extend \Tr{thm pc fp s} to site percolation. The proof is essentially the same, all we have to do is to adapt the probability $(1-p)^n$ appearing in \eqref{msn fp}, but we will also adapt \Lr{grow pint} in order to obtain a better bound on $p_\C$.

For a \pint\ $(I_V,I_O)$ of \g we let $V_O$ denote the set of vertices incident with an edge in $I_O$, and we let $V_V$  denote the set of vertices incident with an edge in $I_V$ but with no edge in $I_O$.
We say that a \pint\ $I=(I_V,I_O)$ is a \defi{site-\pint}, if no edge in $I_V$ has both its end-vertices in $V_O$. Note that any site percolation instance $\oo\in \{0,1\}^{V(G)}$ naturally gives rise to a bond percolation instance $\oo'\in \{0,1\}^{E(G)}$, by setting $\oo'(xy)=1$ whenever $\oo(x)=1$ and $\oo(y)=1$. It is obvious from the definitions that if $I$ occurs in such an $\oo'$, then $I$ is a site-\pint. For site-\pint s we can improve \Lr{grow pint} as follows, using the same proof except that we work with \g rather than $L(G)$. The \defi{vertex-boundary size} of $(I_V,I_O)$ is $|V_V|$.

\begin{lemma} \label{grow pint s}
The number of site-\pint s $(I_V,I_O)$ of \g of vertex-boundary size $n$ such that $I_V$ contains a fixed edge of \g is less than $c' \dot{\gamma}_{\cp}^n$, where  $\dot{\gamma}_{\cp}=(d^{m_\cp}-1)e$,  and $d$ is the degree of \G.
\end{lemma}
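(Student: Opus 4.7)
Plan: I follow the template of \Lr{grow pint}, replacing the line graph $L(G)$ by $G$ throughout. The key combinatorial assertion is:

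\emph{Claim:} For every site-\pint\ $(I_V, I_O)$, the vertex set $V_V$ induces a connected subgraph of the $m_\cp$-th graph power $G^{m_\cp}$.

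Granted the claim, the standard lattice-animals enumeration (the one invoked in the proof of \Lr{grow pint}) applied to $G^{m_\cp}$ bounds the number of connected vertex subsets of size $n$ containing a fixed vertex by $c''((d^{m_\cp}-1)e)^n = c''\dot{\gamma}_\cp^n$, since the maximum degree of $G^{m_\cp}$ is at most $d^{m_\cp}-1$. The fixed edge of the statement, being in $I_V$, has at least one endpoint in $V_V$ by the site-\pint\ condition (no edge of $I_V$ has both endpoints in $V_O$), giving at most two choices for the base vertex. A site-\pint\ is uniquely determined by $I_V$, and the combinatorial constraints imposed by \Dr{def pint} restrict the overcount from $V_V$ to $I_V$ to a subexponential factor in $n$, absorbed into $c'$.

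To prove the claim, I define a map $\phi : \ard{I_V}{D} \to V_V$ by $\vec{vw} \mapsto v$ if $v \in V_V$, and $\vec{vw} \mapsto w$ otherwise. The site-\pint\ condition guarantees that $\phi$ is well-defined into $V_V$: if $v \notin V_V$, then $v$ is incident to $vw \in I_V$ but also to some edge of $I_O$, so $v \in V_O$; the site-\pint\ condition then forces $w \notin V_O$, and combined with $w \in V(D)$ and $vw \in I_V$ this gives $w \in V_V$. For any proper bipartition $(V_1, V_2)$ of $V_V$, the pullback $(\phi^{-1}(V_1), \phi^{-1}(V_2))$ is a proper bipartition of $\ard{I_V}{D}$, so \ref{pint c} of \Dr{def pint} supplies a \cp-path in $G - I_V$ connecting a direction $\vec{e_1}$ with $\phi(\vec{e_1}) \in V_1$ to a direction $\vec{e_2}$ with $\phi(\vec{e_2}) \in V_2$. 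Together with $e_1$ and $e_2$, this path is a subpath of a cycle $C \in \cp$ of length at most $t = 2m_\cp$. Since both $\phi(\vec{e_1})$ and $\phi(\vec{e_2})$ lie on $C$, their $G$-distance is at most $\lfloor t/2 \rfloor = m_\cp$, so they are adjacent in $G^{m_\cp}$.

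Main obstacle: The most delicate step is ensuring that the pullback bipartition is \emph{proper}, i.e.\ that $\phi$ is surjective onto $V_V$. A case analysis on whether $v \in V(D)$, and on the other endpoints of the $I_V$-edges incident to $v$, handles most cases immediately; the corner case where $v \in V_V \cap V(D)$ and every $I_V$-edge incident to $v$ has its other endpoint in $V_V \setminus V(D)$ requires modifying the definition of $\phi$ (e.g.\ by a fixed ordering of $V(G)$) so that each such $v$ still lies in the image. A secondary concern is the overcount from $V_V$ to $I_V$: a priori $I_V$ can be any subset of the (at most $d|V_V|$) edges incident to $V_V$, but the constraints of \Dr{def pint}(\ref{pint a})--(\ref{pint c}) restrict the admissible subsets to a subexponential family in $n$, which is absorbed into $c'$.
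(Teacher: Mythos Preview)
Your approach matches the paper's one-line proof sketch (adapt \Lr{grow pint} by replacing $L(G)$ with $G$), and your reduction to showing that $V_V$ is connected in $G^{m_\cp}$ is the right move. However, the two issues you flag play out differently than you suggest.

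Your ``main obstacle'' (surjectivity of $\phi$) dissolves. By \Prr{prop Io}, the subgraph spanned by $I_O$ is incident with every edge of $I_V$; combined with the site-\pint\ condition (no $I_V$-edge has both endpoints in $V_O$), this means every edge of $I_V$ has \emph{exactly one} endpoint in $V_V$ and the other in $V_O\subseteq V(D)$ (assuming $I_O\neq\emptyset$, the exceptional case being a single \pint). So for any $v\in V_V$, pick an $I_V$-edge $vw$; then $w\in V_O\subseteq V(D)$, hence $\vec{vw}\in\ard{I_V}{D}$ and $\phi(\vec{vw})=v$. No corner case, no ordering trick needed.

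The genuine gap is your ``secondary concern''. You assert that the number of site-\pint s with a given $V_V$ is subexponential in $n=|V_V|$, but give no argument. This is not obvious: a priori $I_V$ can be any of the up to $2^{dn}$ subsets of the edges from $V_V$ to $V_V^c$ satisfying \Dr{def pint}\ref{pint a}--\ref{pint c} and returning the prescribed $V_V$, and nothing you wrote rules out exponentially many such subsets. Without this step, your lattice-animal count bounds the number of possible sets $V_V$, not the number of site-\pint s, and the lemma as literally stated remains unproved. (For the downstream application in \Cr{thm pc fp s}, the $V_V$-count is in fact all that is needed, since the event that \emph{some} site-\pint\ with a given $V_V$ occurs has probability at most $(1-p)^n$; but that does not salvage the lemma itself.) The paper's own proof is equally terse on this point.
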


Using this we can now adapt \Tr{thm pc fp} to site percolation, repeating the proof verbatim, except that we use site-\pint s instead of \pint s.
\begin{corollary} \label{thm pc fp s}
Let \g be an 1-ended  \Cg\ with a finite presentation \cp. Then  $p_\C\leq 1-1/\dot{\gamma}_{\cp}$ for site percolation on \G.
\end{corollary}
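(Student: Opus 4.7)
The plan is to repeat the proof of \Tr{thm pc fp} with three small modifications, exploiting the standard coupling between site and bond percolation. Given a site percolation instance $\oo\in\{0,1\}^{V(G)}$, declare an edge $xy\in E(G)$ occupied iff both $x$ and $y$ are occupied in \oo. This defines a bond percolation instance $\oo'\in\{0,1\}^{E(G)}$, and the cluster of $o$ in $\oo'$ coincides with the subgraph of \g spanned by the site-cluster $C(o)$. The first key observation is that if $I=(I_V,I_O)$ is a \pint\ occurring in $\oo'$, then $I$ must automatically be a site-\pint: otherwise some $e\in I_V$ would have both its endvertices in $V_O$, forcing $e$ to be occupied in $\oo'$ and contradicting that $I$ occurs.

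First I would state and prove the site-percolation analog of \Tr{uniq pint}: every finite site-cluster $C$ of $o$ whose vertex-boundary separates $o$ from infinity is met by a unique occurring site-\pint, with $V_O\subseteq V(C)$ and $V_V\subseteq V(\partial C)\setminus V(C)$. This is immediate from \Tr{uniq pint} applied to $\oo'$ together with the observation above. Next, note that for a site-\pint\ $I$ of vertex-boundary size $|V_V|=n$, the event that $I$ occurs in site percolation is measurable with respect to the states of the vertices in $V_O\cup V_V$, and has probability at most $(1-p)^n$ (forcing all of $V_V$ to be vacant). In particular, $Q_I(p):=\Pr_p(I\text{ occurs})$ admits an entire extension, and $|Q_I(z)|$ is bounded on appropriate complex disks by a factor of the form analogous to \Cr{D and not F NN}, but measured in terms of $|V_V|$ rather than $|I_V|$.

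Following the proof of \Tr{thm pc fp} verbatim, let \MS\ now denote the set of multi-site-\pint s (finite sets of site-\pint s whose $V_O$ are pairwise disjoint) and $\MS_n$ those with total vertex-boundary size $n$. The inclusion-exclusion identity
\[
1-\theta_o(p) = \sum_{S\in\MS}(-1)^{c(S)+1}Q_S(p),\quad p\in(1-1/\dot{\gamma}_{\cp},1],
\]
holds by the uniqueness statement above. The analog of \Lr{fp HR bound} for $\MS_n$ follows by the same Hardy--Ramanujan-plus-pigeonhole argument (using that each site-\pint\ of vertex-boundary size $m$ must touch one of the first $O(m)$ vertices of the geodesic $R$, by the site-percolation variant of \Lr{fp scs axis}). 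Combining this with \Lr{grow pint s}, and using the bound $(1-p)^n$ in place of the bond probability, one obtains the key estimate
\[
\Pr_p(\text{some }S\in\MS_n\text{ occurs})\leq r^{\sqrt n}\max\{c'^k d^{kt}n_1\cdots n_k\}\,\dot{\gamma}_{\cp}^{\,n}(1-p)^n,
\]
which decays exponentially precisely when $(1-p)\dot{\gamma}_{\cp}<1$, i.e.\ $p>1-1/\dot{\gamma}_{\cp}$.

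The final step is to plug this exponential decay into \Cr{cor general}: assumption \ref{cg ii} is exactly the displayed inequality, and assumption \ref{cg i} is verified because every $S\in\MS_n$ is an event determined by $O(n)$ vertices (via $|V_O|\le d^t n$, the site-percolation counterpart of \Lr{port pint}), hence by $O(n)$ edges. This yields analyticity of $\theta$ on $(1-1/\dot{\gamma}_{\cp},1]$. There is no real obstacle here beyond the bookkeeping; the only nontrivial input compared to the bond case is the improvement from $\gamma_{\cp}$ to $\dot{\gamma}_{\cp}$, which is precisely what \Lr{grow pint s} provides once one restricts attention to site-\pint s.
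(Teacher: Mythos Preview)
Your proposal is correct and takes essentially the same approach as the paper, which simply says to repeat the proof of \Tr{thm pc fp} verbatim with site-\pint s replacing \pint s and \Lr{grow pint s} replacing \Lr{grow pint}. You have spelled out the details the paper leaves implicit (the coupling $\oo\mapsto\oo'$, the observation that occurring \pint s in $\oo'$ are necessarily site-\pint s, the replacement of $(1-p)^{|I_V|}$ by $(1-p)^{|V_V|}$), all of which are exactly what the one-line proof in the paper intends.
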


This bound on $p_\C$ is far from the conjectured $p_\C=p_c$, but not so far from  $p_\C\leq 1-p_c$, which is the best that our methods can achieve (and possibly the truth) in light of a result of Kesten \& Zhang, saying that for site percolation on $\Z^d, d\geq 3$,  the distribution of the  vertex-boundary size of the site-\pint\ of the cluster of the origin does not have an exponential tail \cite[Theorem~4]{KeZhaPro} (here $\Z^d$ denotes the cubic lattice in $\R^d$, and the basis \cp\ consists of the squares bounding the faces of its cubes). Our next result implies that this `theoretical barrier'  $p_\C\leq 1-p_c$ can in fact be achieved if we are allowed to modify the graph a little by adding some diagonal edges.

\begin{theorem} \label{thm pc triang} 
Let \g be an 1-ended quasi-transitive graph admitting a basis $\cp$ of $\cc(G)$ all cycles of which are triangles. Then $p_\C\leq 1-p_c$ for both site and bond percolation on \G. In particular, we have $p_c\leq 1/2$. 
\end{theorem}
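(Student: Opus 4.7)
The plan is to recycle the machinery of \autoref{thm pc fp} (bond case) and \autoref{thm pc fp s} (site case), but to replace the crude Peierls estimate of \eqref{msn fp}, in which vacant edges were counted via $\gamma_\cp^n(1-p)^n$, by a sharper estimate that exploits subcritical exponential decay of the vacant configuration. The hypothesis that every element of $\cp$ is a triangle is the key ingredient that allows this improvement, because it forces $m_\cp=1$ and, more importantly, yields a strong dual-type connectedness for the boundary of a \pint.

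The structural lemma I would aim for is the following. For every site-\pint\ $(I_V,I_O)$ of $G$, the vertex set $V_V$ (the vertices of $I_V$ outside $V_O$) induces a connected subgraph of $G$ consisting entirely of vacant vertices in any percolation instance in which $(I_V,I_O)$ occurs. The proof of this is where the triangular relators are used directly: if $V_V$ were disconnected inside $G-V_O$, one could produce a \cp-path in $G-I_V$ between two directions of $I_V$ living on either side of the split, by stringing together triangles crossing between the parts; since every element of $\cp$ is a triangle, a single such triangle forces an edge of $I_V$ to have both endpoints in $V_O$ or else forces a new vertex into $V_V$ bridging the two parts, contradicting the split. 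The bond analogue would be to prove that $I_V$ induces a connected subgraph of the line graph $L(G)$ where the relevant incidences are mediated by triangles; together with an analogous argument that the ``outside'' endpoints of $I_V$ form a connected vacant subgraph, this reduces the bond case to the vertex-type statement.

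Given such a structural statement, the argument runs as follows. First, any occurring \pint\ with $|I_V|=n$ produces a connected vacant subgraph of $G$ of size $\Theta(n)$ containing a vertex on the geodesic $R$ of \autoref{fp scs axis}, and this vacant subgraph is a cluster of the $1-p$ Bernoulli percolation on $G$. By the \ABP\ (\autoref{AB thm}), for $1-p<p_c(G)$, i.e.\ $p>1-p_c$, the probability that the cluster of any given vertex of $R$ has size $\geq n$ decays like $e^{-c(p)n}$. Combining this with the enumeration of possible starting points on $R$ supplied by \autoref{fp scs axis}, and with \autoref{fp HR bound} bounding the number of \mpint s in $\MS_n$ that can occur simultaneously by $x^{\sqrt n}$, one obtains
\[
\Pr_p(\text{some } S\in\MS_n\text{ occurs})\leq A(n)\,e^{-c(p)n}\quad\text{for all } p>1-p_c,
\]
with $A(n)$ growing at most subexponentially (the Hardy--Ramanujan-type factor appears exactly as in the proof of \autoref{thm pc fp}, because of the partitioning step that groups the \pint s of $S\in\MS_n$ according to where they hit $R$). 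Plugging this into the inclusion-exclusion formula \eqref{thetaQ fp} and invoking \autoref{cor general} exactly as in the end of the proof of \autoref{thm pc fp}, we deduce that $\theta$ is analytic on $(1-p_c,1]$, i.e.\ $p_\C\leq 1-p_c$, for both site and bond percolation.

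The addendum $p_c\leq 1/2$ is then a soft consequence: if $p_c>1/2$, then $1-p_c<p_c$, so $\theta$ would be analytic on the open interval $(1-p_c,1]$, which strictly contains the subinterval $(1-p_c,p_c)$ on which $\theta$ vanishes identically. Analytic continuation would force $\theta\equiv 0$ on the whole of $(1-p_c,1]$, contradicting $\theta(p)>0$ for $p>p_c$. The hard part of the whole argument is the structural statement in the second paragraph --- everything else is a straightforward adaptation of the bookkeeping in \autoref{thm pc fp} --- and the reason one must restrict to triangular bases is precisely that for longer relators the boundary $I_V$ acquires ``holes'' of size up to $m_\cp>1$, and the relevant vacant set need not be connected in $G$, so the \ABP\ no longer applies directly.
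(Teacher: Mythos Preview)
Your site-percolation argument is essentially the paper's: the structural lemma you isolate is exactly \autoref{spint tr} (which the paper cites as a special case of Tim\'ar's result rather than reproving), and the rest---coupling occupied/vacant sites, applying the \ABP\ to the connected vacant set $V_V$, and feeding the resulting exponential decay into the machinery of \autoref{thm pc fp}/\autoref{cor general}---matches the paper's proof verbatim. Your derivation of $p_c\le 1/2$ via analytic continuation is also the paper's (phrased there as ``$p_c\le p_\C$ since $\theta$ cannot be analytic at $p_c$'').

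The bond case is where your proposal has a gap. You write that ``the `outside' endpoints of $I_V$ form a connected vacant subgraph'', but in bond percolation vertices carry no vacant/occupied status, so there is nothing for the \ABP\ to bite on directly; the connectedness of $I_V$ in $L(G)$ alone does not give you a subcritical object whose size has an exponential tail. The paper avoids this by a clean reduction: bond percolation on $G$ is canonically the same as site percolation on the line graph $L(G)$, and one checks that $L(G)$ inherits a basis of $\cc(L(G))$ consisting of triangles (the triangles of $\cp$ together with the triangles $\{x,y,z\}$ formed by any three edges of $G$ sharing a vertex). Since $L(G)$ is again 1-ended and quasi-transitive, the site case applied to $L(G)$ yields $p_\C^{\mathrm{bond}}(G)=p_\C^{\mathrm{site}}(L(G))\le 1-p_c^{\mathrm{site}}(L(G))=1-p_c^{\mathrm{bond}}(G)$. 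This is the step you are missing.
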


For example, we can obtain such a \g by adding to $\Z^d$ the `monotone' diagonal edges, i.e.\ the edges of the form $xy$ where $y_i-x_i=1$ for exactly two coordinates $i\leq d$, and $y_i=x_i$ for all other coordinates. Then each square gives rise to two triangles, and we can use all these triangles as our basis \cp\ of the cycle space. 

Note that for $d=2$ we obtain the triangular lattice, and so \Tr{thm pc triang} can be thought of as a generalisation of \Cr{triangular}. 

For its proof we will need the following lemma, which is a special case of \cite[Theorem~5.1]{TimCut}, the main idea of which we used in \Prr{prop b con}, as illustrated in (Figure~5). 

\begin{lemma} \label{spint tr}
Let \g be an 1-ended quasi-transitive graph admitting a basis $\cp$ of $\cc(G)$ all cycles of which are triangles. Then \fe\ site-\pint\ $(I_V,I_O)$ of \G, the vertex boundary $V_V$ spans a connected subgraph of $G$.
\end{lemma}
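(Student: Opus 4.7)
The plan is to adapt the parity-style bipartition argument used in \Prr{prop b con}, replacing its cycle-space calculation with a direct appeal to the $I_V$-connectedness of $\ard{I_V}{D}$ granted by property \ref{pint c} of \Dr{def pint}, and exploiting the fact that when every cycle in $\cp$ is a triangle, every \cp-path has length at most one.

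First I would argue by contradiction, assuming $G[V_V]$ is disconnected and fixing a bipartition $V_V = V_1 \sqcup V_2$ into non-empty sets with no edge of $G$ between them. The site-\pint\ hypothesis ensures that every edge of $I_V$ has at least one endpoint in $V_V$, and since no $G$-edge crosses the bipartition, all $V_V$-endpoints of a given $I_V$-edge lie in the same class $V_i$. This yields a well-defined partition $I_V = I_1 \sqcup I_2$, and hence a non-trivial bipartition $\ard{I_V}{D} = J_1 \sqcup J_2$, where $J_i$ consists of the directions in $\ard{I_V}{D}$ whose underlying edge lies in $I_i$; both $J_1, J_2$ are non-empty because every vertex in $V_V$ is incident to some edge of $I_V$, which in turn has at least one of its directions in $\ard{I_V}{D}$.

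By $I_V$-connectedness of $\ard{I_V}{D}$, there exists a \cp-path $P$ in $G - I_V$ connecting some $\ar{vw} \in J_1$ to some $\ra{yx} \in J_2$. Since $\cp$ consists of triangles, the extension $vwPyx$ has at most three edges, so $|E(P)| \leq 1$. If $|E(P)| = 0$, then $w = y$ and $\{v,w,x\}$ is a triangle with $vw \in I_1$, $wx \in I_2$, and third edge $vx \in E(G) \setminus I_V$. If $|E(P)| = 1$ with $E(P) = \{wy\}$, then $v = x$ and $\{v,w,y\}$ is a triangle with $vw \in I_1$, $yv \in I_2$, and $wy \in E(G) \setminus I_V$ (up to swapping the roles of $I_1, I_2$).

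The hard part is then the short case analysis: in each of the two configurations above, the two $I_V$-edges share a vertex $u$ (namely $u = w$ in the first case and $u = v$ in the second), and I would verify that the triangle already contains a $G$-edge crossing between $V_1$ and $V_2$, contradicting the choice of bipartition. Concretely, if $u \in V_V$, then $u$ is forced into both $V_1$ and $V_2$ by the assignments of the two $I_V$-edges, contradicting $V_1 \cap V_2 = \emptyset$. If instead $u \notin V_V$, then the site-\pint\ condition forces the remaining endpoints of the two $I_V$-edges to lie in $V_V$ (one in $V_1$ and the other in $V_2$), and these two endpoints are joined by the third edge of the triangle ($vx$ in the first case, $wy$ in the second), yielding a $G$-edge with one endpoint in $V_1$ and the other in $V_2$, again a contradiction.
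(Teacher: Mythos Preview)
Your argument is correct, modulo one harmless imprecision: in the $|E(P)|=0$ case you assert $vx\in E(G)\setminus I_V$, but this is neither justified nor used---your contradiction only needs $vx\in E(G)$, which holds because $vx$ is the third side of a triangle in $\cp$. (One can also argue that the $|E(P)|=1$ case never arises when all elements of $\cp$ are triangles, since $vwPyx$ would then have three edges and coincide with the entire cycle rather than a proper subpath; but handling it does no harm.)

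The paper does not prove this lemma directly; it cites \cite[Theorem~5.1]{TimCut}, whose underlying cycle-space parity argument is the one displayed in \Prr{prop b con}. Your route is different: rather than redoing a cycle-space decomposition for the bipartition of $V_V$, you push the bipartition through property~\ref{pint c} of \Dr{def pint}---which is part of the definition and hence comes for free for any \pint---and then use the triangle hypothesis to collapse the resulting $\cp$-path to a three-vertex configuration amenable to a direct check. This yields a proof that is self-contained within the paper's own framework and avoids the external reference; Timar's theorem, by contrast, is a general statement about separating vertex sets in graphs with a triangle basis and does not rely on the interface machinery.
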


\begin{proof}[Proof of \Tr{thm pc triang}]
We first prove the statement for site percolation.
We follow the lines of the proof of \Tr{theta analytic 2D}, except that we now let $\MS_n$ denote the set of \mpint s all elements of which are site-\pint s. Instead of \Lr{dual connd}, which states that the boundary of a \pint\ spans a connected subgraph of the dual lattice in that setup, we now use \Lr{spint tr}, which is the analogous statement for the boundary $V_V$ of a site-\pint\ under our assumption on \cp\ that all its cycles are triangles. The proof of \Lr{HR bound} can be repeated verbatim, except that we replace the quasi-geodesic $X$ used there with an arbitrary 2-way infinite quasi-geodesic of \G, which exists by a standard compactness argument. In that proof, we used the canonical coupling between bond percolation on a planar lattice and its dual, and applied the \ABP\ to the subcritical clusters of the dual. Here, we instead use the canonical coupling between site percolation with parameter $p$ and with parameter $1-p$ obtained by switching between vacant and occupied vertices. We apply the \ABP\ to the boundaries $V_V$ of our site-\pint s: since they span connected subgraphs of \g by \Lr{spint tr}, each such $V_V$ is contained in a cluster of vacant sites. But as $p>1-p_c$, vacant clusters are subcritical due to that coupling, hence their size distribution has an exponential tail by the \ABP\ (\Tr{AB thm}). The rest of the proof can be repeated as is.

\medskip
To prove the statement for bond percolation, we use the canonical coupling between bond percolation on \G\ and site percolation on its line graph $L(G)$, noting that the cluster of an edge of \g is infinite in the former if and only if the cluster of the corresponding vertex of $L(G)$ is infinite in the latter. Our plan is to apply the statement for site percolation we just proved to  $L(G)$. Note that if \g is quasi-transitive, then so is $L(G)$. Moreover, it is straightforward to check that we can obtain a basis of $\cc(L(G))$ from any basis \cp\ of \g by adding all the triangles of the form $x,y,z$ in $L(G)$ whenever the edges $x,y,z$ of \g are incident with a common vertex. Thus we can reduce to the case of site percolation as desired.
\medskip

For both site and bond percolation, since $p_c<1$, we have $p_c\leq p_\C$, hence we immediately obtain $p_c\leq 1/2$.
\end{proof}

\section{Triangulations}  \label{sec triang}

\subsection{Overview}

In this section we use the techniques we developed to provide upper bounds on $p_c$ and \pcs\ for certain families of triangulations. Although these bounds will apply to $p_\C$, we stress that the results of this section give the best known (or only) such bounds on $p_c, \pcs$ for these triangulations.

\medskip
We will prove that $p_{\C} \leq 1/2$ for Bernoulli bond percolation on triangulations of an open disk that either satisfy a weak expansion property or are transient. Once again the analyticity of $\theta_o$ will follow by showing that the \scv s (\pint s) of $o$ have an exponential tail for every $p>1/2$.  

The interest in the study of percolation on triangulations of an open disk was sparked by the seminal paper \cite{BeSchrPer} of Benjamini \& Schramm. They made a series of conjectures, the strongest one of which is that $\pcs(T)\leq 1/2$ on any bounded degree triangulation $T$ of an open disk that satisfies a weak isoperimetric inequality of the form 
$|\partial_V A| \geq f(|A|)) \log(|A|))$ for some function $f=\oo(1)$, where $S$ is any finite set of vertices. More recently, Benjamini \cite{Benjamini} conjectured that $\pcs(T)\leq 1/2$ on any transient bounded degree triangulation $T$ of an open disk. 

Angel, Benjamini \& Horesh \cite{AnBeHor} proved that for any triangulation $T$ of an open disk with minimum degree $6$, the isoperimetric dimension of $T$ is at least $2$ and thus satisfies the assumption of the conjecture of Benjamini \& Schramm. They also asked whether $p_c(T)\leq 2\sin(\pi/18)$ (and $\pcs\leq 1/2$), the critical value for bond percolation on the triangular lattice, for any such triangulation. 

The main results of this section, which we now state, imply that in all aforementioned conjectures, the bound $p_c\leq 1/2$ is correct if one considers bond instead of site percolation.

\begin{theorem}\label{pC triang}
Let $T$ be a triangulation of an open disc \st\ every vertex has finite degree (not necessarily bounded) and\footnote{The reader will lose nothing by replacing ${\mathrm{diam}}(A)$ by $|A|$ in this statement, which only strengthens our assumptions.}
\labtequ{partial A}{for all but finitely many sets $A$ of vertices we have\\ $|\partial_V A| \geq k \log(\mathrm{diam}(A))$ for some constant $k>0$.}
%
%
Then  there is a constant $\nu_k<1$ that converges to $1/2$ as $k$ goes to infinity, such that $$p_c(T)\leq p_{\C}(T)\leq \nu_k.$$

In particular, if 
\labtequ{partial A'}{for every finite set $A$ of vertices we have $|\partial_V A| \geq f(\mathrm{diam}(A)) \log(\mathrm{diam}(A))$ for some function $f=\oo(1)$,}
then
$$p_c(T)\leq p_{\C}(T) \leq 1/2.$$
(This holds in particular when $h(T)>0$, i.e.\ when $T$ is non-amenable.)
\end{theorem}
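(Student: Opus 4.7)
The plan is to adapt the interface machinery of \Sr{sec proof th} to the triangulation $T$. The planar duality $p_c(L)+p_c(L^*)=1$ (\Tr{pc star}) and the subsequent use of the \ABP\ in the dual, on which that section rested, are not available in our setting; instead, I would replace them by a direct Peierls count on the $3$-regular dual $T^*$ whose position error is controlled by the isoperimetric hypothesis~\eqref{partial A}.

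First, I would transfer the definitions of \scv, \smc, $\MS_n$, $Q_M$ from \Sr{sec th pl} verbatim. The structural lemmas \ref{scs disjoint}, \ref{scs boundary} (with constant $k=4$, using that every face of $T$ is a triangle) and \ref{dual connd} go through unchanged; in particular for every \scv\ $S$ the edge-set $\partial S^*$ spans a simple closed trail of the $3$-regular graph $T^*$ surrounding $o$. Second, I would use \eqref{partial A} to control the location of \scv s, thereby replacing the role of \autoref{quasi geo} in the quasi-transitive setting. Fix a one-way infinite geodesic ray $X^+=(o=v_0,v_1,\ldots)$ of $T$ (which exists by K\"onig's lemma since $T$ is locally finite), with edges $e_0,e_1,\ldots$. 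The claim is that for all but finitely many $n$, every \scv\ $S$ with $|\partial S|=n$ has $e_i\in \partial S$ for some $i\leq e^{n/k}$. Indeed, let $A\subseteq V(T)$ be the (finite) set of vertices in the bounded region enclosed by $\partial S^*$; then $o\in A$, every vertex of $\partial_V A$ is incident with an edge of $\partial S$, so $|\partial_V A|\leq n$, and~\eqref{partial A} gives $diam(A)\leq e^{n/k}$; since $X^+$ is a geodesic starting in $A$, $X^+$ must leave $A$ within its first $e^{n/k}$ edges.

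Third, I would establish the key exponential decay: for every $p>\nu_k:= 1-\tfrac{1}{2e^{1/k}}$ there are constants $c_1(p),c_2(p)<1$ (uniform on compact subintervals of $(\nu_k,1]$) such that
\[\sum_{M\in \MS_n} Q_M(p)\leq c_1 c_2^n \quad \fe\ n\in\N.\]
The proof follows the structure of \Lr{exp dec Z2}. The analogue of \Lr{HR bound}, combined with \Lr{scs disjoint}, the position bound from step two, and the \HRT, reduces matters to bounding $\Pr_p(\MS_n \text{ occurs})$; this in turn is handled by the BK inequality exactly as in \eqref{BKclaim}, reducing further to bounding $\Pr_{1-p}(C(m))$, the probability that there is a subgraph of some cluster of dual $(1-p)$-percolation that surrounds $o$ and has at most $m$ edges. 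In the planar lattice case the \ABP\ was used here; instead I would apply Peierls directly: since $T^*$ is $3$-regular, the number of simple closed trails of length $n$ of $T^*$ containing a fixed edge is at most $c\cdot 2^n$ for a universal constant $c$; combined with step two, the total number of simple closed trails of length $n$ of $T^*$ surrounding $o$ is at most $c\cdot e^{n/k}\cdot 2^n$, so $\Pr_{1-p}(C(m))\leq c \sum_{n\leq m}(2e^{1/k}(1-p))^n$, which decays exponentially in $m$ precisely when $p>\nu_k$. The partition sum is absorbed by another subexponential factor from the \HRT.

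Fourth, the exponential decay makes the inclusion--exclusion sum $1-\theta_o(p)= \sum_{M\in \MS} (-1)^{c(M)+1} Q_M(p)$ absolutely convergent on $(\nu_k,1]$ as in \eqref{thetaQ}, and applying \Cr{cor general} with $I=(\nu_k,1]$, $L_n=\MS_n$, $E_{n,i}=\{M \text{ occurs}\}$ yields the analyticity of $\theta$ on $(\nu_k,1]$; since $\theta$ cannot be analytic at $p_c$ when $p_c<1$, this gives $p_c\leq p_\C\leq \nu_k$, and as $\nu_k\to 1/2$ when $k\to\infty$ the first claim follows. For the ``in particular'' clause: under \eqref{partial A'}, for every $k>0$ the weaker condition \eqref{partial A} holds for all but finitely many $A$ (since $f\to \infty$ implies $f(diam(A))\geq k$ for all large enough $A$), so $p_\C\leq \nu_k$ for every $k$, giving $p_\C\leq \inf_k \nu_k=1/2$.

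The main obstacle I expect is step three: in the planar lattice setting the exponential decay of $\Pr_{1-p}(C(m))$ followed with the ``right'' constant coming from \Tr{pc star} and the \ABP; here we have to make do with the cruder Peierls bound on $T^*$, which inflates the critical parameter by the factor $2 e^{1/k}$ contributed by the isoperimetric-controlled position count. Verifying that this is the \emph{only} loss, and that the inflation vanishes as $k\to\infty$, hinges on the \HRT\ being sufficiently robust to absorb both the partition count and the subexponential position count without contributing additional exponential growth in $n$.
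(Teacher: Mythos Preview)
Your plan correctly identifies the overall architecture---transfer the \scv/\smc\ machinery, control positions along a geodesic via \eqref{partial A}, establish exponential decay of $\sum_{M\in\MS_n}Q_M(p)$, then invoke \Cr{cor general}---and your ``in particular'' deduction is right. But step three contains a real gap, and it is exactly the step you yourself flag as the obstacle.

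You assert that $\partial S^*$ ``spans a simple closed trail'' of $T^*$, and then count such trails by $c\cdot 2^n$. The paper only proves (and only needs) that $\partial S^*$ is \emph{connected}; it explicitly points out that ``the boundary of an \scv\ is not necessarily a minimal cut'', i.e.\ $\partial S^*$ is not a cycle in general (an edge of $\partial S$ with both endpoints in $S$---a chord of the outer face, which certainly exists in triangulations---ruins the trail structure). Even granting the trail claim, your conclusion ``$\Pr_{1-p}(C(m))\leq c\sum_{n\leq m}(2e^{1/k}(1-p))^n$ decays exponentially in $m$'' is false: for $p>\nu_k$ that geometric sum is merely bounded, not decaying, so the BK product $\prod_i\Pr_{1-p}(C(m_i))$ does not yield exponential decay in $n=\sum m_i$ (a single \scv\ with $m_1=n$ already kills it). Repairing this by taking ``exactly $m$ edges'' forces you to count connected subgraphs rather than trails, and the lattice-animal bound in a $3$-regular graph is $(2e)^m$, not $2^m$; your $\nu_k$ then tends to $1-\tfrac{1}{2e}\approx 0.816$, not $1/2$.

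The point you are missing is that the \ABP\ \emph{is} available here. The Aizenman--Newman inequality (\Tr{AB thm}) requires only finiteness of the susceptibility, not transitivity. Since $T^*$ is $3$-regular, the self-avoiding-walk bound $\chi^*_u(1-p)\leq\sum_k 3\cdot 2^{k-1}(1-p)^k$ is finite and uniform in $u$ for every $p>1/2$; hence $\Pr_{1-p}(|C(u)|\geq m)\leq c(p)^m$ with $c(p)<1$ for every $p>1/2$. Because $\partial S_i^*$ is a connected vacant subgraph of size exactly $m_i$, it lies in a dual cluster of size $\geq m_i$, and the union bound over the $e^{m_i/k}$ candidate positions gives $\Pr_{1-p}(D(m_i))\leq e^{m_i/k}c(p)^{m_i}$---genuine exponential decay in $m_i$. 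The product over parts is then $(e^{1/k}c(p))^n$; for each fixed $p>1/2$ this is $<1$ once $k$ is large, which is precisely why $\nu_k\to 1/2$. The direct Peierls count (via lattice animals in $T^*$) is what the paper uses only to secure $\nu_k<1$ for small $k$.
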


\begin{theorem}\label{pC transient}
Let $T$ be a transient triangulation of an open disc with degrees bounded above by $d$. Then $$p_c(T)\leq p_{\C}(T) \leq 1/2.$$ 
\end{theorem}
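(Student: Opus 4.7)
The plan is to adapt the interface technique developed in Sections \ref{sec th pl} and \ref{sec fp} to transient bounded-degree triangulations. Since $T$ is a triangulation, the set $\cp$ of triangular faces forms a basis of $\cc(T)$ all of whose cycles have length $3$; the framework of Section \ref{sec fp} therefore applies, and by \Lr{spint tr} the vertex-boundary $V_V$ of every site-\pint\ spans a connected subgraph of $T$.

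For $p\in(1/2,1]$, I would express $1-\theta_o(p)$ via the inclusion-exclusion formula \eqref{thetaQ fp} summed over \mpint s. By \Cr{cor general}, the analyticity on $(1/2,1]$ reduces to showing that, for every such $p$, $\Pr_p(\text{some } M\in \MS_n \text{ occurs})$ decays exponentially in $n$. Mimicking the proof of \Lr{exp dec Z2}, I would combine \Lr{fp HR bound}, the BK inequality (\Tr{BKthm}), and a quasi-geodesic argument in $T$; the sought bound then reduces to an exponential tail estimate on the size of a connected vacant subgraph of $T$ attached to a fixed vertex, at parameter $1-p<1/2$.

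The main obstacle will be this tail estimate, that is, establishing the \ABP\ for vacant site percolation on $T$ at every parameter strictly below $1/2$. I expect to derive this from the He--Schramm circle-packing representation of $T$: a bounded degree triangulation of an open disk is transient precisely when its packing is supported in the open unit disk, and Benjamini--Schramm--type arguments on harmonic measure and hyperbolicity for such packings yield both $\pcs(T)\le 1/2$ and the corresponding subcritical exponential decay, which is exactly the input required for the BK--Hardy--Ramanujan machinery above. Alternatively, one may argue more directly via Nash--Williams cutsets: transience of $T$ produces, for each finite set $A$ containing the origin, quantitative lower bounds on $|\partial_V A|$ that feed into the same chain of estimates as in the proof of \Tr{pC triang}.

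Once the vacant tail estimate is in hand, the remainder of the argument is identical to that of \Tr{theta analytic 2D} and \Tr{thm pc triang}: absolute and uniform convergence of \eqref{thetaQ fp} on a complex neighbourhood of each point of $(1/2,1]$ gives the desired analytic extension of $\theta_o$, so that $p_c\le p_\C\le 1/2$.
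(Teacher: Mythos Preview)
Your proposal has a genuine gap at the decisive step. You reduce the exponential decay of $\Pr_p(\text{some }M\in\MS_n\text{ occurs})$ to ``an exponential tail estimate on the size of a connected vacant subgraph of $T$ attached to a fixed vertex, at parameter $1-p<1/2$,'' and hope to extract this from circle-packing and ``Benjamini--Schramm--type arguments'' giving $\pcs(T)\le 1/2$. But $\pcs(T)\le 1/2$ for transient triangulations is precisely the open Benjamini conjecture discussed just before the theorem; and even if it were known, what your vacant-cluster argument actually needs is the \emph{opposite} inequality $\pcs(T)\ge 1/2$ (so that density $1-p<1/2$ is subcritical), together with sharpness of the transition on a graph that is not quasi-transitive, so that \Tr{AB thm} does not apply. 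None of this is available. Invoking \Lr{spint tr} also conflates the site- and bond-percolation frameworks: the theorem is about bond percolation, and $V_V$ is a set of vertices, not of vacant edges.

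The paper avoids all of this by staying in the planar framework of \Sr{sec th pl} rather than \Sr{sec fp}. For a planar \scv\ $S$, the dual $(\partial S)^*$ spans a connected subgraph of the dual $T^*$, and because $T$ is a triangulation, $T^*$ is \emph{cubic} regardless of the degrees in $T$. The trivial self-avoiding-walk bound $\sigma_k(u)\le 3\cdot 2^{k-1}$ of \Lr{chi finite} then gives $\sup_u\chi_u(1-p)<\infty$ on $T^*$ for every $p>1/2$, and the Aizenman--Newman inequality (which only needs finite susceptibility, not quasi-transitivity) yields the exponential decay of \Cr{ABP triang}. Transience is used only for \Lr{trans R}: by He--Schramm the circle packing of $T$ has carrier the open unit disc, and an area-counting argument shows that a geodesic ray $R$ satisfies $|R_n|=O(n^3)$. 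This polynomial bound replaces \Lr{geodesic}, and the rest of the proof of \Tr{pC triang} then goes through verbatim. In short, you aimed the circle packing at the wrong target (the tail estimate rather than the ray estimate) and missed the $3$-regularity of $T^*$, which is what produces the threshold $1/2$.
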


We will also prove the same bounds for recurrent triangulations $T$ with a uniform upper bound on the radii of the circles in any circle packing of $T$, as well as analogues for site percolation (\Sr{sec sit tri}). 

\subsection{Proofs}

Notice that any bounded degree triangulation $T$ is 1-ended and by definition admits a basis \cp\ of $\cc(G)$ whose elements are cycles of bounded length. Hence the arguments of \Sr{pint props} imply that $p_{\C}(T)<1$ for bond percolation provided we further assume that $T$ contains a 2-way infinite geodesic. However, the latter is a rather strong condition. But we only used the existence of a $2$-way infinite geodesic in the proof of \Lr{fp scs axis}, and it will turn out that a variant of that lemma still holds for transient triangulations and triangulations satisfying the above isoperimetric inequality.

We will first focus on proving \Tr{pC triang}, but many of the following arguments will also be valid for transient triangulations. 

Our proofs will follow the lines of that of \Tr{theta analytic 2D}.
Recall the definitions of \scv\ and \smc\ of \Sr{sec th pl}. Again $\MS$ denotes the set of \smc s of a chosen vertex $o$, while  $\partial M$ denotes the boundary of a \smc\ $M$ and $\MS_n:= \{ M \in \MS \mid |\partial M|=n\}$. 

Let $T$ be a triangulation of an open disk and $o$ a vertex in $T$. Once again we will utilise the inclusion-exclusion principle to express $1-\theta_o$ as an infinite sum 
\labtequ{thetaTriang}{$1-\theta_o(p)= \sum_{M\in \MS} (-1)^{c(M)+1} Q_M(p)$}
\fe\ $p$ large enough, where $c(M)$ denotes the number of \scv s in the \smc\ $M$, and $Q_M(p):= \Pr_p(\text{$M$ occurs})$. 
The validity of the formula will follow as in the proof of \Tr{theta analytic 2D} (recall \eqref{thetaQ}) once we establish an exponential tail  for the corresponding probabilities, which is the purpose of the following lemma.

\begin{lemma} \label{exp dec triang}
There is a constant $\nu_k<1$ that converges to $1/2$ as $k$ goes to infinity, such that 
\fe\ triangulation $T$ of an open disk satisfying condition \eqref{partial A} 
of \Tr{pC triang} and every $p\in (\nu_k,1]$,
\labtequ{dec Q triang}{$\sum_{M\in \MS_n} Q_M(p) \leq c_1 {c_2}^{n}$,} 
where $c_1=c_1(p)>0$ and $c_2=c_2(p)>0$ are some constants with $c_2<1$.
Moreover, if $[a,b]\subset (\nu_k,1]$, then the constants $c_1$ and $c_2$ can be chosen independent of $p$ in such a way that \eqref{dec Q triang} holds for every $p\in [a,b]$.
\end{lemma}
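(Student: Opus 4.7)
The strategy is to adapt the proof of \Lr{exp dec Z2}: replace the quasi-geodesic bound of \Prr{quasi geo} with one derived from~\eqref{partial A}, and replace the duality-based exponential decay (which in the lattice case used \Tr{pc star} combined with \Tr{AB thm} on the dual) with an estimate that works uniformly for the dual $T^*$, which is $3$-regular since $T$ is a triangulation but is not necessarily quasi-transitive.

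First I would extract the analogue of \Prr{quasi geo}. Fix a geodesic ray $X^+=v_0 v_1 v_2 \ldots$ in $T$ starting at $v_0=o$, which exists since $T$ is locally finite, connected, and $1$-ended (take vertices $u_n$ at graph-distance $n$ from $o$ and extract a limiting ray via K\"onig's infinity lemma). For any \scv\ $S$ with $|\partial S|=n$, let $A$ be the set of vertices enclosed by $S$. Each $v\in \partial_V A$ lies in $V(S)$ and has some neighbour outside $A$, giving an edge of $\partial S$ with $v$ as its unique endpoint in $A$; this map is injective, so $|\partial_V A|\leq |\partial S|=n$. Applying \eqref{partial A} yields $\operatorname{diam}(A)\leq e^{n/k}$, and since $X^+$ is a geodesic and $o\in A$, $\partial S$ must contain one of the first $\lceil e^{n/k}\rceil$ edges of $X^+$. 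Imitating \Lr{HR bound}, and combining the \HRT\ with the constraint $\prod j_i\leq e^{n/k}$ obtained from $\sum m_i=n$ and $m_i\geq k\log j_i$, one obtains a subexponential bound $s(n)=e^{o(n)}$ (depending on $k$) on the number of \smc s in $\MS_n$ that can occur simultaneously in any realisation; hence $\sum_{M\in \MS_n} Q_M(p)\leq s(n)\,\Pr_p(\MS_n \text{ occurs})$.

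Next I would follow the BK-based scheme of \Lr{exp dec Z2}. Coupling with $(1-p)$-bond percolation on $T^*$ and using the analogue of \Lr{dual connd} for triangulations, the event $\MS_n \text{ occurs}$ implies the disjoint occurrence of events $C(m_i)$ in $T^*$ for some partition $\{m_i\}\vdash n$, where $C(m)$ is the event that a connected vacant subgraph of $T^*$ of size $\leq m$ surrounds $o$. The quasi-geodesic bound applied in $T^*$ forces any such subgraph to contain one of $O(e^{m/k})$ specific dual edges; together with the lattice-animal count $\leq (de)^m$ of connected subgraphs of the $3$-regular $T^*$ of size $m$ through a fixed edge and the probability $(1-p)^m$ that each is vacant, this gives $\Pr_{1-p}(C(m))\leq O(e^{m/k})(3e)^m(1-p)^m$. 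A further \HRT\ factor from summing over partitions, combined with $s(n)$, yields exponential decay of $\sum_{M\in \MS_n} Q_M(p)$ whenever the resulting product of growth factors is less than $1$, thereby defining a threshold $\nu_k$.

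The hard part will be pushing $\nu_k$ all the way down to $1/2$ in the limit. The crude $(3e)^m$ animal bound only yields $\nu_k\to 1-\frac{1}{3e}$; to get $\nu_k\to 1/2$ one must essentially sharpen the count to the self-avoiding-walk growth rate $\leq 2^m$ in a $3$-regular graph. This is plausible because the closed walk in $T^*$ traced by an \scv\ boundary (as in the proof of \Lr{dual connd}) is a cycle-like structure encircling $o$; making this sharpening rigorous in the full generality of triangulations, with possibly unbounded vertex-degrees in $T$, is the main technical obstacle. An alternative route is to invoke a Menshikov-type exponential decay of subcritical cluster sizes on the $3$-regular $T^*$ ---available for $1-p<1/2\leq p_c(T^*)$, i.e.\ for $p>1/2$--- with a decay rate uniform enough to absorb the $e^{n/k}$ factor coming from the quasi-geodesic once $k$ is large. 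Either way, calibrating $\nu_k$ so that the combined exponential growth factor is strictly below $1$ gives a universal threshold with $\nu_k\to 1/2$ as $k\to \infty$.
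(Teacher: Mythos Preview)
Your overall architecture matches the paper's, but two points need correction.

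First, your claim that the number $s(n)$ of simultaneously occurring elements of $\MS_n$ is $e^{o(n)}$ is false for fixed $k$. From $m_i\geq k\log j_i$ and $\sum m_i=n$ you correctly get $\prod j_i\leq e^{n/k}$, but this is genuinely exponential in $n$; the paper's \Lr{HR bound triang} states the bound as $t\,r^{\sqrt n}\,e^{n/k}$. This exponential factor does not disappear --- it is precisely what forces $\nu_k$ to depend on $k$ and is the reason $\nu_k$ only tends to $1/2$ as $k\to\infty$, never equals it for finite $k$.

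Second, your ``alternative route'' is in fact the paper's main route, and the missing link that makes it work is a one-line observation you almost wrote down. The self-avoiding-walk bound $\sigma_\ell(u)\leq 3\cdot 2^{\ell-1}$ in the cubic graph $T^*$ should not be used to count lattice animals; it should be used to bound the susceptibility: $\chi_u(1-p)\leq \sum_{\ell\geq 0} 3\cdot 2^{\ell-1}(1-p)^\ell<\infty$ for every $p>1/2$, \emph{uniformly in $u$ and in the triangulation} (this is the paper's \Lr{chi finite}). Feeding this uniform $\chi^*$ into the Aizenman--Newman inequality of \Tr{AB thm} gives a decay constant $c=c(p)<1$ depending only on $p$, not on $T$ or on the basepoint (\Cr{ABP triang}). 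Combining with the $e^{n/k}$ factor then yields exponential decay whenever $e^{2/k}c(p)<1$, which defines $\nu_k$ and makes $\nu_k\to 1/2$ immediate since $c(p)<1$ for every $p>1/2$.

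Finally, the paper does also use your crude animal bound (\Lr{cubic animals}), but only as a fallback for small $k$, to guarantee that $\nu_k<1$ holds for \emph{every} $k>0$; the Aizenman--Barsky constant $e^{-1/5\chi^2}$ is bounded away from $0$, so it alone does not beat $e^{n/k}$ when $k$ is small.
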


In order to prove the above lemma, we first pick an arbitrary infinite geodesic $R$ starting from $o$. Our goal is to show that the \scv s $M$ of $o$ for which $\partial M$ contains a fixed edge $e\in E(R)$, occur with exponentially decaying probability for every large enough value of $p$. Then we will upper bound the choices for $e\in R$.

In what follows we will be using the standard coupling between percolation on $T$ and its dual $T^*$ as in the proof of \Lr{exp dec Z2}. Since $T$ is a triangulation, the dual of any minimal cut of $T$ is a cycle. The number of cycles in $T^*$ of size $n$ containing a fixed edge is at most $2^{n-1}$, because $T^*$ is a cubic graph. Then the union bound shows that the probability that some minimal cut containing a fixed edge is vacant has an exponential tail for every $p>1/2$. However, the boundary of an \scv\ is not necessarily a minimal cut. Still, the dual of the boundary of any \scv\ in $T$ is a connected subgraph of $T^*$. The desired exponential tail will follow from \Tr{AB thm} once we show that $\sup_{u\in V(T^*)} \chi_u(p)<\infty$ for every $p<1/2$, where as usual $\chi_u(p)$ denotes the expected size of the percolation cluster of $u$. 
The next lemma proves the this statement.

\begin{lemma}\label{chi finite}
Let $T$ be a triangulation of an open disc. Then $$X^*(p):=\sup_{u\in V(T^*)} \chi_u(1-p)<\infty$$ for every $p\in (1/2,1]$. 
\end{lemma}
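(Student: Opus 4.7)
The plan is to read $\chi_u(p)$ through the standard primal--dual coupling $\omega^*(e^*)=1-\omega(e)$ that the paper has just fixed: when the primal edge parameter equals $p\in(1/2,1]$, each dual edge is independently open with probability $q:=1-p\in[0,1/2)$. The only structural input I need is that, since $T$ triangulates an open disc, every face of $T$ is a triangle, and hence every vertex of $T^*$ has degree exactly $3$; that is, $T^*$ is cubic.

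Fix an arbitrary $u\in V(T^*)$ and explore the dual open cluster $C(u)$ by breadth-first search. The root $u$ has $3$ incident edges to test; every subsequently discovered vertex has been entered through one of its three edges and therefore has at most $2$ new edges to test. Coupling each edge-test to an independent Bernoulli$(q)$ variable, and noting that revisits to already-discovered vertices only shrink $C(u)$ relative to a tree, we conclude that $|C(u)|$ is stochastically dominated by the total progeny of the Galton--Watson branching process whose root has $\mathrm{Bin}(3,q)$ offspring and every later individual has $\mathrm{Bin}(2,q)$ offspring. For $2q<1$ this expected total progeny is
$$1+3q\sum_{k=0}^{\infty}(2q)^{k}=1+\frac{3q}{1-2q},$$
and substituting $q=1-p$ yields
$$\chi_u(p)\le 1+\frac{3(1-p)}{2p-1}.$$
The right-hand side is uniform in $u\in V(T^*)$ and finite for every $p\in(1/2,1]$, so $\chi^*(p)=\sup_{u}\chi_u(p)<\infty$ throughout this interval, as claimed.

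There is no substantive obstacle; the argument uses only the cubicity of $T^*$ (forced by the triangulation hypothesis) and the standard branching-process domination of BFS exploration in a bounded-degree graph. The only subtlety worth stressing is the coupling convention. The $p$ in the lemma is the \emph{primal} edge-occupation probability, so the relevant \emph{dual} parameter is $q=1-p$, which is subcritical (strictly below $1/2$) precisely on the stated regime $p\in(1/2,1]$; this is exactly the range on which the branching-process bound converges, so the lemma is sharp in the sense that this method cannot go past the self-dual value $p=1/2$.
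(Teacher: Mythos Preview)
Your proof is correct and follows essentially the same approach as the paper: both arguments exploit that $T^*$ is cubic and bound $\chi_u$ by the expected number of self-avoiding walks from $u$, which is $1+\sum_{k\ge 1}3\cdot 2^{k-1}(1-p)^k = 1+\tfrac{3(1-p)}{2p-1}$. You phrase this via BFS/branching-process domination while the paper counts self-avoiding walks directly, but these are the same computation and yield the identical uniform bound.
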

\begin{proof}
Let $u$ be a vertex of $T^*$. Note that whenever some vertex $v$ belongs to $C_u$ there is a path from $u$ to $v$ with occupied edges. Hence we obtain $\mathbb{E}_{1-p}(|C_u|)\leq \mathbb{E}_{1-p}(P(u))$, where
$P(u)$ is the number of occupied self-avoiding walks starting from $u$ (including the self-avoiding walk with only one vertex). The number $\sigma_k(u)$ of $k$-step self-avoiding walks in $T^*$ starting from $u$ is at most $3\cdot 2^{k-1}$. Consequently, 
\labtequ{chi finite bound}{$\mathbb{E}_{1-p}(P(u))\leq \sum_{k=0}^\infty 3\cdot 2^{k-1} (1-p)^k<\infty$} 
whenever
$p>1/2$. Since this bound does not depend on $u$ the proof is complete.
\end{proof}

Using \Tr{AB thm} we immediately obtain the desired exponential tail.

\begin{corollary}\label{ABP triang}
For every $p>1/2$ there is a constant $0<c=c(p)<1$ such that for any triangulation $T$ of an open disk and any vertex $u\in T^*$, we have $\Pr_{1-p}(|C(u)|\geq n)\leq c^n$.
\end{corollary}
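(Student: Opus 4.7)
The plan is to combine \Lr{chi finite} with the Aizenman--Newman exponential-tail inequality $\Pr_p(|C(o)|\geq n) \leq (e/n)^{1/2} e^{-n/(2\chi(p))^2}$ recorded in the footnote of \Tr{AB thm}. The central point is that the bound on the susceptibility supplied by \Lr{chi finite} is uniform not only across vertices $u\in V(T^*)$ but also across the choice of the triangulation $T$: the only feature of $T$ exploited in its proof is that $T^*$ is cubic (each face of a triangulation has $3$ sides), which gives the crude bound $\sigma_k(u) \leq 3\cdot 2^{k-1}$ on the number of $k$-step self-avoiding walks starting at any $u\in V(T^*)$. Thus, for every $p>1/2$, the geometric series in \eqref{chi finite bound} produces a single constant $M=M(p)<\infty$ with $\chi_u(1-p)\leq M$ for every $u\in V(T^*)$ and every triangulation $T$.

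With this uniform $M$ in hand, I would apply the Aizenman--Newman inequality to the cluster of $u$ in $T^*$ at parameter $1-p$, monotonically replacing $\chi_u(1-p)$ by $M$ in the exponent, to get
\[
\Pr_{1-p}(|C(u)|\geq n) \;\leq\; (e/n)^{1/2}\, e^{-n/(2M)^2}.
\]
Picking any $c\in \big(e^{-1/(2M)^2},1\big)$, the right-hand side is bounded by $c^n$ for all sufficiently large $n$; enlarging $c$ further if necessary (still with $c<1$) takes care of the finitely many remaining values of $n$. The resulting $c=c(p)$ depends only on $p$, and the bound is uniform over both $u$ and $T$, which is exactly the statement of the corollary.

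The only mildly delicate point, and the one worth flagging, is that \Tr{AB thm} as cited is stated for \emph{quasi-transitive} models, whereas the triangulations here need not be quasi-transitive. This is not an obstacle: as explained in the footnote of \Tr{AB thm}, in the Aizenman--Newman argument the quasi-transitivity is used only to guarantee $\chi<\infty$ below $p_c$, while the exponential-tail bound itself is a consequence solely of $\chi(p)<\infty$ and is valid for arbitrary locally finite graphs. Since \Lr{chi finite} already supplies the required finiteness of $\chi$ (indeed, uniformly), the Aizenman--Newman inequality applies to our setting, and the proof is complete.
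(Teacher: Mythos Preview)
Your proof is correct and follows the same approach as the paper, which simply says ``Using \Tr{AB thm} we immediately obtain the desired exponential tail.'' You have spelled out the details the paper leaves implicit, including the important observation that the bound from \Lr{chi finite} is uniform over all triangulations (since only the cubicness of $T^*$ is used), and you have correctly addressed the point that quasi-transitivity is not needed for the Aizenman--Newman inequality once $\chi<\infty$ is established directly.
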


The following lemma converts condition \eqref{partial A} into a statement saying that every \scv\ of $T$ meets a relatively short initial subpath of $R$.

\begin{lemma} \label{geodesic}	
Let $T$ be a triangulation of an open disk satisfying condition \eqref{partial A}. Let $R$ be a geodetic ray in $T$ starting at any $o\in V(G)$, and let $R_n$ be the set of edges of $R$ contained in some \scv\ of $\mathcal S_n$. Then $|R_n| \leq e^{n/k}$ for all but finitely many values of $n$.
\end{lemma}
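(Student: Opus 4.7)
The plan is to convert the isoperimetric hypothesis \eqref{partial A} into a diameter bound on the region enclosed by an \scv\ $S\in\mathcal S_n$, and then exploit the fact that $R$ is a geodesic to bound the indices $i$ such that $r_ir_{i+1}\in E(S)$.

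First I would associate to each \scv\ $S$ a vertex set $A=A(S)$ such that $|\partial_V A|\le|\partial S|=n$. The natural choice is to take $A$ to be the vertices lying in the bounded planar region enclosed by $S$ (including $V(S)$ itself); equivalently, $A=V(H)$ where $H$ is any finite connected subgraph containing $o$ whose unbounded face induces $S$. Every vertex $v\in \partial_V A$ is by definition adjacent to some $u\in V(S)$ and lies outside $A$, so the edge $uv$ belongs to $\partial S$; different vertices of $\partial_V A$ give rise to different edges of $\partial S$, hence $|\partial_V A|\le n$.

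Next, I would apply \eqref{partial A}: except for a finite collection $\mathcal E$ of exceptional vertex sets, every such $A$ satisfies
\[
k\log(\operatorname{diam}(A))\;\le\;|\partial_V A|\;\le\; n,
\]
so $\operatorname{diam}(A)\le e^{n/k}$. Now suppose $r_ir_{i+1}\in E(S)\cap E(R)$; then $o=r_0\in A$ and $r_i\in A$, and since $R$ is a geodesic, $d_T(r_0,r_i)=i$, so $\operatorname{diam}(A)\ge i$. Combining, any edge of $R_n$ arising from a non-exceptional $A$ has the form $r_ir_{i+1}$ with $i\le e^{n/k}$, giving at most $\lceil e^{n/k}\rceil$ such edges. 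The exceptional family $\mathcal E$ is finite and independent of $n$, so it contributes a bounded number of edges to $R_n$. Since $e^{n/k}\to\infty$, this fixed additive constant is absorbed and we obtain $|R_n|\le e^{n/k}$ for all but finitely many $n$.

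The only real subtlety is the first step, i.e.\ verifying that the enclosed vertex set $A$ really satisfies $|\partial_V A|\le n$: one must be careful about chord edges in $\partial S$ with both endpoints in $V(S)\subset A$ (which contribute nothing to $\partial_V A$) and about a single boundary vertex being hit by several edges of $\partial S$ (which only helps, as it makes $|\partial_V A|$ even smaller). Once this bookkeeping is in place, the remaining argument is a direct substitution of the isoperimetric inequality and the geodesic identity $d_T(r_0,r_i)=i$.
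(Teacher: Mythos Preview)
Your argument is correct and follows the same route as the paper: associate to each $S\in\mathcal S_n$ a vertex set $A$ with $o\in A$, $V(S)\subseteq A$ and $|\partial_V A|\le |\partial S|=n$, apply \eqref{partial A} to get $\operatorname{diam}(A)\le e^{n/k}$, and then use that $R$ is geodesic to bound the index of any edge of $R\cap E(S)$. The paper phrases this contrapositively (assuming some $M\in\mathcal S_n$ misses the first $e^{n/k}$ edges of $R$ and deriving a contradiction) and chooses $A$ to be the component of $o$ in $G-B$ for the minimal cut $B\subseteq\partial M$, but these differences are cosmetic.

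There is one small slip in your handling of the exceptional family $\mathcal E$: you cannot ``absorb'' a fixed additive constant $C$ into $e^{n/k}$, since $e^{n/k}+C>e^{n/k}$ for every $n$. The clean fix is to note that each exceptional $A\in\mathcal E$ determines its \scv\ $S$ uniquely (as the outer boundary of $A$) and hence determines a single value of $|\partial S|$; thus exceptional $A(S)$'s arise only for finitely many values of $n$, and for all remaining $n$ your direct bound holds with no additive correction at all. Equivalently, each exceptional $A$ has some fixed diameter $D_A$, so the edges of $R$ it could contribute already have index $\le \max_{A\in\mathcal E} D_A\le e^{n/k}$ once $n$ is large. (Also, your ``equivalently, $A=V(H)$'' is not literally true, since many $H$'s with different vertex sets can induce the same $S$; but you only use the first definition of $A$, so this does not affect the proof.)
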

\begin{proof}
Define a function $g: \N \to \N$  by letting $g(n)$ be the smallest integer $l$ \st\ every \scv\ of $\mathcal S_n$ contains at least one of the first $l$ edges of $R$ if such a $l$ exists, and let $g(n)=\infty$ otherwise, with the convention that $g(n)=1$ if no such edge-separator of size $n$ exists.

We need to show that $g(n) \leq e^{n/k}$ for almost every $n$ (in particular, $g(n)< \infty$). In other words, we need to show that $g(n) > e^{n/k}$ holds for only finitely many values of $n$. To see this, assume $n$ is such a number, which means that some \scv\ $M$ of $\mathcal S_n$ does not contain any of the first $e^{n/k}$ edges of $R$. Let $B$ be the minimal cut of $M$ and $A=A_n$ be the component of $o$ in $G - B$. Our condition \eqref{partial A} says that
$$k \log(\mathrm{diam}(A)) \leq |\partial_V A| \leq |\partial_E A| = |B| \leq n,$$ 
except possibly for finitely many sets $A=A_n$, hence for finitely many values of $n$. 

On the other hand, we have $\mathrm{diam}(A)> e^{n/k}$ since $A$ contains the first $e^{n/k}$ edges of the geodesic $R$. Combining these inequalities yields
the contradiction $k \log e^{n/k} >n$.

\end{proof}

An immediate consequence of \Lr{geodesic} is that $g(n)$ grows subexponentially in $n$, i.e. $\limsup_{n\to\infty} g(n)^{1/n}=1$, whenever the stronger condition \eqref{partial A'} is satisfied. 

Note that the constant $e^{-1/(5{X}^2)}$ involved in the statement of the theorem of Aizenman \& Barksy does not converge to $0$ as $p$ goes to $0$, because $X\geq 1$. Hence
when we combine \Cr{ABP triang} and \Lr{geodesic} with the union bound, we deduce that 
$\Pr_p(\text{some } M\in \MS_n \text{ occurs})$ decays exponentially in $n$ for every large enough value of $p$, only when $T$ satisfies \eqref{partial A} for some large enough value of $k$. In particular, when $T$ satisfies \eqref{partial A'}, then $\Pr_p(\text{some } M\in \MS_n \text{ occurs})$ decays exponentially in $n$ for every $p>1/2$.

To cover the remaining cases, we will prove in the next lemma an exponential upper bound for the number of all possible \smc s of $\MS_n$ and then we will deduce the desired exponential decay using a Peierls type argument. \mymargin{The constant $\nu$ is better than the one of \Cr{lattice animals}.}

A straightforward application of \Cr{lattice animals} yields
\begin{lemma} \label{cubic animals}	
For every graph $G$ with maximum degree $3$, and any vertex $e\in E(G)$, the number of 2-connected subgraphs of $G$ with $m$ edges containing $e$ is at most $\nu^m$ for some constant $\nu$. 
\end{lemma}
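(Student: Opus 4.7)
The plan is to reduce the 2-connected count to a connected count in the line graph and then invoke the lattice-animal bound \Cr{lattice animals}. Any 2-connected subgraph $H$ of $G$ with $m \geq 2$ edges is in particular connected; moreover, since $H$ has no isolated vertices (every vertex of a 2-connected graph has degree at least $2$), $H$ is determined by its edge set. Therefore it suffices to bound the number of edge sets $F \subseteq E(G)$ with $|F|=m$, $e \in F$, such that the subgraph $(V(F), F)$ is connected.

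The key observation is that a connected edge subgraph of $G$ containing $e$ corresponds bijectively to a connected vertex subset of the line graph $L(G)$ containing the vertex corresponding to $e$. (This is the standard correspondence: two edges of $G$ sharing a vertex are adjacent in $L(G)$, so connectivity of the edge subgraph in $G$ is equivalent to connectivity of the induced subgraph in $L(G)$.) Since $G$ has maximum degree $3$, every vertex $uv$ of $L(G)$ has degree $(\deg_G(u)-1)+(\deg_G(v)-1) \leq 4$, so $L(G)$ has maximum degree at most $4$.

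Now I would apply \Cr{lattice animals} to $L(G)$: the number of connected subsets of the vertex set of a graph with bounded maximum degree $\Delta$ containing a fixed vertex and having $m$ elements is at most $\nu^m$ for some constant $\nu = \nu(\Delta)$. Applying this with $\Delta = 4$ and the fixed vertex being $e \in V(L(G))$, we obtain the desired bound of $\nu^m$ connected vertex subsets of $L(G)$ of size $m$ containing $e$, which is an upper bound on the number of 2-connected subgraphs of $G$ with $m$ edges containing $e$.

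There is essentially no obstacle: the content of the lemma lies entirely in \Cr{lattice animals}, and the only thing to verify is the line-graph correspondence together with the degree bound in $L(G)$, both of which are routine. The small degenerate cases ($m=1$, where a single edge is not usually considered 2-connected, and $m=0$) are trivial. \qed
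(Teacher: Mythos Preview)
Your argument is correct and matches the paper's intended approach: the paper states the lemma as ``a straightforward application of \Cr{lattice animals}'' without spelling out the proof, but the line-graph reduction you describe is precisely the mechanism the authors have in mind (they use the same device explicitly in \Lr{grow pint}, passing to $L(G)$ and noting its degree is at most $2d-2$). Your write-up fills in exactly the details the paper omits.
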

\comment{
\begin{proof}

\end{proof}
}

The following lemma is the analogue of \Lr{HR bound}.

\begin{lemma} \label{HR bound triang}	
There is a constant $r\in \R$ \st\ \fe\ triangulation $T$ of an open disk satisfying condition \eqref{partial A} 
of \Tr{pC triang} and every $n\in \N$ at most $t r^{\sqrt{n}} e^{n/k}$ elements of $\MS_n$ can occur simultaneously in any percolation instance \oo, where $t=t(T,k)>0$ is a constant depending on $T$ and $k$.
\end{lemma}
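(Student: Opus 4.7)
The plan is to mimic the proof of \Lr{HR bound}, the key difference being that the quasi-geodesic bound $|S(x_i,\oo)|>i/f$ (linear in the position $i$) must be replaced by the much weaker logarithmic bound $q\leq e^{n/k}$ furnished by \Lr{geodesic}. Fix a percolation instance $\oo$, and for each occurring \scv\ $S$ of $o$ write $q(S)$ for the smallest index $q$ with $r_q\in V(S)$, where $R=r_0,r_1,\ldots$ is the geodetic ray. The analogue of \Lr{scs disjoint} transfers verbatim to triangulations (occurring \scv s are outer boundaries of vertex-disjoint clusters), so distinct occurring \scv s have distinct values of $q(S)$. Consequently, once $\oo$ is fixed, any $M\in\MS_n$ is uniquely determined by the set $\{q(S):S\in M\}$, and our task reduces to bounding the number of such sets of positions.

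I would next use \Lr{geodesic} to pick a threshold $N_0=N_0(T,k)$ such that every \scv\ $S$ with $|\partial S|=n'>N_0$ satisfies $q(S)\leq e^{n'/k}$ (the first edge of $R$ in $\partial S$ lies among the first $e^{n'/k}$ edges, and one of its endpoints lies in $V(S)$). For \scv s of size at most $N_0$, condition \eqref{partial A} applied to the $o$-side $A_S$ of the cut $\partial S$ forces $diam(A_S)\leq e^{N_0/k}$, so $A_S$ is contained in a finite ball of $T$ around $o$; hence the total number of such small \scv s in $T$ is a constant $C_0=C_0(T,k)$, with the finitely many exceptional sets $A$ from \eqref{partial A} and exceptional values of $n$ from \Lr{geodesic} absorbed into this constant. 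The choice of which small \scv s to include in $M$ therefore contributes at most $2^{C_0}$ to the count, which I absorb into $t$. The main combinatorial step is to bound the number of sets $Q=\{q_1,\ldots,q_{m'}\}$ of distinct positive integers corresponding to the large \scv s in $M$, with sizes $n_1,\ldots,n_{m'}$ satisfying $\sum n_i\leq n$ and $q_i\leq e^{n_i/k}$. Summing the logarithmic constraints yields $\prod q_i\leq e^{n/k}$, which I would exploit via the dyadic substitution $s_i:=\lfloor\log_2 q_i\rfloor$: the multiset $\{s_i\}$ is then a partition of an integer at most $n/(k\ln 2)$, so \Tr{HRthm} gives at most $r^{\sqrt{n}}$ such partitions for a universal constant $r$, and for each partition with multiplicity $\mu_s$ at level $s$ the number of ways to realise $Q$ as distinct integers chosen from the dyadic intervals $[2^s,2^{s+1})$ is $\prod_s \binom{2^s}{\mu_s}\leq 2^{\sum_s s\mu_s}\leq 2^{n/(k\ln 2)}=e^{n/k}$, where the last equality uses the identity $2^{1/\ln 2}=e$. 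Multiplying the three contributions gives the required bound $t\,r^{\sqrt{n}}e^{n/k}$.

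The main subtlety I anticipate is extracting precisely the exponent $n/k$ in the final bound rather than some larger $Cn/k$. The clean match relies both on the arithmetic identity $2^{1/\ln 2}=e$ and on not picking up a multiplicative factor per \scv: a naive per-index bound $q_i\leq e^{n_i/k}$ would produce an unwanted factor $\prod 2=2^{m'}$, and summing $2^{m'}$ over partitions of $n$ has exponential-in-$n$ growth (the generating function $\prod_k 1/(1-2x^k)$ has radius of convergence $1/2$, so this route would destroy the desired bound). The dyadic grouping avoids this because within each dyadic level $s$ the distinctness of the $q_i$ is exactly what the binomial $\binom{2^s}{\mu_s}$ already encodes, at no extra multiplicative cost; the remaining edge cases (such as $q(S)=0$, occurring for at most one \scv\ by distinctness) each contribute only a finite factor absorbed into $t$.
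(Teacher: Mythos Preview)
Your argument is correct, but it takes an unnecessarily circuitous route compared with the paper's proof. The paper proceeds more directly: for an occurring $M\in\MS_n$ with constituent \scv s $S_1,\ldots,S_l$, the boundary sizes $m_i:=|\partial S_i|$ themselves form a partition of $n$, and the \HRT\ applied to \emph{this} partition immediately gives the factor $r^{\sqrt{n}}$. For a fixed boundary partition $\{m_1,\ldots,m_l\}$, \Lr{geodesic} gives at most $e^{m_i/k}$ occurring \scv s of boundary size $m_i$ (for $m_i$ large), so the number of compatible \smc s is at most $\prod_i e^{m_i/k}=e^{n/k}$; the finitely many small \scv s contribute only a constant factor, exactly as in your treatment.

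Your detour through positions $q_i$ and dyadic levels $s_i=\lfloor\log_2 q_i\rfloor$ reproduces the same bound via the identity $2^{1/\ln 2}=e$, but this machinery is not needed: the ``subtlety'' you anticipate---a spurious factor $2^{m'}$---simply does not arise in the direct approach, because once the partition of $n$ into the $m_i$ is fixed, the per-part bounds multiply to $e^{n/k}$ with no residual factor. In short, you are applying Hardy--Ramanujan to the wrong partition (of $\sum s_i\le n/(k\ln 2)$) and then working to undo the loss, whereas the paper applies it to the partition that is already there ($\sum m_i = n$).
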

\begin{proof}
Let $S$ be an element of $\MS_n$, comprising the \scv s  $S_1,S_2,\ldots,S_{l}$. Since any two distinct occurring \scv s are vertex disjoint by \Lr{scs disjoint}, the sizes $m_i$ of $\partial S_i$ define a partition of $n$. We call the multiset 
$\{m_1,m_2,\ldots,m_{l}\}$ the \defi{boundary partition} of $S$.
It is possible that more than one occurring \smc s have the same boundary partition. In order to prove the desired assertion we will show that for every partition $\{m_1,m_2,\ldots,m_{l}\}$ of $n$ the number of occurring \smc s with $\{m_1,m_2,\ldots,m_{l}\}$ as their boundary partition is at most $t e^{n/k}$ for some constant $t>0$. Then the assertion follows by the \HRT\ (\Tr{HRthm}).

Since occurring \scv s meet $R$ and they are vertex-disjoint by \Lr{scs disjoint}, $S$ is uniquely determined by the subset of $R$ it meets. We can utilise \Lr{geodesic} to conclude that the number of occurring \scv s with boundary of size $m_i$ is at most $e^{m_i/k}$ for every $m_i\geq N$, where $N$ is a sufficiently large positive integer. It is easy to see that the number of \scv s with boundary of size at most $N$ is bounded from above by some constant $M>0$. Hence the number of occurring \smc s with $\{m_1,m_2,\ldots,m_{l}\}$ as their boundary partition is bounded above by $M e^{n/k}$.
\end{proof}

We are now ready to prove \Lr{exp dec triang}.

\begin{proof}[Proof of \Lr{exp dec triang}]
By \Lr{HR bound triang} we have 
$$\sum_{M\in \MS_n} Q_M(p) \leq t r^{\sqrt{n}} e^{n/k} \Pr_p(\text{some } M\in \MS_n \text{ occurs})$$ for every $k$. 
Let $r_m$ denote the $m$th edge of $R$. We pick one of the two endpoints from every dual edge $r_m^*$ and we denote it $v_m$ (maybe some of these endpoints are the same). Let $D(m)$ denote the event that one of the clusters of $v_1,\ldots, v_{g(m)}$ contains at least $m$ vertices.
Arguing as in the proof of \Lr{exp dec Z2} we can deduce that
$$\Pr_p(\text{some } M\in \MS_n \text{ occurs})\leq \sum_{\{m_1,\ldots,m_k\}\in P_n} \Pr_{1-p}(D(m_1))\cdot\ldots\cdot\Pr_{1-p}(D(m_k)),$$
where $P_n$ is the set of partitions of $n$. By \Cr{ABP triang} and the union bound we obtain 
$$\Pr_p(\text{some } M\in \MS_n \text{ occurs})\leq t r^{\sqrt{n}} e^{n/k} c^n,$$
where $c$ is the constant of \Cr{ABP triang}. 

When $k$ is large enough, there is some constant $\nu_k<1$ such that $e^{2/k}c<1$ for every $p>\nu_k$. This proves the exponential decay of $\sum_{M\in \MS_n} Q_M(p)$ when $k$ is large enough. Moreover, as $k$ goes to infinity, $e^{2/k}$ converges to $1$ and thus it is easy to choose $\nu_k$ so that it converges to $1/2$. 

For small values of $k$ we can argue as in the proof of \Tr{thm pc fp} to conclude that 
$$\Pr_p(\text{some } M\in \MS_n \text{ occurs})\leq t r^{\sqrt{n}} e^{n/k} \nu^n (1-p)^n,$$
where $\nu$ is a constant provided by \Lr{cubic animals}.
Hence $\sum_{M\in \MS_n} Q_M(p)$ decays exponentially in $n$ for all $p> 1-1/{\nu e^{2/k}}$.
\end{proof}

The following is an easy combinatorial exercise.

\begin{lemma} \label{triang boundary}	
For every triangulation of a disk $T$ and every \scv\ $M$ we have $|E(M)|\leq 4|\partial M|$.
\end{lemma}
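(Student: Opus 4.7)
The plan is to prove $|E(M)| \le 2|\partial M|$ by a charging argument that sends each $e \in E(M)$ to an edge of $\partial M$ adjacent to $e$ through an outside triangle, and then controls the multiplicity of this assignment. First I would note that, because $e$ lies on the boundary of the unbounded face $F$ of $H$, at least one of the two triangular faces of the triangulation $T$ adjacent to $e$ lies inside $F$; pick such a triangle $T_e$. Since $T$ is a triangulation, $T_e$ has exactly three edges, and at most one of the two edges of $T_e$ other than $e$ can lie in $E(M)$: otherwise $T_e$ would be bounded on all three sides by $M$-edges, so $T_e$ would be a bounded face of $M$, contradicting $T_e \subset F$. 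Thus $T_e$ contributes at least one edge to $\partial M$, and we may define a map $\phi\colon E(M) \to \partial M$ that sends $e$ to one such $\partial M$-edge of $T_e$.

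Next I would seek to bound $|\phi^{-1}(f)|$ for each $f \in \partial M$. The naive bound is only $4$: $f$ lies on two triangles of $T$, and each such triangle has at most two edges in $E(M)$, giving $|E(M)| \le 4|\partial M|$. To sharpen the constant to $2$, I would establish the following planarity lemma: in our setting, a single $f \in \partial M$ cannot be the unique $\partial M$-edge of \emph{both} outside triangles on it simultaneously when both triangles have $m(T)=2$. Writing $f = uv$ and the two triangles as $\{u,v,w_1\}, \{u,v,w_2\}$, this configuration would force the four $M$-edges $uw_1,vw_1,uw_2,vw_2$ to form a $4$-cycle in $M$ whose bounded interior equals the union of the two triangles; as the outside face of $M$ must be unbounded (by the definition of an \scv), the two triangles cannot both sit in it, contradiction.

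Using the planarity lemma, I would then refine the choice of $\phi$. For each outside triangle $T$ with $m(T)=1$, $T$ has a single $M$-edge and two candidate $\partial M$-edges $f, f'$; choose $\phi$ to send this $M$-edge to whichever of $f, f'$ is \emph{not} already the saturated $\partial M$-edge of an $m(T)=2$ triangle on its other side. The planarity lemma guarantees that at most one of $f, f'$ is in this forbidden state (otherwise an analogous $5$-cycle configuration leads to the same boundedness contradiction), so a valid choice always exists. With this refinement, each $f \in \partial M$ receives at most $2$ preimages under $\phi$, yielding $|E(M)| \le 2|\partial M|$.

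The main obstacle is the planarity lemma: the rest of the argument is routine double counting, but the step ruling out two $m(T)=2$ triangles meeting along a common $\partial M$-edge requires the global topological fact that the unbounded face of $M$ is genuinely unbounded and cannot contain a bounded pocket sealed off by $M$-edges.
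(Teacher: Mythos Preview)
Your argument is correct, and its skeleton---pick for each $e\in E(M)$ an outside triangle $T_e$ lying in the unbounded face $F$ of $H$, then observe that $T_e$ must contain at least one edge of $\partial M$---is exactly the paper's proof. The paper then finishes in one sentence: ``Since any edge of $\partial M$ lies in at most two of these triangular faces $T_e$, the result follows.'' Read literally (each $f\in\partial M$ lies in at most two triangles of $T$), this yields only $|\{T_e\}|\le 2|\partial M|$, and since two distinct $e$'s can share the same $T_e$ when $m(T_e)=2$, one gets at best $|E(M)|\le 4|\partial M|$ without further work. You correctly diagnose this and supply precisely the missing ingredient: your planarity lemma (the $4$-cycle argument ruling out two $m{=}2$ triangles sharing a $\partial M$-edge), together with the refinement of $\phi$ via the analogous $5$-cycle argument, is exactly what is needed to force the map to be $2$-to-$1$ rather than $4$-to-$1$. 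So your proof follows the same route as the paper's but is more careful about the final count.

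One minor point: in your $5$-cycle step, you should check the degenerate case where the two ``far'' vertices $w_1,w_2$ of the $m{=}2$ triangles coincide (this can happen if the common vertex $z$ of $f$ and $f'$ has degree $3$ in $T$). The $5$-cycle then collapses to the $3$-cycle $xyw$ with the three triangles filling its interior, and the same boundedness contradiction goes through, so no harm is done---but it is worth a sentence.
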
 \mymargin{4 instead of 2}
\begin{proof}
Let $H$ be a finite connected graph witnessing the fact that $M$ is an \scv. We claim that every edge $e\in M$ lies in a triangular face $T_e$ of $T$ such that at least one edge of $T_e - e$ lies in $\partial M$. Indeed, $e$ lies in exactly two (triangular) faces of $T$, and we choose $T_e$ to be one of them lying in the unbounded face of $H$; such a $T_e$ exists because by definition the vertices and edges of $M$ are incident with the unbounded face of $H$.
As $T_e$ lies in the unbounded face of $H$, one of the two other edges of $T_e$ lies in $\partial M$.

Since any edge of $\partial M$ lies in at most two of these triangular faces $T_e$, and each such face contains at most two edges of $E(M)$, the result follows. \mymargin{check the proof}
\end{proof} 

We have collected all the ingredients for the main result of this section. 

\begin{proof}[Proof of \Tr{pC triang}]
We first remark that $p_c<1$ by \eqref{thetaTriang} because, easily, $c_2(p)\to 0$ as $p\to 1$.  

To obtain our precise bounds, note that, by definition, every $M\in \MS_n$ has $n$ vacant edges. Moreover, $|E(M)|\leq 2n$ by \Lr{triang boundary}. Hence we can now apply \Cr{cor general} to deduce that $\theta_o(p)$ is analytic for $p>\nu_k$. As usual, we then recall that $\theta_o(p)$ cannot be analytic at $p_c$, and so $p_c\leq p_\C$.
\end{proof}

{\bf Remark:} The above proof uses some complex analysis (needed in \Cr{cor general}) to prove $p_c<1/2$. But the  complex analysis can be avoided by using a refinement of the Peierls argument that can be found in \cite[Theorem~4.1]{Pete}. 

\medskip
For the proof of \Tr{pC transient} we just need to show that the size of the set of edges of a 1-way geodesic $R$ that meets $\bigcup \MS_n$ grows subexponentially in $n$. To this end, we will use the well-known theorem of He \& Schramm stating that every graph as in our statement is the contacts graph of a circle packing whose carrier is the open unit disc $\DD$ in $\R^2$; see \cite{HeSchrHyp}, where the relevant definitions can be found. We say that an edge $e$ \defi{meets} $\MS_n$, if there is $M \in \MS_n$ with $e\in \partial M$.

\begin{lemma} \label{trans R}	
Let $T$ be a triangulation of an open disk which is transient and has bounded vertex degrees. Let $R$ be a geodesic ray in $T$ starting at any $o\in V(G)$, and let $R_n$ be the set of edges of $R$ meeting $\MS_n$. Then $|R_n|= O(n^3)$.
\end{lemma}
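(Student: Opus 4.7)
The plan is to invoke the He--Schramm theorem, which provides a circle packing of $T$ whose carrier is the open unit disk $\DD$, and then to translate the combinatorial condition ``$r_m$ meets $\MS_n$'' into a geometric constraint on the corresponding circles that can be satisfied by only polynomially many indices $m$.

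First, I would fix a circle packing as above, writing $c_v \in \DD$ and $r_v>0$ for the center and radius of the circle $C_v$ attached to a vertex $v$, and after applying a M\"obius transformation assume $c_o = 0$; in particular the area constraint $\sum_v \pi r_v^2 \le \pi$ holds. Suppose $r_m = v_m v_{m+1}$ lies in $R_n$, so there is an $\scv$ $S$ with $r_m \in \partial S$ and $|\partial S| \le n$. By \Lr{dual connd} applied in $T^*$ (which is a cubic graph because $T$ is a triangulation), the edge set $\partial S^*$ spans a connected subgraph of $T^*$ of at most $n$ edges surrounding $o$. Consequently the set $W$ of vertices of $T$ incident with $\partial S$ has size at most $2n$, it contains $v_m$, and the union of the corresponding circles $\{C_v\}_{v \in W}$ forms a ring surrounding $C_o$ in $\DD$.

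The key geometric step is to quantify how $W$ is constrained by the fact that it surrounds $C_o$ while being a ring of only $\le 2n$ circles. Since the ring surrounds $c_o = 0$, the angular widths of its constituent circles viewed from the origin sum to at least $2\pi$, and a short computation yields the angular inequality
\[
 \sum_{v\in W} \frac{r_v}{|c_v|} \;\ge\; 2.
\]
Combined with Cauchy--Schwarz and the area bound, this forces at least one circle in $W$ to lie fairly close to $c_o$, while the Ring Lemma of Rodin--Sullivan (valid here because the packing has bounded degree) ensures that the hyperbolic radii of the circles $C_v$ are uniformly bounded above and below. Together with transience, which forces $|c_{v_m}| \to 1$ in a quantitative fashion, this geometric data bounds from above the indices $m$ for which $v_m$ can lie on any such ring of size $\le n$.

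The main obstacle is to convert the qualitative He--Schramm statement (that the packing's carrier is $\DD$ rather than $\C$) into a sufficiently effective quantitative estimate on the decay of $r_{v_m}$ (equivalently, on how fast $|c_{v_m}|$ approaches $1$ in $m$). Once such an estimate is available, one combines it with the angular--area bounds on $W$ derived above to count the admissible values of $m$, and a careful accounting ---roughly, $O(n)$ admissible hyperbolic scales times $O(n^2)$ admissible positions at each scale--- yields the cubic bound $|R_n| = O(n^3)$.
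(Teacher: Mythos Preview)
Your proposal has a genuine gap, which you yourself identify: you need a quantitative estimate on how fast $|c_{v_m}| \to 1$ (equivalently, on the decay of $r_{v_m}$), and you do not supply one. The He--Schramm theorem only tells you the carrier is $\DD$, and the Ring Lemma only compares neighbouring radii; neither yields a rate at which the geodesic ray escapes to $\partial\DD$. Without such a rate your final accounting (``$O(n)$ hyperbolic scales times $O(n^2)$ positions'') is unjustified, and indeed it is not clear what ``hyperbolic scales'' should mean here or why there are $O(n)$ of them.

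The paper's argument sidesteps this entirely by working with \emph{area} rather than position along $R$, and by contradiction. Suppose $|R_n| = \omega(n^3)$. For each vertex $u$ on $R$ incident with an edge of $R_n$, the \scv\ witnessing this (together with \Lr{triang boundary}) gives a connected subgraph $G_u$ of $T$ with at most $2n+1$ vertices, containing $u$ and surrounding $o$. The key point is purely Euclidean: the union $P_u$ of the circles of $G_u$ surrounds $x_o$, so its Euclidean diameter is bounded below by a fixed constant (the diameter of $x_o$), and since $P_u$ consists of at most $2n+1$ circles, one of them has diameter of order at least $1/n$ and hence area at least $r/n^2$. Now thin the set of such $u$ to a subset $R''_n$ in which consecutive vertices are at least $4n+2$ apart along $R$; this forces the corresponding $G_u$ to be pairwise vertex-disjoint, hence the $P_u$ pairwise disjoint. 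If $|R_n| = \omega(n^3)$ one can still arrange $|R''_n| = \omega(n^2)$, and then $\sum_{u \in R''_n} \mathrm{area}(P_u) = \omega(1)$, contradicting $\mathrm{area}(\DD) < \infty$. No information about the location of the $P_u$ inside $\DD$, or about the rate of escape of $R$, is ever needed.
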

\begin{proof}
Let $P$ be a circle packing for $T$ whose carrier is the open unit disk $\DD$, provided by \cite{HeSchrHyp}. The main properties of $P$ used in our proof are 
\begin{enumerate}
\item \label{tangent} two vertices of $T$ are joined with an edge \iff\ the corresponding circles are tangent, and
\item \label{accfree} there are no accumulation points of circles of $P$ inside \DD.
\end{enumerate}

Assume that $|R_n| =\oo(n^3)$ contrary to our claim. Let $R'_n$ be the set of vertices of $R$ incident with an edge in $R_n$. Then $|R'_n| > |R_n| =\oo(n^3)$.  For a vertex $u$ of $T$, let $x_u$ denote the corresponding circle of $P$.

For any $u\in R'_n$ \Lr{triang boundary} yields a connected subgraph $G_u$ of $T$ of at most $4n+1$ edges containing $u$ and surrounding $o$; indeed, $G_u$ can be obtained from any \scv\ $M$ witnessing the fact that $u\in R'_n$ by possibly adding the edge of $u$ lying in $\partial M$ in case $u$ does not lie on $M$. 

Let $P_u$ denote the union of the disks of $P$ corresponding to $G_u$. We claim that the area $\text{area}(P_u)$ covered by $P_u$ is at least $r/n^2$ for some constant $r=r(P)$. Indeed, $P_u$ is the union of at most $4n+2$ disks, and its diameter is greater than the diameter of $x_o$, and so at least one of its circles must have diameter of order at least $1/n$, hence area of order at least $1/n^2$.

For every $n$, pick a subset $R''_n$ of $R'_n$ such that any two vertices of $R''_n$ lie at distance at least $8n+3$ along $R$, and therefore in $T$ since $R$ is a geodesic, and $|R''_n| =\oo(n^2)$. Such a choice is possible because $R'_n=\oo(n^3)$. 

Note that for any two distinct elements $u,v\in R''_n$, the subgraphs $G_u,G_v$ defined above are vertex disjoint: this is because we chose $u,v$ to have distance at least $8n+3$ in $T$, and each of $G_u,G_v$ has at most $4n+1$ edges and is connected. Moreover, recall that each $P_u$ has area of order at least $1/n^2$. Combining these two facts we obtain $\sum_{u\in R''_n} \text{area}(P_u) = \oo(1)$, a contradiction since $\text{area}(\DD)$ is finite.
\end{proof}

\begin{proof}[Proof of \Tr{pC transient}]
We repeat the arguments of the proof of \Tr{pC triang}, replacing \Lr{geodesic} by \Lr{trans R}.
\end{proof}

In the case of recurrent triangulations the theorem of He \& Schramm states that $T$ is the contacts graph of a circle packing whose carrier is the plane $\mathbb{R}^2$ \cite{HeSchrHyp}.
Let $P$ be such a circle packing. We will prove the analogue of \Lr{trans R} for recurrent triangulations of an open disk such that the radii of the circles of $P$ are bounded from above. This in turn implies that $p_{\C}\leq 1/2$ for such triangulations by repeating the proof of \Tr{pC transient}.

\begin{lemma} \label{recur R}	
Let $T$ be a triangulation of an open disk which is recurrent and has bounded vertex degree. Assume that 
\labtequ{radius}{for some (and hence every) circle packing $P$ of $T$,  the radius of every disk in $P$ is bounded from above by some constant $M$.}
Let $R$ be a geodesic ray in $T$ starting at any $o\in V(G)$, and let $R_n$ be the set of edges of $R$ contained in some \scv\ of $\MS_n$. Then $|R_n|= O(n^5)$.
\end{lemma}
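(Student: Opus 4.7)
The plan is to adapt the proof of \Lr{trans R} to the recurrent setting, where the lack of a global area bound on the carrier is compensated by extracting a single ``large'' circle from each $P_u$ and working inside a Euclidean disk of radius $O(n)$ around $x_o$.

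First I would invoke the recurrent version of the He--Schramm theorem to obtain a circle packing $P$ of $T$ with carrier $\R^2$, and assume without loss of generality that the disk $x_o$ corresponding to $o$ is centred at the origin, with radius $r_o$; note that tangent circles of radii at most $M$ have centres at Euclidean distance at most $2M$. As in the proof of \Lr{trans R}, for each $u\in R'_n$ choose a connected subgraph $G_u$ of $T$ with at most $2n+1$ vertices that contains $u$ and surrounds $o$, and let $P_u$ be the union of the closed disks $x_v$ for $v\in V(G_u)$. Then $P_u$ is a connected union of at most $2n+1$ closed disks of radius at most $M$, and $P_u$ surrounds $x_o$.

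The main new step is to lower-bound the size of the largest disk in $P_u$. Since $x_o$ lies in the convex hull of $P_u$, we have $\mathrm{diam}(P_u)\geq 2r_o$, while by walking along a chain of tangent disks from one extreme point of $P_u$ to the other we get
\[
\mathrm{diam}(P_u) \leq \sum_{v\in V(G_u)} 2r_v.
\]
Combining these inequalities and applying the pigeonhole principle to the at most $2n+1$ terms, we obtain some $v(u)\in V(G_u)$ with $r_{v(u)} \geq r_o/(2n+1)$. Moreover, since $P_u$ meets a point at Euclidean distance at most $r_o$ from the origin (any point of $\partial x_o$ lies in the convex hull of $P_u$) and has diameter at most $2M(2n+1)$, the disk $x_{v(u)}$ is contained in a Euclidean disk $D_n$ of radius $O(n)$ around $0$.

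Now I would follow the transient argument. Assume for contradiction that $|R_n|=\oo(n^5)$; then $|R'_n|\geq |R_n|$ satisfies $|R'_n|=\oo(n^5)$, so by greedy selection we may pass to a subset $R''_n\subseteq R'_n$ with spacing along $R$ at least $4n+2$ and $|R''_n|=\oo(n^4)$. For distinct $u_1,u_2\in R''_n$ the graph distance between $u_1,u_2$ in $T$ is at least $4n+2$ (as $R$ is a geodesic), which exceeds the sum of the diameters of $G_{u_1}$ and $G_{u_2}$ in $T$; hence $V(G_{u_1})\cap V(G_{u_2})=\emptyset$, and therefore the extracted disks $x_{v(u_1)}$ and $x_{v(u_2)}$ are distinct and thus disjoint in the packing $P$. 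The family $\{x_{v(u)}\}_{u\in R''_n}$ then consists of pairwise disjoint closed disks of radius at least $r_o/(2n+1)$ inside $D_n$, and the area inequality
\[
|R''_n|\cdot \pi\Big(\frac{r_o}{2n+1}\Big)^{2} \leq \mathrm{area}(D_n) = O(n^2)
\]
forces $|R''_n|=O(n^4)$, contradicting $|R''_n|=\oo(n^4)$. Since $|R_n|\leq |R'_n|$, this yields the claimed bound $|R_n|=O(n^5)$. The main obstacle, compared to \Lr{trans R}, is precisely that the total area of the carrier is no longer finite; the remedy is the diameter-versus-sum-of-radii estimate above, which converts the global area argument from the transient case into a local one on a disk of Euclidean radius $O(n)$ around $x_o$.
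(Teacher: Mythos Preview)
Your argument is correct and in fact slightly more direct than the paper's. Both proofs extract from each $P_u$ a disk of radius at least $r_o/(2n+1)$ and then derive a contradiction by packing $\omega(n^4)$ such disjoint disks into a region of area $O(n^2)$. The difference lies in how that region is obtained: the paper singles out the outermost vertex $u_n\in R''_n$, argues that $G_{u_n}$ contains a cycle $C_n$ surrounding $o$ with at most $2n$ edges (hence Euclidean length at most $4Mn$), and then applies the planar isoperimetric inequality to bound the area of the Jordan domain $D_n$ enclosed by $C_n$. You bypass both the cycle-finding step and the isoperimetric inequality by observing directly that each $P_u$ has Euclidean diameter at most $2M(2n+1)$ and surrounds the origin, which already forces $P_u\subseteq D(0,O(n))$. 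This is a genuine simplification.

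One sentence needs tightening: you write that ``$P_u$ meets a point at Euclidean distance at most $r_o$ from the origin'', justified by the parenthetical about the convex hull. The parenthetical is correct (since $P_u$ surrounds the origin, the origin lies in $\operatorname{conv}(P_u)$), but it does not imply that $P_u$ itself comes within distance $r_o$ of the origin --- $G_u$ may be far from $o$ in $T$. The conclusion you want, however, follows immediately from the convex hull statement alone: since the origin lies in $\operatorname{conv}(P_u)$ and $\operatorname{diam}(\operatorname{conv}(P_u))=\operatorname{diam}(P_u)\leq 2M(2n+1)$, every point of $P_u$ lies within distance $2M(2n+1)$ of the origin, so $P_u\subseteq D(0,2M(2n+1))$ as needed.
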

\begin{proof}
We will follow the proof of \Lr{trans R}. Assume that $|R_n| =\oo(n^5)$ contrary to our claim. Recall the definitions of $P_u$, $G_u$ and $R'_n$, and let $R''_n$ be defined as in the proof of \Lr{trans R} with the additional property $\infty>|R''_n|=\oo(n^4)$. This is possible because $|R_n| =\oo(n^5)$. In the proof of \Lr{trans R}, we utilised the finite area of $\DD$ to derive a contradiction. However, the area of the plane is infinite. For this reason, we will construct a family of bounded domains $(D_n)$ with the property that $P_u$ is contained in $D_n$ for most $u\in R''_n$.

Let $u_n$ be the vertex of $R''_n$ that attains the greatest graph distance from $o$. We claim that $G_n:=G_{u_n}$ contains a cycle that surrounds $o$. Indeed, assuming that $G_n$ does not contain any such cycle, we obtain that $o$ lies in $G_n$. Consider now some $u\in R''_n$ other than $u_n$. Then $G_u$ is vertex disjoint from $G_n$, as mentioned in the proof of \Lr{trans R}. As $G_u$ separates $o$ from infinity, $G_n$ must lie in a bounded face of $G_u$. This implies that $d(u,o)>d(u_n,o)$, which is a contradiction. Hence $G_n$ contains a cycle $C_n$ that surrounds $o$. 

Let $D_n$ be the domain bounded by $C_n$. Arguing as above, we can immediately see that each $P_u$ for $u\in R''_n\setminus \{u_n\}$ lies in $D_n$. Moreover, $C_n$ contains at most $4n$ edges by \Lr{triang boundary}. Every edge of $T$ has length at most $2M$ in $P$ by our assumption, therefore, the length of $C_n$ (as a curve in $\mathbb{R}^2$) is at most $8Mn$.

As in the proof of \Lr{trans R} if $u\in R''_n$, then some circle of $P_u$ has area of order at least $1/n^2$. Hence we obtain $\sum_{u\in R''_n\setminus \{u_n\}} \text{area}(P_u) = \oo(n^2)$, since $|R''_n\setminus \{u_n\}|=\oo(n^4)$. Using the standard isoperimetric inequality of the plane, we derive $$\sum_{u\in R''_n\setminus \{u_n\}} 4\pi\cdot \text{area}(P_u)\leq 4\pi\cdot \text{area}(D_n)\leq (8Mn)^2.$$ We have obtained a contradiction.
\end{proof}

Using an idea of Grimmett \& Li \cite{GrimLi}, we can slightly improve our results to obtain the strict inequality $p_c\leq p_\C<1/2$ instead of $p_c\leq p_\C\leq 1/2$ in all above results. Indeed, it is not hard to see that for any bounded degree triangulation of an open disk $T$, $\sigma_k(o)\leq 3 \cdot 2^{d-1}(2^d -2)^{\left \lfloor{n/d}\right \rfloor}$, where $d$ is the maximum degree of $T$. This comes from the fact that for every vertex $u$ and any edge $e$ incident to $u$ the number of $d$-step self avoiding walks starting from $u$ that do not traverse $e$ is at most $2^d -2$. Hence $p_c\leq p_{\C}<1/2$ as claimed. 

\subsection{Site percolation} \label{sec sit tri}
A well-known remark of Grimmett \& Stacey \cite[\S 7.4] {LyonsBook} transforms any upper bound on $p_c(G)$ into an upper bound on \pcs\ via the formula $\pcs\leq 1-(1-p_c)^d$ whenever $G$ has maximum degree $d$. But in our case we can do better: for the triangulations for which we proved $p_\C \leq 1/2$ in the previous section we can also prove $\pcs \leq 1-\frac1{d-1}$. For this, instead of working with the dual $T^*$ we work directly with the primal $T$. We adapt \eqref{chi finite bound} into  \\
$\mathbb{E}_{1-p}(P(u))\leq \sum_{k=0}^\infty d\cdot (d-1)^{k-1} (1-p)^k<\infty,$
which yields an analogue of \Cr{ABP triang} for $p>1-1/d$. We then proceed as in the proof of \Tr{pC triang}.

\section{Alternating signs of Taylor coefficients} \label{sec alt}

In \Sr{secPF} we proved that the functions $f_m(t) :=\Pr_t(|C(o)| \geq m)$ and $p_m(t) :=\Pr_t(|C(o)| = m)$ are analytic, and even more, they can be extended into entire functions. Thus $p_m$ is uniquely determined by its Maclaurin coefficients. We remark that most `macroscopic' functions of percolation theory, e.g.\ $\chi$ and $\theta$, are uniquely determined by the sequence $\{p_m\}_{m\in \N}$, and hence by their Maclaurin coefficients. It is rather hopeless to try to determine all these coefficients for any particular percolation model, but perhaps it is less hopeless to e.g.\ compare two models by comparing the corresponding Maclaurin coefficients. 

Motivated by such thoughts we wondered what can be said about those coefficients in general. In this section we determine the signs of the Maclaurin coefficients of $f_m$ and $p_m$, which turn out not to depend on the model, and deduce that they are alternating. In fact this remains valid in any non trivial percolation model and we do need to impose any transitivity assumption. We let $V$ be a countably infinite set, and $\mu$ any function defined on the set $E:= V^2$ of pairs of elements of $V$ such that $\sum_{y \in V} \mu(xy) <\infty $
for every $x\in V$, and use this data to obtain a percolation model as defined in \Sr{sec setup}. However, for ease of notation we will assume that $\sum_{y \in V} \mu(xy) =1 $  for every $x\in V$, as in \Sr{secPF}. (Some readers may prefer to think of $V$ as the vertex set of a countable connected graph, with $\mu$ supported on its edge set $E$.)

\medskip
We call an entire function \defi{alternating}, if its Maclaurin coefficients are all real and their signs are alternating. To be more precise, if the Maclaurin series of $f$ is $\sum c_i x^i$, with $c_i\in \R$, we say that $f$ is alternating if $\text{sgn}(c_i) = (-1)^{i+\eps}$, for some $\eps \in \{0,1\}$. Here, the \defi{sign $\text{sgn}(c)$} of a real number $c\neq 0$ is defined as $c/|c|$. With a slight abuse of notation, we allow $\text{sgn}(0)$ to take any of the values $1$ or $-1$. For example, any constant real function is allowed as an alternating function. 

More generally, we say that $f$ is \defi{alternating at a point $r\in \R$}, if the Taylor coefficients $c_i$ of $f(z)$ at $z=r$  satisfy $\text{sgn}(c_i) = (-1)^{i+\eps}$. 

For an analytic function $f$, we let \defi{$f[k]:= \frac{f^{(k)}(0)}{k!}, k\geq 0$} denote the $k$th Maclaurin coefficient of $f$. More generally, let $f[k](r)$ denote the $k$th Taylor coefficient of $f$ at $r$.

\begin{theorem}\label{alterf}
The (entire extension of the) function  $f_m$ is alternating, with $\text{sgn}(f_m[k])= (-1)^{m+1+k}$. 
\end{theorem}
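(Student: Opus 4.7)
The plan is to reformulate the theorem in a more tractable form. Setting $g_m(t) := (-1)^{m+1} f_m(-t)$, the statement $\operatorname{sgn}(f_m[k]) = (-1)^{m+k+1}$ is equivalent to $g_m$ having nonnegative Maclaurin coefficients at $t = 0$. Since at least $m-1$ edges must become occupied before $|C(o;t)|$ can reach $m$, we have $f_m(t) = O(t^{m-1})$, so $g_m$ has a zero of order at least $m-1$ at the origin and one only needs $g_m[k] \geq 0$ for $k \geq m-1$.

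First I would write $g_m$ explicitly. From $f_m = 1 - \sum_{i=1}^{m-1} p_i$ and the substitution $t \mapsto -t$ in \eqref{Sproducts LR}, a direct computation yields
\[
g_m(t) = (-1)^{m+1} + \sum_{\substack{S \text{ connected}\\ o \in V(S),\ 1 \leq |V(S)| < m}} (-1)^{m+|E(S)|} \, e^{t\mu(\partial S)} \prod_{e \in E(S)} \bigl( e^{t\mu(e)} - 1 \bigr).
\]
Each factor $e^{t\mu(\partial S)}$ and $e^{t\mu(e)} - 1$ has nonnegative Maclaurin coefficients, so the summands are individually sign-definite, and only the alternating signs $(-1)^{m+|E(S)|}$ obstruct the claim. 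The base cases $m = 1$ (where $g_1 \equiv 1$) and $m = 2$ (where $g_2(t) = e^{\lambda_o t} - 1$ with $\lambda_o = \sum_{v} \mu(ov)$) are immediate.

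For the inductive step $m-1 \to m$, the identity $f_m = f_{m-1} - p_{m-1}$ rearranges to $g_m = -g_{m-1} + \pi_{m-1}$, where $\pi_{m-1}(t) := (-1)^m p_{m-1}(-t)$. The sign pattern for $p_{m-1}[k] = f_{m-1}[k] - f_m[k]$, forced by the claim applied to both $f_{m-1}$ and $f_m$, ensures that $\pi_{m-1}$ has nonnegative Maclaurin coefficients, and the inductive hypothesis gives the same for $g_{m-1}$. What remains is the coefficient-wise majorization $\pi_{m-1}[k] \geq g_{m-1}[k]$ for every $k$. This is the hard part: one is reducing a sign question to an inequality between two power series whose signs are already aligned. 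The plan is to establish it by expanding both sides using the formula above (applied to $m$ and $m-1$) and exhibiting a canonical injection from the connected subgraphs $S$ with $1 \leq |V(S)| \leq m-2$ into those with $|V(S)| = m-1$, obtained by adjoining a new vertex joined to $S$ by an edge, and then comparing the corresponding exponential factors coefficient-wise.

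A parallel route I would pursue is to prove the factorization $g_m(t) = (e^t - 1)^{m-1} R_m(t)$ with $R_m$ analytic at $t=0$ and with nonnegative Maclaurin coefficients; the prefactor $(e^t - 1)^{m-1}$ absorbs the order-$(m-1)$ zero of $g_m$. Direct computation in small examples (the complete graphs $K_3, K_4$, the three-leaf star) shows that $R_m$ is a polynomial in $y := e^t$ whose coefficients in the variable $y - 1$ are nonnegative. Proving this in general, presumably via a M\"obius-inversion argument on the boolean lattice of edge subsets that carry an $m$-vertex tree through $o$, would constitute the main technical content, and the principal obstacle is controlling the interaction between the sign $(-1)^{|E(S)|}$ and the $e^{t\mu(\partial S)}$ factors that vary with $S$.
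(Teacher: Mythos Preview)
Your reformulation via $g_m(t)=(-1)^{m+1}f_m(-t)$ is correct, and the explicit expansion you write down is accurate. The difficulty is that your inductive scheme does not close. The identity $g_m=-g_{m-1}+\pi_{m-1}$ reduces the claim to the coefficient-wise inequality $\pi_{m-1}[k]\geq g_{m-1}[k]$, as you say; but your proposed injection cannot establish this. In your own formula, the summands of $g_{m-1}$ carry the sign $(-1)^{m-1+|E(S)|}$, which varies with the number of edges of $S$, and likewise the summands of $\pi_{m-1}$ carry $(-1)^{m+|E(S)|}$ for $|V(S)|=m-1$; in particular non-tree clusters contribute with the ``wrong'' sign on both sides. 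A coefficient-wise inequality between two series each of which is itself an alternating signed sum cannot be obtained by a mere injection of index sets unless you simultaneously control all the cancellations, and you give no mechanism for that. (The aside that ``the sign pattern for $p_{m-1}[k]$, forced by the claim applied to both $f_{m-1}$ and $f_m$'' is also circular: the claim for $f_m$ is what you are proving.) Your second route via a factorization $(e^t-1)^{m-1}R_m(t)$ is specific to models where all edge intensities $\mu(e)$ are equal; in the general long-range setting there is no common factor $e^t-1$ to extract, so this cannot yield the theorem as stated.

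The paper avoids the difference $g_m=\pi_{m-1}-g_{m-1}$ altogether by strengthening the inductive hypothesis: it proves the alternating property for the more general function $\Pr_t\bigl(|\bigcup_{g\in F}C_{A,g}|\geq m\bigr)$, where $F$ is a finite set of roots inside an ambient set $A$. Fixing one root $g_1\in F$ and conditioning on its realised neighbourhood $L=N_{A\setminus F}(g_1)$ yields a recurrence (equation~\eqref{firstF}) in which every term is a \emph{product} $\Pr_t(N_{A\setminus F}(g_1)=L)\cdot \Pr_t\bigl(|\bigcup_{g\in S_L}C_{A\setminus\{g_1\},g}|\geq m-1\bigr)$, plus a tail term $\Pr_t(|N_{A\setminus F}(g_1)|\geq m-|F|)$. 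The first factor is alternating by an explicit product formula (Lemma~\ref{NAalter}), the second by induction on $m$ (with the enlarged root set $S_L$ and the smaller ambient set), and the tail term by a separate double induction (Lemma~\ref{jalter}). Because products of alternating functions are alternating with computable sign (Lemma~\ref{alterprod}), and because all resulting terms share the \emph{same} sign pattern $(-1)^{m+|F|+k}$, the sum is alternating as well---no coefficient inequalities are ever needed. The passage to multiple roots is the key structural idea you are missing.
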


Since $p_m= f_{m} - f_{m+1}$, this immediately implies
that $p_m$ is alternating too, with $\text{sgn}(p_m[k])= (-1)^{m+1+k}$.

We will prove \Tr{alterf} by induction, and to do so we will prove the following refinement of our statement. 
Let $F$, $A$ be non-empty subsets of $V$, such that $F$ is a finite subset of $A$. Any percolation instance $\oo\in \{0,1\}^{E}$  can be restricted to define a random graph $A_\oo$ on $A$ by only keeping the edges that are occupied and have both end-vertices in $A$.  By a straightforward extension of \Tr{Pm entire}, we can prove that the function
$\Pr_{t}(|\cup_{g\in F} C_{A,g}|\geq m)$, where $C_{A,g}$ denotes the component of vertex $g$ in $A_\oo$, admits an entire extension, which we will denote by $f_m$. Our aim is to prove that $f_m$ is alternating for every $m\geq |F|$, with $\text{sgn}(f_m[k])= (-1)^{m+|F|+k}$. The special case where $F=\{o\}$ and $A= V$ then yields \Tr{alterf}.

We will prove this using the following formula:
\labsplitequ{firstF}{
f_{m}(t)=\Pr_t(|N_{A\setminus F}(g_1)|\geq m-|F|)+\\ \sum_{n=0}^{m-|F|-1}\sum_{L\in B_n} 
\Pr_t(|\cup_{g\in S_L} C_{A\setminus \{g_1\},g}|\geq m-1)\Pr_t(N_{A\setminus F}(g_1)=L),}
where $g_1$ is a fixed but arbitrary element of $F$, and $B_n$ is the set of all possible subsets of size $n$ of the (deterministic) neighbourhood of $g_1$ in $A\setminus F$, and $S_L:=\big(F\setminus \{g_1\}\big)\cup L$. 

The fact that this formula holds (for all $t\in \R_+$) is easy to check: we consider all possible neighbourhoods $L$ of $g_1$ in $A\setminus F$ in our percolation instance, and compute the probability of the event $|\cup_{g\in F} C_{A,g}|\geq m$ defining $f_m$ conditioning on $L$, except that we bulk all $L$ with $|L|\geq m-|F|$ into the first summand of the right hand side.

We claim moreover that the functions involved in the right hand side admit entire extensions, and that these extensions still satisfy \eqref{firstF} \fe\ $z\in \C$.

Indeed, the first summand can expressed as a sum of simpler functions via the formula
\labtequ{AFg1}{$\Pr_t(|N_{A\setminus F}(g_1)|\geq m-|F|)=1-\sum_{n=0}^{m-|F|-1} \sum_{L\in B_n} \Pr_t(N_{A\setminus F}(g_1)=L).$}
By \Cr{neighbourhood} all functions of the form $\Pr_t(N_{A\setminus F}(g_1)=L)$ admit entire extensions and 
\labtequ{sum neighbourhood}{$\sum_{L\in B_n} |\Pr_t(N_{A\setminus F}(g_1)=L)|\leq e^{2M} \sum_{L\in B_n} \Pr_{M}(N_{A\setminus F}(g_1)=L)<\infty $} 
for every $M>0$ and every $z\in D(0,M)$. Applying the Weierstrass M-test and Weierstrass' \Tr{thmWei} as usual we deduce that $\sum_{L\in B_n} \Pr_t(N_{A\setminus F}(g_1)=L)$ admits an entire extension, and hence so does $\Pr_t(|N_{A\setminus F}(g_1)|\geq m-|F|)$ by \eqref{AFg1}.

For the second summand of \eqref{firstF} we observe as above that all functions $\Pr_t$ involved admit entire extensions and thus it suffices to verify once again the assumptions of the Weierstrass M-test for the series taken when summing over $L\in B_n$. To upper bound $\Pr_t(|\cup_{g\in S_L} C_{A\setminus \{g_1\},g}|\geq m-1)$ we will use the identity
$$\Pr_t(|\cup_{g\in S_L} C_{A\setminus \{g_1\},g}|\geq m-1)=1-\sum_{j=1}^{m-2} \Pr_t(|\cup_{g\in S_L} C_{A\setminus \{g_1\},g}|=j).$$ 
Using the estimates of \Lr{C equals S LR} and a simple triangle inequality we deduce, for the corresponding entire extensions, that 
$$|\Pr_z(|\cup_{g\in S_L} C_{A\setminus \{g_1\},g}|\geq m-1)|\leq 1+\sum_{j=1}^{m-2} e^{2Mj}P_{M}(|\cup_{g\in S_L} C_{A\setminus \{g_1\},g}|=j)$$
for every $M>0$ and every $z\in D(0,M)$. We can further upper bound\\ $|\Pr_z(|\cup_{g\in S_L} C_{A\setminus \{g_1\},g}|\geq m-1)|$ by $1+(m-2)e^{2Mm}$, because obviously \\ $P_{M}(|\cup_{g\in S_L} C_{A\setminus \{g_1\},g}|=j)\leq 1$. Combining this with \eqref{sum neighbourhood} we deduce that the assumptions of the M-test are verified. 

This proves that the right hand side of \eqref{firstF} admits an entire extension as claimed. Since this extension  coincides with $f_m$ on $\R_+$ as mentioned above, it must coincide with $f_m(z)$ on all of $\C$ by the uniqueness principle since $f_m(z)$ is entire.

\medskip
Our inductive proof of \Tr{alterf} is based on the observation that all these functions $\Pr_t$ involved in \eqref{firstF} are alternating themselves, and the following basic fact that products of alternating functions are alternating.

\begin{lemma}\label{alterprod}
If $f,g$ are entire alternating functions then $fg$ is also alternating, and $\text{sgn}(fg[k]) = \sgn{k} \text{sgn}(f[k])\text{sgn}(g[k])$.
\end{lemma}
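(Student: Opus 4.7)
The plan is to use the Cauchy product formula for Maclaurin coefficients of a product of entire functions, and observe that the alternating hypothesis forces all summands to share a common sign, so that the sign of their sum is determined without any cancellation worries.

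Concretely, I would first write $\text{sgn}(f[i]) = (-1)^{i+\eps_f}$ and $\text{sgn}(g[j]) = (-1)^{j+\eps_g}$ for some $\eps_f, \eps_g \in \{0,1\}$, using the definition of ``alternating''. The Cauchy product formula gives
\[
(fg)[k] = \sum_{i+j=k} f[i]\,g[j],
\]
which converges absolutely since $f,g$ are entire. For each term on the right hand side, the sign of $f[i]\,g[j]$ is $(-1)^{i+\eps_f}(-1)^{j+\eps_g} = (-1)^{k+\eps_f+\eps_g}$, independent of the choice of pair $(i,j)$ with $i+j=k$.

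Hence every summand has the same sign $(-1)^{k+\eps_f+\eps_g}$, so the sum (if nonzero) also has this sign, and with our convention that $\text{sgn}(0)$ may be chosen freely we may assign it this sign as well. Therefore $sgn((fg)[k]) = (-1)^{k + \eps_f + \eps_g}$, which shows that $fg$ is alternating with parameter $\eps_{fg} := \eps_f + \eps_g \pmod 2$. Finally, to verify the formula in the statement, we compute
\[
(-1)^k\,\text{sgn}(f[k])\,\text{sgn}(g[k]) = (-1)^k (-1)^{k+\eps_f}(-1)^{k+\eps_g} = (-1)^{k+\eps_f+\eps_g},
\]
which matches $sgn((fg)[k])$. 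There is no real obstacle here: the only subtlety is the (harmless) sign-convention for zero coefficients, which is taken care of by the freedom in the definition of $\text{sgn}(0)$.
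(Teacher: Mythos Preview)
Your proof is correct and is essentially the same approach as the paper's: the paper leaves this as ``an easy combinatorial exercise'' based on the Cauchy product formula, and you have simply written out the details of that exercise. The key observation---that all summands $f[i]g[j]$ with $i+j=k$ share the common sign $(-1)^{k+\eps_f+\eps_g}$---is exactly what the paper intends.
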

\begin{proof}
This is an easy combinatorial excersise, using the well-known fact that the Taylor series of a product of two analytic functions coincides with the product of the Taylor series of the two functions at any point of the intersection of their domains of definition.
\end{proof}

We now prove that the entire extensions of the functions of the form\\ $\Pr_t(N_{A\setminus F}(g_1)=L)$ appearing in \eqref{firstF} are alternating.

\begin{lemma}\label{NAalter}
Let $L$, $X$ be non-empty subsets of $V$, such that $L$ is a finite subset of $X$. Then for every $o\in V$, the entire extension $f$ of $\Pr_t(N_{X}(o)=L)$ is alternating, with $\text{sgn}(f[k])=\sgn{|L|+k}$. 
\end{lemma}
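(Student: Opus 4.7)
The plan is to obtain an explicit product formula for the entire extension $f$ of $\Pr_t(N_X(o)=L)$, and then express it as a product of elementary alternating entire functions. By independence of the edges, for real $t\geq 0$ we have
$$\Pr_t(N_X(o)=L) = \prod_{y\in L}(1-e^{-t\mu(oy)}) \prod_{y\in X\setminus L} e^{-t\mu(oy)} = e^{-tS}\prod_{y\in L}(1-e^{-t\mu(oy)}),$$
where $S:=\sum_{y\in X\setminus L}\mu(oy)$ is finite by our summability assumption on $\mu$. The right-hand side extends manifestly to an entire function $F(z)$ of $z\in\C$ (a finite product of entire functions), and since $F$ agrees with $f$ on $\R_{\geq 0}$ and both are entire, the uniqueness principle gives $F=f$.

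Next I will check that each factor in this product is alternating. The exponential $e^{-zS}$ has Maclaurin expansion $\sum_{n\geq 0}\frac{(-S)^n}{n!}z^n$, so the sign of its $n$th coefficient is $(-1)^n$, making it alternating with parameter $\eps=0$. For each $y\in L$, the function $g_y(z):=1-e^{-z\mu(oy)}$ has Maclaurin expansion $\sum_{n\geq 1}(-1)^{n+1}\frac{\mu(oy)^n}{n!}z^n$, so the sign of its $n$th coefficient is $(-1)^{n+1}$ for $n\geq 1$, and using the convention $sgn(0)=-1$ at $n=0$, $g_y$ is alternating with $\eps=1$. The degenerate cases $S=0$ (which makes the $e^{-zS}$ factor equal to the constant $1$) and $\mu(oy)=0$ for some $y\in L$ (which forces $f\equiv 0$) are trivial.

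Finally, a straightforward induction using \Lr{alterprod} shows that a product of entire alternating functions with parameters $\eps_1,\ldots,\eps_n$ is alternating with parameter $\eps_1+\cdots+\eps_n \pmod 2$; indeed, if $sgn(f[k])=(-1)^{k+a}$ and $sgn(g[k])=(-1)^{k+b}$, then \Lr{alterprod} yields $sgn((fg)[k])=(-1)^k(-1)^{k+a}(-1)^{k+b}=(-1)^{k+a+b}$. Applying this to our factorisation ---one factor $e^{-zS}$ with $\eps=0$ together with $|L|$ factors $g_y$ each with $\eps=1$--- yields a total parameter of $|L|\pmod 2$, and hence $sgn(f[k])=(-1)^{|L|+k}$ as claimed. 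The only real points requiring care are the identification of the entire extension of the probability with our explicit product and the routine bookkeeping of signs; I do not foresee any serious obstacle.
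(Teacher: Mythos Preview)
Your proof is correct and follows essentially the same approach as the paper: write the explicit product formula $f(z)=e^{-zS}\prod_{y\in L}(1-e^{-z\mu(oy)})$, observe that each factor is alternating with the appropriate parity, and conclude via \Lr{alterprod}. Your bookkeeping of the sign parameters and your explicit mention of the degenerate cases are slightly more careful than the paper's version, but the argument is the same.
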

\begin{proof}
By definition, our function satisfies the following formula:
\labsplitequ{NLformula}{
f(z):= \Pr_{z}(N_{X}(o)=L)= &\prod_{s\in X\setminus L}e^{-z\mu(o s^{-1})}\prod_{s\in L}\big(1-e^{-z\mu(o s^{-1})}\big) =\\
& e^{-z\sum_{s\in X\setminus L}\mu(o s^{-1})}\prod_{s\in L}\big(1-e^{-z\mu(o s^{-1})}\big).}
Since the function $e^{-z \nu}$ is alternating for every real constant $\nu$, the latter expression is a product of $|L|+1$ alternating functions. Thus the result follows from \Lr{alterprod}. Indeed, the leftmost factor has its odd Maclaurin coefficients positive, while each of the $|L|$ other factors has its even  coefficients positive.
\end{proof}

Next, we prove that the first summand of \eqref{firstF} is also alternating.

\begin{lemma}\label{jalter}
Let $F$, $X$ be non-empty subsets of $V$, such that $F$ is a finite subset of $X$. Then \fe\ $o\in V$, the analytic extension $f$ of\\ $P_{z}(|N_{X}(o)|\geq j)$ is alternating \fe\ $j\geq 0$, with $\text{sgn}(f[0])=\sgn{j}$. 
\end{lemma}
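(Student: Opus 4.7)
The plan is to use a Jordan-type inclusion-exclusion identity expressing $\Pr_t(|N_X(o)| \geq j)$ as a signed sum of ``upper-symmetric'' probabilities. For each finite $S \subseteq X$ let $A_S$ denote the event that every edge $os$ with $s \in S$ is open, so that $\Pr_t(A_S) = \prod_{s \in S}(1-e^{-t\mu(os)})$, and set $\Sigma_k(t) := \sum_{|S|=k,\, S \subseteq X}\Pr_t(A_S)$. The identity reads
\labtequ{jplan}{$\Pr_t(|N_X(o)| \geq j) = \sum_{k \geq j}(-1)^{k-j}\binom{k-1}{j-1}\Sigma_k(t),$}
which is classical for finite $X$; for infinite $X$ it will require some justification, addressed below.

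Granting \eqref{jplan}, the sign analysis is straightforward. Each factor $1-e^{-t\mu(os)}$ is alternating, with $m$-th Maclaurin coefficient of sign $(-1)^{m+1}$ (the coefficient is $0$ for $m=0$, so the sign may be chosen freely there). Iterating \Lr{alterprod} yields that $\prod_{s \in S}(1-e^{-t\mu(os)})$ has $m$-th Maclaurin coefficient of sign $(-1)^{m+|S|}$, vanishing for $m<|S|$. Summing over $S \subseteq X$ with $|S|=k$---all summands sharing the same sign pattern---shows that $\Sigma_k(t)$ has $m$-th Maclaurin coefficient of sign $(-1)^{m+k}$ (zero for $m<k$). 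Multiplying by the prefactor $(-1)^{k-j}\binom{k-1}{j-1}$, the $k$-th term of \eqref{jplan} acquires $m$-th coefficient of sign $(-1)^{k-j}(-1)^{m+k} = (-1)^{m+j}$, \emph{independent of $k$}. Only the finitely many terms with $k \leq m$ contribute to $[f]_{[m]}$, and all contributions share the sign $(-1)^{m+j}$; hence $sgn(f[m]) = (-1)^{m+j}$ when the coefficient is non-zero, yielding the alternating property together with $sgn(f[0]) = (-1)^j$.

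The main technical obstacle is justifying \eqref{jplan} when $X$ is infinite, in which case each $\Sigma_k(t)$ is itself an infinite sum. I would handle this by approximation with an increasing sequence of finite subsets $X_n \nearrow X$. For each $X_n$, \eqref{jplan} is the standard finite Jordan identity and $\Pr_z(|N_{X_n}(o)|\geq j)$ is entire, with Maclaurin coefficients obeying the sign rule above. Pointwise convergence $\Pr_t(|N_{X_n}(o)|\geq j) \to \Pr_t(|N_X(o)|\geq j)$ on $[0,\infty)$ follows by monotone convergence. A bound of the form $|\Pr_z(N_{X_n}(o) = L)| \leq e^{2M}\Pr_{x+M}(N_{X_n}(o) = L)$ for $z \in D(x,M)$ is obtained via the argument of \Cr{neighbourhood}, and summing over $L \subseteq X_n$ with $|L|<j$ yields a uniform bound $|\Pr_z(|N_{X_n}(o)|\geq j)| \leq 1 + j\,e^{2M}$ on $D(x,M)$, independent of $n$. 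Montel's theorem then provides a subsequence converging uniformly on compact subsets of $\C$; uniqueness of analytic continuation from $[0,\infty)$ forces the full sequence to converge to $f$ uniformly on compacts, whence the Maclaurin coefficients converge and the sign pattern $(-1)^{m+j}$ is inherited by $f$.
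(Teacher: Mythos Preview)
Your argument is correct, and it takes a genuinely different route from the paper's.

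The paper first handles finite $X$ by a \emph{double induction} on $|X|$ and $j$: fixing an element $x\in X$, it writes
\[
\Pr_z(|N_X(o)|\geq j)=e^{-z\mu(ox)}\Pr_z(|N_{X\setminus x}(o)|\geq j)+(1-e^{-z\mu(ox)})\Pr_z(|N_{X\setminus x}(o)|\geq j-1),
\]
and reads off the alternation from the inductive hypothesis via \Lr{alterprod}. You instead invoke the Jordan identity \eqref{jplan} directly, which lines up all the signs at once and dispenses with the induction entirely; this is cleaner and arguably more conceptual. For the passage from finite to infinite $X$, the paper establishes explicit locally uniform convergence $f_i\to f$ by comparing $\Pr_z(N_{X_i}(o)=L)$ with $\Pr_z(N_X(o)=L)$ via the multiplicative correction $e^{z\sum_{x\in X\setminus X_i}\mu(ox)}$; you instead obtain a uniform bound and appeal to Montel plus the identity theorem. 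Both work, with the paper's argument being slightly more self-contained (it stays within the Weierstrass toolkit already set up) and yours being a standard normal-families shortcut.

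Two minor remarks: your uniform bound can be sharpened to $1+e^{2M}$ (the probabilities $\Pr_M(N_{X_n}(o)=L)$ over $|L|<j$ sum to at most $1$, not $j$); and the case $j=0$ is trivial ($f\equiv 1$), which is worth saying explicitly since the Jordan formula with $\binom{k-1}{-1}$ needs a convention there.
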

\begin{proof}
We can rewrite $f$ as 
\labtequ{fsumsum}{$f(z)= 1- \Pr_{z}(|N_{X}(o)| < j) = 1- \sum_{n=0}^{j-1}\sum_{L\in B_n} \Pr_{z}(N_{X}(o) = L)$.}
Indeed, this formula is easily verified for $z\in \R_+$, and by the arguments used for \eqref{firstF} it holds \fe\ $z\in \C$.

Note that the right hand side involves $j$ sums, each of which is a sum of alternating functions with agreeing signs by \Lr{NAalter}. However, the signs of each of those $j$ sums have alternating parities, and since we do not know anything about the absolute values of their coefficients this formula is not enough to prove our statement. However, it will be useful below on different grounds.

\medskip
We start by proving the statement of the lemma for finite $X$, using a double induction on $|X|$ and $j$. To begin with, for $j=0$, $f$ is alternating, with $\text{sgn}(f[k])=\sgn{k}$ \fe\ finite $X$, as it becomes the constant function $f=1$. Moreover, $f$ is identically 0 and hence alternating for $X=\emptyset$ and every $j\geq 1$, and we can take $\text{sgn}(f[k])=\sgn{j+k}$ in this case. For the inductive step, suppose the statement is proved for $j\leq k$ and every finite $X$. Then for $j=k$, we will prove it by induction on $|X|=1,2,\ldots$ (remember we already know it for $|X|=0)$. For this, we can pick any element $x\in X$, and rewrite $f$ as follows, by distinguishing between the events of the edge $ox$ being absent or present:  
\begin{equation}  \label{sumsum} 
\begin{aligned}
f(z)= P_{z}(|N_{X}(o)|\geq j) = & e^{-z\mu(ox)} P_{z}(|N_{X\sm x}(o)|\geq j) +\\ 
&(1- e^{-z\mu(ox)} )P_{z}(|N_{X\sm x}(o)|\geq j-1).
\end{aligned} 
\end{equation} 
(Again, we repeat the arguments used \eqref{firstF} to establish this in all of $\C$.)
By our induction hypothesis, both $P_z$ functions involved are alternating; the sign of the $k$th  Maclaurin coefficient of the first one is \sgn{j+k}, while for the second one it is \sgn{j-1+k}. By \Lr{alterprod}, each of the two products of \eqref{sumsum} is alternating, with the sign of the $k$th coefficient being \sgn{j+k}.

\medskip
This completes the induction step, establishing that $f$ is alternating for finite $X$. 
For an infinite $X$ we now use an approximation argument. Let\\ $X_1 \subset X_2 \subset \ldots$ be an increasing sequence of finite subsets of $X$ with $\bigcup X_i = X$. We claim that each Maclaurin coefficient of $P_{z}(|N_{X}(o)|\geq j)$ is the limit, as $i\to \infty$, of the corresponding Maclaurin coefficient of $P_{z}(|N_{X_i}(o)|\geq j)$. Since we have already proved the latter functions to be alternating because $X_i$ is finite, this claim implies our statement that $f$ is alternating.

Applying \eqref{fsumsum} with $X$ replaced by $X_i$ \fe\ $i\in \N$, we have 
\labtequ{fsumsumi}{$f_i(z) := 1- P_{z}(|N_{X_i}(o)| < j) = 1- \sum_{n=0}^{j-1}\sum_{\substack{L\in B_n \\ L\subset X_i}} P_{z}(N_{X_i}(o) = L)$}
To prove the aforementioned claim about the convergence of Maclaurin coefficients, it suffices to show that $f_i$ converges to $f$ uniformly on some open disk $D(0,M)$, and we next show that this is the case. 

Using the explicit formula \eqref{NLformula}, we have 
$$P_{z}(N_{X_i}(o) = L) = P_{z}(N_{X}(o) = L) e^{z\sum_{x\in X \sm X_i} \mu(ox)}$$ whenever $L\subset X_i$. Hence we obtain 
$$\sum_{n=0}^{j-1}\sum_{\substack{L\in B_n \\ L\subset X_i}} P_{z}(N_{X_i}(o) = L)=e^{z\sum_{x\in X \sm X_i} \mu(ox)}\sum_{n=0}^{j-1}\sum_{\substack{L\in B_n \\ L\subset X_i}} P_{z}(N_{X}(o) = L).$$
Pick some $M>0$, and note that as $i\to \infty$, the last factor $e^{z\sum_{x\in X \sm X_i} \mu(ox)}$ approaches the constant $1$ function uniformly on $D(0,M)$ because 
$$|e^{z\sum_{x\in X \sm X_i} \mu(ox)}-1|\leq e^{M\sum_{x\in X \sm X_i} \mu(ox)}-1$$ for every $z\in D(0,M)$ by \Lr{absz}, and the latter quantity converges to $0$. Moreover as $i\to \infty$ the sequence $\sum_{n=0}^{j-1}\sum_{\substack{L\in B_n \\ L\subset X_i}} P_{z}(N_{X}(o) = L)$ converges to $\sum_{n=0}^{j-1}\sum_{L\in B_n} P_{z}(N_{X}(o) = L)$ uniformly on $D(0,M)$, since 
$$\sum_{\substack{L\in B_n \\ L\not\subset X_i}} |P_{z}(N_{X}(o) = L)|\leq \sum_{\substack{L\in B_n \\ L\not\subset X_i}} e^{2M} P_{M}(N_{X}(o) = L)$$ for every $z\in D(0,M)$ by \Lr{C equals S LR},  and the latter sum converges to $0$. Therefore $f_i$ converges to $f$ uniformly on $D(0,M)$ as desired.
\end{proof}

We now have all the ingredients needed for \Tr{alterf}:

\begin{proof}[Proof of \Tr{alterf}]

We work with the more general function\\ $f_m(t)= \Pr_{t}(|\cup_{g\in F} C_{A,g}|\geq m)$ as discussed after the statement of \Tr{alterf}, and proceed by induction on $m$. The statement is trivial for $m\leq |F|$, since $f_m$ is the constant function 1 in this case, and we are allowed to consider $\text{sgn}(0)$ to be 1 or $-1$. For the induction step, supposing we have proved the statement for $m<j$, we can obtain it for $m=j$ using \eqref{firstF}; we repeat it here for convenience:
\labsplitequ{repeat}{
f_{m}(z)=P_{z}(|N_{A\setminus F}(g_1)|\geq m-|F|)+\\ \sum_{n=0}^{m-|F|-1}\sum_{L\in B_n} 
P_{z}(|\cup_{g\in S_L} C_{A\setminus \{g_1\},g}|\geq m-1)P_{z}(N_{A\setminus F}(g_1)=L),}
The first summand is alternating by \Lr{jalter}, while we can prove each summand of the form $P_{z}(|\cup_{g\in S_L} C_{A\setminus \{g_1\},g}|\geq m-1)P_{z}(N_{A\setminus F}(g_1)=L)$ appearing in the second summand to be alternating by combining \Lr{alterprod} with our induction hypothesis and  \Lr{NAalter} (here we used the fact that $|S_L| = |F|+|L|-1 <m-1$ since $|L|\leq m-|F|-1$ in order to be allowed to apply the induction hypothesis). Moreover, it is straightforward to check that these results also imply that the sign of the $k$th Maclaurin coefficient of any of those summands is \sgn{m+|F|+k}. Since the $k$th Maclaurin coefficient of $f_m$ is the sum of the corresponding coefficients of these finitely many summands, this completes the proof that $f_m$ is alternating, with $\text{sgn}(f_m[k])= (-1)^{m+|F|+k}$.

\comment{
\medskip
The proof that $f_m(z)$ is almost alternating at any $r\in \R$ follows the same lines: it will suffice to extend \Lr{alterprod} to all of $\R$, since \Lrs{NAalter} and \ref{jalter}, on which the above proof was based, are consequences of that lemma. The extension of  \Lr{alterprod} we need is the following:
\labtequ{alterprodgen}{If $f,g$ are entire functions almost alternating at a point $r\in \R$, then $h=fg$ is also almost alternating at $r$, and\\ $\text{sgn}((h(z)-h(r))[k]) =  \sgn{k}\text{sgn}(f[k])\text{sgn}(g[k])$.}
To prove this, notice first that the (entire) function 
\begin{align*} 
\begin{split}
h^*(z):= &\left( f(z) - f(r) \right) \left( g(z) - g(r) \right) = \\
&f(z)g(z) - f(r)g(z) - g(r)f(z) + f(r)g(r)
\end{split}
\end{align*} 
is alternating at $r$; indeed, each factor is alternating by the hypothesis that $f$ and $g$ are almost alternating at $r$, and hence $h^*$ is alternating by  \Lr{alterprod}.

However, our aim is to prove that $h=fg$ is almost alternating at $r$, that is,  $h^\circ(z):= f(z)g(z) - f(r)g(r)$ is alternating. To prove this, notice that, by elementary manipulations, 
\labsplitequ{threesummands}{
h^\circ(z) = &h^*(z) + f(r)g(z) + g(r)f(z) - 2f(r)g(r) = \\
&h^*(z) + f(r)\left(g(z) - g(r) \right) + g(r)\left(f(z)- f(r) \right).
}

Recall that the first summand $h^*$ is alternating. The same is true for the other two summands $f(r)\left(g(z) - g(r) \right)$ and $g(r)\left(f(z)- f(r) \right)$ by the assumption that $f$ and $g$ are almost alternating at $r$, since the terms $f(r)$ and $g(r)$ are constants. Thus, to prove that $h^\circ(z)$ is alternating, it only remains to check that the signs of corresponding Taylor coefficients of these three summands agree. This is a straightforward check: the sign of the odd coefficients of the last two summands coincides with $\text{sgn}(f(r)g(r))$ (we can assume that $f(r)g(r)\neq 0$ for simplicity, by adding constants to $f$ and $g$ if needed). To determine the signs of $h^*(z)$ we need to calculate the second derivative as $h^*(z)$ has a zero of order 2 at $r$. We have $dh^*(z)/dz = f'(z)\left(g(z) - g(r) \right)  + g'(z) \left(f(z)- f(r) \right)$, and $d^2h^*(z)/dz^2 = f''(z)\left(g(z) - g(r) \right)  + g''(z) \left(f(z)- f(r) \right) + 2f'(z) g'(z)$. Evaluated at $z=r$ this gives $2 f'(r) g'(r)$, which has the same sign as $f(r)g(r)$ by the alternating property of $f,g$. Thus the sings of the Taylor coefficients of $h'$ agree with those of the other two summands of \eqref{threesummands} as claimed. This completes the proof of \eqref{alterprodgen}.

The proofs of \Lrs{NAalter} and \ref{jalter} apply almost verbatim to prove that the corresponding functions are almost alternating at any $r\in \R$. Using them, we can prove  that $f_m(z)$ is almost alternating too at any $r\in \R$ by the same induction as in the case $r=0$ considered above.}
\end{proof}

We just proved that $f_m$ and $p_m$ are alternating at 0. Using this we can prove the same for $z$ on the negative real axis.

\begin{corollary}\label{alterneg}
The functions $f_m$ and $p_m$ are alternating at every $r \in \R_{\leq 0}$, with $\text{sgn}(f_m[k](r))= \text{sgn}(p_m[k](r))= (-1)^{m+k+1}$.
\end{corollary}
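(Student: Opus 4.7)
The plan is to deduce Corollary~\ref{alterneg} directly from \Tr{alterf} by expanding the Taylor series of $f_m$ at $r$ in terms of its Maclaurin coefficients and observing a sign-compatibility miracle. Since $f_m$ is entire by \Tr{Pm entire}, its Maclaurin series $f_m(z)=\sum_{n=0}^{\infty} f_m[n]\, z^n$ converges on all of $\C$, and termwise differentiation is legitimate.

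First I would write, for $r\in \R_{\leq 0}$,
\[
f_m[k](r) \;=\; \frac{f_m^{(k)}(r)}{k!} \;=\; \sum_{n=k}^{\infty} \binom{n}{k}\, f_m[n]\, r^{\,n-k},
\]
with absolute convergence since $f_m$ is entire. The case $r=0$ is \Tr{alterf} itself (only the $n=k$ term survives and its sign is $(-1)^{m+k+1}$), so assume $r<0$. By \Tr{alterf} we have $\mathrm{sgn}(f_m[n])=(-1)^{n+m+1}$, and $\mathrm{sgn}(r^{n-k})=(-1)^{n-k}=(-1)^{n+k}$. The key computation is therefore
\[
\mathrm{sgn}\bigl(f_m[n]\, r^{\,n-k}\bigr) \;=\; (-1)^{n+m+1}\cdot(-1)^{n+k}\;=\;(-1)^{2n+m+k+1}\;=\;(-1)^{m+k+1},
\]
which is \emph{independent of $n$}. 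Hence every summand in the series for $f_m[k](r)$ has the same sign $(-1)^{m+k+1}$ (or is zero), so the sum has that sign too, establishing the claim for $f_m$.

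For $p_m$, we use the identity $p_m=f_m-f_{m+1}$ (valid on $\C$ since both admit entire extensions), which gives $p_m[k](r)=f_m[k](r)-f_{m+1}[k](r)$. By what we just proved, $f_m[k](r)$ has sign $(-1)^{m+k+1}$ and $f_{m+1}[k](r)$ has sign $(-1)^{m+k+2}=-(-1)^{m+k+1}$. Thus these two quantities have \emph{opposite} signs (or vanish), so their difference has the same sign as $f_m[k](r)$, namely $(-1)^{m+k+1}$, completing the proof.

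There is no real obstacle here: the only mildly delicate point is the bookkeeping of the signs, and the convention that $\mathrm{sgn}(0)$ may be taken to be either $\pm 1$ handles the degenerate cases uniformly. No new probabilistic or analytic input is needed beyond the entirety of $f_m$ (giving the unrestricted convergence of the Taylor series at any $r$) and the alternating pattern at $0$ from \Tr{alterf}.
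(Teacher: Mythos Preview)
Your proof is correct and takes essentially the same approach as the paper: both evaluate the Maclaurin expansion of $f_m^{(k)}$ at $r<0$ and observe that every term acquires the common sign $(-1)^{m+k+1}$, then deduce the result for $p_m$ from $p_m=f_m-f_{m+1}$. Your formula $f_m[k](r)=\sum_{n\ge k}\binom{n}{k}f_m[n]\,r^{n-k}$ is just a slightly more compact packaging of the paper's step of computing $\mathrm{sgn}(f_m^{(n)}[k])=\mathrm{sgn}(f_m[k+n])$ and then evaluating at $r$.
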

\begin{proof}
It suffices to prove the statement for $f_{m}$, since we can then deduce it for $p_{m}$ using again the fact that $p_{m}= f_{m} - f_{m+1}$.

Since $f_{m}$ is an entire function, so is its $n$th derivative $f_m^{(n)}(z)$, and therefore the radius of convergence of the Maclaurin expansion of $f_m^{(n)}(z)$ is infinite. Thus we can determine the sign of $\text{sgn}(f_m^{(n)}[k](r))$ by using the Maclaurin expansion of $f_m^{(n)}(z)$. The latter can be immediately obtained using the Maclaurin expansion of $f_m$, and we have $\text{sgn}(f_m^{(n)}[k]) = \text{sgn}(f_m[k+n])$, which by \Tr{alterf} equals $(-1)^{m+1+k+n}$. Evaluating the Maclaurin expansion of $f_m^{(n)}$ at $r<0$ we see that all terms of that expansion have sign $(-1)^{m+n+1}$, and so $\text{sgn}(f_m^{(n)}(r)) = (-1)^{m+n+1}$. Since $\text{sgn}(f_m[k](r))= \text{sgn}(f_m^{(k)}(r))$ by the definition of the Taylor expansion, our claim follows.
\end{proof}

We finish this section with a related fact about the zeros of our functions.
\begin{theorem}\label{zerosf}
The functions $p_m$ and $f_m$ have a zero of order at least $m-1$ at $z=0$ \fe\ $m>1$.
\end{theorem}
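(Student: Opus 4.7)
The key observation is that each $P_S(z)=\Pr_z(C(o)=S)$, for $S$ a finite connected subgraph of $V$ containing $o$, has a zero of order at least $|E(S)|\geq|V(S)|-1$ at $z=0$. This is immediate from \eqref{Sproducts II} (or \eqref{Sproducts NN} in the \nnm\ case): each of the $|E(S)|$ factors $e^{z\mu(e)}-1$ (respectively $z$) has a simple zero at $0$, while the complementary factor $e^{-z\mu(S)}$ (resp.\ $(1-z)^{|\partial S|}$) is analytic and non-vanishing there. Since $S$ is connected, a spanning tree on $|V(S)|=m$ vertices already contributes $m-1$ such factors.

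For $p_m$, \Tr{Pm entire} gives $p_m(z)=\sum_S P_S(z)$ with uniform convergence on compact subsets of $\C$, where $S$ ranges over connected subgraphs with $o\in V(S)$ and $|V(S)|=m$. By Cauchy's integral formula, the $k$th Maclaurin coefficient of $p_m$ equals the sum of the $k$th Maclaurin coefficients of the $P_S$; since each such summand vanishes for $k<m-1$, so does $[p_m]_k$, yielding the claim for $p_m$.

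For $f_m$, I would identify $f_m(z)=\sum_{k\geq m}p_k(z)$ on a small disk $D(0,M)\subset\C$ and conclude as above. On a real interval near $0$ this identity holds because $C(o)$ at time $t$ is stochastically dominated by a Galton--Watson process with $\mathrm{Poisson}(t)$ offspring: using $\sum_y\mu(xy)=1$, the number of occupied edges at any vertex is a sum of independent Bernoulli$(1-e^{-t\mu(xy)})$ variables, which is dominated by $\mathrm{Poisson}(t)$. For $t<1$ this branching process is strictly subcritical, so $|C(o)|$ has an exponential tail, forcing $\theta(t)=0$ and $\sum_k p_k(t)=1$; hence $f_m(t)=\sum_{k\geq m}p_k(t)$ for small real $t>0$. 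Combining this exponential decay with $|p_k(z)|\leq e^{2Mk}p_k(M)$ from \Tr{Pm entire} gives uniform convergence of $\sum_{k\geq m}p_k(z)$ on $D(0,M)$ for $M$ small enough, and the identity theorem extends the equality $f_m=\sum_{k\geq m}p_k$ to all of $D(0,M)$. Since each $p_k$ with $k\geq m$ has a zero of order at least $m-1$ at $0$ by the previous step, so does $f_m$. The main obstacle is obtaining the exponential decay of $p_k(M)$ without invoking transitivity or the \ABP; the branching process domination supplies this cleanly in the full generality of the standing hypothesis $\sum_y\mu(xy)<\infty$.
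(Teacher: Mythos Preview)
Your argument for $p_m$ is essentially identical to the paper's: each summand $P_S(z)$ carries a zero of order $|E(S)|\ge m-1$ from the edge factors, and uniform convergence passes this to $p_m$.

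For $f_m$ you take a genuinely different route. The paper exploits the alternating-sign result (\Cr{alterneg}) just established: since $f_m[k]$ and $-f_{m+1}[k]$ share the sign $(-1)^{m+1+k}$, they cannot cancel in $p_m[k]=f_m[k]-f_{m+1}[k]$; hence $p_m[k]=0$ for $k<m-1$ forces $f_m[k]=0$ as well. Your approach instead realises $f_m=\sum_{k\ge m}p_k$ on a small disc via a Galton--Watson domination (each vertex has Poisson$(t)$-dominated offspring under the standing normalisation $\sum_y\mu(xy)=1$), giving the needed exponential tail of $p_k(M)$ for small $M>0$ without transitivity or the \ABP, and then reads off the zero from the already-proven statement for each $p_k$. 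This is correct; indeed the paper itself invokes exactly this branching-process comparison later (in the proof of \Tr{coincide}) for the same purpose. Your route is more self-contained in that it does not rely on the alternating-sign machinery, at the modest cost of the domination argument; the paper's route is slicker once \Cr{alterneg} is in hand, and makes no use of the normalisation beyond what that corollary already needed.
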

\begin{proof}
We first prove the statement for $p_m$. Note that any connected graph with $m$ vertices has at least $m-1$ edges. Hence using the explicit formulas 
\labtequ{sumpm}{$p_m(z)=\sum_{S\in G_m} P_{z}(C(o)=S)$,}
where $G_m$ denotes the set of connected graphs on $m$ vertices in $V$, and
$$P_{z}(C(o)=S)=\prod_{e\in \partial S}e^{-z\mu(e)}\prod_{e\in E(S)}\big(1-e^{-z\mu(e)}\big),$$
we see that the summands of $p_m$ have a zero of order at least $m-1$ at $z=0$, because each factor of the form $1-e^{-z\mu(e)}$ contributes a zero of order $1$ and $|E(S)|\geq m-1$. By \Tr{Pm entire} the partial sums in \eqref{sumpm} converge uniformly on an open neighbourhood of 0 to $p_m$, which implies that $p_m$ satisfies the desired property.

Combining this with the formula  $p_m= f_{m} - f_{m+1}$, we can now easily deduce that  $f_m$ too has a zero of order at least $m-1$ at $z=0$. Indeed, by \Cr{alterneg} the $k$th Maclaurin coefficient of $f_{m}$ and $-f_{m+1}$ have the same sign $(-1)^{m+1+k}$. Hence if any of the first $m-1$ Maclaurin coefficients of  $f_{m}$ or $-f_{m+1}$ is non-zero then so is the corresponding coefficient of $p_m$, contradicting what we just proved.
\end{proof}

\section{The negative percolation threshold} \label{sec neg}

In \Sr{secChi} we proved that the susceptibility $\chi$ is an analytic function of the parameter below the percolation threshold $p_c$ or $t_c$ for all transitive models. This means that $\chi(t)$ admits an extension into a holomorphic function in some domain $D$ of $\C$ containing the interval $[0,p_c)$ or $[0,t_c)$. It would be interesting to come up with a definition that determines this $D$ uniquely, and makes it maximal in some sense. Motivated by this quest, we introduce in this section a `negative threshold' $t_c^- \in \R_{<0}$, at which the boundary of such a $D$ would have to cross the negative real axis.
From now on we will be working with a transitive \lrm\ as defined in \Sr{sec setup}, but the discussion can be repeated for \nnm s as well.



The standard percolation threshold $t_c$ is typically defined as $\sup\{t \mid \theta(t)=0\}$. Natural alternative definitions of $t_c$ can be given by considering the finiteness of the susceptibility $\chi$, i.e.\ as $\sup\{t \mid \chi(t)< \infty\}$, or in terms of the exponential decay of the cluster size as $\sup\{t \mid \exists c<1 : p_m(t) \leq c^m\ \forall m\in \N \}$. 
For a while it was an open problem whether these three thresholds coinside, which was settled by the papers \cite{AizNewTre,AizBar,DCTa} (we discussed in \Sr{sec ABP} about how these results generalise to \lrm s).

When trying to define the negative threshold $t_c^-$ we are faced with similar difficulties, some of which we are able to overcome below. Perhaps the most natural definition is the following. Since we know (\Tr{chi LR}) that $\chi(t)$ admits an analytic extension into a domain containing the real interval $[0,t_c)$, we can let $I$ be the largest real interval that contains $[0,t_c)$ and is contained in the domain of an analytic extension of $\chi(t): \R_+ \to \R_+$, and let $t_c^-=t_A$, where $t_A\in \R_- \cup \{-\infty\}$ is defined as the leftmost point of $I$. 

Different definitions for $t_c^-$ can be given based on the concrete analytic extension of $\chi$ that we constructed with \Tr{chi LR}: recall that we used the fact that, for $t\in \R_+$, we have $\chi(t) = \sum_m m p_m$. Alternatively, we could have used the formula $\chi(t) = \sum_m f_m$. This motivates the following definitions.

\begin{definition}
We define $t_1\coloneqq \inf\{r<0 \mid \lim_{m\rightarrow \infty} p_m(r)=0\}$,
$t_2\coloneqq \inf\{r<0 \mid \sum_{m=1}^\infty m|p_m(r)| < \infty\}$, 
$t_3\coloneqq \inf\{r<0 \mid \lim_{m\rightarrow \infty} f_m(r)=0\}$ and $t_4\coloneqq \inf\{r<0 \mid \sum_{m=1}^\infty |f_m(r)| < \infty\}$.
\end{definition}

Moreover, given the important role of the exponential decay of $p_m$ in this paper, it is also natural to define 
$$t_5:= \inf\{r<0 \mid \exists c<1 : |p_m(r)| \leq c^m\ \forall m\in \N\}.$$

We remark that since $\text{sgn}(f_m[k](r)))=(-1)^{m+k+1}$ and $\text{sgn}(p_m[k](r)))=(-1)^{m+k+1}$ when $r<0$ by the results of \Sr{sec alt}, we see that $|f_m(r)|$ and $|p_m(r)|$ are decreasing
functions of $r$ for every $m\geq 1$. 

We will show that all these values  $t_i$ coincide (\Tr{coincide}).
A key role in our proof will be played by the Hadamard three circles theorem (\Tr{HT}). 
In order to use it, we first prove that
the supremum of both $|f_m|$ and $|p_m|$ over the closed disk $D(0,M)$ is attained at
$z=-M$.

\begin{lemma} \label{maxx}
Let $f$ be an alternating function. Then for every $M>0$
$$\sup_{z\in D(0,M)} |f(z)|=|f(-M)|.$$
\end{lemma}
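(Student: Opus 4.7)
The plan is to use the Maclaurin expansion of $f$ directly. Let $f(z) = \sum_{i=0}^\infty c_i z^i$ be the Maclaurin series of $f$, which converges on all of $\C$ (or at least on a disk containing $D(0,M)$, which is enough). Since $f$ is alternating, there exists $\eps \in \{0,1\}$ such that $\mathrm{sgn}(c_i) = (-1)^{i+\eps}$ for every $i$, and in particular $c_i = (-1)^{i+\eps}|c_i|$.

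First I would establish the upper bound. For any $z \in D(0,M)$, the triangle inequality together with $|z| \le M$ gives
\[
|f(z)| \;=\; \Bigl|\sum_{i=0}^\infty c_i z^i\Bigr| \;\le\; \sum_{i=0}^\infty |c_i|\, |z|^i \;\le\; \sum_{i=0}^\infty |c_i|\, M^i.
\]
Next I would observe that the alternating sign structure makes this upper bound equal to $|f(-M)|$. Indeed, evaluating at $z = -M$,
\[
f(-M) \;=\; \sum_{i=0}^\infty c_i (-M)^i \;=\; \sum_{i=0}^\infty (-1)^{i+\eps} |c_i| (-1)^i M^i \;=\; (-1)^{\eps}\sum_{i=0}^\infty |c_i| M^i,
\]
so that $|f(-M)| = \sum_{i=0}^\infty |c_i| M^i$. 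Combining these two displays yields $|f(z)| \le |f(-M)|$ for every $z \in D(0,M)$, with equality at $z = -M$ itself, which proves the claim.

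There is essentially no obstacle here: the entire content is that the triangle inequality, which is typically lossy, becomes sharp precisely when all summands line up in sign, and evaluation at $z = -M$ is the unique point on the boundary of $D(0,M)$ where this alignment occurs (up to the global factor $(-1)^\eps$). One small point to mention is the convergence of $\sum |c_i| M^i$, which is automatic because $f$ is entire (or analytic on a larger disk), so the Maclaurin series converges absolutely on $D(0,M)$.
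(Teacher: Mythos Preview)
Your proof is correct and follows essentially the same approach as the paper's: both apply the triangle inequality to the Maclaurin expansion to get $|f(z)|\le \sum |c_i|M^i$, and then use the alternating sign condition to identify this bound with $|f(-M)|$. The only cosmetic difference is that the paper factors out $(-1)^k$ before applying the triangle inequality, whereas you substitute $c_i=(-1)^{i+\eps}|c_i|$ afterwards.
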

\begin{proof}
Let $f(z)=\sum_{k=0}^\infty c_kz^k$ be the Taylor expansion of $f$.
Then 
$$\abs*{\sum_{k=0}^\infty c_k z^k}=\abs*{\sum_{k=0}^\infty (-1)^k c_k (-
z)^k}\leq \sum_{k=0}^\infty|(-1)^k c_k|M^k.$$
Note that the sign of $(-1)^k c_k$ is the same 
for every $k$, since $\text{sgn}(c_k)=(-1)^{k+\varepsilon}$ for some $\varepsilon\in \{0,1\}$. Hence,
$$\sum_{k=0}^\infty |(-1)^k c_k|M^k=\abs*{\sum_{k=0}^\infty (-1)^k c_kM^k}=
\abs*{\sum_{k=0}^\infty c_k(-M)^k}=|f(-M)|.$$ Thus, $f$ is maximised at 
$z=-M$.
\end{proof}

\comment{
As both $f_m$ and $p_m$ are alternating, we have proved the following.

\begin{corollary}\label{maxx}
For every $M>0$ we have  
$$\sup_{z\in D(0,M)} |f_m(z)|=|f_m(-M)|$$
and 
$$\sup_{z\in D(0,M)} |p_m(z)|=|p_m(-M)|$$
\end{corollary}
}

\begin{theorem}\label{coincide}
With the above notation we have $t_1=t_2=t_3=t_4=t_5$. 
\end{theorem}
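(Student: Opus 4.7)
My approach is to establish the chain
\[t_5 \geq t_2 \geq t_4 \geq t_3 = t_1 \geq t_5,\]
which forces all five thresholds to coincide. The first four inequalities are immediate from the definitions combined with a simple identity on the negative real axis, while $t_5 \leq t_1$ is the essential point and will follow from a Schwarz-lemma type estimate exploiting the zero of order $m-1$ of $f_m$ at the origin provided by \Tr{zerosf}.

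\textbf{Step 1 (Sign identity).} I first claim that, for every $r<0$ and every $m\geq 1$,
\[
|p_m(r)| = |f_m(r)| + |f_{m+1}(r)|.
\]
By \Tr{alterf} the Maclaurin coefficients satisfy $\mathrm{sgn}(f_m[k]) = (-1)^{m+k+1}$; evaluating the (entire) Maclaurin series at $r<0$ makes each term $f_m[k]\,r^k$ have constant sign $(-1)^{m+1}$, whence $\mathrm{sgn}(f_m(r)) = (-1)^{m+1}$. The same argument applies to $p_m$. Consequently $f_m(r)$ and $f_{m+1}(r)$ have opposite signs, so $p_m(r)=f_m(r)-f_{m+1}(r)$ gives the identity. (This also recovers the monotonicity of $|f_m(r)|$ and $|p_m(r)|$ in $|r|$ mentioned before the theorem, and shows that each defining set $A_i$ is an interval of the form $(t_i,0)$ or $[t_i,0)$.)

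\textbf{Step 2 (Easy inequalities).} With $A_i$ denoting the defining set of $t_i$, the identity gives directly
\[
A_5 \subseteq A_2 \subseteq A_4 \subseteq A_3 = A_1,
\]
hence $t_5 \geq t_2 \geq t_4 \geq t_3 = t_1$. Indeed, exponential decay trivially yields $\sum m|p_m(r)|<\infty$ (so $A_5\subseteq A_2$); the identity rewrites $\sum_m m|p_m(r)|$ as $|f_1(r)|+\sum_{m\geq 2}(2m-1)|f_m(r)|$, so $\sum m|p_m|<\infty \Rightarrow \sum|f_m|<\infty$ (so $A_2\subseteq A_4$); summability implies convergence to $0$ (so $A_4\subseteq A_3$); and the two-sided bound $|f_m(r)|\leq |p_m(r)|\leq |f_m(r)|+|f_{m+1}(r)|$ shows $A_3=A_1$.

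\textbf{Step 3 (Closing the loop).} It remains to prove $t_5\leq t_1$. Fix $r'\in A_3$ and let $r''\in(r',0)$. By \Tr{zerosf} the entire function $f_m$ vanishes to order at least $m-1$ at the origin, so $g_m(z):=f_m(z)/z^{m-1}$ extends to an entire function. Setting $R:=|r'|$, the maximum modulus principle applied to $g_m$ on $\overline{D(0,R)}$ gives $|g_m(z)|\leq R^{-(m-1)} M_m(R)$ for $|z|\leq R$, where $M_m(R):=\sup_{|z|=R}|f_m(z)|$. Since $f_m$ is alternating, \Lr{maxx} identifies $M_m(R)=|f_m(-R)|=|f_m(r')|$. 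Combining,
\[
|f_m(z)| \leq \left(\tfrac{|z|}{R}\right)^{m-1}|f_m(r')| \qquad \text{for } |z|\leq R.
\]
Because $r'\in A_3$, the sequence $|f_m(r')|$ is bounded by some constant $C$. Evaluating at $z=r''$ yields $|f_m(r'')|\leq C(|r''|/R)^{m-1}$, which is exponential decay in $m$ since $|r''|<R$. The identity of Step~1 then gives $|p_m(r'')|\leq 2C(|r''|/R)^{m-1}$, so $r''\in A_5$. As every $r''\in(t_3,0)$ admits such an $r'\in A_3$ with $r'<r''<0$, we conclude $(t_3,0)\subseteq A_5$, i.e.\ $t_5\leq t_3=t_1$.

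\textbf{Main obstacle.} The non-trivial content is Step~3. Naively one cannot go from pointwise vanishing $|f_m(r')|\to 0$ to exponential decay at nearby points; what makes this possible is that the \emph{order} of the zero of $f_m$ at the origin grows with $m$, so dividing by $z^{m-1}$ turns the geometric ratio $|r''|/R<1$ into an exponentially small factor. The alternating property plays a double role: it provides the sign identity of Step~1 (used to transfer exponential bounds between $|f_m|$ and $|p_m|$), and, via \Lr{maxx}, it identifies the circular supremum $M_m(R)$ with the real value $|f_m(-R)|$ that is controlled by the hypothesis $r'\in A_3$. I expect the Hadamard three circles theorem that the authors mention to play the same role as the generalised Schwarz lemma argument I have used here, perhaps in a slightly more refined form.
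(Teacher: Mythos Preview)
Your proof is correct and takes a genuinely different route from the paper's.

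For the comparison inequalities, the paper argues via the series identity $f_m(r)=\sum_{i\ge m}p_i(r)$, established by analytic continuation, to get $t_4\le t_1$; your sign identity $|p_m(r)|=|f_m(r)|+|f_{m+1}(r)|$ (an immediate consequence of \Tr{alterf}/\Cr{alterneg}) is cleaner and gives the whole chain $A_5\subseteq A_2\subseteq A_4\subseteq A_3=A_1$ at once.

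For the hard direction $t_5\le t_1$, the paper uses Hadamard's three-circles theorem together with the exponential decay of $|p_m|$ at some inner radius $r_1$ close to $0$, a fact imported from the positive real axis via \Tr{Pm entire} and the subcritical \ABP. Your argument replaces this by a Schwarz-type estimate that exploits the zero of order $m-1$ of $f_m$ at the origin (\Tr{zerosf}): dividing out $z^{m-1}$ and applying the maximum principle, together with \Lr{maxx}, converts boundedness of $|f_m(r')|$ into exponential decay of $|f_m(r'')|$ for $|r''|<|r'|$. This has the advantage of being self-contained---no input from the positive axis is needed---and puts \Tr{zerosf} to work, whereas in the paper it is proved but not used here. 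The paper's route, conversely, works directly with $p_m$ and does not need \Tr{zerosf}; the two arguments are essentially dual uses of subharmonicity of $\log|f|$.

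One small remark (which applies equally to the paper's proof): as literally stated, the condition defining $t_5$ requires $|p_m(r)|\le c^m$ for \emph{all} $m$, but $|p_1(r)|=e^{-r}>1$ for $r<0$, so the set is empty. Both proofs actually establish (and clearly intend) $|p_m(r)|=O(c^m)$ for some $c<1$, which is the natural reading.
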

\begin{proof}
We will show that $t_1=t_5$, from which the remaining equalities follow easily. It is immediate from the definitions that $t_1\leq t_5$. To show that $t_1\geq t_5$, pick $r,r_2\in \R$ with $t_1<r_2<r<0$. By Theorem~\ref{Pm entire} we have
$|P_m(z)|\leq e^{2mM} P_m(M)$ for every $M>0$ and $z\in D(0,M)$. Therefore, since
$P_m(M)$ decays exponentially in $m$ for every $0<M<t_c$ \cite{AizNewTre, AntVes}, we can choose  $r_1<0$ with
$|P_m(r_1)|\leq ke^{-lm}$ for some $k,l>0$ (for this argument we can do without the results of \cite{AizNewTre, AntVes}; instead, we can use the fact that $P_m(M)$ decays exponentially in $m$ for $0<M<1$, which can be proved by comparison with a subcritical Galton-Watson tree). Pick such an $r_1<0$ with $r_1>r$. 
Using the Hadamard three circles theorem (\Tr{HT}) and Lemma~\ref{maxx}, we have 
$$|P_m(r)|\leq |P_m(r_1)|^{c_1}|P_m(r_2)|^{c_2},$$
where $c_1=\dfrac{\log|r_2|-\log|r|}{\log|r_2|-\log|r_1|}$ and $c_2=\dfrac{\log|r|-\log|r_1|}{\log|r_2|-\log|r_1|}$. Note that both 
$c_1,c_2$ are positive. Since $|P_m(r_2)|$ converges to
$0$ by the definition of $t_1$, it follows that  $|P_m(r_2)|^{c_2}$ is bounded above by some constant $c>0$. Moreover, by the choice of $r_1$, $|P_m(r_1)|^{c_1}$ decays exponentially in $m$. Hence so does $|P_m(r)|$. This proves that $t_1=t_5$.

Obviously $t_1\leq t_2$ and $t_3\leq t_4$.
Using the identity $P_m=f_m-f_{m+1}$ we see that $P_m$ converges to $0$ whenever $f_m$ does.
This shows that $t_1\leq t_3$. Also, assuming that $|P_m(r)|$ decays exponentially in $m$ for $t_1<r<0$ we obtain that $\sum_{m=1}^\infty m|P_m(r)| < \infty$. Hence $t_1\geq t_2$. Moreover, we have $f_m(r)=\sum_{i=m}^\infty P_m(r)$: to see this, note that the functions $f_m$ and $\sum_{i=m}^\infty P_m(z)$ coincide on the positive real line. Besides, the exponential decay of $|P_m(r)|$ combined with Lemma~\ref{maxx} and the fact that $P_m$ is alternating by the results of \Sr{sec alt}, implies that $\sum_{i=m}^\infty P_m(z)$ is continuous on $D(0,M)$ and analytic on its interior. Since $f_m$ is entire, the two functions coincide on $D(0,M)$. Therefore, $|f_m(r)|$ decays exponentially in $m$ for $t_1<r<0$ and the series $\sum_{m=1}^\infty |f_m(r)|$ converges, which implies that $t_4\leq t_1$.
\end{proof}

We thus let $t_\chi:= t_i$ be our second candidate for the definition of $t_c^-$. There is one case where we can actually compute $t_\chi$: for the Poisson branching process  (which is not one of our percolation models, but our definitions extend to it  canonically), we have $t_\chi= W(1/e)$, where $W$ denotes the Lambert function. This implies that for appropriately parametrised percolation on the $d$-regular tree $T_d$, we have $\lim_{d\to \infty} t_\chi(T_d)= W(1/e)$. 
\medskip

Since $\chi$ is analytic in $(t_\chi,0]$ and $t_A$ is defined as the infimum over those $t$ such that $\chi$ is analytic in $(t,0]$, it is natural to ask
whether $t_\chi=t_A$, but it turns out that this is not the case: for percolation on the 1-way infinite path, as well as for the Poisson branching process, we have found out that $t_A=-\infty$ although $t_\chi$ is finite. Since these two models are the least and the most percolative examples, it might be that $t_A=-\infty$ always holds, and $t_\chi$ is the `right' definition of the negative threshold.

\section{Appendix: On the number of lattice animals of a given size} \label{sec App Enum}

Let $T_d$ denote the infinite $d$-regular tree, and let $\mathcal S_n$ denote the number of subtrees of $T_d$ with $n$ vertices containing a fixed vertex $o\in V(T_d)$. We claim that
\labtequ{Sn bound}{$\mathcal S_n < c_d  \left( \frac{(d-1)^{(d-1)}}{(d-2)^{(d-2)}} \right)^n$,}
where $c_d$ is a constant depending on $d$ but not on $n$.

This can be proved using the following idea due to Kesten \cite[Lemma~5.1]{KestenBook}. Consider bond percolation on $T_d$ with parameter $p=1/d-1$ (the critical value). The probability that the cluster $C$ of the root has exactly $n$ vertices is of course at most 1. This probability can be explicitly computed as 
$$\Pr( |C|=n) = \mathcal S_n p^{n-1} (1-p)^{(d-2)n+2},$$
 since if $|C|=n$ then $|E(C)|=n-1$ and $|\partial C| = (d-2)n+2$ (the latter can be proved by induction on $n$). Substituting $p$ by $1/d-1$ we arrive at \eqref{Sn bound} by elementary manipulations.

\medskip
Using \eqref{Sn bound} 
we can also upper bound the number of subtrees of any $d$-regular graph: 

\begin{corollary} \label{lattice animals}	
For every graph $G$ with maximum degree $d$, and any vertex $o\in V(G)$, the number of subtrees of $G$ with $n$ vertices containing $o$ is at most
$$c_d  \left( \frac{(d-1)^{(d-1)}}{(d-2)^{(d-2)}} \right)^n< c_d ((d-1)e)^n$$
where $c_d$ is a universal constant depending on $d$ only. 
\end{corollary}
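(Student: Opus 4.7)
The plan is to reduce the statement to the already-derived bound \eqref{Sn bound} for the $d$-regular tree $T_d$ by constructing an injection from rooted subtrees of $G$ with $n$ vertices containing $o$ into rooted subtrees of $T_d$ with $n$ vertices containing a fixed root $\hat o$. Once such an injection exists, the number of subtrees of $G$ we want to count is at most $\mathcal S_n$, and the first displayed inequality in the statement is just \eqref{Sn bound}.

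To build the injection I would fix, once and for all, a labeling of the (at most $d$) half-edges at each vertex of $G$ by distinct elements of $\{1,\dots,d\}$, and similarly a labeling of the $d$ half-edges at each vertex of $T_d$. Given a subtree $T\subseteq G$ with $n$ vertices containing $o$, I set $\hat o$ as the chosen root of $T_d$ and process $T$ in breadth-first order starting at $o$: whenever $v\in V(T)$ has already been lifted to $\hat v\in V(T_d)$, each non-parent neighbour $w$ of $v$ in $T$ is lifted to the unique neighbour $\hat w$ of $\hat v$ joined to it by the half-edge at $\hat v$ carrying the same label as the half-edge at $v$ incident with the edge $vw$. Because $T$ is acyclic and the labelings are fixed, the resulting $\widehat T$ is a well-defined subtree of $T_d$ with $|V(\widehat T)|=n$ containing $\hat o$, and the inverse procedure recovers $T$ from $\widehat T$, so $T\mapsto\widehat T$ is injective. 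Equivalently, one could phrase this via the universal cover $\pi\colon\widetilde G\to G$: since $T$ is simply connected, it lifts uniquely to a subtree of $\widetilde G$ containing $\hat o$, and $\widetilde G$ embeds as a rooted subtree of $T_d$ because it is a tree of maximum degree at most~$d$.

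For the second inequality I would simply factor
$$\frac{(d-1)^{d-1}}{(d-2)^{d-2}}=(d-1)\left(1+\frac{1}{d-2}\right)^{d-2}$$
and apply the classical bound $(1+1/m)^m<e$ with $m=d-2$, which handles all $d\geq 3$; the case $d\leq 2$ is vacuous or trivially absorbed into the constant $c_d$. The only mildly delicate point is checking that the BFS-lift cannot accidentally merge two branches of $T$ at the same vertex of $T_d$: this is immediate, since at each step $\hat w$ is distinct from the parent of $\hat v$ in $\widehat T$ by construction, and distinct sibling choices are forced to use distinct labels on distinct half-edges at $\hat v$.
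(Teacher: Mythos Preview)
Your universal-cover phrasing is correct and is precisely the paper's argument: the paper first attaches trees to raise all degrees to $d$ so that the universal cover is literally $T_d$, whereas you embed $\widetilde G$ into $T_d$, but either way one lifts the simply connected subgraph $T$ uniquely through a fixed basepoint. The treatment of the second inequality is also identical to the paper's.

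Your primary BFS construction, however, has a gap. You fix the half-edge labelings of $G$ and of $T_d$ independently and \emph{in advance}, and then assert that ``$\hat w$ is distinct from the parent of $\hat v$ in $\widehat T$ by construction''. This does not follow. When $\hat v$ was created as the neighbour of $\hat u$ along the half-edge at $\hat u$ carrying a certain label, nothing constrains the label of the half-edge at $\hat v$ pointing back to $\hat u$. So if a non-parent neighbour $w$ of $v$ in $T$ uses the half-edge at $v$ with label $i$, and the half-edge at $\hat v$ towards its parent $\hat u$ happens also to carry label $i$, your rule sends $w$ to $\hat u$; the image then has fewer than $n$ vertices and the map is not an injection into $n$-vertex subtrees of $T_d$. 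The fix is to choose, at each step, a bijection between the non-parent half-edges at $v$ and those at $\hat v$ --- which is exactly what the universal-cover lift supplies --- but a labeling of $T_d$ fixed once and for all does not do this for you.
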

\begin{proof}
We may assume without loss of generality that $G$ is $d$-regular, for otherwise we can attach an appropriate infinite tree to each vertex of degree less than $d$ to raise all degrees to exactly $d$. 

Since \g is $d$-regular, its universal cover is (isomorphic to) $T_d$, so let $p: T_d \to \G$ be a covering map. Fix a preimage $o'$ of $o$ under $p$. Then every subtree of \g containing $o$ lifts uniquely to a subtree of $T_d$ containing $o'$, and distinct subtrees of \g lift to distinct subtrees of $T_d$. This means that the number of subtrees of $G$ containing  $o$ is at most the corresponding number for $T_d$, which is less than $c_d  \left( \frac{(d-1)^{(d-1)}}{(d-2)^{(d-2)}} \right)^n$ by \eqref{Sn bound}. 
We can rewrite the fraction in the parenthesis as 
$$(d-1) (\frac{d-1}{d-2})^{(d-2)} = (d-1) (1+\frac{1}{d-2})^{(d-2)}< (d-1)e$$
to complete our proof.

\end{proof} 

{\bf Remark 1:} \Cr{lattice animals} implies that the number of $n$-vertex induced connected subgraphs of $G$ containing a fixed vertex, called (site) lattice animals in the statistical mechanics literature, or polyominoes in combinatorics, is upper-bounded by the same expression, since every such graph has at least one spanning tree, and no two distinct induced subgraphs share a spanning tree. In particular, we deduce that the growth rate of the number of site lattice animals of any graph of maximum degree $d$ is at most $(d-1)e$. 
In the special case where $G$ is the $\Z^d$ lattice this upper bound was proved in \cite{BaBaRoFor} with different arguments.

\medskip
{\bf Remark 2:} The number  $\mathcal S_n$ is known exactly: it is $\frac{d((d-1)n)!}{(n-1)!((d-2)n+2)!}$.\footnote{We thank Stephan Wagner for acquainting us with this formula.} This can be proved using analytic combinatorics. One can also arrive at \eqref{Sn bound} using Stirling's formula to approximate the factorials in the latter expression.

\comment{

\begin{proposition} \label{thanks Stephan}
$$\mathcal S_n = \frac{d((d-1)n)!}{(n-1)!((d-2)n+2)!}$$
\fe\ $d>2$.
\end{proposition}

	Using this, we calculate
$$\mathcal S_n= \frac{d((d-1)n)!}{(n-1)!((d-2)n+2)!} = d \frac{((d-1)n)!}{(n)!((d-2)n)!} \frac{n}{((d-2)n+1)((d-2)n+2)}.$$ 
Since $d>2$, the last term $\frac{n}{((d-2)n+1)((d-2)n+2)}$ is less than 1. 
Using this fact, and Stirling's formula $n! \sim \sqrt{2\pi n} (n/e)^n$ to approximate the above factorials, we obtain
$$\mathcal S_n < d \frac{((d-1)n/e)^{(d-1)n}}{(n/e)^{n} (((d-2)n)/e)^{(d-2)n}}.$$
Eliminating the factor $(n/e)^{(d-1)n}$ from both the enumerator and the denominator, the last expression simplifies to
\labtequ{Sn bound}{$\mathcal S_n < c_d  \left( \frac{(d-1)^{(d-1)}}{(d-2)^{(d-2)}} \right)^n$,}
}

\section{Appendix: complex analysis basics} \label{sec App CA}

In this appendix we list some classical facts in complex analysis used throughout the paper. They can be found in standard textbooks like \cite{Ahlfors}. 
The first two provide the standard technique for showing that a sum of analytic functions is analytic, a technique we employ many times throughout the paper.

\begin{theorem}\textbf{(Weierstrass Theorem)}  \label{thmWei}
Let $f_n$ be a sequence of analytic functions defined on an open subset $\Omega$ of the plane, which converges uniformly on the compact subsets of $\Omega$ to a function $f$. Then $f$ is analytic on $\Omega$. Moreover, $f'_n$ converges uniformly on the compact subsets of $\Omega$ to $f'$.
\end{theorem}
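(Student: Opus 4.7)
The plan is to establish the two assertions in sequence using Morera's theorem and the Cauchy integral formula, which are the natural tools for transferring analyticity and derivative bounds across uniform limits.

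First I would show that $f$ is analytic on $\Omega$. Uniform convergence on compacta immediately gives continuity of $f$, since each $f_n$ is continuous and the uniform limit of continuous functions on compact sets is continuous. For analyticity, I would apply Morera's theorem: it suffices to show that $\oint_{\partial T} f(z)\,dz = 0$ for every closed triangle $T$ contained in $\Omega$. For such a $T$, Cauchy's theorem applied to each analytic $f_n$ gives $\oint_{\partial T} f_n(z)\,dz = 0$. Since $\partial T$ is compact and $f_n \to f$ uniformly on it, we may interchange limit and integral to obtain $\oint_{\partial T} f(z)\,dz = \lim_n \oint_{\partial T} f_n(z)\,dz = 0$, and Morera's theorem then yields that $f$ is analytic on $\Omega$.

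Next I would prove the convergence statement for derivatives. Fix a compact set $K \subset \Omega$ and choose $r > 0$ small enough that the $r$-neighbourhood $K_r := \{z \in \C : d(z, K) \leq r\}$ is still contained in $\Omega$ (this is possible by compactness of $K$ and openness of $\Omega$). For every $z \in K$, the circle $C(z, r)$ lies in $K_r$, and Cauchy's integral formula for the derivative gives
\[
f'_n(z) - f'(z) = \frac{1}{2\pi i} \oint_{C(z,r)} \frac{f_n(w) - f(w)}{(w - z)^2}\,dw.
\]
Bounding the integrand trivially by $\sup_{w \in K_r} |f_n(w) - f(w)| / r^2$ and the length of the contour by $2\pi r$ yields
\[
|f'_n(z) - f'(z)| \leq \frac{1}{r} \sup_{w \in K_r} |f_n(w) - f(w)|,
\]
uniformly in $z \in K$. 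Since $K_r$ is compact and $f_n \to f$ uniformly on $K_r$, the right-hand side tends to $0$, giving the desired uniform convergence of $f'_n$ to $f'$ on $K$.

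The only real subtlety is the choice of $r$ in the second step: one has to work on a slightly enlarged compact set $K_r \subset \Omega$ rather than on $K$ itself, because Cauchy's formula for the derivative needs the function on a contour surrounding each point of $K$. This enlargement is routine but essential, and it uses nothing more than the compactness of $K$ and the openness of $\Omega$. No harder obstacle arises.
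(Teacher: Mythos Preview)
Your proof is correct and follows the standard textbook approach (Morera's theorem for analyticity, Cauchy's integral formula for the derivative convergence). Note that the paper itself does not prove this theorem: it is listed in the appendix as a classical fact with a reference to Ahlfors, so there is no in-paper proof to compare against.
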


\begin{theorem}\textbf{(Weierstrass M-test)} \label{MT} 
Let $f_n$ be a sequence of complex-valued functions defined on a subset $\Omega$ of the plane and assume that there exist positive numbers $M_n$ with $|f_n(z)|\leq M_n$ for every $z\in\Omega$, and $\sum_{n} M_n<\infty$. Then $\sum_n f_n$ converges uniformly on $\Omega$.
\end{theorem}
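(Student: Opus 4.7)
The plan is to reduce uniform convergence of the partial sums $S_n(z) = \sum_{k=1}^n f_k(z)$ to the Cauchy criterion, exploiting completeness of $\mathbb{C}$. The key observation is that the hypotheses give us a deterministic (i.e.\ $z$-independent) control on tails of the series.

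First I would fix $\varepsilon > 0$ and use the assumption $\sum_n M_n < \infty$ to produce an index $N = N(\varepsilon)$ such that $\sum_{k=n+1}^m M_k < \varepsilon$ whenever $m > n \geq N$; this is just the Cauchy criterion for the convergent real series $\sum_n M_n$. Next, for arbitrary $z \in \Omega$ and $m > n \geq N$, I would chain the triangle inequality with the pointwise bound $|f_k(z)| \leq M_k$ to conclude
\[
|S_m(z) - S_n(z)| \;=\; \Bigl|\sum_{k=n+1}^m f_k(z)\Bigr| \;\leq\; \sum_{k=n+1}^m |f_k(z)| \;\leq\; \sum_{k=n+1}^m M_k \;<\; \varepsilon.
\]
Since $N$ does not depend on $z$, this shows that $(S_n)$ is \emph{uniformly} Cauchy on $\Omega$.

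Finally, since $\mathbb{C}$ is complete, for each $z \in \Omega$ the sequence $(S_n(z))$ converges to some limit $S(z) \in \mathbb{C}$, and I would upgrade the uniform Cauchy property to uniform convergence by letting $m \to \infty$ in the inequality above to obtain $|S(z) - S_n(z)| \leq \varepsilon$ for all $n \geq N$ and all $z \in \Omega$. There is no genuine obstacle here; this is a textbook argument, and the only mild subtlety is the standard passage from ``uniformly Cauchy'' to ``uniformly convergent'' via pointwise completeness plus a limit on one side of the Cauchy inequality.
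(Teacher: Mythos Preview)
Your argument is correct and is the standard textbook proof. Note, however, that the paper does not actually prove this statement: it is listed in the appendix as a classical fact with a reference to \cite{Ahlfors}, so there is no ``paper's own proof'' to compare against.
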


The following is only used in \Sr{sec neg}, when we discuss the negative percolation threshold.

\begin{theorem}\textbf{(Hadamard's three circles theorem)} \label{HT} 
Let $f(z)$ be an analytic function on the annulus $r_1\leq |z| \leq r_2$. Let $M(r)=\sup\{|f(re^{it})|, t\in\mathbb{R}\}$ be the supremum of $|f(z)|$ over the circle of radius $r$. Then for every $r\in (r_1,r_2)$
$$M(r)\leq M(r_1)^{R}M(r_2)^{R'},$$
where $R=R(r_1,r,r_2)=\dfrac{\log r_2-\log r}{\log r_2-\log r_1}$ and $R'=R'(r_1,r,r_2)=\dfrac{\log r-\log r_1}{\log r_2-\log r_1}.$
\end{theorem}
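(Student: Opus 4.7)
The plan is to reduce the inequality to a statement about convexity, namely that $\log M(r)$ is a convex function of $\log r$ on $[\log r_1, \log r_2]$. Indeed, unwinding the definitions of $R$ and $R'$ in the statement, the claimed inequality
\[
\log M(r) \leq R \log M(r_1) + R' \log M(r_2)
\]
with $R+R'=1$ and $R\log r_1 + R' \log r_2 = \log r$ is precisely the assertion that the point $(\log r, \log M(r))$ lies below the chord joining $(\log r_1, \log M(r_1))$ and $(\log r_2, \log M(r_2))$. So it suffices to establish this convexity.

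To establish convexity, I would use the maximum modulus principle applied to an auxiliary function designed to have equal modulus-maxima on the two boundary circles. Concretely, given any real number $a$, the function $|z|^a |f(z)|$ is well-defined (and continuous) on the closed annulus even though $z^a$ itself may be multivalued. I would pick $a := \frac{\log M(r_1)-\log M(r_2)}{\log r_2 - \log r_1}$, so that $r_1^a M(r_1) = r_2^a M(r_2)$. The main step is then to show that $|z|^a |f(z)| \leq r_1^a M(r_1)$ throughout the annulus; evaluating at an arbitrary $z$ with $|z|=r$ and taking the supremum over $t$ on the circle $|z|=r$ gives $r^a M(r) \leq r_1^a M(r_1)$, which rearranges exactly to the desired inequality.

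The justification of the boundedness step is the one point requiring some care, because $z^a$ is genuinely multivalued for non-integer $a$. To avoid this nuisance I would approximate $a$ by rationals $p/q$ and work with the single-valued, holomorphic (and hence max-modulus-obeying) function $z^p f(z)^q$ on the annulus, raised to the $1/q$ power only at the level of modulus; then $|z^p f(z)^q|^{1/q} = |z|^{p/q}|f(z)|$ achieves its maximum on the boundary of the annulus by the maximum modulus principle, and the boundary maxima are arranged (by choice of $p/q \to a$) to be asymptotically equal. Passing to the limit $p/q \to a$ yields the claim. (Alternatively, one can work directly with $\log |f(z)|$, which is subharmonic wherever $f \neq 0$ on the open annulus, and deduce that $\log|z|^a|f(z)|$ satisfies the maximum principle by adding the harmonic function $a\log|z|$; zeros of $f$ cause no harm since $\log|f|\to -\infty$ there.) Either route, this is the only genuinely non-routine step; the remainder is bookkeeping with the definitions of $R$ and $R'$.
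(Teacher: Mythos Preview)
The paper does not actually prove this theorem: it is merely stated in the appendix as a classical fact from complex analysis, with a reference to Ahlfors' textbook. Your proof sketch is correct and is essentially the standard textbook argument (convexity of $\log M(r)$ in $\log r$ via the maximum modulus principle applied to $z^a f(z)$), so there is nothing to compare.
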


\bibliographystyle{plain}
\bibliography{collective}

\begin{thebibliography}{10}

\bibitem{Ahlfors}
L.~V. Ahlfors.
\newblock {\em {Complex analysis: an introduction to the theory of analytic
  functions of one complex variable}}.
\newblock McGraw-Hill Interamericana, 1953.

\bibitem{AizBar}
M.~Aizenman and D.~J. Barsky.
\newblock Sharpness of the phase transition in percolation models.
\newblock {\em Communications in Mathematical Physics}, 108:489--526, 1987.

\bibitem{AiDeSoLow}
M.~{Aizenman}, F.~{Delyon}, and B.~{Souillard}.
\newblock {Lower bounds on the cluster size distribution}.
\newblock {\em Journal of Statistical Physics}, 23(3):267--280, 1980.

\bibitem{AizKeNew}
M.~Aizenman, H.~Kesten, and C.~M. Newman.
\newblock Uniqueness of the infinite cluster and continuity of connectivity
  functions for short- and long-range percolation.
\newblock {\em Communications in Mathematical Physics}, 111:505--532, 1987.

\bibitem{AizNewTre}
M.~Aizenman and C.~M. Newman.
\newblock {Tree Graph Inequalities and Critical Behavior in Percolation
  Models}.
\newblock {\em J.\ Stat.\ Phys.}, 36(1/2):107--143, 1984.

\bibitem{AizNewDis}
M.~Aizenman and C.~M. Newman.
\newblock {Discontinuity of the percolation density in one-dimensional
  $1/|x-y|^2$ percolation models}.
\newblock {\em Commun.\ Math.\ Phys.}, 107:611--647, 1986.

\bibitem{AnBeHor}
O.~Angel, I.~Benjamini, and N.~Horesh.
\newblock {An isoperimetric inequality for planar triangulations}.
\newblock {\em Discrete \& Computational Geometry}, pages 1--8, 2018.

\bibitem{AntVes}
T.~Antunovi{\'c} and I.~Veseli{\'c}.
\newblock Sharpness of the phase transition and exponential decay of the
  subcritical cluster size for percolation on quasi-transitive graphs.
\newblock {\em Journal of Statistical Physics}, 130:983--1009, 2008.

\bibitem{BoRiPer}
O.~Riordan B.~Bollob\'as.
\newblock Percolation on dual lattices with $k$-fold symmetry.
\newblock {\em Random Structures and Algorithms}, 32:463--472, 2008.

\bibitem{BaBeCut}
E.~Babson and I.~Benjamini.
\newblock Cut sets and normed cohomology with applications to percolation.
\newblock {\em Proc.\ Am.\ Math.\ Soc.}, 127:589--597, 1999.

\bibitem{BaBaRoFor}
R.~Barequet, G.~Barequet, and G.~Rote.
\newblock Formulae and growth rates of high-dimensional polycubes.
\newblock {\em Combinatorica}, 30(3):257--275, 2010.

\bibitem{Benjamini}
I.~Benjamini.
\newblock {Percolation and Coarse Conformal Uniformization}.
\newblock {\small\tt http://arxiv.org/pdf/1510.05196}.

\bibitem{BeSchrPer}
I.~Benjamini and O.~Schramm.
\newblock {Percolation Beyond $\mathbb{Z}^d$, Many Questions And a Few
  Answers}.
\newblock {\em Elect.\ Comm.\ in Probab.}, 1:71--82, 1996.

\bibitem{BeSchrPerHyp}
I.~Benjamini and O.~Schramm.
\newblock {Percolation in the Hyperbolic Plane}.
\newblock {\em J.~Amer.~Math.~Soc.}, 14:487--507, 2000.

\bibitem{BoRioShor}
B.~Bollob{\'a}s and O.~Riordan.
\newblock {A short proof of the Harris--Kesten theorem}.
\newblock {\em Bulletin of the London Mathematical Society}, 38(3):470--484,
  2006.

\bibitem{BrPrSaSco}
G.~A. Braga, A.~Procacci, R.~Sanchis, and B.~Scoppola.
\newblock Percolation connectivity in the highly supercritical regime.
\newblock {\em Markov Processes And Related Fields}, 10(4):607--628, 2004.

\bibitem{BrPrSa}
G.~A. Braga, A.~Proccaci, and R.~Sanchis.
\newblock Analyticity of the $d$-dimensional bond percolation probability
  around $p=1$.
\newblock {\em Journal of Statistical Physics}, 107:1267--1282, 2002.

\bibitem{BKunique}
R.~M. Burton and M.~Keane.
\newblock {Density and uniqueness in percolation}.
\newblock {\em Communications in Mathematical Physics}, 121(3):501--505, 1989.

\bibitem{CatDif}
F.~S. Cater.
\newblock {Differentiable, Nowhere Analytic Functions}.
\newblock {\em The American Mathematical Monthly}, 91(10):618--624, 1984.

\bibitem{ChChNeBer}
J.~T. Chayes, L.~Chayes, and C.~M. Newman.
\newblock {Bernoulli percolation above threshold: an invasion percolation
  analysis}.
\newblock {\em The Annals of Probability}, pages 1272--1287, 1987.

\bibitem{Anchored}
D.~Chen, Yuval Y.~Peres, and G.~Pete.
\newblock Anchored expansion, percolation and speed.
\newblock {\em The Annals of Probability}, pages 2978--2995, 2004.

\bibitem{HugoSurvey}
H.~Duminil-Copin.
\newblock {Sixty years of percolation}.
\newblock In {\em Proceedings of the ICM}, 2018.
\newblock {\small\tt http://arxiv.org/pdf/1712.04651}.

\bibitem{DCGRSY}
H.~Duminil-Copin, S.~Goswami, A.~Raoufi, F.~Severo, and A.~Yadin.
\newblock {Existence of phase transition for percolation using the Gaussian
  Free Field}.
\newblock arXiv:1806.07733.

\bibitem{DCRaTaSha}
H.~Duminil-Copin, A.~Raoufi, and V.~Tassion.
\newblock {Sharp phase transition for the random-cluster and Potts models via
  decision trees}.
\newblock {To appear in Ann.\ Math.}

\bibitem{DCTa}
H.~Duminil-Copin and V.~Tassion.
\newblock {A new proof of the sharpness of the phase transition for Bernoulli
  percolation and the Ising model}.
\newblock {\em Communications in Mathematical Physics}, 343(2):725--745, 2016.

\bibitem{ErdHR}
P.~Erd{\H o}s.
\newblock {On an elementary proof of some asymptotic formulas in the theory of
  partitions}.
\newblock {\em Annals of Mathematics}, pages 437--450, 1942.

\bibitem{FedererGeoMeasure}
H.~Federer.
\newblock {\em {Geometric measure theory}}.
\newblock Springer, 2014.

\bibitem{hypamen}
B.~Federici and A.~Georgakopoulos.
\newblock {Hyperbolicity vs.\ Amenability for planar graphs}.
\newblock {\em Discrete {\&} Computational Geometry}, 58(1):67--79, 2017.

\bibitem{GriDif}
{G.~Grimmett}.
\newblock On the differentiability of the number of clusters per site in the
  percolation model.
\newblock {\em J.\ London Math.\ Soc.}, 23:372--384, 1981.

\bibitem{Grimmett}
{G.~Grimmett}.
\newblock {\em Percolation, Second Edition}, volume 321 of {\em Grundlehren der
  mathematischen Wissenschaften}.
\newblock Springer, 1999.

\bibitem{IGperc}
A.~Georgakopoulos and J.~Haslegrave.
\newblock Percolation on an infinitely generated group.
\newblock {\em Comb.,\ Probab.\ Comput.}, 29(4):587--615, 2020.

\bibitem{analyticity}
A.~Georgakopoulos and C.~Panagiotis.
\newblock {Analyticity results in Bernoulli Percolation}.
\newblock arXiv:1811.07404.

\bibitem{ExpGrowth}
A.~Georgakopoulos and C.~Panagiotis.
\newblock On the exponential growth rates of lattice animals and interfaces,
  and new bounds on $p_c$.
\newblock arXiv:1908.03426.

\bibitem{Griffiths}
R.~Griffiths.
\newblock {Nonanalytic behavior above the critical point in a random Ising
  ferromagnet}.
\newblock {\em Physical Review Letters}, 23(1):17, 1969.

\bibitem{GrimmettDisordered}
G.~Grimmett.
\newblock {Percolation and disordered systems}.
\newblock In {\em Lectures on Probability Theory and Statistics}, pages
  153--300. 1997.

\bibitem{GrimLi}
G.~Grimmett and Z.~Li.
\newblock {Bounds on connective constants of regular graphs}.
\newblock {\em Combinatorica}, 35(3):279--294, 2015.

\bibitem{GriMar}
G.~Grimmett and J.~Marstrand.
\newblock The supercritical phase of percolation is well behaved.
\newblock {\em Proceedings of the Royal Society of London. Series A:
  Mathematical and Physical Sciences}, 430(1879):439--457, 1990.

\bibitem{ContReimer}
J.~C. Gupta and B.~V. Rao.
\newblock {van den Berg-Kesten inequality for the Poisson Boolean model for
  continuum percolation}.
\newblock {\em Sankhy{\=a}: The Indian Journal of Statistics, Series A}, pages
  337--346, 1999.

\bibitem{HadwigerSteinerschen}
H.~Hadwiger.
\newblock {Die erweiterten Steinerschen Formeln f{\"u}r ebene und
  sph{\"a}rische Bereiche}.
\newblock {\em Commentarii Mathematici Helvetici}, 18(1):59--72, 1945.

\bibitem{HasPanSit}
J.~Haslegrave and C.~Panagiotis.
\newblock {Site percolation and isoperimetric inequalities for plane graphs}.
\newblock arXiv:1905.09723.

\bibitem{HeSchrHyp}
Z.-X. He and O.~Schramm.
\newblock Hyperbolic and parabolic packings.
\newblock {\em Discrete {\&} Computational Geometry}, 14(2):123--149, 1995.

\bibitem{HerHutSup}
J.~Hermon and T.~Hutchcroft.
\newblock {Supercritical percolation on nonamenable graphs: Isoperimetry,
  analyticity, and exponential decay of the cluster size distribution}.
\newblock arXiv:1904.10448.

\bibitem{Remco}
R.~Van~Der Hofstad.
\newblock {\em Random graphs and complex networks}, volume~1.
\newblock Cambridge University Press, 2016.

\bibitem{KLM}
W.~Kager, M.~Lis, and R.~Meester.
\newblock {The signed loop approach to the Ising model: foundations and
  critical point}.
\newblock {\em Journal of Statistical Physics}, 152(2):353--387, 2013.

\bibitem{KestenCritical}
H.~Kesten.
\newblock {The critical probability of bond percolation on the square lattice
  equals 1/2}.
\newblock {\em Communications in Mathematical Physics}, 74(1):41--59, 1980.

\bibitem{Ke81}
H.~Kesten.
\newblock Analyticity properties and power law estimates of functions in
  percolation theory.
\newblock {\em Journal of Statistical Physics}, 25(4):717--756, 1981.

\bibitem{KestenBook}
H.~Kesten.
\newblock {\em {Percolation theory for mathematicians}}.
\newblock Springer, 1982.

\bibitem{KeZhaPro}
H.~Kesten and Y.~Zhang.
\newblock {The probability of a large finite cluster in supercritical Bernoulli
  percolation}.
\newblock {\em The Annals of Probability}, 18(2):537--555, 1990.

\bibitem{KunSou}
H.~Kunz and B.~Souillard.
\newblock {Essential Singularity in Percolation Problems and Asymptotic
  Behavior of Cluster Size Distribution}.
\newblock {\em Journal of Statistical Physics}, 19(1):77--106, 1978.

\bibitem{PenroseBooleanModel}
G.~Last, M.D. Penrose, and S.~Zuyev.
\newblock {On the capacity functional of the infinite cluster of a Boolean
  model}.
\newblock {\em The Annals of Applied Probability}, 27(3):1678--1701, 2017.

\bibitem{LyonsTrees}
R.~Lyons.
\newblock Random walks and percolation on trees.
\newblock {\em The Annals of Probability}, pages 931--958, 1990.

\bibitem{LyonsBook}
R.~Lyons and Y.~Peres.
\newblock {\em Probability on Trees and Networks}.
\newblock Cambridge University Press, New York, 2016.
\newblock Available at {http://pages.iu.edu/~rdlyons/}.

\bibitem{MeesterRoyContPerc}
R.~Meester and R.~Roy.
\newblock {\em {Continuum Percolation}}, volume 119 of {\em {Cambridge Tracts
  in Mathematics}}.
\newblock Cambridge University Press, 1996.

\bibitem{MenCoi}
M.~V. Menshikov.
\newblock {Coincidence of critical points in percolation problems}.
\newblock {\em {Dokl.\ Akad.\ Nauk SSSR}}, 288(6):1308--1311, 1986.

\bibitem{MiPeRoQue}
M.~Michelen, R.~Pemantle, and J.~Rosenberg.
\newblock {Quenched Survival of Bernoulli Percolation on Galton-Watson Trees}.
\newblock arXiv:1805.03693.

\bibitem{Mi68}
J.~Milnor.
\newblock A note on curvature and fundamental group.
\newblock {\em J.\ Differential Geometry}, 2:1--7, 1968.

\bibitem{Onsager}
L.~Onsager.
\newblock {Crystal statistics. I. A two-dimensional model with an
  order-disorder transition}.
\newblock {\em Physical Review}, 65(3-4):117, 1944.

\bibitem{PakSmir}
I.~Pak and T.~Smirnova-Nagnibeda.
\newblock {On non-uniqueness of percolation on nonamenable Cayley graphs}.
\newblock {\em Comptes Rendus de l'Acad{\'e}mie des Sciences-Series
  I-Mathematics}, 330(6):495--500, 2000.

\bibitem{PenRgg}
M.~Penrose.
\newblock {\em Random Geometric Graphs}.
\newblock Oxford University Press, 2003.

\bibitem{PeteBook}
G.~Pete.
\newblock {\em Probability and Geometry on Groups}.
\newblock A book in preparation. Available at {http://pages.iu.edu/~rdlyons/}.

\bibitem{Pete}
G.~Pete.
\newblock A note on percolation on $\mathbb{Z}^d$: Isoperimetric profile via
  exponential cluster repulsion.
\newblock {\em Electronic Communications in Probability}, 13:377--392, 2008.

\bibitem{HarRam}
S.~Ramanujan and G.~H. Hardy.
\newblock Asymptotic formulae in combinatory analysis.
\newblock {\em Proceedings of the London Mathematical Society}, 17:75--115,
  1918.

\bibitem{SchonMulti}
R.~H. Schonmann.
\newblock {Multiplicity of phase transitions and mean-field criticality on
  highly non- amenable graphs}.
\newblock {\em Communications in Mathematical Physics}, 219(2):271--322, 2001.

\bibitem{SheRan}
S.~Sheffield.
\newblock Random surfaces.
\newblock {\em Ast{\'e}risque}, 304, 2006.

\bibitem{SykesEssam}
M.~F. Sykes and J.~W Essam.
\newblock Exact critical percolation probabilities for site and bond problems
  in two dimensions.
\newblock {\em Journal of Mathematical Physics}, 5(8):1117--1127, 1964.

\bibitem{ThomRemark}
A.~Thom.
\newblock {A remark about the spectral radius}.
\newblock {\em International Mathematics Research Notices},
  2015(10):2856--2864, 2014.

\bibitem{TimCut}
A.~Tim{\'a}r.
\newblock Cutsets in infinite graphs.
\newblock {\em Combinatorics, Probability and Computing}, 16:159--166, 2007.

\bibitem{TreibergsIsoperimetric}
A.~Treibergs.
\newblock {Inequalities that imply the isoperimetric inequality}.
\newblock {\em Several proofs of the isoperimetric inequality, manuscript},
  2002.

\bibitem{BK}
J.~van~den Berg and H.~Kesten.
\newblock {Inequalities with applications to percolation and reliability}.
\newblock {\em Journal of applied probability}, 22(3):556--569, 1985.

\bibitem{EntGri}
A.~C.~D. van Enter.
\newblock {Griffiths Singularities}.
\newblock In {\em Modern Encyclopedia of Mathematical Physics}, 2007.

\bibitem{Complete}
A.~C.~D. van Enter, R.~Fern{\'a}ndez, R.~H. Schonmann, and S.~B. Shlosman.
\newblock {Complete analyticity of the 2D Potts model above the critical
  temperature}.
\newblock {\em Communications in mathematical physics}, 189(2):373--393, 1997.

\end{thebibliography}
\end{document}